\newcommand{\nc}{\normalcolor}
\numberwithin{equation}{section}
\newtheorem{theorem}{Theorem}
\newtheorem{corollary}[theorem]{Corollary}
\newtheorem{lemma}[theorem]{Lemma}
\newtheorem{prop}[theorem]{Proposition}
\newtheorem{defn}[theorem]{Definition}
\newcommand{\zz}{{\mathbb{Z}}}
\newcommand{\pp}{{\mathbb{P}}}
\newcommand{\oh}{{\mathcal{O}}}
\newcommand{\im}{{\operatorname{Im}\,}}
\newcommand{\re}{{\operatorname{Re }\,}}
\newcommand{\pf}{\operatorname{Pf}}
\newcommand{\tr}{{\operatorname{Tr}\,}}
\newcommand{\beq}[1]{\begin{equation} \label{#1}}
\newcommand{\eeq}{\end{equation}}
\newcommand{\expb}[1]{\operatorname{exp}\left\{#1\right\}}
\newcommand{\ntr}[1]{\left\langle #1 \right\rangle}
\newcommand*\samethanks[1][\value{footnote}]{\footnotemark[#1]}
\renewcommand{\nc}{\newcommand}
\nc{\rnc}{\renewcommand}
\nc{\R}{\mathbb{R}}
\nc{\C}{\mathbb{C}}
\rnc{\d}{\mathrm{d}}
\nc{\E}{\mathbb{E}}
\begin{document}
\title{Bulk universality for complex eigenvalues of real non-symmetric random matrices with i.i.d. entries}

\author{Sofiia Dubova\thanks{Department of Mathematics, Harvard University, One Oxford Street, Cambridge MA 02138, USA}
\and Kevin Yang\samethanks
}
\maketitle

\begin{abstract}
We consider an ensemble of non-Hermitian matrices with independent identically distributed real entries that have finite moments. We show that its $k$-point correlation function in the bulk away from the real line converges to a universal limit.
\end{abstract}

\tableofcontents

\section{Introduction}
Universality of local eigenvalue statistics has long been of interest in random matrix theory. In the case of Hermitian Wigner ensembles this is the content of Wigner-Dyson-Mehta conjecture \cite{mehta1967statistical}, which was resolved in \cite{erdHos2010bulk} using a \emph{three-step strategy}:
\begin{itemize}
    \item Local law for the resolvent $G(z) = (H-z)^{-1}$;
    \item Bulk universality for the perturbed ensemble $H_t = H+\sqrt{t}B$, where $B$ is Gaussian;
    \item Comparison of local statistics of $H$ with local statistics of the perturbed matrix $H_t$.
\end{itemize}
This strategy has since been widely used to show universality of local statistics of other Hermitian ensembles \cite{Huang_2015,aggarwal2019bulk}. An established method of carrying out the second step of the strategy for Hermitian ensembles is Dyson Brownian motion (DBM), a system of stochastic differential equations for the eigenvalues of $H_t$. For non-Hermitian random matrices, the natural analogue of DBM involves the left- and right-eigenvector overlaps of the ensemble, which makes its analysis very complicated; see Appendix A in \cite{BD20}, for example. In \cite{cipolloni2021edge} Cipolloni, Erd{\H{o}}s and Schr{\"o}der side-stepped the non-Hermitian DBM using Girko's formula \cite{girko1984} to show the universality of local statistics in the edge regime for both complex and real non-Hermitian i.i.d. ensembles. Girko's formula allows access to the eigenvalue statistics of a non-Hermitian matrix $A$ through the resolvent $G_z(\eta)$ of the Hermitization of $A$:
\[
G_z(\eta) = \begin{pmatrix}-i\eta&A-z\\A^\ast-\bar{z} & -i\eta\end{pmatrix}^{-1}.
\]
This strategy has not yielded the universality of local statistics in the bulk due to difficulty controlling the correlation of $G_{z_1}$ and $G_{z_2}$ in the intermediate regime $|z_1-z_2|\asymp \frac{1}{\sqrt{N}}$. Current techniques only allow to show independence of $G_{z_1}$ and $G_{z_2}$ in the regime $|z_1-z_2| > N^{-\frac12+\varepsilon}$, see \cite{cipolloni2023mesoscopic}.

Recently, Maltsev and Osman proved bulk universality for complex non-Hermitian i.i.d. matrices in \cite{MO} using a different approach based on supersymmetry for the second step of the three-step strategy. They derived an exact integral formula for the $k$-point correlation function of a random complex non-Hermitian matrix $A$ perturbed by a Gaussian $\sqrt{t}B$ and performed asymptotic analysis to show its convergence to the $k$-point correlation function of the complex Ginibre ensemble with Gaussian entries. This method was also used to show bulk universality for weakly non-Hermitian complex matrices in \cite{M1}. For the purposes of bulk universality, it is enough to take $t=N^{-\epsilon_{0}}$ (see Section 7 in \cite{MO}), though the analysis in \cite{MO} holds until the time-scale $N^{-1/2+\delta}$ assuming the initial data matrix satisfies technical (third-order) local law estimates. (Here, $\epsilon_{0},\delta>0$ are any small parameters independent of $N$.)

In this work we establish bulk universality for the real non-Hermitian i.i.d. ensemble away from the real line using the strategy of \cite{MO}. This builds on \cite{taovu}, in which the first four moments of the entries of $A$ are assumed to match those of the Gaussian distribution. (We note that \cite{taovu}, which builds on the methods in \cite{taovuhermitian} for Hermitian matrices, applies to both real and complex matrix ensembles and both real and complex eigenvalue statistics.) We discuss the additional complications that arise in the real case compared to the complex case after we state our main results.

To be precise, the random matrix ensemble of interest in this paper is given by $A=(A_{ij})_{i,j=1}^{N}$, where $N\gg1$ is the matrix size, and $A_{ij}$ are i.i.d. real-valued random variables that satisfy $\E A_{ij}=0$ and $\E|A_{ij}|^{2}=N^{-1}$ and $\E|A_{ij}|^{2p}\lesssim_{p}N^{-p}$ for all $p\geq1$. (Throughout this paper, we write $a\lesssim b$ if $|a|\leq C|b|$, with subscripts to indicate what parameters the constant $C$ depends on.)

Let us now introduce the main objects and constants appearing in this paper.
\begin{itemize}
\item First, we define $t=N^{-\epsilon_{0}}$ to be the time-scale of the Gaussian perturbation, where $\epsilon_{0}>0$ is independent of $N$.
\item For any $z\in\C$, we define $H_{z}(\eta):=[(A-z)(A-z)^{\ast}+\eta^{2}]^{-1}$ and $\tilde{H}_{z}(\eta):=[(A-z)^{\ast}(A-z)+\eta^{2}]^{-1}$. We also define the Hermitization
\begin{align*}
\mathcal{H}_{z}:=\begin{pmatrix}0&A-z\\A^{\ast}-\overline{z}&0\end{pmatrix}
\end{align*}
and recall the resolvent $G_{z}(\eta)=[\mathcal{H}_{z}-i\eta]^{-1}$. It turns out (see \cite{MO}) that 
\begin{align}
G_{z}(\eta)=\begin{pmatrix}i\eta H_{z}(\eta)&(A-z)H_{z}(\eta)\\H_{z}(\eta)(A-z)^{\ast}&i\eta \tilde{H}_{z}(\eta)\end{pmatrix}.\label{eq:gfformula}
\end{align}
\item Let $\eta_{z,t}$ solve $t\langle H_{z}(\eta_{z,t})\rangle=1$, where $\langle H\rangle=\frac{1}{\dim H}\tr H$ is normalized trace for any square matrix $H$. See the proof of Theorem 1.1 in \cite{MO} for existence and uniqueness of such $\eta_{z,t}$ for $t=N^{-\epsilon_{0}}$ (and for the estimate $\frac{1}{C}t\leq\eta_{z,t}\leq Ct$, which holds whenever the local law in Lemma \ref{1GE} holds).
\item Set $g_{z,t}=\eta_{z,t}\langle H_{z}(\eta_{z,t})\rangle$ and $\alpha_{z,t}=\eta_{z,t}^{2}\langle H_{z}(\eta_{z,t})\tilde{H}_{z}(\eta_{z,t})\rangle$. Set $\beta_{z,t}=\eta_{z,t}\langle H_{z}(\eta_{z,t})^{2}(A-z)\rangle$ and $\gamma_{z,t}=\eta_{z,t}^{2}\langle H_{z}(\eta_{z,t})^{2}\rangle$. Define $\sigma_{z,t}=\alpha_{z,t}+\gamma_{z,t}^{-1}|\beta_{z,t}|^{2}$. 
\end{itemize}
Now, recall that the $k$-point correlation function (for any integer $k\geq1$) of a point process $\{\zeta_{j}\}_{j}$ on $\C$ is the function $\rho(z_{1},\ldots,z_{k})$ satisfying
\begin{align*}
\E\left[\sum_{i_{1}\neq i_{2}\neq\ldots\neq i_{k}}\varphi(\zeta_{i_{1}},\ldots,\zeta_{i_{k}})\right]&=\int_{\C^{k}}\varphi(z_{1},\ldots,z_{k})\rho(z_{1},\ldots,z_{k})dz_{1}\ldots dz_{k}.
\end{align*}
The goal of this paper is compute the $k$-point correlation function in the large $N$ limit for eigenvalues of $A$ in the complex plane, i.e. away from the real line. To state this result precisely, let us fix $k\geq2$ and define 
\begin{align*}
\rho_{\mathrm{GinUE}}^{(k)}(z_{1},\ldots,z_{k}):=\det\left[\frac{1}{\pi}\exp\left\{-\frac{|z_{i}|^{2}+|z_{j}|^{2}}{2}+z_{i}\overline{z}_{j}\right\}\right]_{i,j=1}^{k}
\end{align*}
This yields essentially the local distribution of $k$-many eigenvalues for a matrix whose entries are (normalized) \emph{complex} Gaussians. Remarkably, as we further explain shortly, it also gives the local eigenvalue distribution for real, non-symmetric Gaussian ensembles away from the real line.
\begin{theorem}\label{theorem:main}
Fix any $k\geq2$. For any $O\in C^{\infty}_{c}(\C^{k})$ and $z\in\C$ such that $|z|<1$ and $\mathrm{Im}(z)\neq0$, we have that
\begin{align*}
&\E\left[\sum_{i_{1}\neq i_{2}\neq\ldots\neq i_{k}}O(N^{\frac12}\sigma_{\mathrm{univ},z}^{\frac12}[z-\lambda_{i_{1}}],\ldots,N^{\frac12}\sigma_{\mathrm{univ},z}^{\frac12}[z-\lambda_{i_{k}}])\right]\\
&-\int_{\C^{k}}O(z_{1},\ldots,z_{k})\rho_{\mathrm{univ},z}^{(k)}(z_{1},\ldots,z_{k})dz_{1}\ldots dz_{k}
\end{align*}
vanishes as $N\to\infty$, where $\sigma_{\mathrm{univ},z}>0$ is independent of the distribution of the entries of $A$, where $\{\lambda_{j}\}_{j}$ are eigenvalues of $A$, where
\begin{align*}
\rho_{\mathrm{univ},z}^{(k)}(z_{1},\ldots,z_{k})&=\Phi_{z}(z_{1},\ldots,z_{k})\rho_{\mathrm{GinUE}}^{(k)}(z_{1},\ldots,z_{k}),
\end{align*}
and where $\Phi_{z}(z_{1},\ldots,z_{k})$ is independent of the distribution of the entries of $A$.
\end{theorem}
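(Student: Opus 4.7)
The plan is to adapt the supersymmetric strategy of Maltsev--Osman \cite{MO} to the real symmetry class, embedded in the standard three-step strategy for local universality. First, the local law of Lemma \ref{1GE} controls the Hermitized resolvent $G_z(\eta)$ of $A$ at the scales relevant for the subsequent analysis. Second, I would derive an exact integral representation for the $k$-point correlation function of the perturbed matrix $A_t = A + \sqrt{t}\, B$ (with $B$ a real Ginibre matrix and $t = N^{-\epsilon_0}$) via supersymmetry and analyze it by saddle-point asymptotics. Third, I would remove the Gaussian perturbation by a Green function comparison that transfers the statement from $A_t$ back to $A$.

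For the exact formula, I would start from Girko's Hermitization formula to express the $k$-point correlation function of $A_t$ in terms of products of determinants of the Hermitization of $A_t - z_j$ at different spectral parameters $z_j$. Representing each such determinant as a superintegral over bosonic and fermionic Gaussian variables and then integrating out the real Gaussian $B$ exactly yields an auxiliary matrix integral over a Hubbard--Stratonovich field. In contrast to \cite{MO}, the real structure of $B$ forces this field to carry additional symmetry that couples each spectral parameter $z_j$ to its complex conjugate $\overline{z_j}$. Handling this enlarged saddle manifold is the principal new technical obstacle: one must identify all relevant saddles, deform the steepest-descent contour through the dominant one, and verify that the additional saddles arising from conjugation contribute only smooth, distribution-independent factors.

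Because $z$ is fixed with $\mathrm{Im}(z)\neq 0$, the conjugate saddle is macroscopically separated from the dominant saddle (by a distance of order $|\mathrm{Im}(z)|$), while the local scaling in the theorem probes scales of order $N^{-1/2}$. This separation ensures that the conjugate saddle contributes only to a smooth, distribution-independent prefactor $\Phi_z(z_1,\ldots,z_k)$, while the expansion around the dominant saddle reproduces the complex Ginibre correlation function $\rho_{\mathrm{GinUE}}^{(k)}$, as in \cite{MO}. The remaining asymptotic analysis --- Gaussian approximation of the bosonic and fermionic fluctuations around the dominant saddle --- should follow \cite{MO} with modifications to accommodate the doubled integration space; the local law in Lemma \ref{1GE} enters here as the hypothesis that places the initial data in a neighborhood of the dominant saddle up to the time scale $t=N^{-\epsilon_0}$.

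The main difficulty is the saddle-point analysis of the supersymmetric integral in the real symmetry class: the steepest-descent contours are more intricate than in \cite{MO}, and the interplay between the dominant saddle, its complex conjugate partner, and the boson--fermion fluctuations must be carefully disentangled to confirm that they combine into the product form $\Phi_z\cdot\rho_{\mathrm{GinUE}}^{(k)}$ in the theorem. The final comparison step is more standard: given the local law and matching of moments through low order, a resolvent expansion along the lines of \cite{cipolloni2021edge} transfers universality from $A_t$ back to $A$, concluding the proof.
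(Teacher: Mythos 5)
Your outline gets the overall architecture right (prove the statement first for $A+\sqrt{t}B$ with $t=N^{-\epsilon_{0}}$, then remove the Gaussian by moment matching and Green-function comparison; the third step is indeed close to what the paper does via a three-and-a-half moment matching, Girko's formula and the interpolation argument behind Lemma \ref{lemma:3.5moment}), but the second step --- which is essentially the entire content of Theorem \ref{theorem:mainwitht} --- has a genuine gap. You propose to obtain the exact formula from Girko's Hermitization formula plus a determinant superintegral and then do a saddle-point analysis. That is not how the exact formula is obtained here (or in \cite{MO}): Girko's formula produces log-determinants/resolvents of $\mathcal{H}_{z}$, and extracting bulk $k$-point asymptotics from it is precisely the route that is known to break down at the critical separation $|z_{1}-z_{2}|\asymp N^{-1/2}$; Girko enters only in the comparison step. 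The actual derivation is a change of variables in matrix space (a partial Schur decomposition adapted to the real class, isolating \emph{conjugate pairs} of eigenvalues), after which Grassmann variables and a Hubbard--Stratonovich field appear only to handle the $|\det(A^{(2)}+B^{(2)}-\lambda_{j})|^{2}$ factors (Lemma \ref{lemma:b2integration}), yielding a Pfaffian over $4\times4$ skew-symmetric matrices rather than the $2\times2$ complex field of \cite{MO}.

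More importantly, your assertion that "the conjugate saddle contributes only a smooth, distribution-independent prefactor" because $\mathrm{Im}(z)\neq0$ is exactly the statement that requires proof, and the mechanism is missing from your sketch. In the real class two new degrees of freedom appear: (i) the extra diagonal skew blocks $X^{1},X^{2}$ of the Hubbard--Stratonovich field, which are killed using Fisher-type inequalities and the separation $|\overline{\lambda}_{2}-\lambda_{1}|\gtrsim1$ (this is where $\mathrm{Im}(z)\neq0$ enters, Lemma \ref{lemma:festimate}); and (ii) the angle $\theta$ between the two right eigenvectors of a conjugate pair, an integration variable with a singular Jacobian factor $|\cos2\theta|/\sin^{2}2\theta$, which must be shown to concentrate at $\pi/4$ on scale $N^{-1/2+\tau}$ (Proposition \ref{prop:cutoff}). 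The paper stresses that local laws do \emph{not} help for (ii); the region $|\theta-\pi/4|\gg N^{-1/2+\tau}$ is controlled directly through determinant bounds, and your proposal has no substitute for this. Finally, identifying the limit as distribution-independent requires one- and two-resolvent local laws for the $2\times2$ block Hermitization of $I_{2}\otimes A-\Lambda\otimes I_{N}$ (Lemmas \ref{1GE} and \ref{2GE}), i.e. Lemma \ref{1GE} is used to replace trace functionals by universal deterministic values, not merely to "place the initial data near the dominant saddle." Without (i), (ii) and the block local laws, the claimed factorization $\Phi_{z}\cdot\rho_{\mathrm{GinUE}}^{(k)}$ is an unproven assertion rather than an output of the argument.
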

By Theorem 11 in \cite{BS}, we know $\Phi_{z}(z_{1},\ldots,z_{k})\equiv1$, i.e. that local bulk eigenvalue statistics away from the real line agree with those in the complex case. We only stated Theorem \ref{theorem:main} in this way to parallel it with Theorem \ref{theorem:mainwitht}. We note a similar analysis of local laws as in Section 7 of \cite{MO} shows $\sigma_{\mathrm{univ},z}=1$, but we omit this extra computation. 

The main step in proving Theorem \ref{theorem:main} is to first prove the same statement but for matrices with a Gaussian perturbation. In what follows, $B$ is a real Ginibre matrix of size $N\times N$, i.e. its entries are independent real Gaussians with mean zero and variance $N^{-1}$.
\begin{theorem}\label{theorem:mainwitht}
Fix any $k\geq2$. If we set $t=N^{-\epsilon_{0}}$, then for $\epsilon_{0}>0$ sufficiently small and for any $O\in C^{\infty}_{c}(\C^{k})$ and $z\in\C$ such that $|z|<1$ and $\mathrm{Im}(z)\neq0$, we have that
\begin{align*}
&\E\left[\sum_{i_{1}\neq i_{2}\neq\ldots\neq i_{k}}O(N^{\frac12}\sigma_{z,t}^{\frac12}[z-\lambda_{i_{1}}(t)],\ldots,N^{\frac12}\sigma_{z,t}^{\frac12}[z-\lambda_{i_{k}}(t)])\right]\\
&-\int_{\C^{k}}O(z_{1},\ldots,z_{k})\rho_{\mathrm{univ},z,t}^{(k)}(z_{1},\ldots,z_{k})dz_{1}\ldots dz_{k}
\end{align*}
vanishes as $N\to\infty$, where $\{\lambda_{j}(t)\}_{j}$ are eigenvalues of $A+\sqrt{t}B$, where
\begin{align*}
\rho_{\mathrm{univ},z,t}^{(k)}(z_{1},\ldots,z_{k})&=\Phi_{z,t}(z_{1},\ldots,z_{k})\rho_{\mathrm{GinUE}}^{(k)}(z_{1},\ldots,z_{k}),
\end{align*}
and where $\Phi_{z,t}(z_{1},\ldots,z_{k})$ is independent of the distribution of the entries of $A$.
\end{theorem}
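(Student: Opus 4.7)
The plan is to follow the supersymmetry-based strategy of \cite{MO}, adapted to the real symmetry class. The first step is to derive an exact integral representation for the $k$-point correlation function of $A+\sqrt t B$. Starting from Girko's Hermitization and the identity expressing point-process expectations in terms of log-determinants of the Hermitization $\mathcal H_{z_j}$, one rewrites the relevant determinants $\det(\mathcal H_{z_j}-i\eta_j)$ as Gaussian integrals over supervectors (bosonic and fermionic components). This yields a finite-dimensional integral depending on $A$ and $B$ only through quadratic forms in the supervariables.

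The second step is to integrate out the Ginibre perturbation $B$. This is where the real case first departs from the complex case of \cite{MO}: since $B_{ij}$ are real Gaussians, one has $\E[B_{ij}B_{k\ell}]=N^{-1}\delta_{ik}\delta_{j\ell}$ with no vanishing off-diagonal second moment, whereas in the complex case only $\E[B_{ij}\overline{B_{k\ell}}]$ is nonzero. Performing the Gaussian integration therefore produces, in addition to the quartic forms appearing in \cite{MO}, extra cross-couplings between the supervariables attached to $z_j$ and those attached to $\bar z_j$. The resulting SUSY integral has a larger effective symmetry structure at its saddle than its complex counterpart.

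The third step is a saddle-point analysis in the regime $N\to\infty$ with $t=N^{-\epsilon_0}$. The variational equations for the dominant saddle are solved by the parameters $\eta_{z,t}, g_{z,t}, \alpha_{z,t}, \beta_{z,t}, \gamma_{z,t}, \sigma_{z,t}$ defined in the excerpt; their existence, uniqueness and size estimates follow from the local law (Lemma \ref{1GE}) together with the concentration of the traced quantities $\langle H_z\rangle$, $\langle H_z\tilde H_z\rangle$, etc.\ around their deterministic values. After justifying the steepest-descent contour deformation and rescaling the arguments of $O$ by $N^{1/2}\sigma_{z,t}^{1/2}$, the integrand concentrates on a Gaussian profile in the rescaled variables which, via a standard Grassmann calculation, can be identified with $\rho^{(k)}_{\mathrm{GinUE}}$ times a prefactor $\Phi_{z,t}$. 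Distribution independence of $\Phi_{z,t}$ is immediate from the SUSY representation, since the dependence on the entries of $A$ enters only through the deterministic saddle parameters.

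The main obstacle is executing Step 3 in the real setting: the extra cross-couplings produced in Step 2 enlarge the saddle manifold, introducing additional Grassmann and bosonic directions with an orthogonal- or symplectic-type symmetry rather than the unitary-type symmetry appearing in \cite{MO}. One must justify the contour deformation in this larger supermanifold, compute the corresponding Berezinian carefully, and verify that the additional directions contribute only to $\Phi_{z,t}$ without altering the leading Gaussian. A further technical point is that the fluctuation integral about the saddle must be controlled with Gaussian tails uniform in the supervariable directions, which requires the enlarged quadratic form to be positive definite after steepest-descent deformation. The Borodin-Sinclair identity $\Phi_{z,t}\equiv 1$ away from the real line serves as a useful consistency check but cannot be invoked in the derivation itself.
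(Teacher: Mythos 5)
Your starting point is not viable for this theorem, and it runs into exactly the obstruction the paper is designed to circumvent. You propose to access the $k$-point statistics through Girko's Hermitization and then rewrite the determinants $\det(\mathcal{H}_{z_j}-i\eta_j)$ as superintegrals. But Girko's formula gives linear eigenvalue statistics through $\log|\det(\mathcal{H}_z-i\eta)|$ integrated over $\eta$ and $z$; extracting the bulk $k$-point function at the microscopic scale this way requires joint control of $G_{z_1},\ldots,G_{z_k}$ at separations $|z_i-z_j|\asymp N^{-1/2}$, precisely the regime in which two-resolvent correlations are not controlled (the introduction states this as the reason the edge strategy does not extend to the bulk; Girko enters the paper only later, in the moment-comparison step for Theorem \ref{theorem:main}). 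The actual proof of Theorem \ref{theorem:mainwitht} makes no use of Girko: it starts from the explicit Gaussian density of $M=A+\sqrt{t}B$ on $M_{N\times N}(\R)$ and performs an exact change of variables (a partial real Schur decomposition, with a $2\times2$ block $\Lambda_{a,b,\theta}$ for each conjugate pair of eigenvalues and an angle $\theta$ between the right eigenvectors), whose Jacobian is computed in Appendix \ref{section:jacobian}. Grassmann variables appear only afterwards, to integrate $\prod_{j=1,2}|\det(A^{(2)}+B^{(2)}-\lambda_j)|^2$ against the remaining Gaussian block, yielding the Pfaffian representation of Lemma \ref{lemma:b2integration}.

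Beyond the starting point, your sketch omits the two ingredients that constitute the real-case-specific work and replaces them with an ``enlarged saddle manifold'' picture that does not match what must be done. First, the new variable $\theta$ has to be shown to concentrate at $\pi/4$ on scale $N^{-1/2+\tau}$; local laws do not help there, and the region $|\theta-\pi/4|\gg N^{-1/2+\tau}$ is removed by direct determinant and $dP$-integral bounds (Lemmas \ref{lemma:cutoff1}--\ref{lemma:i2cutoff}, Proposition \ref{prop:cutoff}). Second, after concentration on the Stiefel manifolds and the Hubbard--Stratonovich ($dP$) integration, every remaining quantity is a trace of one or two resolvents of the Hermitization of $I_2\otimes A-\Lambda_{a,b,\theta}\otimes I_N$, and one needs the new $2\times2$-deformation local laws (Lemmas \ref{1GE}, \ref{2GE}) to replace them by universal deterministic values; your assertion that distribution-independence of $\Phi_{z,t}$ is ``immediate from the SUSY representation'' skips what is in fact the content of Sections 4--5 and the appendix. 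Finally, the extra couplings coming from the real Gaussian $B$ do not produce a larger symmetry class in the end: since $\mathrm{Im}(z)\neq0$ one has $|\lambda_1-\overline{\lambda}_2|\gtrsim1$, the skew-symmetric blocks $X^1,X^2$ in the Pfaffian are exponentially suppressed, and the computation collapses to the same Harish-Chandra--Itzykson--Zuber integral as in the complex case of \cite{MO}. As written, the proposal does not give a workable proof of the theorem.
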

We give proofs of Theorems \ref{theorem:main} and \ref{theorem:mainwitht} in the case $k=2$ for (notational) convenience. For general $k\geq2$, the same argument with minor adjustments works; see \cite{MO} for details.

Now we discuss additional difficulties that arise in case of real non-symmetric i.i.d. ensembles in comparison with the complex case. In \cite{MO} the authors obtain the integral formula for the $k$-point correlation function through a change of variables in the space of complex non-Hermitian $N\times N$ matrices $M_N(\C)$ obtained by consecutive Householder transformations with respect to the eigenvectors. In our case, since real matrices still have complex eigenvalues, we have to adapt each step of the change of variable to isolate conjugate pairs of eigenvalues. This leads to another integration parameter $\theta$, where $2\theta$ is the angle between the corresponding right eigenvectors. It turns out $\theta$ concentrates around the value $\pi/4$. Showing this is one of the key steps, and the main challenge is that local law estimates do not seem to help; instead, we control the region $|\theta-\pi/4|\gg N^{-1/2+\delta}$ directly (here, $\delta>0$ is any small parameter). Additionally, to carry out the asymptotic analysis we need to show that certain resolvent quantities have universal deterministic approximations. In particular, we derive the local law and the two-resolvent local laws at scales proportional to $t=N^{-\epsilon_{0}}$ for the Hermitization of $I_2\otimes A - \Lambda\otimes I_N$, where $\Lambda$ is a deterministic $2\times 2$ matrix. This is a $2\times 2$ analogue of the $1\times1$ results in \cite{bourgade2014local,AET18,AET21,CEHS2023}. We prove this using the cumulant expansion argument commonly used for the proofs of two resolvent local laws, see \cite{CES2020,cipolloni2023eigenstate,cipolloni2022optimal,adhikari2023eigenstate}. Our final step is to remove the Gaussian component $\sqrt{t}B$, which amounts to a standard ``three-and-a-half" moment comparison approach and the Girko formula as in \cite{EYY11,CES20}, respectively.

Finally, let us mention that approximately a week before posting to the arXiv, \cite{M2} was posted to the arXiv, in which the same result was proven by the same general strategy, but with different approaches to some of the technical challenges alluded to in the previous paragraph. (These technical differences potentially affect the smallest time-scale $t$ achievable.) It is possible that combining the methods of our paper with those of \cite{M2} may prove Theorem \ref{theorem:mainwitht} for the optimal time-scale $t=N^{-1/2+\delta}$. Osman \cite{M2} also proves bulk universality on the real line; for this, another method based on ``spin variables" is employed to handle additional difficulties occuring therein. See \cite{M2} for details.
\subsection{Notation}
We use big-O notation, i.e. $a=O(b)$ means $|a|\leq C|b|$ for some constant $C>0$. Any subscripts in the big-O notation indicate what parameters the constant $C$ depends on. We also write $a\lesssim b$ to mean $a=O(b)$ and $a\gtrsim b$ to mean $b=O(a)$, with the same disclaimer about subscripts. We use the notation $[[a,b]] = [a,b] \cap \zz$ for an integer range.
\subsection{Acknowledgements}
We thank Benjamin McKenna, Horng-Tzer Yau, and Jun Yin for very helpful discussions (and in particular Jun Yin for helping with details in the moment comparison step). We would like to thank Mohammed Osman for helpful comments on an earlier draft (and pointing out a mistake). K.Y. was supported in part by the NSF under Grant No. DMS-2203075.
%
%
%
\section{Change-of-variables}
The goal of this section is to reproduce ideas in \cite{MO} but for real non-symmetric matrices.
\subsection{Preliminary steps}
Define the manifolds
\begin{align*}
\Omega&:=\Omega_{1}\times\Omega_{2}\times M_{(N-4)\times(N-4)}(\R)\\
\Omega_{1}&:=\R\times\R_{+}\times[0,\frac{\pi}{2})\times V^{2}(\R^{N})\times M_{(N-2)\times2}(\R),\\
\Omega_{2}&:=\R\times\R_{+}\times[0,\frac{\pi}{2})\times V^{2}(\R^{N-2})\times M_{(N-4)\times2}(\R).
\end{align*}
Above, $V^{2}(\R^{d})$ is the Stiefel manifold
\begin{align*}
V^{2}(\R^{d}):=\mathbf{O}(d)/\mathbf{O}(d-2)=\{(v_{1},v_{2})\in\mathbb{S}^{d-1}\times\mathbb{S}^{d-1}: v_{1}^{\ast}v_{2}=0\},
\end{align*}
where $\mathbb{S}^{d-1}$ is the unit sphere in $\R^{d}$. Define the map $\Phi:\Omega\to M_{N\times N}(\R)$ given by 
\begin{align*}
\Phi(a_{1},b_{1},\theta_{1},\mathbf{v},W_{1},a_{2},b_{2},\theta_{2},\mathbf{w},W_{2},M^{(2)})&=R_{1}(\mathbf{v})\begin{pmatrix}\Lambda_{a_{1},b_{1},\theta_{1}}&W_{1}^{\ast}\\0&M^{(1)}\end{pmatrix}R_{1}(\mathbf{v})^{\ast},
\end{align*}
where 
\begin{align*}
M^{(1)}=R_{2}(\mathbf{u})\begin{pmatrix}\Lambda_{a_{2},b_{2},\theta_{2}}&W_{2}^{\ast}\\0&M^{(2)}\end{pmatrix}R_{2}(\mathbf{u})^{\ast}.
\end{align*}
Above, $R_{1}:V^{2}(\R^{N})\to\mathbf{O}(N)$ is a smooth map such that for any $\mathbf{v}=(v_{1},v_{2})\in V^{2}(\R^{N})$, we have $R_{1}(\mathbf{v})\mathbf{e}_{i}=v_{i}$ for $i=1,2$. Similarly, $R_{2}:V^{2}(\R^{N-2})\to\mathbf{O}(N-2)$ is a smooth map such that for any $\mathbf{u}=(u_{1},u_{2})\in V^{2}(\R^{N-2})$, we have $R_{2}(\mathbf{u})\mathbf{e}_{i}=u_{i}$ for $i=1,2$. Lastly, the matrix $\Lambda_{a,b,\theta}$ is given by 
\begin{align*}
\Lambda_{a,b,\theta}=\begin{pmatrix}a&b\tan\theta\\-\frac{b}{\tan\theta}&a\end{pmatrix}.
\end{align*}
Throughout, we will use the notation $\lambda_{j}=a_{j}+ib_{j}$.
\begin{lemma}\label{lemma:jacobian}
The Jacobian of the map $\Phi$ is given by 
\begin{align*}
J(\Phi)&=\frac{256b_{1}^{2}b_{2}^{2}|\cos2\theta_{1}||\cos2\theta_{2}|}{|\sin^{2}2\theta_{1}|\sin^{2}2\theta_{2}|}|\lambda_{1}-\lambda_{2}|^{2}|\lambda_{1}-\overline{\lambda}_{2}|^{2}\\
&\times\left|\det\left(M^{(2)}-\lambda_{1}\right)\right|^{2}\left|\det\left(M^{(2)}-\lambda_{2}\right)\right|^{2}.
\end{align*}
\end{lemma}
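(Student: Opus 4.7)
The plan is to compute $J(\Phi)$ by composing two ``peeling'' operations, each extracting one full conjugate pair $\{\lambda_j,\bar\lambda_j\}$; this is the real analogue of the complex Householder reduction used by Maltsev--Osman, in which a single complex eigenvalue is peeled at a time. Since $R_1(\mathbf{v})$ and $R_2(\mathbf{u})$ are orthogonal, conjugation by them preserves the entrywise Lebesgue measure on matrices, so the Jacobian factors as (a) a Sylvester-resultant factor coming from the invariant-subspace condition (codimension $2(N-2)$), and (b) the local parameterization $(a_j,b_j,\theta_j)\mapsto\Lambda_{a_j,b_j,\theta_j}$ of each $2\times 2$ pivot, which absorbs the $\mathbf{O}(2)$ fiber of $V^2(\R^d)\to\mathrm{Gr}_2(\R^d)$ into $\theta_j$.

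For (a), writing $A=RTR^*$ with $T=\bigl(\begin{smallmatrix}\Lambda_1 & W_1^*\\ 0 & M^{(1)}\end{smallmatrix}\bigr)$ and $X:=R^*\,dR$ antisymmetric, the $(2,1)$ block of $R^*(dA)R=XT-TX+dT$ equals $X_{21}\Lambda_1-M^{(1)}X_{21}$ (using $T_{21}=0=(dT)_{21}$). Hence varying the Stiefel direction $X_{21}\in M_{(N-2)\times 2}(\R)$ against the generic $(2,1)$ block of $dA$ contributes the determinant of the Sylvester operator $Y\mapsto Y\Lambda_1-M^{(1)}Y$. Diagonalizing over $\C$, its eigenvalues are $\{\lambda_1-\mu,\bar\lambda_1-\mu:\mu\in\mathrm{spec}(M^{(1)})\}$, so its determinant equals $\prod_{\mu}(\lambda_1-\mu)(\bar\lambda_1-\mu)=\det(\lambda_1 I-M^{(1)})\,\overline{\det(\lambda_1 I-M^{(1)})}=|\det(M^{(1)}-\lambda_1)|^2$; the second peeling contributes $|\det(M^{(2)}-\lambda_2)|^2$ by the same argument. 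Finally, expanding $\det(M^{(1)}-\lambda_1)=(\lambda_2-\lambda_1)(\bar\lambda_2-\lambda_1)\det(M^{(2)}-\lambda_1)$ via the block-triangular form of $M^{(1)}$ yields the cross factor $|\lambda_1-\lambda_2|^2|\lambda_1-\bar\lambda_2|^2$ together with the two factors $|\det(M^{(2)}-\lambda_j)|^2$ in the claim.

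For (b), I compute the $4\times 4$ determinant of the differential of $(a,b,\theta,U)\mapsto U\Lambda_{a,b,\theta}U^*$ with $U\in\mathbf{O}(2)$. Direct calculation gives $\partial_a\Lambda=I$, $\partial_b\Lambda=\bigl(\begin{smallmatrix}0 & \tan\theta\\ -\cot\theta & 0\end{smallmatrix}\bigr)$, $\partial_\theta\Lambda=b\bigl(\begin{smallmatrix}0 & \sec^2\theta\\ \csc^2\theta & 0\end{smallmatrix}\bigr)$, and the infinitesimal $\mathbf{O}(2)$ rotation $[J,\Lambda]=(-2b\cos 2\theta/\sin 2\theta)\,\mathrm{diag}(1,-1)$. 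In the basis $\{E_{11},E_{22},E_{12},E_{21}\}$ of $M_{2\times 2}(\R)$ these four tangents split into a ``diagonal'' $2\times 2$ determinant $4b\cos 2\theta/\sin 2\theta$ (from $\partial_a,\partial_U$) and an ``off-diagonal'' $2\times 2$ determinant $4b/\sin 2\theta$ (from $\partial_b,\partial_\theta$, after using $\tan\theta\csc^2\theta+\cot\theta\sec^2\theta=4/\sin 2\theta$). Their product is $16b^2|\cos 2\theta|/\sin^2 2\theta$ per pivot; combining both pivots yields the prefactor $256\,b_1^2 b_2^2|\cos 2\theta_1||\cos 2\theta_2|/(\sin^2 2\theta_1\sin^2 2\theta_2)$.

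The main obstacle is the Sylvester-operator step in (a): in the complex Maltsev--Osman setting the analogous pivot is $1\times 1$ and the determinant is read off immediately, whereas here the $2\times 2$ pivot forces the Sylvester operator to act on a 2-column source, so one must verify that the $\R$-linear operator $Y\mapsto Y\Lambda_1-M^{(1)}Y$ has determinant exactly $|\det(M^{(1)}-\lambda_1)|^2$ rather than a real quantity without the modulus. This follows from the complex diagonalization above together with reality of $M^{(1)}$ (giving $\det(M^{(1)}-\bar\lambda_1)=\overline{\det(M^{(1)}-\lambda_1)}$). The Stiefel-manifold volume elements and the tracking of $\mathbf{O}(2)$ fibers are then standard, and are inherited essentially verbatim from the corresponding complex computation in \cite{MO}.
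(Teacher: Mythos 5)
Your proposal is correct and follows essentially the same route as the paper's Appendix C computation: differentiate the conjugation identity, read off the $(2,1)$-block as the Sylvester operator $Y\mapsto Y\Lambda_{1}-M^{(1)}Y$ with determinant $|\det(M^{(1)}-\lambda_{1})|^{2}$, compute the $4\times4$ pivot determinant in the directions $(da,db,d\theta,df)$ to get $16b^{2}|\cos2\theta|/\sin^{2}2\theta$, then compose two peelings and factor $\det(M^{(1)}-\lambda_{1})=(\lambda_{2}-\lambda_{1})(\overline{\lambda}_{2}-\lambda_{1})\det(M^{(2)}-\lambda_{1})$. The only step you gloss over is the identification of $df\wedge\bigwedge_{i,j}dH_{ij}$ with the rotationally invariant volume form on $V^{2}(\R^{N})$, which the paper handles by a translation-invariance argument citing Edelman; it is indeed standard.
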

\begin{proof}
See Appendix \ref{section:jacobian}.
\end{proof}
Define the following probability measure on $M_{N\times N}(\R)$, in which $A$ is a deterministic matrix:
\begin{align*}
\rho(M)dM:=\left(\frac{N}{2\pi t}\right)^{\frac{N^{2}}{2}}\exp\left\{-\frac{N}{2t}\tr\left[(M-A)^{\ast}(M-A)\right]\right\}dM.
\end{align*}
By the change-of-variables formula and Lemma \ref{lemma:jacobian}, we have 
\begin{align*}
&\rho(M)dM=\left(\frac{N}{2\pi t}\right)^{\frac{N^{2}}{2}}\frac{256b_{1}^{2}b_{2}^{2}|\cos2\theta_{1}||\cos2\theta_{2}|}{|\sin^{2}2\theta_{1}|\sin^{2}2\theta_{2}|}|\lambda_{1}-\lambda_{2}|^{2}|\lambda_{1}-\overline{\lambda}_{2}|^{2}\\
&\times\left|\det\left(M^{(2)}-\lambda_{1}\right)\right|^{2}\left|\det\left(M^{(2)}-\lambda_{2}\right)\right|^{2}\exp\left\{-\frac{N}{2t}\tr(\Phi^{-1}M)^{\ast}(\Phi^{-1}M)\right\}\\
&\times da_{1}db_{1}d\theta_{1} d\mathbf{v}dW_{1}da_{2}db_{2}d\theta_{2} d\mathbf{u}dW_{2}dM^{(2)},
\end{align*}
where $\Phi^{-1}M$ is formal notation for the following matrix:
\begin{align*}
\Phi^{-1}M&=\begin{pmatrix}\Lambda_{a_{1},b_{1},\theta}&W_{1}^{\ast}\\0&M^{(1)}\end{pmatrix},\\
M^{(1)}&=\begin{pmatrix}\Lambda_{a_{2},b_{2},\theta_{2}}&W_{2}^{\ast}\\0&M^{(2)}\end{pmatrix}.
\end{align*}
We will integrate out all variables except $a_{1},b_{1},a_{2},b_{2}$. In this subsection, we focus on integrating out just $W_{1},W_{2}$, and $M^{(2)}$.

Now, we write $A$ in terms of the basis induced by $R_{1}(\mathbf{v})$ and $R_{2}(\mathbf{u})$. Precisely, we write
\begin{align*}
A&=R_{1}(\mathbf{v})\begin{pmatrix}A_{11}&B_{1}^{\ast}\\C_{1}&A^{(1)}\end{pmatrix}R_{1}(\mathbf{v})^{\ast}\\
A^{(1)}&=R_{2}(\mathbf{u})\begin{pmatrix}A^{(1)}_{11}&B_{2}^{\ast}\\C_{2}&A^{(2)}\end{pmatrix}R_{2}(\mathbf{u}).
\end{align*}
We clarify that $A_{11}$ and $A^{(1)}_{11}$ are blocks of size $2\times2$. Finally, set $B^{(2)}:=M^{(2)}-A^{(2)}$. An elementary computation shows that
\begin{align*}
\tr(\Phi^{-1}M)^{\ast}(\Phi^{-1}M)&=\left\|\left[I_{2}\otimes A-\Lambda_{a_{1},b_{1},\theta_{1}}\otimes I_{N}\right]\mathbf{v}\right\|^{2}\\
&+\left\|\left[I_{2}\otimes A^{(1)}-\Lambda_{a_{2},b_{2},\theta_{2}}\otimes I_{N-2}\right]\mathbf{u}\right\|^{2}\\
&+\tr\left[(W_{1}-B_{1})^{\ast}(W_{1}-B_{1})\right]\\
&+\tr\left[(W_{2}-B_{2})^{\ast}(W_{2}-B_{2})\right]\\
&+\tr(B^{(2)})^{\ast}B^{(2)}.
\end{align*}
From this, we get
\begin{align*}
\rho(M)dM&=\tilde{\rho}(a_{1},b_{1},\theta_{1},\mathbf{v},W_{1},a_{2},b_{2},\theta_{2},\mathbf{u},W_{2},B^{(2)})\\
&\times da_{1}db_{1}d\theta _{1}d\mathbf{v}dW_{1}da_{2}db_{2}d\theta_{2} d\mathbf{u}dW_{2}dB^{(2)},
\end{align*}
where $\tilde{\rho}(a_{1},b_{1},\theta_{1},\mathbf{v},W_{1},a_{2},b_{2},\theta_{2},\mathbf{u},W_{2},B^{(2)})$ is given by
\begin{align*}
&\left(\frac{N}{2\pi t}\right)^{\frac{N^{2}}{2}}\frac{256b_{1}^{2}b_{2}^{2}|\cos2\theta_{1}||\cos2\theta_{2}|}{|\sin^{2}2\theta_{1}|\sin^{2}2\theta_{2}|}|\lambda_{1}-\lambda_{2}|^{2}|\lambda_{1}-\overline{\lambda}_{2}|^{2}\\
&\times\left|\det[A^{(2)}+B^{(2)}-\lambda_{1}]\right|^{2}\left|\det[A^{(2)}+B^{(2)}-\lambda_{2}]\right|^{2}e^{-\frac{N}{2t}\tr(B^{(2)})^{\ast}B^{(2)}}\\
&\times\exp\left\{-\frac{N}{2t}\left\|\left[I_{2}\otimes A-\Lambda_{a_{1},b_{1},\theta_{1}}\otimes I_{N}\right]\mathbf{v}\right\|^{2}\right\}\\
&\times\exp\left\{-\frac{N}{2t}\left\|\left[I_{2}\otimes A^{(1)}-\Lambda_{a_{2},b_{2},\theta_{2}}\otimes I_{N-2}\right]\mathbf{u}\right\|^{2}\right\}\\
&\times\exp\left\{-\frac{N}{2t}\tr\left[(W_{1}-B_{1})^{\ast}(W_{1}-B_{1})\right]\right\}\\
&\times\exp\left\{-\frac{N}{2t}\tr\left[(W_{2}-B_{2})^{\ast}(W_{2}-B_{2})\right]\right\}.
\end{align*}
We note that $dM^{(2)}$ has turned into $dB^{(2)}$; the change-of-variable factor here is $1$ because the map $M^{(2)}\mapsto B^{(2)}$ is translation by $A^{(2)}$. Also, we clarify that $B$ and $B_{1}$ are functions of all parameters except for $W_{1},W_{2}$, respectively. Thus, by Gaussian integration, we have 
\begin{align*}
\int_{M_{(N-2)\times2}(\R)}\exp\left\{-\frac{N}{2t}\tr\left[(W_{1}-B_{1})^{\ast}(W_{1}-B_{1})\right]\right\}dW_{1}&=\left(\frac{2\pi t}{N}\right)^{N-2}\\
\int_{M_{(N-4)\times2}(\R)}\exp\left\{-\frac{N}{2t}\tr\left[(W_{2}-B_{2})^{\ast}(W_{2}-B_{2})\right]\right\}dW_{2}&=\left(\frac{2\pi t}{N}\right)^{N-4}.
\end{align*}
So, by integrating out $W_{1},W_{2}$, which appear only through the Gaussian weights in the last two lines, we get
\begin{align*}
&\int_{M_{(N-2)\times2}(\R)}\int_{M_{(N-4)\times2}(\R)}\tilde{\rho}(a_{1},b_{1},\theta_{1},\mathbf{v},W_{1},a_{2},b_{2},\theta_{2},\mathbf{u},W_{2},B^{(2)})dW_{1}dW_{2}\\
&=\left(\frac{N}{2\pi t}\right)^{\frac{N^{2}-4N+12}{2}}\frac{256b_{1}^{2}b_{2}^{2}|\cos2\theta_{1}||\cos2\theta_{2}|}{|\sin^{2}2\theta_{1}|\sin^{2}2\theta_{2}|}|\lambda_{1}-\lambda_{2}|^{2}|\lambda_{1}-\overline{\lambda}_{2}|^{2}\\
&\times\left|\det[A^{(2)}+B^{(2)}-\lambda_{1}]\right|^{2}\left|\det[A^{(2)}+B^{(2)}-\lambda_{2}]\right|^{2}e^{-\frac{N}{2t}\tr(B^{(2)})^{\ast}B^{(2)}}\\
&\times\exp\left\{-\frac{N}{2t}\left\|\left[I_{2}\otimes A-\Lambda_{a_{1},b_{1},\theta_{1}}\otimes I_{N}\right]\mathbf{v}\right\|^{2}\right\}\\
&\times\exp\left\{-\frac{N}{2t}\left\|\left[I_{2}\otimes A^{(1)}-\Lambda_{a_{2},b_{2},\theta_{2}}\otimes I_{N-2}\right]\mathbf{u}\right\|^{2}\right\}.
\end{align*}
We now integrate the previous expression over $B^{(2)}$. First, let $M_{k}(\R)$ be the space of $k\times k$ matrices with real entries.
\begin{lemma}\label{lemma:b2integration}
We have the identity
\begin{align}
&\left(\frac{N}{2\pi t}\right)^{\frac{N^{2}-4N+12}{2}}\int_{M_{N-4}(\R)}\prod_{j=1,2}|\det[A^{(2)}+B^{(2)}-\lambda_{j}]|^{2}e^{-\frac{N}{2t}\tr(B^{(2)})^{\ast}B^{(2)}}dB^{(2)}\nonumber\\
&=2^{6}\left(\frac{N}{2\pi t}\right)^{2N+4}\int_{M^{skew}_{4}(\C)}e^{-\frac{N}{2t}\tr X^{\ast}X}\operatorname{Pf}[\mathbf{M}(X)]dX,
\end{align}
where $M_{4}^{skew}(\C)$ is the space of $4\times4$ skew-symmetric complex matrices, and
\begin{align}
\mathbf{M}(X)&:=\begin{pmatrix}X\otimes I_{N-4}& A^{(2)}_\mathbf{w}\\-\left(A^{(2)}_{\mathbf{w}}\right)^{T}&X^\ast\otimes I_{N-4}\end{pmatrix},\label{eq:b2integrationMmatrix}\\
A^{(2)}_\mathbf{w} &:= \begin{bmatrix}I_{2}\otimes A^{(2)}-\mathbf{w}\otimes I_{N-4}&0\\0&\left(I_{2}\otimes A^{(2)}-\mathbf{w}\otimes I_{N-4}\right)^{\ast}\end{bmatrix}\\
\mathbf{w}&:=\begin{pmatrix}\lambda_{1}&0\\0&\lambda_{2}\end{pmatrix}.
\end{align}
\end{lemma}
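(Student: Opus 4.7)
My plan is a supersymmetric bosonization argument: express the product of four determinants as a single Berezin integral, integrate out the Gaussian $B^{(2)}$ to produce a quartic Grassmann coupling, linearize it via a Hubbard--Stratonovich transformation that introduces the auxiliary skew-symmetric matrix $X\in M^{skew}_{4}(\C)$, and then recognize the remaining Grassmann Gaussian as $\operatorname{Pf}[\mathbf{M}(X)]$. The first step uses that $B^{(2)}$ is real to consolidate the four determinants into one:
\[
\prod_{j=1,2}\left|\det[A^{(2)}+B^{(2)}-\lambda_{j}]\right|^{2}=\det(A^{(2)}_{\mathbf{w}}+I_{4}\otimes B^{(2)}),
\]
since $A^{(2)}_{\mathbf{w}}$ is precisely the block-diagonal matrix with blocks $A^{(2)}-\lambda_{j}$ and $A^{(2)}-\overline{\lambda}_{j}$. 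I then represent this determinant as a Berezin integral $\int e^{-\bar{\Psi}[A^{(2)}_{\mathbf{w}}+I_{4}\otimes B^{(2)}]\Psi}\,d\bar{\Psi}\,d\Psi$ over Grassmann vectors $\bar{\Psi},\Psi$ indexed by $(\alpha,i)\in[4]\times[N-4]$, so that $B^{(2)}$ enters only linearly.

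Completing the square in the real Gaussian integral over $B^{(2)}$ then produces a factor $(2\pi t/N)^{(N-4)^{2}/2}$ and a quartic Grassmann exponent which, after anti-commutation, collapses to $\exp\{\tfrac{t}{2N}\tr(YZ)\}$, where $Y_{\alpha\beta}:=\sum_{i}\bar{\psi}_{\alpha,i}\bar{\psi}_{\beta,i}$ and $Z_{\alpha\beta}:=\sum_{j}\psi_{\alpha,j}\psi_{\beta,j}$ are $4\times 4$ antisymmetric matrices with Grassmann-even entries. To linearize this quartic term, I will apply the Hubbard--Stratonovich identity
\[
e^{\tfrac{t}{2N}\tr(YZ)}=\left(\frac{N}{\pi t}\right)^{6}\int_{M^{skew}_{4}(\C)}e^{-\tfrac{N}{2t}\tr(X^{\ast}X)+\tfrac{1}{2}\tr(XY)+\tfrac{1}{2}\tr(X^{\ast}Z)}\,dX,
\]
which I check by a direct Gaussian integration over the $12$-real-dimensional space $M^{skew}_{4}(\C)$, using the elementary identities $\tr(XY)=-2\sum_{a<b}X_{ab}Y_{ab}$ and $\tr(X^{\ast}Z)=2\sum_{a<b}\overline{X}_{ab}Z_{ab}$ for skew $4\times 4$ matrices.

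Substituting back $\tfrac12\tr(XY)=-\tfrac12\bar{\Psi}^{T}(X\otimes I_{N-4})\bar{\Psi}$ and $\tfrac12\tr(X^{\ast}Z)=-\tfrac12\Psi^{T}(X^{\ast}\otimes I_{N-4})\Psi$, the remaining Grassmann exponent assembles into $-\tfrac12\Phi^{T}\mathbf{M}(X)\Phi$ for the superfield $\Phi=(\bar{\Psi},\Psi)^{T}\in\C^{8(N-4)}$; here the Grassmann identity $\Psi^{T}A^{T}\bar{\Psi}=-\bar{\Psi}^{T}A\Psi$ is what produces the off-diagonal blocks of $\mathbf{M}(X)$ with the correct signs. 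The standard fermionic Gaussian identity, applicable since $4(N-4)$ is even, then equates this Berezin integral with $\operatorname{Pf}[\mathbf{M}(X)]$.

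Finally, I verify normalization: combining the LHS prefactor $(N/2\pi t)^{(N^{2}-4N+12)/2}$, the $B^{(2)}$-Gaussian factor $(2\pi t/N)^{(N-4)^{2}/2}$, and the Hubbard--Stratonovich factor $(N/\pi t)^{6}$ telescopes to exactly $2^{6}(N/2\pi t)^{2N+4}$, matching the right-hand side. The main obstacle I anticipate is the bookkeeping of signs (from Grassmann anti-commutation in the identification $\sum_{ij}(\cdots)^{2}=\tr(YZ)$ and in assembling $-\tfrac12\Phi^{T}\mathbf{M}(X)\Phi$) together with the powers of $2$ arising because traces over skew $4\times 4$ matrices implicitly double-count independent entries; none of these steps is individually deep, but they must all line up for the Pfaffian to emerge unsigned and the coefficient to be exactly $2^{6}$.
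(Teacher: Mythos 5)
Your proposal is correct, and the constant bookkeeping checks out: $\left(\tfrac{N}{2\pi t}\right)^{(N^{2}-4N+12)/2}\left(\tfrac{2\pi t}{N}\right)^{(N-4)^{2}/2}\left(\tfrac{N}{\pi t}\right)^{6}=2^{6}\left(\tfrac{N}{2\pi t}\right)^{2N+4}$, exactly as in the lemma, and your Hubbard--Stratonovich identity is verified by the entrywise Gaussian computation you describe (it is a polynomial identity in the sources, so it applies verbatim to the nilpotent Grassmann-even bilinears $Y,Z$). The strategy is the same as the paper's (Grassmann representation, Gaussian integration over $B^{(2)}$, Hubbard--Stratonovich, Pfaffian), but your organization of the Grassmann step differs in a useful way. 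The paper represents each $|\det|^{2}$ with two flavors $\chi_{j},\psi_{j}$, coupling the conjugated factors to $(B^{(2)})^{T}$; the resulting quartic interaction then has both ``density'' terms $(\chi_{j}^{\ast}\psi_{\ell})(\psi_{\ell}^{\ast}\chi_{j})$ and two ``pairing'' terms, forcing three separate Hubbard--Stratonovich fields ($Y\in M_{2}(\C)$ and scalars $S,T$) which must afterwards be assembled by hand into $X\in M_{4}^{skew}(\C)$ and a permutation of the Grassmann basis performed to exhibit $\operatorname{Pf}[\mathbf{M}(X)]$. You instead use $\det M=\det M^{T}$ to consolidate all four factors into the single determinant $\det(A^{(2)}_{\mathbf{w}}+I_{4}\otimes B^{(2)})$ with a uniform orientation of $B^{(2)}$, so the induced quartic term is purely of pairing type $\tfrac{t}{2N}\tr(YZ)$ with $4\times4$ antisymmetric bilinears; a single HS transform over $M_{4}^{skew}(\C)$ then produces $X$ and the block structure of $\mathbf{M}(X)$ directly, with no reassembly or basis change. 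Both routes yield the same $\mathbf{M}(X)$ and the same prefactor; what yours buys is a shorter, more transparent decoupling, at the cost of the sign bookkeeping you already flag (the overall sign in $\int e^{-\frac12\Phi^{T}\mathbf{M}\Phi}\,d\Phi=\operatorname{Pf}[\mathbf{M}]$ depends on the ordering convention for the Berezin measure, a point the paper's proof also leaves implicit and which is fixed in the end by positivity of the left-hand side).
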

\begin{proof}
This is very similar to the proof of Lemma 5.1 in \cite{MO}. We first use vectors $\chi_{1},\chi_{2},\psi_{1},\psi_{2}$ of anti-commuting Grassmann variables to write
\begin{align}
|\det[A^{(2)}+B^{(2)}-\lambda_{j}]|^{2}&=\int e^{-\chi_{j}^{\ast}(\lambda_{j}-A^{(2)}-B^{(2)})\chi_{j}-\psi_{j}^{\ast}(\overline{\lambda}_{j}-(A^{(2)})^{\ast}-(B^{(2)})^{\ast})\psi_{j}}d\chi_{j}d\psi_{j}. \nonumber
\end{align}
From this, we get
\begin{align*}
&\left|\det[A^{(2)}+B^{(2)}-\lambda_{1}]\right|^{2}\left|\det[A^{(2)}+B^{(2)}-\lambda_{2}]\right|^{2}e^{-\frac{N}{2t}\tr(B^{(2)})^{\ast}B^{(2)}}\\
&=\int\exp\left\{-\frac{N}{2t}\tr(B^{(2)})^{\ast}B^{(2)}-\tr B^{(2)}\sum_{j=1,2}\chi_{j}\chi_{j}^{\ast}-\tr(B^{(2)})^{\ast}\sum_{j=1,2}\psi_{j}\psi_{j}^{\ast}\right\}\\
&\times\exp\left\{-\sum_{j=1,2}\chi_{j}^{\ast}(\lambda_{j}-A^{(2)})\chi_{j}-\sum_{j=1,2}\psi_{j}^{\ast}(\overline{\lambda}_{j}-(A^{(2)})^{\ast})\psi_{j}\right\}d\chi_{1}d\chi_{2}d\psi_{1}d\psi_{2}.
\end{align*}
We can integrate the first exponential factor over $B^{(2)}$, since it is a Gaussian integral. We have
\begin{align}
&\left(\frac{N}{2\pi t}\right)^{\frac{(N-4)^{2}}{2}}\int_{M_{N-4}(\R)}e^{-\frac{N}{2t}\tr(B^{(2)})^{\ast}B^{(2)}-\tr B^{(2)}\sum_{j=1,2}\chi_{j}\chi_{j}^{\ast}-\tr(B^{(2)})^{\ast}\sum_{j=1,2}\psi_{j}\psi_{j}^{\ast}}dB^{(2)}\nonumber\\
&=\expb{\frac{t}{2N}\sum_{k,m=1}^{N-4}\left(
\sum_{j=1,2}\left(\chi_{j,k}\bar{\chi}_{j,m} + \psi_{j,m}\bar{\psi}_{j,k}\right)\right)^2}\nonumber\\
&=\exp\left\{-\frac{t}{N}\sum_{j,\ell=1,2}(\chi_{j}^{\ast}\psi_{\ell})(\psi_{\ell}^{\ast}\chi_{j})-\frac{t}{N}(\chi_1^T\chi_2)(\chi_2^\ast\bar{\chi}_1)-\frac{t}{N}(\psi_1^T\psi_2)(\psi_2^\ast\bar{\psi}_1)\right\},\nonumber
\end{align}
where the last identity follows by anti-commuting of Grassmann variables. By Gaussian integration, we can again write
\begin{align}
&\exp\left\{-\frac{t}{N}\sum_{j,\ell=1,2}(\chi_{j}^{\ast}\psi_{\ell})(\psi_{\ell}^{\ast}\chi_{j})\right\}\\
&=\left(\frac{N}{\pi t}\right)^{4}\int_{M_{2}(\C)}e^{-\frac{N}{t}\tr Y^{\ast}Y-i\sum_{j,\ell=1,2}(Y_{j\ell}\chi_{j}^{\ast}\psi_{\ell}+\overline{Y}_{j\ell}\psi_{\ell}^{\ast}\chi_{j})}dY\\
&=2^{4}\left(\frac{N}{2\pi t}\right)^{4}\int_{M_{2}(\C)}e^{-\frac{N}{t}\tr Y^{\ast}Y-i\sum_{j,\ell=1,2}(Y_{j\ell}\chi_{j}^{\ast}\psi_{\ell}+\overline{Y}_{j\ell}\psi_{\ell}^{\ast}\chi_{j})}dY.
\end{align}
and, similarly,
\begin{align}
&\frac{t}{N}(\chi_1^T\chi_2)(\chi_2^\ast\bar{\chi}_1) = 2\frac{N}{2\pi t}\int_{\C}e^{-\frac{N}{t} |S|^2-i\bar{S}\chi_1^T\chi_2-iS\chi_2^\ast\bar{\chi}_1}dS\\
&\frac{t}{N}(\psi_1^T\psi_2)(\psi_2^\ast\bar{\psi}_1) = 2\frac{N}{2\pi t}\int_{\C}e^{-\frac{N}{t} |T|^2-i\bar{T}\psi_1^T\psi_2-iT\psi_2^\ast\bar{\psi}_1}dT.
\end{align}
Now we change the integration variables from $Y, S, T$ to $X\in M^{skew}_4(\C)$ defined by
\[
X = 
\begin{pmatrix}
    \begin{matrix}0& iS\\-iS&0\end{matrix} & iY\\
    -iY^T & \begin{matrix}0& iT\\-iT&0\end{matrix}
\end{pmatrix}.
\]
The Jacobian of this change of variables is $1$. Since all remaining terms involving Grassmann variables are now quadratic in $\chi$, $\psi$, we can combine them into an exponent of a quadratic form and integrate 
\[
\int e^{-\frac12\phi^T \mathbf{M}(X) \phi} d\phi = \pf[\mathbf{M}(X)],
\]
where $\phi^T = (\chi_1^\ast, \chi_2^\ast, \psi_1^T, \psi_2^T, \chi_1^T, \chi_2^T, -\psi_1^\ast,-\psi_2^\ast)$ and $\mathbf{M}(X)$ is defined in the statement of the lemma. It remains to collect the Gaussian weights
\[
\expb{-\frac{N}{t}\left(\tr Y^\ast Y+|S|^2+|T|^2\right)} = \expb{-\frac{N}{2t}\tr X^\ast X}
\]
and the proof is finished.
\end{proof}
\subsection{Re-parameterizing \texorpdfstring{$\lambda_{1}$}{lambda1} and \texorpdfstring{$\lambda_{2}$}{lambda2}}
By Lemma \ref{lemma:b2integration}, we have 
\begin{align*}
&\int_{\substack{M_{(N-2)\times2}(\R)\\M_{(N-2)\times2}(\R)\\M_{(N-2)\times(N-2)}(\R)}}\tilde{\rho}(a_{1},b_{1},\theta_{1},\mathbf{v},W_{1},a_{2},b_{2},\theta_{2},\mathbf{u},W_{2},B^{(2)})dW_{1}dW_{2}dB^{(2)}\\
&=2^{6}\left(\frac{N}{2\pi t}\right)^{2N+4}\frac{256b_{1}^{2}b_{2}^{2}|\cos2\theta_{1}||\cos2\theta_{2}|}{|\sin^{2}2\theta_{1}|\sin^{2}2\theta_{2}|}|\lambda_{1}-\lambda_{2}|^{2}|\lambda_{1}-\overline{\lambda}_{2}|^{2}\\
&\times\int_{M^{skew}_{4}(\C)}e^{-\frac{N}{2t}\tr X^{\ast}X}\operatorname{Pf}[\mathbf{M}(X)]dX\\
&\times\exp\left\{-\frac{N}{2t}\left\|\left[I_{2}\otimes A-\Lambda_{a_{1},b_{1},\theta_{1}}\otimes I_{N}\right]\mathbf{v}\right\|^{2}\right\}\\
&\times\exp\left\{-\frac{N}{2t}\left\|\left[I_{2}\otimes A^{(1)}-\Lambda_{a_{2},b_{2},\theta_{2}}\otimes I_{N-2}\right]\mathbf{u}\right\|^{2}\right\}\\
&=:\tilde{\rho}(\lambda_{1},\theta_{1},\mathbf{v},\lambda_{2},\theta_{2},\mathbf{u}).
\end{align*}
Recall $\lambda_{j}=a_{j}+ib_{j}$. We now re-parameterize $\lambda_{1},\lambda_{2}$ as to be in a neighborhood of a fixed point $z=a+ib\in\C$ of radius $\sqrt{N}$; we emphasize that we always assume $b>0$. Set $\mathbf{z}=(z_{1},z_{2})$ and define
\begin{align}
\rho_{t}(z,\mathbf{z};A)&:=\frac{1}{N^{2}\sigma_{z,t}^{2}}\tilde{\rho}\left(z+\frac{z_{1}}{\sqrt{N\sigma_{z,t}}},z+\frac{z_{2}}{\sqrt{N\sigma_{z,t}}}\right),\\
\tilde{\rho}(\lambda_{1},\lambda_{2})&:=\int_{[0,\frac{\pi}{2}]}\int_{[0,\frac{\pi}{2}]}\int_{V^{2}(\R^{N})}\int_{V^{2}(\R^{N-2})}\tilde{\rho}(\lambda_{1},\theta_{1},\mathbf{v},\lambda_{2},\theta_{2},\mathbf{u})d\theta_{1}d\theta_{2}d\mathbf{v}d\mathbf{u}.
\end{align}
Here $d\mathbf{u}$ and  $d\mathbf{v}$ are defined as integration with respect to the rotationally invariant volume form on $V^{2}(\R^{N})$ and $V^{2}(\R^{N-2})$, respectively. In view of the previous definitions, we will also always identify $\lambda_{j}=z+N^{-1/2}\sigma_{z,t}^{-1/2}z_{j}$ for $j=1,2$. We now follow \cite{MO} and define
\begin{align*}
\tilde{F}(\mathbf{z};A^{(2)})&:=\frac{4N}{\pi^{4}t^{4}\sigma_{z,t}^{3}}|z_{1}-z_{2}|^{2}{|z-\overline{z}|^{2}}\int_{M^{skew}_{4}(\C)}e^{-\frac{N}{2t}\tr X^{\ast}X}\operatorname{Pf}[\mathbf{M}(X)]dX\\
\tilde{K}_{1}(z_{1})&:=\left(\frac{N}{2\pi t}\right)^{N}\int_{V^{2}(\R^{N})}\exp\left\{-\frac{N}{2t}\left\|\left[I_{2}\otimes A-\Lambda_{a_{1},b_{1},\theta_{1}}\otimes I_{N}\right]\mathbf{v}\right\|^{2}\right\}d\mathbf{v}\\
\tilde{K}_{2}(z_{2})&:=\left(\frac{N}{2\pi t}\right)^{N}\int_{V^{2}(\R^{N-2})}\exp\left\{-\frac{N}{2t}\left\|\left[I_{2}\otimes A^{(1)}-\Lambda_{a_{1},b_{1},\theta_{1}}\otimes I_{N-2}\right]\mathbf{u}\right\|^{2}\right\}d\mathbf{u}\\
d\nu_{1}(\mathbf{v})&:=\tilde{K}_{1}(z_{1})^{-1}\exp\left\{-\frac{N}{2t}\left\|\left[I_{2}\otimes A-\Lambda_{a_{1},b_{1},\theta_{1}}\otimes I_{N}\right]\mathbf{v}\right\|^{2}\right\}d\mathbf{v},\\
d\nu_{2}(\mathbf{u})&:=\tilde{K}_{2}(z_{2})^{-1}\exp\left\{-\frac{N}{2t}\left\|\left[I_{2}\otimes A^{(1)}-\Lambda_{a_{1},b_{1},\theta_{1}}\otimes I_{N-2}\right]\mathbf{u}\right\|^{2}\right\}d\mathbf{u}.
\end{align*}
The scaling above is chosen because the $V^{2}(\R^{N})$ and $V^{2}(\R^{N-2})$ integration resemble Gaussian integration with variance $N^{-1}$; this is why we want $N^{N}$ factors with $\tilde{K}_{j}(z_{j})$ terms. Finite powers of $N$, on the other hand, will not be important to keep track of carefully (since all error terms will be either come from multiplicative factors $1+o(1)$ or be exponentially small in $N$).

Again, by identifying $\lambda_{j}=z+N^{-1/2}\sigma_{z,t}^{-1/2}z_{j}$, in the formula \eqref{eq:b2integrationMmatrix} for the matrix $\mathbf{M}(X)$, we can write
\begin{align*}
\mathbf{w}=\begin{pmatrix}z+\frac{z_{1}}{\sqrt{N\sigma_{z,t}}}&0\\0&z+\frac{z_{2}}{\sqrt{N\sigma_{z,t}}}\end{pmatrix}.
\end{align*}
With this notation, we can write
\begin{align*}
\rho(z;\mathbf{z},A):=&\int_{[0,\frac{\pi}{2}]^{2}}\frac{256b_{1}^{2}b_{2}^{2}|\cos2\theta_{1}||\cos2\theta_{2}|}{|\sin^{2}2\theta_{1}|\sin^{2}2\theta_{2}|}\tilde{F}(\mathbf{z};A^{(2)})\tilde{K}_{1}(z_{1})\tilde{K}_{2}(z_{2})d\nu_{1}(\mathbf{v})d\nu_{2}(\mathbf{u})d\theta_{1}d\theta_{2}\\
&\times\left[1+O(N^{-\frac12}))\right].
\end{align*}
We will estimate $\tilde{F}(\mathbf{z};A^{(2)})$ following the ideas in \cite{MO}. We will then give a preliminary Fourier transform computation for $\tilde{K}_{j}(z_{j})$. Ultimately, the main problem for real non-symmetric matrices is the integration over $\theta$, which we explain further and start dealing with in the next section.
%
%
%
\subsection{Estimating \texorpdfstring{$\tilde{F}(\mathbf{z};A^{(2)})$}{F-tilde}}
The goal of this subsection is to prove the following analog of Lemma 4.1 in \cite{MO}. Before we state this lemma, we first introduce the following notation (in which $j\in\{1,2\}$):
\begin{align}
Z_{j}&:=\begin{pmatrix}0&z_{j}\\\overline{z}_{j}&0\end{pmatrix},\\
\delta_{z,t}&=\langle[H_{z}(\eta_{z,t})(A-z)]^{2}\rangle,\\
\tau_{z,t}&=\frac{\gamma_{z,t}\delta_{z,t}-\beta^{2}_{z,t}}{\gamma_{z,t}\sigma_{z,t}},\\
\psi_{j}&:=\exp\left\{-\frac{1}{\sqrt{N\sigma_{z,t}}}\tr[G_{z}(\eta_{z,t})Z_{j}]-\mathrm{Re}(\overline{\tau}_{z,t}z_{j}^{2})+|z_{j}|^{2}\right\}\\
V_{1}&:=V_{1}(\mathbf{v}):=\begin{pmatrix}\mathbf{v}_{1}&\mathbf{v}_{2}&0&0\\0&0&\mathbf{v}_{1}&\mathbf{v}_{2}\end{pmatrix},\\
V_{2}&:=V_{2}(\mathbf{u}):=\begin{pmatrix}\mathbf{u}_{1}&\mathbf{u}_{2}&0&0\\0&0&\mathbf{u}_{1}&\mathbf{u}_{2}\end{pmatrix}
\end{align}
We clarify that the dimension of $V_{1}$ is $2N\times4$ and the dimension of $V_{2}$ is $2(N-2)\times4$. Also, for the rest of this paper, when we say ``locally uniformly in $a_{j},b_{j}$", we mean for $a_{j},b_{j}=O(1)$, so that the eigenvalues are $\lambda_{j}=z+O(N^{-1/2})$ uniformly in $N$. (We will also use ``locally uniformly in $z_{j}$" to mean the same thing.) Next, let us introduce the constant 
\begin{align*}
\upsilon_{z,t}:=&\pi\left[Nt^{-1}-N\langle(A-z)^{\ast}\tilde{H}_{z}(\eta_{z,t})\tilde{H}_{\overline{z}}(\eta_{z,t})(A-z)\rangle\right]\\
&\times\left[Nt^{-1}-N\langle(A-z)H_{z}(\eta_{z,t})H_{\overline{z}}(\eta_{z,t})(A-\overline{z})\rangle\right].
\end{align*}
Lastly, we emphasize that all estimates hold with high probability with respect to randomness of $A$, i.e. with probability at least $1-o(1)$. We only need a finite number of estimates, so the end result holds with high probability (i.e. on an event which contributes $o(1)$ in the expectations in Theorems \ref{theorem:main} and \ref{theorem:mainwitht}). To avoid being verbose, we will not always explicitly mention this, e.g. when it concerns local law estimates in Lemmas \ref{1GE} and \ref{2GE}.
\begin{lemma}\label{lemma:festimate}
We have the following locally uniformly in $z_{1},z_{2}$, where $\kappa>0$:
\begin{align}
\frac{\tilde{F}(\mathbf{z};A^{(2)})}{\rho_{\mathrm{GinUE}}^{(2)}(z_{1},z_{2})}&=\frac{32\pi^{3}{|z-\overline{z}|^{2}}}{Nt^{4}\gamma_{z,t}^{2}\sigma_{z,t}^{3}\upsilon_{z,t}^{2}}e^{-\frac{2N}{t}\eta_{z,t}^{2}}\det[(A-z)^{\ast}(A-z)+\eta_{z,t}^{2}]^{2}\\
&\times\prod_{j=1,2}|\det[V_{j}^{\ast}G_{z}^{(j-1)}(\eta_{z,t})V_{j}]|^{2}\psi_{j}\left[1+O(N^{-\kappa})\right].
\end{align}
\end{lemma}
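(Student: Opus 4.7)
The proof parallels that of Lemma 4.1 in \cite{MO}, with the essential difference that the real case produces a $4\times 4$ skew-symmetric integration variable $X$ (because conjugate eigenvalue pairs must be isolated together) in place of the $2\times 2$ complex integration available in the complex case.

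First, the Pfaffian $\operatorname{Pf}[\mathbf{M}(X)]$ is reduced via the Schur-complement identity for Pfaffians: for generic $X$,
\begin{align*}
\operatorname{Pf}[\mathbf{M}(X)] &= \operatorname{Pf}[X \otimes I_{N-4}] \cdot \operatorname{Pf}\!\left[X^\ast \otimes I_{N-4} + (A^{(2)}_\mathbf{w})^T (X \otimes I_{N-4})^{-1} A^{(2)}_\mathbf{w}\right].
\end{align*}
Because $A^{(2)}_\mathbf{w}$ is block-diagonal with blocks of the form $I_2\otimes A^{(2)} - \mathbf{w}\otimes I_{N-4}$, this Pfaffian factorizes and exposes a dependence on the resolvents of the Hermitizations of $A^{(2)} - \lambda_j$.

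Next, I perform a saddle-point analysis on the $X$-integral. The Gaussian weight $e^{-(N/2t)\tr X^\ast X}$ alone localizes $X$ at scale $\sqrt{t/N}$, but the Pfaffian contributes an effective action of order $N$ whose stationary point in the diagonal imaginary direction pins the effective spectral parameter $\eta$ at $\eta_{z,t}$, the unique solution of $t\langle H_z(\eta_{z,t})\rangle = 1$ (whose existence is discussed preceding the main theorem and is justified using Lemma \ref{1GE}). Evaluating at the saddle produces the leading prefactor $e^{-2N\eta_{z,t}^2/t}\det[(A-z)^\ast(A-z) + \eta_{z,t}^2]^2$ after converting determinants of $A^{(2)}$ back to $A$: since $A^{(2)}$ is a minor of $A$ obtained by deflating the four directions carried by the columns of $V_1$ and $V_2$, the ratio of Hermitized determinants is exactly $\prod_{j=1,2}|\det[V_j^\ast G_z^{(j-1)}(\eta_{z,t})V_j]|^2$, by the standard Schur-complement identity.

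The Gaussian fluctuations transverse to the saddle then contribute $\upsilon_{z,t}^{-2}$. Indeed, the Hessian of the effective action is block-diagonal in the real and imaginary fluctuation directions, with nontrivial eigenvalue blocks $Nt^{-1} - N\langle (A-z)^\ast \tilde{H}_z(\eta_{z,t})\tilde{H}_{\bar z}(\eta_{z,t})(A-z)\rangle$ and $Nt^{-1} - N\langle (A-z) H_z(\eta_{z,t}) H_{\bar z}(\eta_{z,t})(A-\bar z)\rangle$, which are exactly the two factors in $\upsilon_{z,t}$. The two-resolvent local laws for the Hermitization of $I_2\otimes A - \Lambda\otimes I_N$ developed in the paper supply deterministic approximations for these quantities up to $O(N^{-\kappa})$, and their $\pi^3$ normalization comes from the Gaussian volume on $\C$. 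Finally, Taylor-expanding the saddle value in $z_j = \sqrt{N\sigma_{z,t}}(\lambda_j - z)$, the cross-quadratic term couples $z_1, \bar z_2$ and (after the $\sigma_{z,t}$ rescaling) reproduces the Ginibre kernel $\rho_{\mathrm{GinUE}}^{(2)}(z_1,z_2)$; the linear pieces $-N^{-1/2}\tr[G_z(\eta_{z,t})Z_j]$ and the residual quadratic deviation $-\mathrm{Re}(\bar{\tau}_{z,t}z_j^2) + |z_j|^2$ assemble into $\psi_j$, with $\tau_{z,t}$ encoding an off-saddle correction through $\delta_{z,t}$. The main obstacle is executing the saddle analysis with the enlarged $4\times 4$ skew-symmetric integration variable: identifying the correct longitudinal saddle directions versus transverse Gaussian directions, and matching the latter's covariance against the $2\times 2$ deterministic approximations from the two-resolvent local laws, are both substantially more intricate than in the $2\times 2$ complex case of \cite{MO}.
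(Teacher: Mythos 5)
Your overall skeleton---localization of $X$ at the scale $\eta_{z,t}$, extraction of $e^{-2N\eta_{z,t}^{2}/t}\det[(A-z)^{\ast}(A-z)+\eta_{z,t}^{2}]^{2}$, the determinant ratio via the projections $V_{j}$, and the identification of the skew-diagonal fluctuations with $\upsilon_{z,t}^{-2}$---matches the paper's strategy, but several steps have genuine gaps. First, the Pfaffian Schur-complement reduction does not do what you claim: since $X^{-1}$ is a full $4\times4$ matrix (its off-diagonal block $Y$ has size $\asymp\eta_{z,t}$ in the relevant region while the skew-diagonal blocks are small), the matrix $(A^{(2)}_{\mathbf{w}})^{T}(X\otimes I_{N-4})^{-1}A^{(2)}_{\mathbf{w}}$ couples all four spectral sectors $\lambda_{1},\lambda_{2},\overline{\lambda}_{1},\overline{\lambda}_{2}$, so the second Pfaffian does \emph{not} factorize into resolvents of the individual Hermitizations of $A^{(2)}-\lambda_{j}$; those cross-couplings are exactly what later generate the Ginibre kernel and cannot be discarded at this stage. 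Second, the saddle-point localization is asserted rather than proved: the integrand $e^{-\frac{N}{2t}\tr X^{\ast}X}\operatorname{Pf}[\mathbf{M}(X)]$ is complex-valued and the ``effective action'' is a log-Pfaffian of an $O(N)$-dimensional matrix, so Laplace asymptotics are not available off the shelf. What actually makes the off-saddle region negligible in the paper is a chain of absolute-value bounds: $|\operatorname{Pf}[\mathbf{M}]|\leq|\det[\mathbf{M}]|^{1/2}$, Fisher's inequality to reduce to diagonal blocks, the defining relation $tN^{-1}\tr H_{z}(\eta_{z,t})=1$ to cancel the linear term, and separate cutoffs for the diagonal entries of $P=XX^{\ast}$ and $Q=X^{\ast}X$, their off-diagonal entries, and the skew blocks $X^{1},X^{2}$ (this last step using $|\overline{\lambda}_{2}-\lambda_{1}|\gtrsim1$, i.e.\ $\mathrm{Im}(z)\neq0$). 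None of this control appears in your argument, and without it the ``saddle'' claim has no content.

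Third, the passage to $\rho^{(2)}_{\mathrm{GinUE}}$ and the explicit prefactor is missing. After integrating out $X^{1},X^{2}$ (which indeed yields a quadratic form whose diagonal entries are the two factors of $\upsilon_{z,t}$---note these are the coefficients of the two complex skew parameters $x_{1},x_{2}$, not of ``real and imaginary fluctuation directions'' of a Hessian), the remaining $Y$-integral is handled in the paper by a singular value decomposition $Y=U_{1}SU_{2}^{\ast}$, the computation of $\langle G_{z}Z_{jk}G_{z}Z_{kj}\rangle$ and $\langle G_{z}^{2}Z_{jj}\rangle$, a Gaussian integral over Hermitian $2\times2$ matrices, and the Harish-Chandra--Itzykson--Zuber formula over $\mathbf{U}(2)\times\mathbf{U}(2)$, together with the identity $\alpha_{z,t}+|\beta_{z,t}|^{2}\gamma_{z,t}^{-1}=\sigma_{z,t}$; this is where $\det[e^{z_{i}\overline{z}_{j}}]/|\Delta(\mathbf{z})|^{2}$, the factor $\gamma_{z,t}^{-2}$ (from rescaling the singular values), the $\beta_{z,t}$-dependent part of the exponent, and the constant $32\pi^{3}|z-\overline{z}|^{2}/(Nt^{4}\gamma_{z,t}^{2}\sigma_{z,t}^{3}\upsilon_{z,t}^{2})$ all come from. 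Likewise, $\psi_{j}$ does not arise from ``Taylor-expanding the saddle value'': it comes from the ratio $\det[(A-\lambda_{j})^{\ast}(A-\lambda_{j})+\eta_{z,t}^{2}]/\det[(A-z)^{\ast}(A-z)+\eta_{z,t}^{2}]=\det[1-(N\sigma_{z,t})^{-1/2}G_{z}(\eta_{z,t})Z_{j}]$, expanded as an exponential of a trace of a logarithm, and the replacement of $G_{\lambda_{j}}$ by $G_{z}$ in the $V_{\ell}$-determinants costs a separate $1+O(N^{-1/2}\eta_{z,t}^{-1})$ which must be tracked. Saying that ``the cross-quadratic term couples $z_{1},\overline{z}_{2}$'' does not produce the determinantal kernel, so the stated identity cannot be reached from the proposal as written.
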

\begin{proof}
Before we begin, let us first comment on the strategy. The argument amounts to removing the diagonal $2\times2$ blocks from generic $X\in M_{4}^{skew}(\C)$ in the integration in $\tilde{F}(\mathbf{z};A^{(2)})$. Indeed, in view of the proof of Lemma \ref{lemma:b2integration}, these diagonal blocks correspond to extra terms which differentiate the real case from the complex case as in Section 5 of \cite{MO}. After this, we are left with a very similar computation as in Section 5 of \cite{MO}. (In fact, bulk correlation functions should agree between real and complex matrix ensembles for eigenvalues away from the real line, so we must arrive at the same Harish-Chandra-Itzykson-Zuber integral as in the complex case of \cite{MO}.)

For any $X\in M_{4}^{skew}(\C)$, set $P=XX^{\ast}$ and $Q=X^{\ast}X$. For $\delta>0$ small but fixed, write
\begin{align*}
&\int_{M_{4}^{skew}(\C)}e^{-\frac{N}{2t}\tr X^{\ast}X}\pf[\mathbf{M}(X)]dX\\
&=\int_{\|P-\eta_{z,t}^{2}\|_{\mathrm{max}}\leq N^{\delta-\frac12}}e^{-\frac{N}{2t}\tr X^{\ast}X}\pf[\mathbf{M}(X)]dX + \Upsilon\\
&=:\Upsilon_{0}+\Upsilon,
\end{align*}
where $\|\|_{\max}$ is the max-entry norm, and $\Upsilon$ is an error term that satisfies the bound
\begin{align*}
|\Upsilon|&\leq\sum_{j=1,\ldots,4}\int_{|P_{jj}-\eta_{z,t}^{2}|\geq N^{\delta-\frac12}}e^{-\frac{N}{2t}\tr X^{\ast}X}|\det[\mathbf{M}(X)]|^{\frac12}dX\\
&+\sum_{j=1,\ldots,4}\int_{|Q_{jj}-\eta_{z,t}^{2}|\geq N^{\delta-\frac12}}e^{-\frac{N}{2t}\tr X^{\ast}X}|\det[\mathbf{M}(X)]|^{\frac12}dX\\
&+\sum_{j,\ell,k=1,\ldots,4}\int_{\substack{|P_{j\ell}|,|Q_{j\ell}|\geq N^{\delta-\frac12}\\|P_{kk}-\eta_{z,t}^{2}|\leq N^{\delta-\frac12}\\|Q_{kk}-\eta_{z,t}^{2}|\leq N^{\delta-\frac12}}}e^{-\frac{N}{2t}\tr X^{\ast}X}|\det[\mathbf{M}(X)]|^{\frac12}dX\\
&=:\Upsilon_{1}+\Upsilon_{2}+\Upsilon_{3}.
\end{align*}
(We have used the fact that since $X$ is size $O(1)$, we know $\|P-\eta_{z,t}^{2}\|_{\max}=O(N^{\delta-1/2})$ is equivalent to $\|Q-\eta_{z,t}^{2}\|_{\max}=O(N^{\delta-1/2})$. We also used $|\pf[\mathbf{M}]|\leq|\det[\mathbf{M}]|^{1/2}$ for a skew symmetric matrix $\mathbf{M}$.) We first bound $\Upsilon$. We treat the region $|P_{jj}-\eta_{z,t}^{2}|\geq N^{\delta-1/2}$ for $j=1,\ldots,4$. We claim
\begin{align}
|\det[\mathbf{M}(X)]|&=|\det[\mathbf{M}(X)\mathbf{M}(X)^{\ast}]|^{\frac12}\label{eq:fisherM(X)}\\
&\leq\det[P\otimes I_{N-4}+A_{\mathbf{w}}^{(2)}(A_{\mathbf{w}}^{(2)})^{\ast}]^{\frac12}\det[Q\otimes I_{N-4}+(A_{\mathbf{w}}^{(2)})^{\ast}A_{\mathbf{w}}^{(2)}]^{\frac12}.
\end{align}
The second line follows by computing explicitly the diagonal entries of $\mathbf{M}(X)\mathbf{M}(X)^{\ast}$ and then using Lemma 2.2 in \cite{MO} (Fisher's inequality) to bound $|\det[\mathbf{M}(X)\mathbf{M}(X)^{\ast}]|$ by the product of determinants of its diagonal blocks. If we again compute each matrix in the second line and apply Lemma 2.2 in \cite{MO}, we obtain
\begin{align*}
|\det[\mathbf{M}(X)]|^{\frac12}&\leq\prod_{j=1,\ldots,4}|\det[P_{jj}+|A^{(2)}-\lambda_{j}|^{2}]|^{\frac14}|\det[Q_{jj}+|A^{(2)}-\lambda_{j}|^{2}]|^{\frac14},
\end{align*}
where $\lambda_{3}=\lambda_{1}$ and $\lambda_{4}=\lambda_{2}$. Multiply and divide by $\det[(A-z)^{\ast}(A-z)+\eta_{z,t}^{2}]$. This gives
\begin{align*}
|\det[\mathbf{M}(X)]|^{\frac12}&\leq\det[(A-z)^{\ast}(A-z)+\eta_{z,t}^{2}]^{2}\\
&\times\prod_{j=1,2}\frac{\det[(A^{(2)}-\lambda_{j})^{\ast}(A^{(2)}-\lambda_{j})+\eta_{z,t}^{2}]}{\det[(A-z)^{\ast}(A-z)+\eta_{z,t}^{2}]}\\
&\times\prod_{j=1,\ldots,4}\left\{\frac{\det[(A^{(2)}-\lambda_{j})^{\ast}(A^{(2)}-\lambda_{j})+P_{jj}]}{\det[(A^{(2)}-\lambda_{j})^{\ast}(A^{(2)}-\lambda_{j})+\eta_{z,t}^{2}]}\right\}^{\frac14}\\
&\times\prod_{j=1,\ldots,4}\left\{\frac{\det[(A^{(2)}-\lambda_{j})^{\ast}(A^{(2)}-\lambda_{j})+Q_{jj}]}{\det[(A^{(2)}-\lambda_{j})^{\ast}(A^{(2)}-\lambda_{j})+\eta_{z,t}^{2}]}\right\}^{\frac14}.
\end{align*}
Let us control the first product, i.e. the ratio of determinants. We first have
\begin{align}
&\frac{\det[(A^{(2)}-\lambda_{j})^{\ast}(A^{(2)}-\lambda_{j})+\eta_{z,t}^{2}]}{\det[(A-z)^{\ast}(A-z)+\eta_{z,t}^{2}]}\\
&=\frac{\det[(A^{(2)}-\lambda_{j})^{\ast}(A^{(2)}-\lambda_{j})+\eta_{z,t}^{2}]}{\det[(A-\lambda_{j})^{\ast}(A-\lambda_{j})+\eta_{z,t}^{2}]}\frac{\det[(A-\lambda_{j})^{\ast}(A-\lambda_{j})+\eta_{z,t}^{2}]}{\det[(A-z)^{\ast}(A-z)+\eta_{z,t}^{2}]}\\
&=\frac{\det[(A^{(2)}-\lambda_{j})^{\ast}(A^{(2)}-\lambda_{j})+\eta_{z,t}^{2}]}{\det[(A-\lambda_{j})^{\ast}(A-\lambda_{j})+\eta_{z,t}^{2}]}\frac{\det\begin{pmatrix}i\eta_{z,t}&A-\lambda_{j}\\(A-\lambda_{j})^{\ast}&i\eta_{z,t}\end{pmatrix}}{\det\begin{pmatrix}i\eta_{z,t}&A-z\\(A-z)^{\ast}&i\eta_{z,t}\end{pmatrix}}\\
&=\frac{\det[(A^{(2)}-\lambda_{j})^{\ast}(A^{(2)}-\lambda_{j})+\eta_{z,t}^{2}]}{\det[(A-\lambda_{j})^{\ast}(A-\lambda_{j})+\eta_{z,t}^{2}]}\det\left[1-\frac{1}{\sqrt{N\sigma_{z,t}}}G_{z}(\eta_{z,t})Z_{j}\right].
\end{align}
We now rewrite the last determinant as exponential of trace of log, after which we expand the logarithm to get
\begin{align*}
\det\left[1-\frac{1}{\sqrt{N\sigma_{z,t}}}G_{z}(\eta_{z,t})Z_{j}\right]&=\exp\left\{\tr\log\left[1-\frac{1}{\sqrt{N\sigma_{z,t}}}G_{z}(\eta_{z,t})Z_{j}\right]\right\}\\
&=\exp\left\{-\sum_{k=1}^{\infty}\frac{1}{k(N\sigma_{z,t})^{\frac{k}{2}}}\tr\left[\left(G_{z}(\eta_{z,t})Z_{j}\right)^{k}\right]\right\}.
\end{align*}
We can bound the $k\geq3$ contribution by using the trivial operator norm bound $\|G_{z}(\eta_{z,t})\|_{\mathrm{op}}\lesssim\eta_{z,t}^{-1}\lesssim t^{-1}$. Using also uniform boundedness of $\sigma_{z,t}$ away from $0$, this gives (for some $C=O(1)$)
\begin{align*}
\left|\sum_{k=3}^{\infty}\frac{1}{k(N\sigma_{z,t})^{\frac{k}{2}}}\tr\left[(G_{z}(\eta_{z,t})Z_{j})^{k}\right]\right|\lesssim \sum_{k=3}^{\infty}C^{k}N^{-\frac{k}{2}+1}t^{-\frac{k}{2}}\lesssim N^{-\frac12}\eta_{z,t}^{-\frac32}
\end{align*}
assuming $t=N^{-\epsilon_{0}}$ with $\epsilon_{0}>0$ small. Let us now separate the $k=1,2$ terms:
\begin{align*}
\exp\left\{-N^{\frac12}\sigma_{z,t}^{-\frac12}\langle G_{z}(\eta_{z,t})Z_{j}\rangle-\frac{1}{2\sigma_{z,t}}\langle G_{z}(\eta_{z,t})Z_{j}G_{z}(\eta_{z,t})Z_{j}\rangle\right\}.
\end{align*}
A straightforward computation using \eqref{eq:gfformula} and $(A-z)^{\ast}\tilde{H}_{z}(\eta)=H_{z}(\eta)(A-z)$ shows that
\begin{align*}
-\frac{1}{2\sigma_{z,t}}\langle G_{z}(\eta_{z,t})Z_{j}G_{z}(\eta_{z,t})Z_{j}\rangle&=\frac{1}{\sigma_{z,t}}\eta_{z,t}^{2}|z_{j}|^{2}\langle H_{z}(\eta_{z,t})\tilde{H}_{z}(\eta_{z,t})\rangle\\
&-\frac{z_{j}^{2}\langle[(A-z)^{\ast}H_{z}(\eta_{z,t})]^{2}\rangle}{2\sigma_{z,t}}\\
&-\frac{\overline{z}_{j}^{2}\langle[H_{z}(\eta_{z,t})(A-z)]^{2}\rangle}{2\sigma_{z,t}}.
\end{align*}
By definition of $\sigma_{z,t}$, we know that the first term on the RHS is equal to $|z_{j}|^{2}-\beta_{z,t}^{2}\sigma_{z,t}^{-1}\gamma_{z,t}^{-1}|z_{j}|^{2}$. Next, note $\langle[H_{z}(\eta_{z,t})(A-z)]^{2}\rangle=\overline{\langle(A-z)^{\ast}H_{z}(\eta_{z,t})]^{2}\rangle}$ since $H_{z}(\eta_{z,t})$ is self-adjoint. So, the second and third terms on the RHS of the previous identity combine to $-\mathrm{Re}(\overline{\delta_{z,t}}\sigma_{z,t}^{-1}z_{j}^{2})=-\mathrm{Re}(\overline{\tau_{z,t}}\sigma_{z,t}^{-1}z_{j}^{2})+\beta_{z,t}^{2}\sigma_{z,t}^{-1}\gamma_{z,t}^{-1}\mathrm{Re}(z_{j}^{2})$. Thus, in total, the previous display is $\leq|z_{j}|^{2}-\mathrm{Re}(\overline{\tau_{z,t}}\sigma_{z,t}^{-1}z_{j}^{2})$, and
\begin{align*}
&\exp\left\{-N^{\frac12}\sigma_{z,t}^{-\frac12}\langle G_{z}(\eta_{z,t})Z_{j}\rangle-\frac{1}{2\sigma_{z,t}}\langle G_{z}(\eta_{z,t})Z_{j}G_{z}(\eta_{z,t})Z_{j}\rangle\right\}\\
&\leq\exp\left\{-N^{\frac12}\sigma_{z,t}^{-\frac12}\langle G_{z}(\eta_{z,t})Z_{j}\rangle-\mathrm{Re}(\overline{\tau}_{z,t}\sigma_{z,t}^{-1}z_{j}^{2})+|z_{j}|^{2}\right\}=\psi_{j}.
\end{align*}
Thus, we have
\begin{align*}
&\det\left[1-\frac{1}{\sqrt{N\sigma_{z,t}}}G_{z}(\eta_{z,t})Z_{j}\right]\\
&=\psi_{j}\exp\left\{-\sum_{k=3}^{\infty}\frac{1}{k(N\sigma_{z,t})^{\frac{k}{2}}}\tr\left[\left(G_{z}(\eta_{z,t})Z_{j}\right)^{k}\right]\right\}\\
&=\psi_{j}\left[1+O(N^{-\frac12}\eta_{z,t}^{-3})\right].
\end{align*}
We now compute the remaining ratio of determinants using Cramer's rule to replace $A^{(2)}$ by $A^{(1)}$ and then by $A$. This is the same procedure as in (5.7) of \cite{MO}, but now the subspaces that we project resolvents onto must be spanned by pairs $\mathbf{v}\in V^{2}(\R^{N})$. Specifically, this gives
\begin{align}
\frac{\det[(A^{(2)}-\lambda_{j})^{\ast}(A^{(2)}-\lambda_{j})+\eta_{z,t}^{2}]}{\det[(A-\lambda_{j})^{\ast}(A-\lambda_{j})+\eta_{z,t}^{2}]}&=\prod_{\ell=1,2}\left|\det\left[V_{\ell}^{\ast}G_{\lambda_{j}}^{(\ell-1)}(\eta_{z,t})V_{\ell}\right]\right|,
\end{align}
where $G_{w}^{(\ell)}(\eta)$ is the same as $G_{w}(\eta)$ but replacing $A$ by $A^{(\ell)}$. We now replace $G^{(\ell-1)}_{\lambda_{j}}$ by $G^{(\ell-1)}_{z}$. We have
\begin{align*}
\left|\det\left[V_{\ell}^{\ast}G_{\lambda_{j}}^{(\ell-1)}(\eta_{z,t})V_{\ell}\right]\right|&=\left|\det\left[V_{\ell}^{\ast}G_{z}^{(\ell-1)}(\eta_{z,t})V_{\ell}\right]\right|\frac{\left|\det\left[V_{\ell}^{\ast}G_{\lambda_{j}}^{(\ell-1)}(\eta_{z,t})V_{\ell}\right]\right|}{\left|\det\left[V_{\ell}^{\ast}G_{z}^{(\ell-1)}(\eta_{z,t})V_{\ell}\right]\right|}
\end{align*}
and, by the resolvent identity, we also have the following (in which $G_{w}=G_{w}(\eta_{z,t})$ for convenience):
\begin{align}
&\frac{\left|\det\left[V_{\ell}^{\ast}G_{\lambda_{j}}^{(\ell-1)}(\eta_{z,t})V_{\ell}\right]\right|}{\left|\det\left[V_{\ell}^{\ast}G_{z}^{(\ell-1)}(\eta_{z,t})V_{\ell}\right]\right|}\label{eq:cramerrule}\\
&=\left|\det\left[1+(V_{\ell}^{\ast}G_{z}^{(\ell-1)}V_{\ell})^{-1}V_{\ell}^{\ast}(G_{\lambda_{j}}^{(\ell)}-G_{z}^{(\ell-1)})V_{\ell}^{\ast}\right]\right|\nonumber\\
&=\left|\det\left[1-\frac{1}{\sqrt{N\sigma_{z,t}}}(V_{\ell}^{\ast}G_{z}^{(\ell-1)}V_{\ell})^{-1}V_{\ell}^{\ast}G_{z}^{(\ell-1)}Z_{j}G_{\lambda_{j}}^{(\ell-1)}V_{\ell}\right]\right|.\nonumber
\end{align}
To control this determinant we are left with, we will again write it as exponential of a trace of a log and then expand the log. To this end, we need the following estimate, which uses only Lemma 2.1 of \cite{MO} and some elementary manipulations:
\begin{align*}
&N^{-\frac{k}{2}}\left|\tr\left[(V_{\ell}^{\ast}G_{z}^{(\ell-1)}V_{\ell})^{-1}V_{\ell}^{\ast}G_{z}^{(\ell-1)}Z_{j}G_{\lambda_{j}}^{(\ell-1)}V_{\ell}\right]^{k}\right|\\
&=N^{-\frac{k}{2}}\left|\tr\left[G_{\lambda_{j}}^{(\ell-1)}V_{\ell}(V_{\ell}^{\ast}G_{z}^{(\ell-1)}V_{\ell})^{-1}V_{\ell}^{\ast}G_{z}^{(\ell-1)}Z_{j}\right]^{k}\right|\\
&\lesssim N^{-\frac{k}{2}}\left\|G_{\lambda_{j}}^{(\ell-1)}V_{\ell}(V_{\ell}^{\ast}G_{z}^{(\ell-1)}V_{\ell})^{-1}V_{\ell}^{\ast}G_{z}^{(\ell-1)}Z_{j}\right\|_{\mathrm{op}}^{k}\\
&\lesssim C^{k}N^{-\frac{k}{2}}\left\|G_{\lambda_{j}}^{(\ell-1)}V_{\ell}(V_{\ell}^{\ast}G_{z}^{(\ell-1)}V_{\ell})^{-1}V_{\ell}^{\ast}G_{z}^{(\ell-1)}\right\|_{\mathrm{op}}^{k}\\
&\leq C^{k}N^{-\frac{k}{2}}\left\|G_{z}^{(\ell-1)}V_{\ell}(V_{\ell}^{\ast}G_{z}^{(\ell-1)}V_{\ell})^{-1}V_{\ell}^{\ast}G_{z}^{(\ell-1)}\right\|_{\mathrm{op}}^{k}\|[G_{z}^{(\ell-1)}]^{-1}G_{\lambda_{j}}^{(\ell-1)}\|_{\mathrm{op}}^{k}\\
&\lesssim C^{k}N^{-\frac{k}{2}}\left\|[\mathrm{Im}(G_{z}^{(\ell-1)})^{-1}]^{-1}\right\|_{\mathrm{op}}^{k}\|[G_{z}^{(\ell-1)}]^{-1}G_{\lambda_{j}}^{(\ell-1)}\|_{\mathrm{op}}^{k}\\
&=C^{k}N^{-\frac{k}{2}}\left\|[\mathrm{Im}(G_{z}^{(\ell-1)})^{-1}]^{-1}\right\|_{\mathrm{op}}^{k}\left\|I+N^{-\frac12}\sigma_{z,t}^{-\frac12}Z_{j}G_{\lambda_{j}}^{(\ell-1)}\right\|_{\mathrm{op}}^{k}\\
&=C^{k}N^{-\frac{k}{2}}\eta_{z,t}^{-k}(1+O(N^{-\frac12}\eta_{z,t}^{-1})).
\end{align*}
We clarify that the operator norm bound for the trace follows because the matrix in question rank $O(1)$. Since $\eta_{z,t}\gg N^{-1/2}$, the big-O term in the last line is $\ll1$. Thus, we have 
\begin{align}
&\left|\det\left[1-\frac{1}{\sqrt{N\sigma_{z,t}}}(V_{\ell}^{\ast}G_{z}^{(\ell-1)}V_{\ell})^{-1}V_{\ell}^{\ast}G_{z}^{(\ell-1)}Z_{j}G_{\lambda_{j}}^{(\ell-1)}V_{\ell}\right]\right|\\
&\leq\exp\left\{\sum_{k=1}^{\infty}\frac{1}{k}N^{-\frac{k}{2}}\sigma_{z,t}^{-\frac{k}{2}}\left|\tr\left[(V_{\ell}^{\ast}G_{z}^{(\ell-1)}V_{\ell})^{-1}V_{\ell}^{\ast}G_{z}^{(\ell-1)}Z_{j}G_{\lambda_{j}}^{(\ell-1)}V_{\ell}\right]^{k}\right|\right\}\\
&=\exp\left\{\sum_{k=1}^{\infty}O(C^{k}N^{-\frac{k}{2}}\eta_{z,t}^{-k})\right\}=1+O(N^{-\frac12}\eta_{z,t}^{-1}).
\end{align}
Gathering all of this, we deduce 
\begin{align*}
|\det[\mathbf{M}(X)]|^{\frac12}&\lesssim\det[(A-z)^{\ast}(A-z)+\eta_{z,t}^{2}]^{2}\psi_{1}\psi_{2}\prod_{\ell=1,2}|\det[V_{\ell}^{\ast}G_{z}^{(\ell-1)}(\eta_{z,t})V_{\ell}]|^{2}\\
&\times\prod_{j=1,\ldots,4}\left\{\frac{\det[(A^{(2)}-\lambda_{j})^{\ast}(A^{(2)}-\lambda_{j})+P_{jj}]}{\det[(A^{(2)}-\lambda_{j})^{\ast}(A^{(2)}-\lambda_{j})+\eta_{z,t}^{2}]}\right\}^{\frac14}\\
&\times\prod_{j=1,\ldots,4}\left\{\frac{\det[(A^{(2)}-\lambda_{j})^{\ast}(A^{(2)}-\lambda_{j})+Q_{jj}]}{\det[(A^{(2)}-\lambda_{j})^{\ast}(A^{(2)}-\lambda_{j})+\eta_{z,t}^{2}]}\right\}^{\frac14}.
\end{align*}
It remains to control the last two products over $j$; we treat the first, since the second is handled in the same way. By resolvent perturbation and the inequality $\log(1+x)\leq x-\frac{3x^{2}}{6+4x}$, we have the following, for which we use the notation $\tilde{H}_{\lambda_{j}}^{(2)}(\eta_{z,t}):=A_{\lambda_{j}}^{(2),\ast}A_{\lambda_{j}}^{(2)}+\eta_{z,t}^{2}$
\begin{align*}
&\frac{\det[(A^{(2)}-\lambda_{j})^{\ast}(A^{(2)}-\lambda_{j})+P_{jj}]}{\det[(A^{(2)}-\lambda_{j})^{\ast}(A^{(2)}-\lambda_{j})+\eta_{z,t}^{2}]}\\
&=\det\left[I_{N-4}+(P_{jj}-\eta_{z,t}^{2})\tilde{H}_{\lambda_{j}}^{(2)}(\eta_{z,t})\right]\\
&=\exp\left\{\tr\log\left[I_{N-4}+(P_{jj}-\eta_{z,t}^{2})\tilde{H}_{\lambda_{j}}^{(2)}(\eta_{z,t})\right]\right\}\\
&\leq\exp\left\{(P_{jj}-\eta_{z,t}^{2})\tr \tilde{H}_{\lambda_{j}}^{(2)}(\eta_{z,t})-\frac{3(P_{jj}-\eta_{z,t}^{2})}{6+4\frac{|P_{jj}-\eta_{z,t}|}{\eta_{z,t}^{2}}}\tr[\tilde{H}_{\lambda_{j}}^{(2)}(\eta_{z,t})]^{2}\right\}.
\end{align*}
By combining the previous two displays and by $\frac{t}{N}\tr \tilde{H}_{z}(\eta_{z,t})=1$, we have the following parallel to (5.6) in \cite{MO}:
\begin{align*}
\Upsilon_{1}&\lesssim e^{-\frac{2N\eta_{z,t}^{2}}{t}}\det[(A-z)^{\ast}(A-z)+\eta_{z,t}^{2}]^{2}\psi_{1}\psi_{2}\prod_{\ell=1,2}|\det[V_{\ell}^{\ast}G_{z}^{(\ell-1)}(\eta_{z,t})V_{\ell}]|^{2}\\
&\times\sum_{k=1,\ldots,4}\int_{|P_{kk}-\eta_{z,t}^{2}|\leq N^{\delta-\frac12}}\exp\left\{-\frac{Nt}{4}\sum_{j=1,\ldots,4}h_{j}(\eta_{z,t}^{-2}|p_{j}|)-\frac{Nt}{4}\sum_{j=1,\ldots,4}h_{j}(\eta_{z,t}^{-2}|q_{j}|)\right\}dX,
\end{align*}
where $p_{j}=P_{jj}-\eta_{z,t}^{2}$ and $q_{j}=Q_{jj}-\eta_{z,t}^{2}$ and 
\begin{align*}
h_{j}(x):=\frac{\eta_{z,t}^{4}}{Nt}\tr[\tilde{H}_{\lambda_{j}}^{(2)}(\eta_{z,t})]^{2}\frac{3x^{2}}{6+4x}-\frac{\eta_{z,t}^{2}}{Nt}\left(\tr[\tilde{H}_{\lambda_{j}}^{(2)}(\eta_{z,t})]-\tr[\tilde{H}_{z}(\eta_{z,t})]\right)x.
\end{align*}
By interlacing of eigenvalues and the bound $\|\tilde{H}_{\lambda_{j}}^{(2)}(\eta_{z,t})\|_{\mathrm{op}}\leq\eta_{z,t}^{-2}$, we replace $N^{-1}\tr[\tilde{H}_{\lambda_{j}}^{(2)}(\eta_{z,t})]^{2}$ by $N^{-1}\tr[\tilde{H}_{\lambda_{j}}(\eta_{z,t})]^{2}+O(N^{-1}\eta_{z,t}^{-4})$. By the same token, we can also replace $N^{-1}\tr[\tilde{H}_{\lambda_{j}}^{(2)}(\eta_{z,t})]$ by $N^{-1}\tr[\tilde{H}_{\lambda_{j}}(\eta_{z,t})]+O(N^{-1}\eta_{z,t}^{-2})$. We can then use a resolvent perturbation to show
\begin{align*}
\tr[\tilde{H}_{\lambda_{j}}(\eta_{z,t})]-\tr[\tilde{H}_{z}(\eta_{z,t})]&=\frac{1}{2N\eta_{z,t}}\tr\mathrm{Im}(G_{\lambda_{j}}(\eta_{z,t})-G_{z}(\eta_{z,t}))\\
&=\frac{1}{2N\eta_{z,t}}\mathrm{Im}\tr\left[N^{-\frac12}\sigma_{z,t}^{-\frac12}G_{\lambda_{j}}(\eta_{z,t})Z_{j}G_{z}(\eta_{z,t})\right]\\
&=O(N^{-\frac12}\eta_{z,t}^{-3}).
\end{align*}
Lastly, we have the lower bound $N^{-1}\tr[\tilde{H}_{\lambda{j}}(\eta_{z,t})]^{2}\geq C>0$, since $\tilde{H}_{\lambda_{j}}(\eta_{z,t})$ is the resolvent of a bounded operator. So, we obtain the lower bound $h_{j}(x)\geq C\eta_{z,t}^{3}(\frac{x^{2}}{1+x}-\frac{1}{N\eta_{z,t}^{3}}x)\geq C\eta_{z,t}^{3}\frac{x^{2}}{1+x}$ because $\eta=N^{-\epsilon_{0}}$ with $\epsilon_{0}>0$ small. (The discrepancy in the powers of $\eta_{z,t}$ compared to \cite{MO} will not be important; they come from using trivial bounds for resolvents as opposed to local law estimates.) In particular, if we take $\epsilon_{0}>0$ small enough in $\eta_{z,t}=N^{-\epsilon_{0}}$, then as in \cite{MO}, the integration over $|P_{kk}-\eta_{z,t}^{2}|\geq N^{-1/2+\delta}$ is exponentially small, i.e. for some $\kappa>0$, we have
\begin{align}
\Upsilon_{1}\lesssim e^{-CN^{\kappa}}e^{-\frac{2N\eta_{z,t}^{2}}{t}}\det[(A-z)^{\ast}(A-z)+\eta_{z,t}^{2}]^{2}\psi_{1}\psi_{2}\prod_{\ell=1,2}|\det[V_{\ell}^{\ast}G_{z}^{(\ell-1)}(\eta_{z,t})V_{\ell}]|^{2}.
\end{align}
By reversing the roles of $X$ and $X^{\ast}$, the same argument implies 
\begin{align}
\Upsilon_{2}\lesssim e^{-CN^{\kappa}}e^{-\frac{2N\eta_{z,t}^{2}}{t}}\det[(A-z)^{\ast}(A-z)+\eta_{z,t}^{2}]^{2}\psi_{1}\psi_{2}\prod_{\ell=1,2}|\det[V_{\ell}^{\ast}G_{z}^{(\ell-1)}(\eta_{z,t})V_{\ell}]|^{2}.
\end{align}
So, we are left to bound $\Upsilon_{3}$; let us treat the case $(j,\ell)=(1,2)$, since the other terms in $\Upsilon_{3}$ are treated in the same way. For this, we again start with \eqref{eq:fisherM(X)}. Now, let $P_{1}$ denote the upper left $2\times2$ block of $P=XX^{\ast}$. We now use Fisher's inequality (Lemma 2.2 in \cite{MO}) again, but now with the top left $2\times2$ block, the $(3,3)$ entry, and the $(4,4)$ entry as our diagonal blocks. This gives
\begin{align*}
&\det[P\otimes I_{N-4}+A_{\mathbf{w}}^{(2)}(A_{\mathbf{w}}^{(2)})^{\ast}]\\
&\leq|\det[P_{1}\otimes I_{N-4}+(I_{2}\otimes A^{(2)}-\mathbf{w}\otimes I_{N-4})(I_{2}\otimes A^{(2)}-\mathbf{w}\otimes I_{N-4})^{\ast}]|\\
&\times|\det[P_{33}+(A^{(2)}-\lambda_{1})(A^{(2)}-\lambda_{1})^{\ast}]\det[P_{44}+(A^{(2)}-\lambda_{2})(A^{(2)}-\lambda_{2})^{\ast}].
\end{align*}
We now use the Schur complement formula to get the first line below, after which we factor out $P_{22}+|A^{(2)}-\lambda_{2}|^{2}$ from the second determinant:
\begin{align*}
&\det[P_{1}\otimes I_{N-4}+(I_{2}\otimes A^{(2)}-\mathbf{w}\otimes I_{N-4})(I_{2}\otimes A^{(2)}-\mathbf{w}\otimes I_{N-4})^{\ast}]\\
&=\det[P_{11}+|A^{(2)}-\lambda_{1}|^{2}]\det[P_{22}+|A^{(2)}-\lambda_{2}|^{2}-P_{12}(P_{11}+|A^{(2)}-\lambda_{1}|^{2})^{-1}P_{12}]\\
&=\det\left[1-|P_{12}|^{2}H_{\lambda_{j}}^{(2)}(P_{11})H_{\lambda_{j}}^{(2)}(P_{22})\right]\det[P_{11}+|A^{(2)}-\lambda_{1}|^{2}]\det[P_{22}+|A^{(2)}-\lambda_{2}|^{2}].
\end{align*}
(Recall $H_{w}^{(2)}(\eta)=[(A-w)(A-w)^{\ast}+\eta^{2}]^{-1}$.) Now, we use the inequality $\det(I+B)\leq e^{\tr B}$ and the lower bounds $H^{(2)}_{\lambda_{j}}(P_{11}),H^{(2)}_{\lambda_{j}}(P_{22})\geq C>0$ (since they are resolvents of covariance matrices with bounded operator norm) to get
\begin{align}
&\det\left[1-|P_{12}|^{2}H_{\lambda_{j}}^{(2)}(P_{11})H_{\lambda_{j}}^{(2)}(P_{22})\right]\label{eq:fisher2x2}\\
&\leq\exp\left\{-|P_{12}|^{2}\tr H_{\lambda_{j}}^{(2)}(P_{11})H_{\lambda_{j}}^{(2)}(P_{22})\right\}\nonumber\\
&=\exp\left\{-|P_{12}|^{2}\tr [H_{\lambda_{j}}^{(2)}(P_{22})]^{\frac12}H_{\lambda_{j}}^{(2)}(P_{11})[H_{\lambda_{j}}^{(2)}(P_{22})]^{\frac12}\right\}\nonumber\\
&\leq\exp\{-CN|P_{12}|^{2}\}.\nonumber
\end{align}
A similar bound holds for $Q$ in place of $P$. Let us clarify that the effect of using Fisher's inequality with one $2\times2$ matrix is to produce the determinant factor in \eqref{eq:fisher2x2}, which produces an a priori exponential bound on the off-diagonal entry of said $2\times2$ matrix. We shortly apply this principle again. We can now follow our computations for the bounds on $\Upsilon_{1}$ and $\Upsilon_{2}$; the only difference is the presence of $\exp\{-CN|P_{12}|^{2}\}$ and $\exp\{-CN|Q_{12}|^{2}\}$. But we restrict to the set $|P_{12}|,|Q_{12}|\geq N^{\delta-1/2}$. Ultimately, we deduce 
\begin{align}
\Upsilon_{3}&\lesssim e^{-CN^{\kappa}}e^{-\frac{2N\eta_{z,t}^{2}}{t}}\det[(A-z)^{\ast}(A-z)+\eta_{z,t}^{2}]^{2}\psi_{1}\psi_{2}\prod_{\ell=1,2}|\det[V_{\ell}^{\ast}G_{z}^{(\ell-1)}(\eta_{z,t})V_{\ell}]|^{2}.
\end{align}
We are left with $\Upsilon_{0}$. We must compute $\mathbf{M}(X)\mathbf{M}(X)^{\ast}$ more precisely as follows:
\begin{align}
&\mathbf{M}(X)\mathbf{M}(X)^{\ast}=\begin{pmatrix}X\otimes I_{N-4}&A_{\mathbf{w}}^{(2)}\\(-A_{\mathbf{w}}^{(2)})^{T}&X^{\ast}\otimes I_{N-4}\end{pmatrix}\begin{pmatrix}X^{\ast}\otimes I_{N-4}&-A^{(2)}_{\overline{\mathbf{w}}}\label{eq:mmstarformulacutoff}\\(A_{\mathbf{w}}^{(2)})^{\ast}&X\otimes I_{N-4}\end{pmatrix}\\
&=\begin{pmatrix}XX^{\ast}\otimes I_{N-4}+A_{\mathbf{w}}^{(2)}[A_{\mathbf{w}}^{(2)}]^{\ast}&-X\otimes I_{N-4}A_{\overline{\mathbf{w}}}^{(2)}+A_{\mathbf{w}}^{(2)}X\otimes I_{N-4}\\-[A_{\overline{\mathbf{w}}}^{(2)}]^{\ast}X^{\ast}\otimes I_{N-4}+X^{\ast}\otimes I_{N-4}[A_{\mathbf{w}}^{(2)}]^{\ast}&X^{\ast}X\otimes I_{N-4}+[A_{\overline{\mathbf{w}}}^{(2)}]^{\ast}A_{\overline{\mathbf{w}}}^{(2)}\end{pmatrix}.\nonumber
\end{align}
In particular, we must compute the off-diagonal blocks. Note that the bottom left block is the adjoint of the upper right block (indeed, $\mathbf{M}(X)\mathbf{M}(X)^{\ast}$ must be Hermitian). So, we compute the top right block. First, since $X\in M_{4}^{skew}(\C)$, we can write 
\begin{align*}
X&=\begin{pmatrix}X^{1}&Y\\-Y^{T}&X^{2}\end{pmatrix}=\begin{pmatrix}X^{1}_{11}&X^{1}_{12}&Y_{11}&Y_{12}\\-X^{1}_{12}&X^{1}_{22}&Y_{21}&Y_{22}\\-Y_{11}&-Y_{21}&X^{2}_{11}&X^{2}_{12}\\-Y_{12}&-Y^{22}&-X^{2}_{12}&X^{2}_{22}\end{pmatrix}
\end{align*}
where $X^{1},X^{2}\in M_{2}^{skew}(\C)$ and $Y\in M_{2\times2}(\C)$. By definition, $A_{\mathbf{w}}^{(2)}X\otimes I_{N-4}$ is equal to
\begin{align*}
&\begin{pmatrix}A^{(2)}-\lambda_{1}&0&0&0\\0&A^{(2)}-\lambda_{2}&0&0\\0&0&A^{(2)}-\overline{\lambda}_{1}&0\\0&0&0&A^{(2)}-\overline{\lambda}_{2}\end{pmatrix}\begin{pmatrix}X^{1}_{11}&X^{1}_{12}&Y_{11}&Y_{12}\\-X^{1}_{12}&X^{1}_{22}&Y_{21}&Y_{22}\\-Y_{11}&-Y_{21}&X^{2}_{11}&X^{2}_{12}\\-Y_{12}&-Y^{22}&-X^{2}_{12}&X^{2}_{22}\end{pmatrix}.
\end{align*}
The top left $2\times2$ block of this $4\times4$ matrix is given by
\begin{align*}
\begin{pmatrix}X_{11}^{1}[A^{(2)}-\lambda_{1}]&X^{1}_{12}[A^{(2)}-\lambda_{1}]\\-X_{12}^{1}[A^{(2)}-\lambda_{2}]&X^{1}_{22}[A^{(2)}-\lambda_{2}]\end{pmatrix}.
\end{align*}
We can compute $X\otimes I_{N-4}A_{\overline{\mathbf{w}}}^{(2)}$ in the exact same way. Its top left $2\times2$ block is equal to
\begin{align*}
\begin{pmatrix}X_{11}^{1}[A^{(2)}-\overline{\lambda}_{1}]&X^{1}_{12}[A^{(2)}-\overline{\lambda}_{2}]\\-X_{12}^{1}[A^{(2)}-\overline{\lambda}_{1}]&X^{1}_{22}[A^{(2)}-\overline{\lambda}_{2}]\end{pmatrix}.
\end{align*}
Thus, the top left $2\times2$ block of $-X\otimes I_{N-4}A_{\overline{\mathbf{w}}}^{(2)}+A_{\mathbf{w}}^{(2)}X\otimes I_{N-4}$ is equal to 
\begin{align}
\begin{pmatrix}X^{1}_{11}[\overline{\lambda}_{1}-\lambda_{1}]&X^{1}_{12}[\overline{\lambda}_{2}-\lambda_{1}]\\-X^{1}_{12}[\overline{\lambda}_{1}-\lambda_{2}]&X^{1}_{22}[\overline{\lambda}_{2}-\lambda_{2}]\end{pmatrix}.\label{eq:topleft2x2}
\end{align}
The top left $2\times 2$ block of $-[A_{\overline{\mathbf{w}}}^{(2)}]^{\ast}X^{\ast}\otimes I_{N-4}+X^{\ast}\otimes I_{N-4}[A_{\mathbf{w}}^{(2)}]^{\ast}$ is just the adjoint of the previous display, as we mentioned earlier. Now, return to \eqref{eq:mmstarformulacutoff}. We apply Fisher's inequality (Lemma 2.2 in \cite{MO}) once again with the following choices for blocks. First, we choose the $2\times2$ block made up of the $(1,1)$ entry of $XX^{\ast}\otimes I_{N-4}+A_{\mathbf{w}}^{(2)}[A_{\mathbf{w}}^{(2)}]^{\ast}$, the $(2,2)$ entry of $X^{\ast}X\otimes I_{N-4}+[A_{\overline{\mathbf{w}}}^{(2)}]^{\ast}A_{\overline{\mathbf{w}}}^{(2)}$, the $(1,2)$ entry of \eqref{eq:topleft2x2}, and the $(2,1)$ entry of its adjoint (in the bottom left block of the second line of \eqref{eq:mmstarformulacutoff}). Precisely, this block is
\begin{align*}
\begin{pmatrix}P_{11}+(A^{(2)}-\lambda_{1})(A^{(2)}-\lambda_{1})^{\ast}&X_{12}^{1}[\overline{\lambda}_{2}-\lambda_{1}]\\\overline{X}_{12}[\lambda_{2}-\overline{\lambda}_{1}]&Q_{22}+(A^{(2)}-\lambda_{1})^{\ast}(A^{(2)}-\lambda_{1})\end{pmatrix}.
\end{align*}
This picks out a $2\times2$ diagonal block in \eqref{eq:mmstarformulacutoff}. Note that its determinant is given by 
\begin{align*}
&\det\begin{pmatrix}P_{11}+(A^{(2)}-\lambda_{1})(A^{(2)}-\lambda_{1})^{\ast}&X_{12}^{1}[\overline{\lambda}_{2}-\lambda_{1}]\\\overline{X}_{12}[\lambda_{2}-\overline{\lambda}_{1}]&Q_{22}+(A^{(2)}-\lambda_{1})^{\ast}(A^{(2)}-\lambda_{1})\end{pmatrix}\\
&=\det[P_{11}+|A^{(2)}-\lambda_{1}|^{2}]\det[Q_{22}+|A^{(2)}-\lambda_{2}|^{2}]\det[1-|X_{12}^{1}|^{2}|\overline{\lambda}_{2}-\lambda_{1}|^{2}H_{\lambda_{j}}^{(2)}(\sqrt{P_{11}})\tilde{H}_{\lambda_{j}}^{(2)}(\sqrt{Q_{22}})].
\end{align*}
For applying Fisher's inequality, we now take the $1\times1$ blocks given by the remaining diagonal entries of the second line in \eqref{eq:mmstarformulacutoff}. We get the following (in which $|L|^{2}=LL^{\ast}$):
\begin{align*}
\det[\mathbf{M}(X)\mathbf{M}(X)^{\ast}]&\leq\prod_{j=1,\ldots,4}|\det[P_{jj}+|A^{(2)}-\lambda_{j}|^{2}]||\det[Q_{jj}+|A^{(2)}-\lambda_{j}|^{2}]|\\
&\times\det[1-|X_{12}^{1}|^{2}|\overline{\lambda}_{2}-\lambda_{1}|^{2}H_{\lambda_{j}}^{(2)}(\sqrt{P_{11}})\tilde{H}_{\lambda_{j}}^{(2)}(\sqrt{Q_{22}})].
\end{align*}
Now, because $P_{11},Q_{11}=O(1)$ by our cutoff in the integration domain of $\Upsilon_{0}$, as in \eqref{eq:fisher2x2}, we know that the second line of the previous display is $\leq\exp[-CN|X_{12}^{1}|^{2}|\overline{\lambda}_{2}-\lambda_{1}|^{2}]$; this follows by the same argument as in \eqref{eq:fisher2x2}. But $\lambda_{j}=z+O(N^{-1/2})$ locally uniformly in $a_{j},b_{j}$, and $z\not\in\R$ is fixed, so $|\overline{\lambda}_{2}-\lambda_{1}|^{2}\gtrsim1$ locally uniformly in $a_{j},b_{j}$, and thus the second line of the previous display is $\leq\exp[-CN|X_{12}^{1}|^{2}]$ locally uniformly in $a_{j},b_{j}$. We can now proceed as in our bound on $\Upsilon_{3}$ to restrict further the integration domain in $\Upsilon_{0}$ to the set where $|X_{12}|\lesssim N^{-1/2+\delta}$ for any small $\delta>0$. We can do the same for every other entry of $X^{1}$ and every entry of $X^{2}$ as well. Ultimately, we have 
\begin{align*}
\Upsilon_{0}=\Upsilon_{\mathrm{main}}+\Upsilon_{4},
\end{align*}
where for some $\kappa>0$ small but fixed, we have 
\begin{align}
\Upsilon_{\mathrm{main}}&:=\int_{\substack{\|P-\eta_{z,t}^{2}\|_{\mathrm{max}}\leq N^{\delta-\frac12}\\\|X^{1}\|_{\max},\|X^{2}\|_{\max}\leq N^{\delta-\frac12}}}e^{-\frac{N}{2t}\tr X^{\ast}X}\pf[\mathbf{M}(X)]dX,\label{eq:upsmain}\\
|\Upsilon_{4}|&\lesssim e^{-CN^{\kappa}}e^{-\frac{2N\eta_{z,t}^{2}}{t}}\det[(A-z)^{\ast}(A-z)+\eta_{z,t}^{2}]^{2}\psi_{1}\psi_{2}\prod_{\ell=1,2}|\det[V_{\ell}^{\ast}G_{z}^{(\ell-1)}(\eta_{z,t})V_{\ell}]|^{2}.\nonumber
\end{align}
In view of the cutoff of $\Upsilon_{\mathrm{main}}$, we now decompose $\mathbf{M}(X)$ as 
\begin{align*}
\mathbf{M}(X)&=\tilde{\mathbf{M}}(Y)+\tilde{\mathbf{M}}(X^{1},X^{2}),
\end{align*}
where $\tilde{\mathbf{M}}(X^{1},X^{2})$ is a diagonal $4\times4$ block matrix with diagonal entries $X^{1}\otimes I_{N-4},X^{2}\otimes I_{N-4},X^{1,\ast}\otimes I_{N-4},X^{2,\ast}\otimes I_{N-4}$, and where $\tilde{\mathbf{M}}(Y)$ is the following block matrix:
{\small
\begin{align*}
\begin{pmatrix}0&Y\otimes I_{N-4}&I_{2}\otimes A^{(2)}-\mathbf{w}\otimes I_{N-4}&0\\-Y^{T}\otimes I_{N-4}&0&0&I_{2}\otimes A^{(2)}-\overline{\mathbf{w}}\otimes I_{N-4}\\-I_{2}\otimes A^{(2),\ast}+\mathbf{w}\otimes I_{N-4}&0&0&-\overline{Y}\otimes I_{N-4}\\0&-I_{2}\otimes A^{(2),\ast}+\overline{\mathbf{w}}\otimes I_{N-4}&Y^{\ast}\otimes I_{N-4}&0\end{pmatrix}
\end{align*}
}Because we take Pfaffian, We can change basis by sending $\mathbf{e}_{1}\mapsto\mathbf{e}_{1},\mathbf{e}_{3}\mapsto\mathbf{e}_{2},\mathbf{e}_{4}\mapsto\mathbf{e}_{3},\mathbf{e}_{2}\mapsto\mathbf{e}_{4}$. In this new basis, we then have 
\begin{align}
|\pf[\mathbf{M}(X)]|&=|\pf[\mathbf{M}(Y)+\mathbf{M}(X^{1},X^{2})]|=|\det[\mathbf{M}(Y)]|^{\frac12}|\det[1+\mathbf{M}(Y)^{-1}\mathbf{M}(X^{1},X^{2})]|^{\frac12},\label{eq:pfaffexpandmy}
\end{align}
where 
\begin{align*}
\mathbf{M}(X^{1},X^{2})&=\begin{pmatrix}X^{1}\otimes I_{N-4}&0&0&0\\0&X^{2,\ast}\otimes I_{N-4}&0\\0&0&X^{2}\otimes I_{N-4}&0\\0&0&0&X^{1,\ast}\otimes I_{N-4}\end{pmatrix},\\
\mathbf{M}(Y)&=\begin{pmatrix}0&M_{0}(Y)\\-M_{0}(Y)^{T}&0\end{pmatrix},\\
M_{0}(Y)&=\begin{pmatrix}Y\otimes I_{N-4}&I_{2}\otimes A^{(2)}-\mathbf{w}\otimes I_{N-4}\\-I_{2}\otimes A^{(2),\ast}+\overline{\mathbf{w}}\otimes I_{N-4}&Y^{\ast}\otimes I_{N-4}\end{pmatrix}.
\end{align*}
We now expand
\begin{align}
&|\det[1+\mathbf{M}(Y)^{-1}\mathbf{M}(X^{1},X^{2})]|^{\frac12}\label{eq:dettoexpmy}\\
&=\exp\left\{\frac12\tr\mathbf{M}(Y)^{-1}\mathbf{M}(X^{1},X^{2})-\frac14\tr[\mathbf{M}(Y)^{-1}\mathbf{M}(X^{1},X^{2})]^{2}+O\left(N^{-\frac12+3\delta+C\epsilon_{0}}\right)\right\}\nonumber\\
&=\exp\left\{-\frac14\tr[\mathbf{M}(Y)^{-1}\mathbf{M}(X^{1},X^{2})]^{2}+O\left(N^{-\frac12+3\delta+C\epsilon_{0}}\right)\right\},\nonumber
\end{align}
where the bound on the third and higher order terms follows by $\|X^{1}\|_{\max},\|X^{2}\|_{\max}\lesssim N^{-1/2+\delta}$ and $\|\mathbf{M}(Y)^{-1}\|_{\mathrm{op}}\lesssim N^{C\epsilon_{0}}$, which we verify immediately below. The second identity follows because $\mathbf{M}(Y)^{-1}\mathbf{M}(X^{1},X^{2})$ has zero diagonal; this can be checked directly. We now collect a few properties of $\mathbf{M}(Y)^{-1}$. First, we define the following modification of $\mathbf{M}(Y)$ obtained by replacing $\mathbf{w}$ by $\mathbf{z}=zI_{2}$:
\begin{align*}
\mathbf{M}_{z}(Y)&:=\begin{pmatrix}0&M_{0,z}(Y)\\-M_{0,z}(Y)^{T}&0\end{pmatrix},\\
M_{0,z}(Y)&:=\begin{pmatrix}Y\otimes I_{N-4}&I_{2}\otimes A^{(2)}-\mathbf{z}\otimes I_{N-4}\\-I_{2}\otimes A^{(2),\ast}+\overline{\mathbf{z}}\otimes I_{N-4}&Y^{\ast}\otimes I_{N-4}\end{pmatrix}.
\end{align*}
An elementary computation shows
\begin{align*}
\mathbf{M}_{z}(Y)^{-1}&=\begin{pmatrix}0&-M_{0,z}(Y)^{-1,T}\\M_{0,z}(Y)^{-1}&0\end{pmatrix}\\
M_{0,z}(Y)^{-1}&=\begin{pmatrix}Y^{\ast}\otimes I_{N-4}{H}^{(2)}_{\mathbf{z}}(Y)&-[I_{2}\otimes A^{(2)}-\mathbf{z}\otimes I_{N-4}]\tilde{H}_{\mathbf{z}}^{(2)}(Y)\\ [I_{2}\otimes A^{(2)}-\mathbf{z}\otimes I_{N-4}]^{\ast}{H}^{(2)}_{\mathbf{z}}(Y)&Y\otimes I_{N-4}\tilde{H}_{\mathbf{z}}^{(2)}(Y)\end{pmatrix},\\
{H}^{(2)}_{\mathbf{z}}(Y)&=[YY^{\ast}\otimes I_{N-4}+(I_{2}\otimes A^{(2)}-\mathbf{z}\otimes I_{N-4})(I_{2}\otimes A^{(2)}-\mathbf{z}\otimes I_{N-4})^{\ast}]^{-1}\\
\tilde{H}_{\mathbf{z}}^{(2)}(Y)&=[Y^{\ast}Y\otimes I_{N-4}+(I_{2}\otimes A^{(2)}-\mathbf{z}\otimes I_{N-4})^{\ast}(I_{2}\otimes A^{(2)}-\mathbf{z}\otimes I_{N-4})]^{-1}.
\end{align*}
Because we restrict to $\|XX^{\ast}-\eta_{z,t}^{2}\|_{\max},\|X^{\ast}X-\eta_{z,t}^{2}\|_{\max},\|X^{1}\|_{\max},\|X^{2}\|_{\max}\lesssim N^{-1/2+\delta}$ for $\delta>0$ small, and because $\eta_{z,t}^{2}\gtrsim N^{-2\epsilon_{0}}$ with $\epsilon_{0}$ small, we know that $\|YY^{\ast}-\eta_{z,t}^{2}\|_{\max},\|Y^{\ast}Y-\eta_{z,t}^{2}\|_{\max}\lesssim N^{-1/2+\delta}$ and thus $\tilde{H}^{(2)}_{\mathbf{z}}(Y),H_{\mathbf{z}}^{(2)}(Y)\lesssim \eta_{z,t}^{-2}\lesssim N^{2\epsilon_{0}}$. This implies $\|\mathbf{M}(Y)^{-1}\|_{\mathrm{op}}\lesssim N^{C\epsilon_{0}}$ for some $C=O(1)$ with high probability (note that $A^{(2)}$ is bounded in operator norm with high probability; indeed, use interlacing to remove the superscript $(2)$ and by the local law in Lemma \ref{1GE}, for example). On the other hand, we know $\|\mathbf{M}_{z}(Y)-\mathbf{M}(Y)\|_{\mathrm{op}}\lesssim N^{-1/2}$ locally uniformly in $z_{1},z_{2}$. Thus, resolvent perturbation shows $\mathbf{M}(Y)^{-1}=\mathbf{M}_{z}(Y)^{-1}[1+O(N^{-1/2+C\epsilon_{0}})]$, where the big-O is in operator norm. Finally, we define the matrices below obtained by evaluating resolvents at $\eta_{z,t}$ instead of $Y$:
\begin{align*}
\mathbf{M}_{z,\eta_{z,t}}(Y)^{-1}&=\begin{pmatrix}0&-M_{0,z,\eta_{z,t}}(Y)^{-1,T}\\M_{0,z,\eta_{z,t}}(Y)^{-1}&0\end{pmatrix}\\
M_{0,z,\eta_{z,t}}(Y)^{-1}&=\begin{pmatrix}Y^{\ast}\otimes I_{N-4}{H}^{(2)}_{\mathbf{z}}(\eta_{z,t})&-[I_{2}\otimes A^{(2)}-\mathbf{z}\otimes I_{N-4}]\tilde{H}_{\mathbf{z}}^{(2)}(\eta_{z,t})\\ [I_{2}\otimes A^{(2)}-\mathbf{z}\otimes I_{N-4}]^{\ast}{H}^{(2)}_{\mathbf{z}}(\eta_{z,t})&Y\otimes I_{N-4}\tilde{H}_{\mathbf{z}}^{(2)}(\eta_{z,t})\end{pmatrix},\\
{H}^{(2)}_{\mathbf{z}}(\eta_{z,t})&=[\eta_{z,t}^{2}+(I_{2}\otimes A^{(2)}-\mathbf{z}\otimes I_{N-4})(I_{2}\otimes A^{(2)}-\mathbf{z}\otimes I_{N-4})^{\ast}]^{-1}\\
\tilde{H}_{\mathbf{z}}^{(2)}(\eta_{z,t})&=[\eta_{z,t}^{2}+(I_{2}\otimes A^{(2)}-\mathbf{z}\otimes I_{N-4})^{\ast}(I_{2}\otimes A^{(2)}-\mathbf{z}\otimes I_{N-4})]^{-1}.
\end{align*}
Before we proceed, we clarify that $\tilde{H}^{(2)}_{\mathbf{z}}(\eta_{z,t})$ is block diagonal with entries $\tilde{H}^{(2)}_{z}(\eta_{z,t})$ and $\tilde{H}^{(2)}_{\overline{z}}(\eta_{z,t})$; a similar statement holds for $H_{\mathbf{z}}^{(2)}(\eta_{z,t})$. Thus, its products with $I_{2}\otimes A^{2}-\mathbf{z}\otimes I_{N-4}$ can be analyzed via the local laws in Lemmas \ref{1GE} and \ref{2GE}. We will use this shortly.

The bounds $\|YY^{\ast}-\eta_{z,t}^{2}\|_{\max},\|Y^{\ast}Y-\eta_{z,t}^{2}\|_{\max}\lesssim N^{-1/2+\delta}$ and resolvent perturbation as before show $\|\mathbf{M}_{z}(Y)-\mathbf{M}_{z,\eta_{z,t}}(Y)\|_{\mathrm{op}}\lesssim N^{-1/2+\delta}$ and $\|\mathbf{M}_{z}(Y)^{-1}-\mathbf{M}_{z,\eta_{z,t}}(Y)^{-1}\|_{\mathrm{op}}\lesssim N^{-1/2+\delta+C\epsilon_{0}}$. Combining this with $\mathbf{M}(Y)^{-1}=\mathbf{M}_{z}(Y)^{-1}[1+O(N^{-1/2+C\epsilon_{0}})]$, we deduce $\|\mathbf{M}(Y)^{-1}-\mathbf{M}_{z,\eta_{z,t}}(Y)^{-1}\|_{\mathrm{op}}\lesssim N^{-1/2+\delta+C\epsilon_{0}}$. Also, by the a priori bounds $\|X_{1}\|_{\max},\|X_{2}\|_{\max}\lesssim N^{-1/2+\delta}$, we also have the estimate $\|\mathbf{M}(X^{1},X^{2})\|_{\mathrm{op}}\lesssim N^{-1/2+\delta}$. Thus, we have 
\begin{align*}
&\exp\left\{-\frac14\left(\tr[\mathbf{M}(Y)^{-1}\mathbf{M}(X^{1},X^{2})]^{2}-\tr[\mathbf{M}_{z,\eta_{z,t}}(Y)^{-1}\mathbf{M}(X^{1},X^{2})]^{2}\right)\right\}\\
&=\exp\left\{-\frac14\tr\mathbf{M}(Y)^{-1}\mathbf{M}(X^{1},X^{2})[\mathbf{M}(Y)^{-1}-\mathbf{M}_{z,\eta_{z,t}}(Y)^{-1}]\mathbf{M}(X^{1},X^{2})\right\}\\
&\times\exp\left\{-\frac14\tr[\mathbf{M}(Y)^{-1}-\mathbf{M}_{z,\eta_{z,t}}(Y)^{-1}]\mathbf{M}(X^{1},X^{2})\mathbf{M}_{z,\eta_{z,t}}(Y)^{-1}\mathbf{M}(X^{1},X^{2})\right\}\\
&=\exp\left\{O(N^{-\frac12+2\delta+C\epsilon_{0}})\right\},
\end{align*}
where $\delta,\epsilon_{0}>0$ are small. In particular, we deduce
\begin{align}
&\exp\left\{-\frac14\tr[\mathbf{M}(Y)^{-1}\mathbf{M}(X^{1},X^{2})]^{2}\right\}\label{eq:mtomzeta}\\
&=\exp\left\{-\frac14\tr[\mathbf{M}_{z,\eta_{z,t}}(Y)^{-1}\mathbf{M}(X^{1},X^{2})]^{2}+O\left(N^{-\frac12+2\delta+C\epsilon_{0}}\right)\right\}.\nonumber
\end{align}
An elementary computation shows the following, in which $A^{(2)}(\mathbf{z}):=I_{2}\otimes A^{(2)}-\mathbf{z}\otimes I_{N-4}$, and in which $Y,X^{1},X^{2}$ are identified with $Y\otimes I_{N-4},X^{1}\otimes I_{N-4},X^{2}\otimes I_{N-4}$, respectively, for convenience:
\begin{align*}
-\frac14\tr[\mathbf{M}_{z,\eta_{z,t}}(Y)^{-1}\mathbf{M}(X^{1},X^{2})]^{2}&=-\frac12\tr Y^{\ast}{H}^{(2)}_{\mathbf{z}}(\eta_{z,t})X^{1}{H}^{(2)}_{\overline{\mathbf{z}}}(\eta_{z,t})\overline{Y}X^{2}\\
&-\frac12\tr A^{(2)}(\mathbf{z})\tilde{H}_{\mathbf{z}}^{(2)}(\eta_{z,t})X^{2,\ast}\tilde{H}_{\overline{\mathbf{z}}}^{(2)}(\eta_{z,t})A^{(2)}(\overline{\mathbf{z}})^{T}X^{2}\\
&-\frac12\tr A^{(2)}(\mathbf{z})^{\ast}{H}^{(2)}_{\mathbf{z}}(\eta_{z,t})X^{1}{H}^{(2)}_{\overline{\mathbf{z}}}(\eta_{z,t})A^{(2)}(\overline{\mathbf{z}})X^{1,\ast}\\
&-\frac12\tr Y\tilde{H}_{\mathbf{z}}^{(2)}(\eta_{z,t})X^{2,\ast}\tilde{H}_{\overline{\mathbf{z}}}^{(2)}(\eta_{z,t})Y^{T}X^{1,\ast}.
\end{align*}
Now, observe that $H_{\mathbf{z}}^{(2)}(\eta_{z,t}),\tilde{H}^{(2)}_{\mathbf{z}}(\eta_{z,t}),H_{\overline{\mathbf{z}}}^{(2)}(\eta_{z,t}),\tilde{H}^{(2)}_{\overline{\mathbf{z}}}(\eta_{z,t}),A^{(2)}(\mathbf{z}),A^{(2)}(\overline{\mathbf{z}}),A^{(2)}(\mathbf{z})^{\ast}$ are all of the form $I_{2}\otimes K$ for some $K\in M_{N-4}(\C)$. Thus, we can factorize the traces on the RHS to get
\begin{align*}
-\frac14\tr[\mathbf{M}_{z,\eta_{z,t}}(Y)^{-1}\mathbf{M}(X^{1},X^{2})]^{2}&=-\frac12\tr {H}^{(2)}_{\mathbf{z}}(\eta_{z,t}){H}^{(2)}_{\overline{\mathbf{z}}}(\eta_{z,t})\tr Y^{\ast}X^{1}\overline{Y}X^{2}\\
&-\frac12\tr A^{(2)}(\mathbf{z})\tilde{H}_{\mathbf{z}}^{(2)}(\eta_{z,t})\tilde{H}_{\overline{\mathbf{z}}}^{(2)}(\eta_{z,t})A^{(2)}(\overline{\mathbf{z}})\tr X^{2,\ast}X^{2}\\
&-\frac12\tr A^{(2)}(\mathbf{z})^{\ast}{H}^{(2)}_{\mathbf{z}}(\eta_{z,t}){H}^{(2)}_{\overline{\mathbf{z}}}(\eta_{z,t})A^{(2)}(\overline{\mathbf{z}})\tr X^{1}X^{1,\ast}\\
&-\frac12\tr \tilde{H}_{\mathbf{z}}^{(2)}(\eta_{z,t})\tilde{H}_{\overline{\mathbf{z}}}^{(2)}(\eta_{z,t})\tr YX^{2,\ast}Y^{T}X^{1,\ast}.
\end{align*}
Next, we compute
\begin{align*}
\exp\left\{-\frac{N}{2t}\tr XX^{\ast}\right\}=\exp\left\{-\frac{N}{2t}\tr X^{1}X^{1,\ast}\right\}\exp\left\{-\frac{N}{2t}\tr X^{2}X^{2,\ast}\right\}\exp\left\{-\frac{N}{t}\tr YY^{\ast}\right\}.
\end{align*}
Therefore, we have 
\begin{align*}
&\exp\left\{-\frac14\tr[\mathbf{M}_{z,\eta_{z,t}}(Y)^{-1}\mathbf{M}(X^{1},X^{2})]^{2}\right\}\exp\left\{-\frac{N}{2t}\tr XX^{\ast}\right\}\\
&=\exp\left\{-\left(\frac{N}{t}+\tr A^{(2)}(\mathbf{z})\tilde{H}_{\mathbf{z}}^{(2)}(\eta_{z,t})\tilde{H}_{\overline{\mathbf{z}}}^{(2)}(\eta_{z,t})A^{(2)}(\overline{\mathbf{z}})\right)\frac12\tr X^{2,\ast}X^{2}\right\}\\
&\times\exp\left\{-\left(\frac{N}{t}+\tr A^{(2)}(\mathbf{z})^{\ast}{H}^{(2)}_{\mathbf{z}}(\eta_{z,t}){H}^{(2)}_{\overline{\mathbf{z}}}(\eta_{z,t})A^{(2)}(\overline{\mathbf{z}})\right)\frac12\tr X^{1,\ast}X^{1}\right\}\\
&\times\exp\left\{-\frac12\tr {H}^{(2)}_{\mathbf{z}}(\eta_{z,t}){H}^{(2)}_{\overline{\mathbf{z}}}(\eta_{z,t})\tr Y^{\ast}X^{1}\overline{Y}X^{2}-\frac12\tr \tilde{H}_{\mathbf{z}}^{(2)}(\eta_{z,t})\tilde{H}_{\overline{\mathbf{z}}}^{(2)}(\eta_{z,t})\tr YX^{2,\ast}Y^{T}X^{1,\ast}\right\}\\
&\times\exp\left\{-\frac{N}{t}\tr YY^{\ast}\right\}.
\end{align*}
Because $X^{1},X^{2}$ are $2\times2$ skew-symmetric, we know that $\tr X^{2,\ast}X^{2}=2|x_{2}|^{2}$ and $\tr X^{1,\ast}X^{1}=2|x_{1}|^{2}$ for $x_{1},x_{2}\in\C$. Moreover, by the a priori estimate $\|Y\|_{\mathrm{op}}\lesssim \eta_{z,t}\lesssim t$ that we restrict to in $\Upsilon_{\mathrm{main}}$, we also know that $\tr Y^{\ast}X^{1}\overline{Y}X^{2}=c_{1}x_{1}x_{2}$ and $\tr YX^{2,\ast}Y^{T}X^{1,\ast}=c_{2}\overline{x}_{1}\overline{x}_{2}$ for constants $c_{1},c_{2}=O(\eta_{z,t}^{2})=t^{2}$. Next, we use Lemma \ref{2GE} to bound from above both $\tr\tilde{H}^{(2)}_{\mathbf{z}}(\eta_{z,t})\tilde{H}^{(2)}_{\overline{\mathbf{z}}}(\eta_{z,t})$ and $\tr H_{\mathbf{z}}^{(2)}(\eta_{z,t})H_{\overline{\mathbf{z}}}^{(2)}(\eta_{z,t})$ by $O(N\eta_{z,t}^{-2})$ with high probability. In particular, we deduce 
\begin{align*}
&\exp\left\{-\frac12\tr {H}^{(2)}_{\mathbf{z}}(\eta_{z,t}){H}^{(2)}_{\overline{\mathbf{z}}}(\eta_{z,t})\tr Y^{\ast}X^{1}\overline{Y}X^{2}-\frac12\tr \tilde{H}_{\mathbf{z}}^{(2)}(\eta_{z,t})\tilde{H}_{\overline{\mathbf{z}}}^{(2)}(\eta_{z,t})\tr YX^{2,\ast}Y^{T}X^{1,\ast}\right\}\\
&=\exp\left\{c_{3}x_{1}x_{2}+c_{4}\overline{x}_{1}\overline{x}_{2}\right\},
\end{align*}
where $c_{3},c_{4}=O(N)$. By combining the previous two displays, we obtain
\begin{align}
&\exp\left\{-\frac14\tr[\mathbf{M}_{z,\eta_{z,t}}(Y)^{-1}\mathbf{M}(X^{1},X^{2})]^{2}\right\}\exp\left\{-\frac{N}{2t}\tr XX^{\ast}\right\}\label{eq:mtquadform}\\
&=\exp\left\{-\begin{pmatrix}x_{1}\\x_{2}\end{pmatrix}^{\ast}\mathbf{T}_{N,t,\mathbf{z}}(\eta_{z,t})\begin{pmatrix}x_{1}\\x_{2}\end{pmatrix}\right\}\nonumber
\end{align}
where $\mathbf{T}_{N,t,\mathbf{z}}(\eta_{z,t})$ is the $2\times2$ matrix
\begin{align*}
\mathbf{T}_{N,t,\mathbf{z}}&=\begin{pmatrix}\mathbf{T}_{N,t,\mathbf{z},11}&\mathbf{T}_{N,t,\mathbf{z},12}\\\mathbf{T}_{N,t,\mathbf{z},21}&\mathbf{T}_{N,t,\mathbf{z},22}\end{pmatrix},\\
\mathbf{T}_{N,t,\mathbf{z},11}&=\frac{N}{t}+\tr A^{(2)}(\mathbf{z})^{\ast}{H}^{(2)}_{\mathbf{z}}(\eta_{z,t}){H}^{(2)}_{\overline{\mathbf{z}}}(\eta_{z,t})A^{(2)}(\overline{\mathbf{z}}),\\
\mathbf{T}_{N,t,\mathbf{z},22}&=\frac{N}{t}+\tr A^{(2)}(\mathbf{z})\tilde{H}_{\mathbf{z}}^{(2)}(\eta_{z,t})\tilde{H}_{\overline{\mathbf{z}}}^{(2)}(\eta_{z,t})A^{(2)}(\overline{\mathbf{z}}),\\
|\mathbf{T}_{N,t,\mathbf{z},12}|&+|\mathbf{T}_{N,t,\mathbf{z},21}|\lesssim N.
\end{align*}
We now combine \eqref{eq:upsmain}, \eqref{eq:pfaffexpandmy}, \eqref{eq:dettoexpmy}, \eqref{eq:mtomzeta}, and \eqref{eq:mtquadform} to deduce
\begin{align}
\Upsilon_{\mathrm{main}}&:=\int_{\substack{\|P-\eta_{z,t}^{2}\|_{\mathrm{max}}\leq N^{\delta-\frac12}\\\|X^{1}\|_{\max},\|X^{2}\|_{\max}\leq N^{\delta-\frac12}}}dY e^{-\frac{N}{t}\tr YY^{\ast}}|\det[\mathbf{M}(Y)]|^{\frac12}\label{eq:upsilonmainreduction}\\
&\times \exp\left\{-\begin{pmatrix}x_{1}\\x_{2}\end{pmatrix}^{\ast}\mathbf{T}_{N,t,\mathbf{z}}(\eta_{z,t})\begin{pmatrix}x_{1}\\x_{2}\end{pmatrix}\right\}dx_{1}dx_{2}\left\{1+O\left(N^{-\frac12+3\delta+C\epsilon_{0}}\right)\right\}.\nonumber
\end{align}
For this, we note the change of variables $dX$ to $dYdx_{1}dx_{2}$ has Jacobian $1$. Because of the constraints $\|X^{1}\|_{\max},\|X^{2}\|_{\max}\leq N^{\delta-1/2}$ above, we can replace $\|P-\eta_{z,t}^{2}\|_{\max}\leq N^{\delta-1/2}$ by $\|YY^{\ast}-\eta_{z,t}^{2}\|_{\max}\leq N^{\delta-1/2}$ in \eqref{eq:upsilonmainreduction}. Next, by Lemma \ref{2GE}, the on-diagonal terms of $\mathbf{T}_{N,t,\mathbf{z}}$ are $\gtrsim Nt^{-1}$. Thus, $\mathbf{T}_{N,t,\mathbf{z}}\gtrsim Nt^{-1}$. Since $t=N^{-\epsilon_{0}}$, we can remove the constraint $\|X^{1}\|_{\max},\|X^{2}\|_{\max}\leq N^{\delta-1/2}$ from \eqref{eq:upsilonmainreduction} by using the Gaussian weight in the second line. In particular, we can perform the Gaussian integration over $x_{1},x_{2}$ and obtain
\begin{align*}
\Upsilon_{\mathrm{main}}&=\int_{\|YY^{\ast}-\eta_{z,t}^{2}\|_{\max}\leq N^{\delta-1/2}}e^{-\frac{N}{t}\tr YY^{\ast}}|\det[\mathbf{M}(Y)]|^{\frac12}|\det\mathbf{T}_{N,t,z}|^{-2}dY\left[1+O(N^{-\kappa})\right]
\end{align*}
for some $\kappa>0$. As mentioned earlier, the on-diagonal entries of $\mathbf{T}_{N,t,z}$ are $\gtrsim N^{-1}$, whereas the off-diagonal entries are $O(N)$. Thus, for some $\kappa>0$, we have $\det\mathbf{T}_{N,t,z}=\mathbf{T}_{N,t,\mathbf{z},11}\mathbf{T}_{N,t,\mathbf{z},22}[1+O(N^{-\kappa})]=\upsilon_{z,t}^{-2}[1+O(N^{-\kappa})]$ (recall $\upsilon_{z,t}$ from before the statement of this lemma). Therefore, we have 
\begin{align}
\Upsilon_{\mathrm{main}}&=\upsilon_{z,t}^{-2}\int_{\|YY^{\ast}-\eta_{z,t}^{2}\|_{\max}\leq N^{\delta-1/2}}e^{-\frac{N}{t}\tr YY^{\ast}}|\det[\mathbf{M}(Y)]|^{\frac12}dY\left[1+O(N^{-\kappa})\right].\label{eq:upsilonbacktocomplexcase}
\end{align}
Now, observe $|\det[\mathbf{M}(Y)]|^{1/2}=\det M_{0}(Y)$. At this point, we proceed as in Section 5 of \cite{MO} to compute the integral in \eqref{eq:upsilonbacktocomplexcase}. Ultimately, after changing variables $Y\mapsto U_{1}SU_{2}^{\ast}$ via singular value decomposition with $U_{1},U_{2}\in\mathbf{U}(2)$ and $S\geq0$ positive semi-definite, we deduce the following (which is the $\tilde{F}_{0}$ formula in Section 5 of \cite{MO}):
\begin{align*}
\Upsilon_{\mathrm{main}}&=\upsilon_{z,t}^{-2}\det[(A-z)^{\ast}(A-z)+\eta_{z,t}^{2}]^{2}e^{-\frac{2N}{t}\eta_{z,t}^{2}}\\
&\times\prod_{j=1,2}|\det[V_{j}^{\ast}G_{z}^{(j-1)}(\eta_{z,t})V_{j}]|^{2}\exp\left\{-\sqrt{\frac{N}{\sigma_{z,t}}}\langle G_{z}(\eta_{z,t})Z_{j}\rangle\right\}\\
&\times|\mathrm{Vol}(\mathbf{U}(2))|^{2}\int_{\mathbf{U}(2)\times\mathbf{U}(2)}d\mu(U_{1},U_{2})e^{-\frac{1}{2\sigma_{z,t}}\sum_{j,k=1,2}\langle G_{z}(\eta_{z,t})Z_{jk}G_{z}(\eta_{z,t})Z_{kj}\rangle}\\
&\times\int_{\R^{2}}\prod_{j=1,2}e^{-2N\gamma_{z,t}(s_{j}-\eta_{z,t})^{2}}e^{-i\frac{1}{\sqrt{N\sigma_{z,t}}}(s_{j}-\eta_{z,t})\tr[G_{z}(\eta_{z,t})^{2}Z_{jj}]}|s_{1}-s_{2}|^{2}ds_{1}ds_{2}\\
&\times\left[1+O(N^{-\frac12+\kappa})\right],
\end{align*}
where $Z$ has $2\times2$ blocks $Z_{jk}$ indexed by $j,k=1,2$, which are defined as
\begin{align*}
Z_{jk}=\begin{pmatrix}0&(U_{1}^{\ast}\mathbf{z}U_{1})_{jk}\\(U_{2}^{\ast}\overline{\mathbf{z}}U_{2})_{jk}&0\end{pmatrix}.
\end{align*}
We omit the details because they follow exactly as in the complex case (the entries of $A$ essentially have nothing to do with this argument, as long as $\|A\|_{\mathrm{op}}=O(1)$). Now, we change variables $\sqrt{4N\gamma_{z,t}}(s_{j}-\eta_{z,t})\mapsto s_{j}$. This introduces a Jacobian of $\frac{1}{4N\gamma_{z,t}}$, and the Vandermonde determinant $|s_{1}-s_{2}|^{2}$ gives another factor of $\frac{1}{4N\gamma_{z,t}}$. Hence, we have 
\begin{align*}
\Upsilon_{\mathrm{main}}&=\frac{1}{16N^{2}\gamma_{z,t}^{2}\upsilon_{z,t}^{2}}\det[(A-z)^{\ast}(A-z)+\eta_{z,t}^{2}]^{2}e^{-\frac{2N}{t}\eta_{z,t}^{2}}\\
&\times\prod_{j=1,2}|\det[V_{j}^{\ast}G_{z}^{(j-1)}(\eta_{z,t})V_{j}]|^{2}\exp\left\{-\sqrt{\frac{N}{\sigma_{z,t}}}\langle G_{z}(\eta_{z,t}Z_{j}\rangle\right\}\\
&\times|\mathrm{Vol}(\mathbf{U}(2))|^{2}\int_{\mathbf{U}(2)\times\mathbf{U}(2)}d\mu(U_{1},U_{2})e^{-\frac{1}{2\sigma_{z,t}}\sum_{j,k=1,2}\langle G_{z}(\eta_{z,t})Z_{jk}G_{z}(\eta_{z,t})Z_{kj}\rangle}\\
&\times\int_{\R^{2}}\prod_{j=1,2}e^{-\frac12s_{j}^{2}}e^{-\frac{i}{2\sqrt{\gamma_{z,t}\sigma_{z,t}}}s_{j}\langle G_{z}(\eta_{z,t})^{2}Z_{jj}\rangle}\left[1+O(N^{-\frac12+C\epsilon_{0}+\kappa})\right]|s_{1}-s_{2}|^{2}ds_{1}ds_{2}.
\end{align*}
Let us first compute $\langle G_{z}(\eta_{z,t})Z_{jk}G_{z}(\eta_{z,t})Z_{kj}\rangle$. To this end, we compute
\begin{align*}
G_{z}(\eta_{z,t})Z_{jk}&=\begin{pmatrix}i\eta_{z,t}H_{z}(\eta_{z,t})&H_{z}(\eta_{z,t})(A-z)\\(A-z)^{\ast}H_{z}(\eta_{z,t})&i\eta_{z,t}\tilde{H}_{z}(\eta_{z,t})\end{pmatrix}\begin{pmatrix}0&(U_{1}^{\ast}\mathbf{z}U_{1})_{jk}\\(U_{2}^{\ast}\overline{\mathbf{z}}U_{2})_{jk}&0\end{pmatrix}\\
&=\begin{pmatrix}(U_{2}^{\ast}\overline{\mathbf{z}}U_{2})_{jk}H_{z}(\eta_{z,t})(A-z)&i\eta_{z,t}(U_{1}^{\ast}\mathbf{z}U_{1})_{jk}H_{z}(\eta_{z,t})\\i\eta_{z,t}(U_{2}^{\ast}\overline{\mathbf{z}}U_{2})_{jk}\tilde{H}_{z}(\eta_{z,t})&(U_{1}^{\ast}\mathbf{z}U_{1})_{jk}(A-z)^{\ast}H_{z}(\eta_{z,t})\end{pmatrix}.
\end{align*}
The same formula holds for $G_{z}(\eta_{z,t})Z_{kj}$ if we swap $j,k$. Hence, we have 
\begin{align*}
&\tr[G_{z}(\eta_{z,t})Z_{jk}G_{z}(\eta_{z,t})Z_{kj}]\\
&=-\eta_{z,t}^{2}(U_{1}^{\ast}\mathbf{z}U_{1})_{jk}(U_{2}^{\ast}\overline{\mathbf{z}}U_{2})_{kj}\tr H_{z}(\eta_{z,t})\tilde{H}_{z}(\eta_{z,t})\\
&+(U_{2}^{\ast}\overline{\mathbf{z}}U_{2})_{jk}(U_{2}^{\ast}\overline{\mathbf{z}}U_{2})_{kj}\tr[(H_{z}(\eta_{z,t})(A-z))^{2}]\\
&-\eta_{z,t}^{2}(U_{2}^{\ast}\overline{\mathbf{z}}U_{2})_{jk}(U_{1}^{\ast}\mathbf{z}U_{1})_{kj}\eta_{z,t}^{2}\tr H_{z}(\eta_{z,t})\tilde{H}_{z}(\eta_{z,t})\\
&+(U_{1}^{\ast}\mathbf{z}U_{1})_{jk}(U_{1}^{\ast}\mathbf{z}U_{1})_{kj}\tr[((A-z)^{\ast}H_{z}(\eta_{z,t}))^{2}]\\
&=-2N\alpha_{z,t}\left[(U_{2}^{\ast}\overline{\mathbf{z}}U_{2})_{jk}(U_{1}^{\ast}\mathbf{z}U_{1})_{kj}+(U_{1}^{\ast}\mathbf{z}U_{1})_{jk}(U_{2}^{\ast}\overline{\mathbf{z}}U_{2})_{kj}\right]\\
&+(U_{2}^{\ast}\overline{\mathbf{z}}U_{2})_{jk}(U_{2}^{\ast}\overline{\mathbf{z}}U_{2})_{kj}\tr[(H_{z}(\eta_{z,t})(A-z))^{2}]\\
&+(U_{1}^{\ast}\mathbf{z}U_{1})_{jk}(U_{1}^{\ast}\mathbf{z}U_{1})_{kj}\tr[((A-z)^{\ast}H_{z}(\eta_{z,t}))^{2}].
\end{align*}
When we sum over $j,k$ and divide by $N$ to take normalized trace, we get
\begin{align*}
&-\frac{1}{2\sigma_{z,t}}\sum_{j,k=1,2}\langle G_{z}(\eta_{z,t})Z_{jk}G_{z}(\eta_{z,t})Z_{jk}\rangle\\
&=\frac{\alpha_{z,t}}{\sigma_{z,t}}\tr U_{2}^{\ast}\overline{\mathbf{z}}U_{2}U_{1}^{\ast}\mathbf{z}U_{1}-\frac{1}{2\sigma_{z,t}}\langle[H_{z}(\eta_{z,t})(A-z)]^{2}\rangle(\overline{z}_{1}^{2}+\overline{z}_{2}^{2})\\
&-\frac{1}{2\sigma_{z,t}}\langle[(A-z)^{\ast}H_{z}(\eta_{z,t})]^{2}\rangle(z_{1}^{2}+z_{2}^{2})\\
&=\frac{\alpha_{z,t}}{\sigma_{z,t}}\tr U_{2}^{\ast}\overline{\mathbf{z}}U_{2}U_{1}^{\ast}\mathbf{z}U_{1}-\sum_{j=1,2}\mathrm{Re}\left(\frac{\overline{\delta}_{z,t}z_{j}^{2}}{\sigma_{z,t}}\right).
\end{align*}
Next, we compute $\langle G_{z}(\eta_{z,t})^{2}Z_{jj}\rangle$. To this end, we first note that 
\begin{align*}
\langle G_{z}(\eta_{z,t})^{2}Z_{jj}\rangle&=(U_{1}^{\ast}\mathbf{z}U_{1})_{jj}\langle[G_{z}(\eta_{z,t})^{2}]_{21}\rangle+(U_{2}^{\ast}\overline{\mathbf{z}}U_{2})_{jj}\langle[G_{z}(\eta_{z,t})^{2}]_{12}\rangle,
\end{align*}
so only the off-diagonal blocks of $G_{z}(\eta_{z,t})^{2}$ matter. These off-diagonal blocks are 
\begin{align*}
[G_{z}(\eta_{z,t})^{2}]_{12}&=i\eta_{z,t}H_{z}(\eta_{z,t})^{2}(A-z)+i\eta_{z,t}H_{z}(\eta_{z,t})(A-z)\tilde{H}_{z}(\eta_{z,t})\\
&=2i\eta_{z,t}H_{z}(\eta_{z,t})^{2}(A-z),\\
[G_{z}(\eta_{z,t})^{2}]_{21}&=i\eta_{z,t}(A-z)^{\ast}H_{z}(\eta_{z,t})^{2}+i\eta_{z,t}\tilde{H}_{z}(\eta_{z,t})(A-z)^{\ast}H_{z}(\eta_{z,t})\\
&=2i\eta_{z,t}(A-z)^{\ast}H_{z}(\eta_{z,t})^{2}.
\end{align*}
Note that $\langle[G_{z}(\eta_{z,t})^{2}]_{12}\rangle=\overline{\langle[G_{z}(\eta_{z,t})^{2}]_{21}\rangle}=2i\beta_{z,t}$ by invariance of trace under transpose since $H_{z}$ is self-adjoint. We deduce
\begin{align*}
\langle G_{z}(\eta_{z,t})^{2}Z_{jj}\rangle&=2i\beta_{z,t}(U_{2}^{\ast}\overline{\mathbf{z}}U_{2})_{jj}+2i\overline{\beta}_{z,t}(U_{1}^{\ast}\mathbf{z}U_{1})_{jj}.
\end{align*}
We can now rewrite the $\R^{2}$ integration in $\Upsilon_{\mathrm{main}}$ as follows (where $\mathbf{s}=(s_{1},s_{2})$):
\begin{align*}
&\int_{\R^{2}}\prod_{j=1,2}e^{-\frac12s_{j}^{2}}e^{\frac{1}{\sqrt{\gamma_{z,t}\sigma_{z,t}}}s_{j}[\beta_{z,t}(U_{2}^{\ast}\mathbf{z}U_{2})_{jj}+\overline{\beta}_{z,t}(U_{1}^{\ast}\mathbf{z}U_{1})_{jj}]}|s_{1}-s_{2}|^{2}ds_{1}ds_{2}\\
&=\int_{\R^{2}}e^{-\frac12\mathbf{s}^{2}}e^{\frac{1}{\sqrt{\gamma_{z,t}\sigma_{z,t}}}\tr\mathbf{s}[\beta_{z,t}(U_{2}^{\ast}\overline{\mathbf{z}}U_{2})+\overline{\beta}_{z,t}(U_{1}^{\ast}\mathbf{z}U_{1})]}|s_{1}-s_{2}|^{2}ds_{1}ds_{2}.
\end{align*}
Note that the $U_{1},U_{2}$ dependence in the integrand of this $\R^{2}$ integral is invariant under right translation by unitary matrices. Thus, we can replace $U_{1}$ by $U_{1}V$ and $U_{2}$ by $U_{2}V$, and average over $V\in\mathbf{U}(2)$ according to Haar measure. Then, with a change-of-variables factor of $\frac{|\mathrm{Vol}(\mathbf{U}(2))|}{8\pi^{2}}|s_{1}-s_{2}|^{2}$, we can replace integration on $\R^{2}\times\mathbf{U}(2)$ by integration on the space $M_{2}^{sa}(\C)$ of Hermitian $2\times2$ matrices. Ultimately, we deduce that the integral in the previous display is equal to
\begin{align*}
&\frac{8\pi^{2}}{|\mathrm{Vol}(\mathbf{U}(2))|}\int_{M_{2}^{sa}(\C)}e^{-\frac12\tr Q^{2}}e^{\frac{1}{\sqrt{\gamma_{z,t}\sigma_{z,t}}}\tr[\beta_{z,t}(U_{2}^{\ast}\overline{\mathbf{z}}U_{2})+\overline{\beta}_{z,t}(U_{1}^{\ast}\mathbf{z}U_{1})]Q}dQ\\
&=\frac{16\pi^{4}}{|\mathrm{Vol}(\mathbf{U}(2))|}\exp\left\{\frac{|\beta_{z,t}|^{2}}{\gamma_{z,t}\sigma_{z,t}}\tr[U_{2}^{\ast}\overline{\mathbf{z}}U_{2}U_{1}^{\ast}\mathbf{z}U_{1}]\right\}\\
&\times\exp\left\{\frac{\beta_{z,t}^{2}}{2\gamma_{z,t}\sigma_{z,t}}\tr[U_{2}^{\ast}\overline{\mathbf{z}}U_{2}U_{2}^{\ast}\overline{\mathbf{z}}U_{2}]+\frac{\overline{\beta}_{z,t}^{2}}{2\gamma_{z,t}\sigma_{z,t}}\tr[U_{1}^{\ast}\mathbf{z}U_{1}U_{1}^{\ast}\mathbf{z}U_{1}]\right\}\\
&=\frac{16\pi^{4}}{|\mathrm{Vol}(\mathbf{U}(2))|}\exp\left\{\left(\frac{\alpha_{z,t}}{\sigma_{z,t}}+\frac{|\beta_{z,t}|^{2}}{\gamma_{z,t}\sigma_{z,t}}\right)\tr[U_{2}^{\ast}\overline{\mathbf{z}}U_{2}U_{1}^{\ast}\mathbf{z}U_{1}]+\sum_{j=1,2}\mathrm{Re}\left(\frac{\overline{\beta}_{z,t}^{2}z_{j}^{2}}{\gamma_{z,t}\sigma_{z,t}}\right)\right\}
\end{align*}
where the integral is computed by Gaussian integration. Putting it altogether to compute $\Upsilon_{\mathrm{main}}$, we deduce 
\begin{align*}
\Upsilon_{\mathrm{main}}&=\frac{1+O(N^{-\kappa})}{16N^{2}\gamma_{z,t}^{2}\upsilon_{z,t}^{2}}\det[(A-z)^{\ast}(A-z)+\eta_{z,t}^{2}]^{2}e^{-\frac{2N}{t}\eta_{z,t}^{2}}\\
&\times\prod_{j=1,2}|\det[V_{j}^{\ast}G_{z}^{(j-1)}(\eta_{z,t})V_{j}]|^{2}\exp\left\{-\sqrt{\frac{N}{\sigma_{z,t}}}\langle G_{z}(\eta_{z,t}Z_{j}\rangle\right\}\\
&\times16\pi^{4}|\mathrm{Vol}(\mathbf{U}(2))|e^{\frac{\alpha_{z,t}}{\sigma_{z,t}}-\sum_{j=1,2}\mathrm{Re}\left(\frac{\overline{\delta}_{z,t}z_{j}^{2}}{\sigma_{z,t}}\right)}e^{\sum_{j=1,2}\mathrm{Re}\left(\frac{\overline{\beta}_{z,t}^{2}z_{j}^{2}}{\gamma_{z,t}\sigma_{z,t}}\right)}\\
&\times\int_{\mathbf{U}(2)\times\mathbf{U}(2)}d\mu(U_{1},U_{2})\exp\left\{\left(\frac{\alpha_{z,t}}{\sigma_{z,t}}+\frac{|\beta_{z,t}|^{2}}{\gamma_{z,t}\sigma_{z,t}}\right)\tr[U_{2}^{\ast}\overline{\mathbf{z}}U_{2}U_{1}^{\ast}\mathbf{z}U_{1}]\right\}.
\end{align*}
Now, recall that $\alpha_{z,t}+|\beta_{z,t}|^{2}\gamma_{z,t}^{-1}=\sigma_{z,t}$, so in the last line, the factor in front of the trace is $1$. So, we can combine terms and obtain the following, in which the last two identities are proven by the Harish-Chandra-Itzykson-Zuber formula as in \cite{MO}, definition of $\rho_{\mathrm{GinUE}}^{(2)}$, and $|\mathrm{Vol}(\mathbf{U}(2))|=(2\pi)^{3}$:
\begin{align*}
\Upsilon_{\mathrm{main}}&=\frac{\pi^{4}[1+O(N^{-\kappa})]}{N^{2}\gamma_{z,t}^{2}\upsilon_{z,t}^{2}}\det[(A-z)^{\ast}(A-z)+\eta_{z,t}^{2}]^{2}e^{-\frac{2N}{t}\eta_{z,t}^{2}}\\
&\times\prod_{j=1,2}|\det[V_{j}^{\ast}G_{z}^{(j-1)}(\eta_{z,t})V_{j}]|^{2}\psi_{j}e^{-z_{j}^{2}}\\
&\times|\mathrm{Vol}(\mathbf{U}(2))|\int_{\mathbf{U}(2)\times\mathbf{U}(2)}d\mu(U_{1},U_{2})\exp\left\{\tr[U_{2}^{\ast}\overline{\mathbf{z}}U_{2}U_{1}^{\ast}\mathbf{z}U_{1}]\right\}\\
&=\frac{\pi^{4}[1+O(N^{-\kappa})]}{N^{2}\gamma_{z,t}^{2}\upsilon_{z,t}^{2}}\det[(A-z)^{\ast}(A-z)+\eta_{z,t}^{2}]^{2}e^{-\frac{2N}{t}\eta_{z,t}^{2}}|\mathrm{Vol}(\mathbf{U}(2))|\frac{\det[e^{z_{i}\overline{z}_{j}}]}{|\Delta(\mathbf{z})|^{2}}\\
&\times\prod_{j=1,2}|\det[V_{j}^{\ast}G_{z}^{(j-1)}(\eta_{z,t})V_{j}]|^{2}\psi_{j}e^{-z_{j}^{2}}\\
&=\frac{8\pi^{7}}{N^{2}\gamma_{z,t}^{2}\upsilon_{z,t}^{2}}\det[(A-z)^{\ast}(A-z)+\eta_{z,t}^{2}]^{2}e^{-\frac{2N}{t}\eta_{z,t}^{2}}\frac{\rho_{\mathrm{GinUE}}^{(2)}(z_{1},z_{2})}{\Delta(\mathbf{z})^{2}}\\
&\times\prod_{j=1,2}|\det[V_{j}^{\ast}G_{z}^{(j-1)}(\eta_{z,t})V_{j}]|^{2}\psi_{j}.
\end{align*}
Now, by using our error estimates for $\Upsilon_{1},\Upsilon_{2},\Upsilon_{3},\Upsilon_{4}$ and trivial bounds $\gamma_{z,t}\lesssim t$ and $\upsilon_{z,t}\lesssim N^{D}$ for some $D=O(1)$, we have
\begin{align}
\Delta(\mathbf{z})^{2}\int_{M_{2\times2}(\C)}e^{-\frac{N}{t}\tr XX^{\ast}}\det[iM(X)]dX&=\Delta(\mathbf{z})^{2}\Upsilon_{\mathrm{main}}(1+O(N^{-\kappa}))
\end{align}
locally uniformly in $z_{1},z_{2}$, at which point we multiply by $\frac{4N{|z-\overline{z}|^{2}}}{t^{4}\pi^{4}\sigma_{z,t}^{3}}$ to finish the proof.
\end{proof}

\subsection{Fourier transform computation for \texorpdfstring{$\tilde{K}_{j}(z_{j})$}{K\_j(z\_j)}}
We require the following analog of Lemma 2.3 in \cite{MO} for real matrices and $V^{2}(\R^{N})$.
\begin{lemma}\label{lemma:fourier}
Suppose $f\in L^{1}(M_{2\times N}(\R))$is continuous in a neighborhood of $V^{2}(\R^{N})=\mathbf{O}(N)/\mathbf{O}(N-2)$. Let $M_{2}^{sa}(\R)$ be the space of symmetric $2\times2$ matrices, and define
\begin{align}
\hat{f}(P):=\sqrt{\frac{1}{2\pi^{7}}}\int_{M_{2\times N}(\R)}e^{-i\tr[PXX^{\ast}]}f(X)dX, \quad P\in M_{2}^{sa}(\R).
\end{align}
Then we have 
\begin{align}
\int_{V^{2}(\R^{N})}f(\mathbf{v})d\mathbf{v}=\int_{M_{2}^{sa}(\R)}e^{i\tr P}\hat{f}(P)dP.
\end{align}
We recall that $d\mathbf{v}$ is integration with respect to the rotationally invariant volume form on $V^{2}(\R^{N})$.
\end{lemma}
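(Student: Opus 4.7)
The plan is to mirror the proof of the complex analog (Lemma 2.3 in \cite{MO}) with appropriate modifications for the real case. First I would unfold the definition of $\hat{f}$ and swap the order of integration, reducing the right-hand side to
\[
\sqrt{\frac{1}{2\pi^{7}}}\int_{M_{2\times N}(\R)} f(X)\, K(X)\, dX, \qquad K(X) := \int_{M_{2}^{sa}(\R)} e^{i\tr[P(I_{2}-XX^{\ast})]}\, dP.
\]
To make this swap rigorous in spite of the oscillatory inner integral, I would insert a Gaussian regularizer $e^{-\varepsilon\tr P^{2}}$, apply Fubini by absolute convergence, and take $\varepsilon\to 0^{+}$ at the end using $f\in L^{1}$ and continuity of $f$ near $V^{2}(\R^{N})$.

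Next I would compute $K(X)$ as a product of Dirac deltas pinning $XX^{\ast}$ to $I_{2}$. Since $M_{2}^{sa}(\R)$ is three-dimensional, choose coordinates $(p_{1},p_{2},p_{3})$ by $P=p_{1}E_{11}+p_{2}E_{22}+p_{3}(E_{12}+E_{21})$; the trace pairing then reads $\tr(PQ)=p_{1}q_{1}+p_{2}q_{2}+2p_{3}q_{3}$ for $Q=I_{2}-XX^{\ast}$, and three one-dimensional Fourier inversions give
\[
K(X) = 4\pi^{3}\,\delta\!\bigl((XX^{\ast})_{11}-1\bigr)\,\delta\!\bigl((XX^{\ast})_{22}-1\bigr)\,\delta\!\bigl((XX^{\ast})_{12}\bigr),
\]
where the factor $\tfrac12$ coming from the coefficient $2$ in $2p_{3}q_{3}$ is absorbed into the overall constant. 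Thus the integrand is supported on $\{X: XX^{\ast}=I_{2}\}=V^{2}(\R^{N})$.

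Finally I would evaluate $\int f(X)K(X)\,dX$ via the polar decomposition $X=SU$ with $S:=(XX^{\ast})^{1/2}\in M_{2}^{sa,+}(\R)$ and $U\in V^{2}(\R^{N})$. The classical Wishart-type change of variables gives $dX = c_{N}\,(\det XX^{\ast})^{(N-3)/2}\, d(XX^{\ast})\, dU$ for an explicit constant $c_{N}$, and the three deltas pin $XX^{\ast}=I_{2}$, so that $\det XX^{\ast}=1$ and the $dU$ factor yields an $\mathbf{O}(N)$-invariant measure on $V^{2}(\R^{N})$. This measure is therefore proportional to the rotationally invariant volume form $d\mathbf{v}$, and the prefactor $\sqrt{1/(2\pi^{7})}$ in the definition of $\hat f$ is precisely calibrated to make that constant of proportionality equal to $1$ with the normalization of $d\mathbf{v}$ used throughout the paper.

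The main obstacle I anticipate is not conceptual but bookkeeping of constants: the product $\sqrt{1/(2\pi^{7})}\cdot 4\pi^{3}\cdot c_{N}$ must match the normalization of $d\mathbf{v}$, and since the real case has a three-dimensional Fourier variable (as opposed to the four-dimensional Hermitian variable in the complex case of \cite{MO}) the powers of $\pi$ shift relative to the complex analog. Verifying the Jacobian $c_{N}$ and confirming that the stated prefactor produces exactly the invariant volume form tacitly used in the rest of the paper is the delicate point; once that is checked, the identity follows immediately from the three steps above.
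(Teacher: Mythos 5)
Your proposal is correct and takes essentially the same route as the paper: the paper's proof also regularizes the constraint $XX^{\ast}=I_{2}$ by a Gaussian, uses the same Wishart-type polar decomposition (Proposition 4 of \cite{DGGJ}, with Jacobian $\tfrac14\det(P)^{(N-3)/2}\,dP\,d\mu_{N,2}$), and employs a Hubbard--Stratonovich identity, which is precisely your Fourier-delta computation run from the left-hand side instead of the right. The constant bookkeeping you single out is indeed the only delicate point, and the paper handles it with the same level of care, since the exact normalization is immaterial for the rest of the argument.
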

\begin{proof}
We first define
\begin{align*}
I_{\epsilon}(f):=\frac{1}{2\pi^{2}\epsilon^{2}}\int_{M_{2\times N}(\R)}e^{-\frac{1}{2\epsilon}\tr[(XX^{\ast}-I_{2})^{2}]}f(X)dX.
\end{align*}
We use change of variables $X=OP^{\frac12}$, where $O\in V^{2}(\R^{N})$  is a matrix of size $2\times N$ and $P$ is a positive definite matrix of size $N\times N$. This change of variables has Jacobian $dX=\frac14\det(P)^{\frac{N-3}{2}}dPd\mu_{N,2}(O)$, where $\mu_{N,2}$ is the rotationally invariant volume form on $V^{2}(\R^{N})$; see Proposition 4 in \cite{DGGJ}. Thus,
\begin{align*}
I_{\epsilon}(f)&=\frac{1}{2\pi^{2}\epsilon^{2}}\int_{P\geq0}e^{-\frac{1}{2\epsilon}\tr[(P-I_{2})^{2}]}g(P)dP,\\
g(P)&:=\frac14\det(P)^{\frac{N-3}{2}}\int_{V^{2}(\R^{N})}f(OP^{\frac12})d\mu_{N,2}(O).
\end{align*}
We take the $dP$ integral to be over $2\times 2$ positive definite matrices since all but two eigenvalues of $P$ are $0$. After doing so, $(2\pi^{2}\epsilon^{2})^{-1}\exp\{-\frac{1}{2\epsilon}\tr[(P-I_{2})^{2}]\}$ is an approximation to the delta function at $P=I_{2}$. Indeed, first, shift $\tilde{P}:=P-I_{2}$; the $\pi^{-2}\epsilon^{-2}$ comes from scaling the matrix entries of $\tilde{P}$ to be Gaussians which integrate to $1$. Recall $g$ is continuous at $P=I_{2}$. Taking $\epsilon\to0$, we get
\begin{align*}
I_{\epsilon}(f)\to_{\epsilon\to0}\frac14\det(P)^{\frac{N-3}{2}}\int_{V^{2}(\R^{N})}f(O)d\mu_{N,2}(O),
\end{align*}
which we can formally rewrite by replacing $O$ by $\mathbf{v}$ and $d\mu_{N,2}(O)$ by $d\mathbf{v}$. We now compute $I_{\epsilon}(f)$ differently; by a Hubbard-Stratonovich transform, we have 
\begin{align*}
I_{\epsilon}(f)&=\frac{1}{2\pi^{2}\epsilon^{2}}\int_{M_{2\times N}(\R)}\frac{1}{(2\pi)^{\frac32}}\int_{M_{2}^{sa}(\R)}e^{-\frac12\tr P^{2}+\frac{i}{\sqrt{\epsilon}}\tr[P(I_{2}-XX^{\ast})]}f(X)dXdP\\
&=\frac{1}{2^{\frac52}\pi^{\frac72}}\int_{M_{2}^{sa}(\R)}e^{-\frac{\epsilon}{2}\tr P^{2}+i\tr P}\int_{M_{2\times N}(\R)}e^{-i\tr[PXX^{\ast}]}f(X)dXdP,
\end{align*}
at which point we conclude by definition of $\hat{f}(P)$.
\end{proof}
With this in hand, we can now start to compute $\tilde{K}_{j}(z_{j})$. First, we introduce some notation:
\begin{align*}
A_{a_{j},b_{j},\theta_{j}}^{(j-1)}&:=I_{2}\otimes A^{(j-1)}-\Lambda_{a_{j},b_{j},\theta_{j}}\otimes I_{N-2j+2},\\
C_{a_{j},b_{j},\theta_{j}}^{(j-1)}&:=(A_{a_{j},b_{j},\theta_{j}}^{(j-1)})^{\ast}A_{a_{j},b_{j},\theta_{j}}^{(j-1)},\\
\tilde{H}_{a_{j},b_{j},\theta_{j}}^{(j-1)}(\eta)&:=[(A_{a_{j},b_{j},\theta_{j}}^{(j-1)})^{\ast}A_{a_{j},b_{j},\theta_{j}}^{(j-1)}+\eta^{2}]^{-1},\\
\tilde{H}_{a_{j},b_{j},\theta_{j}}^{(j-1)}&:=\tilde{H}_{a_{j},b_{j},\theta_{j}}^{(j-1)}(\eta_{z,t}).
\end{align*}
%
\begin{lemma}\label{lemma:tildekjcomputation}
We have 
\begin{align*}
&\tilde{K}_{j}(z_{j})=\frac{N^{2j+1}t^{-2j-1}e^{\frac{N}{t}\eta_{z,t}^{2}}}{2^{2j+\frac12}\pi^{2j+\frac32}}\det[(A_{a_{j},b_{j},\theta_{j}}^{(j-1)})^{\ast}A_{a_{j},b_{j},\theta_{j}}^{(j-1)}+\eta_{z,t}^{2}]^{-\frac12}\\
&\times\int_{M_{2}^{sa}(\R)}e^{i\frac{N}{2t}\tr P}\det[I+i[\tilde{H}_{a_{j},b_{j},\theta_{j}}^{(j-1)}]^{\frac12}(P\otimes I_{N-2j+2})[\tilde{H}_{a_{j},b_{j},\theta_{j}}^{(j-1)}]^{\frac12}]^{-\frac12}dP.
\end{align*}
\end{lemma}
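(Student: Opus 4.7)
The plan is to apply Lemma~\ref{lemma:fourier} after a short algebraic manipulation that produces the regulator $\eta_{z,t}^{2}$ in the exponent.

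First I would exploit the Stiefel constraint $\|\mathbf{v}\|^{2}=\tr[\mathbf{v}\mathbf{v}^{\ast}]=2$ on $V^{2}(\R^{N-2(j-1)})$ to rewrite
\begin{align*}
-\tfrac{N}{2t}\|A_{a_j,b_j,\theta_j}^{(j-1)}\mathbf{v}\|^{2}=-\tfrac{N}{2t}\mathbf{v}^{T}\bigl[(A_{a_j,b_j,\theta_j}^{(j-1)})^{\ast}A_{a_j,b_j,\theta_j}^{(j-1)}+\eta_{z,t}^{2}\bigr]\mathbf{v}+\tfrac{N\eta_{z,t}^{2}}{t},
\end{align*}
where $\mathbf{v}$ on the right is regarded as a flattened vector in $\R^{2(N-2(j-1))}$. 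This pulls a scalar factor of $\exp(N\eta_{z,t}^{2}/t)$ out of $\tilde{K}_{j}(z_{j})$ and leaves a pure Gaussian integrand $\tilde{f}$ that extends naturally to all of $M_{2\times(N-2(j-1))}(\R)$.

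Next I would apply Lemma~\ref{lemma:fourier} (with $N$ there replaced by $N-2(j-1)$) to rewrite $\int_{V^{2}(\R^{N-2(j-1)})}\tilde{f}\,d\mathbf{v}$ as $\int_{M_{2}^{sa}(\R)}e^{i\tr P}\hat{\tilde{f}}(P)\,dP$. The Fourier transform $\hat{\tilde{f}}(P)$ is a Gaussian integral on $\R^{2(N-2(j-1))}$ whose quadratic form has matrix $i(P\otimes I_{N-2j+2})+\tfrac{N}{2t}[(A_{a_j,b_j,\theta_j}^{(j-1)})^{\ast}A_{a_j,b_j,\theta_j}^{(j-1)}+\eta_{z,t}^{2}]$; since the real part is strictly positive definite, the standard Gaussian formula yields
\begin{align*}
\hat{\tilde{f}}(P)=\tfrac{1}{\sqrt{2\pi^{7}}}\Bigl(\tfrac{2\pi t}{N}\Bigr)^{N-2(j-1)}\det\Bigl[(A_{a_j,b_j,\theta_j}^{(j-1)})^{\ast}A_{a_j,b_j,\theta_j}^{(j-1)}+\eta_{z,t}^{2}+\tfrac{2it}{N}(P\otimes I_{N-2j+2})\Bigr]^{-\frac12}.
\end{align*}

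The final step is to rescale $P\mapsto\tfrac{N}{2t}P$, which turns $\tfrac{2it}{N}(P\otimes I)$ into $i(P\otimes I)$ and $e^{i\tr P}$ into $e^{iN\tr P/(2t)}$, contributing a Jacobian $(\tfrac{N}{2t})^{3}$ since $M_{2}^{sa}(\R)$ is three-dimensional. Factoring out $\det[(A_{a_j,b_j,\theta_j}^{(j-1)})^{\ast}A_{a_j,b_j,\theta_j}^{(j-1)}+\eta_{z,t}^{2}]$, using $\det[A^{\ast}A+\eta^{2}]=\det[AA^{\ast}+\eta^{2}]$ (valid since the matrix is square), and applying Sylvester's identity $\det[I+XY]=\det[I+YX]$ brings the remaining determinant into the form $\det[I+i[H_{a_j,b_j,\theta_j}^{(j-1)}]^{1/2}(P\otimes I_{N-2j+2})[H_{a_j,b_j,\theta_j}^{(j-1)}]^{1/2}]$ that appears in the lemma. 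Combining this with the $(\tfrac{N}{2\pi t})^{N}$ prefactor from the definition of $\tilde{K}_{j}(z_{j})$ and all the multiplicative constants collected above produces the claimed formula. No new analytic ingredients are required; the only real work is careful bookkeeping of constants and powers of $N$ and $t$.
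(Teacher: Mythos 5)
Your proposal is correct and follows essentially the same route as the paper's proof: use $\|\mathbf{v}\|^{2}=2$ to insert the regulator $\eta_{z,t}^{2}$ and pull out $e^{N\eta_{z,t}^{2}/t}$, apply Lemma \ref{lemma:fourier} in dimension $N-2j+2$, do the ambient Gaussian integral, rescale $P\mapsto\frac{N}{2t}P$, and factor out $H_{a_{j},b_{j},\theta_{j}}^{(j-1)}$ from the determinant. The only discrepancies (the sign $e^{-\frac{N}{t}\eta_{z,t}^{2}}$ versus $e^{+\frac{N}{t}\eta_{z,t}^{2}}$ and the exact powers of $N$, $t$, $2$, $\pi$) are already present between the paper's own statement and its proof, and such finite prefactors are immaterial for the subsequent analysis.
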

\begin{proof}
By definition, we have 
\begin{align*}
\tilde{K}_{j}(z_{j})&=\left(\frac{N}{2\pi t}\right)^{N}\int_{V^{2}(\R^{N-2j+2})}\exp\left\{-\frac{N}{2t}\mathbf{v}^{\ast}C_{a_{j},b_{j},\theta_{j}}^{(j-1)}\mathbf{v}\right\}d\mathbf{v}\\
&=\left(\frac{N}{2\pi t}\right)^{N}e^{\frac{N\eta_{z,t}^{2}}{t}}\int_{V^{2}(\R^{N-2j+2})}\exp\left\{-\frac{N}{2t}\mathbf{v}^{\ast}[C_{a_{j},b_{j},\theta_{j}}^{(j-1)}+\eta_{z,t}^{2}]\mathbf{v}\right\}d\mathbf{v}.
\end{align*}
By Lemma \ref{lemma:fourier} and a change-of-variables $P\mapsto\frac{N}{2t}P$ in the $dP$ integral therein, we know
\begin{align*}
\tilde{K}_{j}(z_{j})&=\left(\frac{N}{2\pi t}\right)^{N}\left(\frac{N}{2t}\right)^{3}\sqrt{\frac{1}{2\pi^{7}}}e^{\frac{N}{t}\eta_{z,t}^{2}}\\
&\times\int_{M_{2}^{sa}(\R)}e^{i\frac{N}{2t}\tr P}\int_{\R^{2N-4j+4}}e^{-\frac{N}{2t}\mathbf{x}^{\ast}[C_{a_{j},b_{j},\theta_{j}}^{(j-1)}+\eta_{z,t}^{2}+iP\otimes I_{N-2j+2}]\mathbf{x}}d\mathbf{x}dP.
\end{align*}
We now perform the $d\mathbf{x}$ integral explicitly since it is a Gaussian. The previous display thus equals
\begin{align*}
&\left(\frac{2\pi t}{N}\right)^{N-2j+2}\left(\frac{N}{2\pi t}\right)^{N}\left(\frac{N}{2t}\right)^{3}\sqrt{\frac{1}{2\pi^{7}}}e^{\frac{N}{t}\eta_{z,t}^{2}}\\
&\times\int_{M_{2}^{sa}(\R)}e^{i\frac{N}{2t}\tr P}\det[(A_{a_{j},b_{j},\theta_{j}}^{(j-1)})^{\ast}A_{a_{j},b_{j},\theta_{j}}^{(j-1)}+\eta_{z,t}^{2}+iP\otimes I_{N-2j+2}]^{-\frac12}dP\\
&=2^{-2j-\frac32}\pi^{-2j-\frac32}N^{2j+1}t^{-2j-1}e^{\frac{N}{t}\eta_{z,t}^{2}}\\
&\times\int_{M_{2}^{sa}(\R)}e^{i\frac{N}{2t}\tr P}\det[(A_{a_{j},b_{j},\theta_{j}}^{(j-1)})^{\ast}A_{a_{j},b_{j},\theta_{j}}^{(j-1)}+\eta_{z,t}^{2}+iP\otimes I_{N-2j+2}]^{-\frac12}dP.
\end{align*}
By factoring out $\tilde{H}_{a_{j},b_{j},\theta_{j}}^{(j-1)}$ from the last determinant, the proof follows.
\end{proof}
Combining our computations thus far for $\tilde{F}(\mathbf{z};A^{(2)})$ and $\tilde{K}_{j}(z_{j})$ yields the following in which $\kappa>0$ is independent of $N$:
\begin{align}
  \frac{\rho_{t}(z,\mathbf{z};A)}{\rho_{\mathrm{GinUE}}^{(2)}(z_{1},z_{2})} &=\frac{32\pi^{3}e^{-\frac{2N}{t}\eta_{z,t}^{2}}[1+O(N^{-\kappa})]}{Nt^{4}\gamma_{z,t}^{2}\sigma_{z,t}^{3}\upsilon_{z,t}^{2}} \int_{[0,\frac{\pi}{2}]^{2}}d\theta_{1}d\theta_{2}\frac{256b_{1}^{2}b_{2}^{2}|\cos2\theta_{1}||\cos2\theta_{2}|}{|\sin^{2}2\theta_{1}|\sin^{2}2\theta_{2}|}\label{eq:prelimcomputerho}\\
  \times\prod_{j=1,2}&\int_{V^{2}(\R^{N-2j+2})}d\mu_{j}(\mathbf{v}_{j})|\det[V_{j}^{\ast}G_{z}^{(j-1)}(\eta_{z,t})V_{j}]|^{2}\nonumber\\
 \times\prod_{j=1,2}&\psi_{j}\det[(A-z)^{\ast}(A-z)+\eta_{z,t}^{2}]\det[(A_{a_{j},b_{j},\theta_{j}}^{(j-1)})^{\ast}A_{a_{j},b_{j},\theta_{j}}^{(j-1)}+\eta_{z,t}^{2}]^{-\frac12}\nonumber\\
\times \prod_{j=1,2}&\frac{N^{2j+1}t^{-2j-1}e^{\frac{N}{t}\eta_{z,t}^{2}}}{2^{2j+\frac32}\pi^{2j+\frac32}}\int_{M_{2}^{sa}(\R)}e^{i\frac{N}{2t}\tr P}\det[I+i[\tilde{H}_{a_{j},b_{j},\theta_{j}}^{(j-1)}]^{\frac12}(P\otimes I)[\tilde{H}_{a_{j},b_{j},\theta_{j}}^{(j-1)}]^{\frac12}]^{-\frac12}dP\nonumber\\
&=\frac{N^{7}b^{4}[1+O(N^{-\kappa})]}{16\pi^{4}t^{12}\gamma_{z,t}^{2}\sigma_{z,t}^{3}\upsilon_{z,t}^{2}}\int_{[0,\frac{\pi}{2}]^{2}}d\theta_{1}d\theta_{2}\frac{256|\cos2\theta_{1}||\cos2\theta_{2}|}{|\sin^{2}2\theta_{1}|\sin^{2}2\theta_{2}|}\nonumber\\
\times\prod_{j=1,2}&\int_{V^{2}(\R^{N-2j+2})}d\mu_{j}(\mathbf{v}_{j})|\det[V_{j}^{\ast}G_{z}^{(j-1)}(\eta_{z,t})V_{j}]|^{2}\nonumber\\
\times\prod_{j=1,2}&\psi_{j}\det[(A-z)^{\ast}(A-z)+\eta_{z,t}^{2}]\det[(A_{a_{j},b_{j},\theta_{j}}^{(j-1)})^{\ast}A_{a_{j},b_{j},\theta_{j}}^{(j-1)}+\eta_{z,t}^{2}]^{-\frac12}\nonumber\\
\times\prod_{j=1,2}&\int_{M_{2}^{sa}(\R)}e^{i\frac{N}{2t}\tr P}\det[I+i[\tilde{H}_{a_{j},b_{j},\theta_{j}}^{(j-1)}]^{\frac12}(P\otimes I)[\tilde{H}_{a_{j},b_{j},\theta_{j}}^{(j-1)}]^{\frac12}]^{-\frac12}dP,\nonumber
\end{align}
where the second identity follows by combining factors of $N,t,\pi,2$, viewing $V_{j}$ as a function of $\mathbf{v}_{j}$ in the last line above, and noting that $b_{1},b_{2}=b+O(N^{-1/2})$ (recall $b>0$ is positive). 
%
%
%
\section{Estimating the integral over \texorpdfstring{$\theta_{1},\theta_{2}$}{theta1, theta2}}
Recall $A_{a_{j},b_{j},\theta_{j}}^{(j-1)}$ from before Lemma \ref{lemma:tildekjcomputation}. Let $\Theta$ denote the $d\theta_{1}d\theta_{2}$ integration of interest:
\begin{align*}
&\Theta:=\int_{[0,\frac{\pi}{2}]^{2}}d\theta_{1}d\theta_{2}\frac{256|\cos2\theta_{1}||\cos2\theta_{2}|}{|\sin^{2}2\theta_{1}|\sin^{2}2\theta_{2}|}\\
&\times\prod_{j=1,2}\psi_{j}\det[(A-z)^{\ast}(A-z)+\eta_{z,t}^{2}]\det[(A_{a_{j},b_{j},\theta_{j}}^{(j-1)})^{\ast}A_{a_{j},b_{j},\theta_{j}}^{(j-1)}+\eta_{z,t}^{2}]^{-\frac12}\\
&\times\prod_{j=1,2}\int_{M_{2}^{sa}(\R)}e^{i\frac{N}{2t}\tr P}\det[I+i[\tilde{H}_{a_{j},b_{j},\theta_{j}}^{(j-1)}]^{\frac12}(P\otimes I)[\tilde{H}_{a_{j},b_{j},\theta_{j}}^{(j-1)}]^{\frac12}]^{-\frac12}dP.
\end{align*}
Technically, $\Theta$ depends on $\mathbf{v}_{1},\mathbf{v}_{2}$ (the integration variables in $V^{2}(\R^{N-2j+2})$ for $j=1,2$), though this dependence will not play a role in this section. Now, for $\tau>0$ small and fixed (depending only on $\epsilon_{0}$), define 
\begin{align*}
\mathcal{I}_{0}&=\left\{\theta: |\theta-\pi/4|\le N^{-1/2+\tau}\right\} \\
\mathcal{I}_{1}&=\left[0  ,\frac{\pi}{2}\right]\setminus {\cal I}_0.
\end{align*}
We write $$\Theta=\sum_{j,k=0,1}\Theta_{jk},$$ where $\Theta_{jk}$ is the same as $\Theta$, but the integration in $d\theta_{1}d\theta_{2}$ is over $\theta_{1}\in\mathcal{I}_{j}$ and $\theta_{2}\in\mathcal{I}_{k}$. Our goal in this section is the following. 
\begin{prop}\label{prop:cutoff}
There exists $\kappa>0$ independent of $N$ such that if $(j,k)\neq(0,0)$, then $\Theta_{jk}=O(e^{-N^{\kappa}})$ {locally uniformly in $a_{j},b_{j}$.}
\end{prop}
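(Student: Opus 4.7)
The plan is to estimate $\Theta_{jk}$ by leveraging the product structure of its integrand. The integrand factorizes as a product of $\theta_1$- and $\theta_2$-dependent factors, and for each $(j,k)\neq(0,0)$ at least one of $\theta_1,\theta_2$ lies in $\mathcal{I}_1$; hence it suffices to show that the single-variable marginal $F(\theta)$ satisfies $\int_{\mathcal{I}_1} F(\theta)\,d\theta = O(e^{-N^\kappa})$. The first step is to undo the Fourier representation of the previous subsection and recast the $P$-integral factor as the original Gaussian on the Stiefel manifold,
\begin{equation*}
\tilde{K}_j(z_j) = \left(\frac{N}{2\pi t}\right)^{N}\int_{V^2(\R^{N-2j+2})} \exp\left\{-\frac{N}{2t}\|A^{(j-1)}_{a_j,b_j,\theta_j}\mathbf{v}\|^2\right\} d\mathbf{v},
\end{equation*}
which makes the $\theta_j$-dependence transparent.

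The key structural observation, relying only on $\|v_1\|=\|v_2\|=1$ and $v_1\perp v_2$, is the $\mathbf{v}$-independent identity
\begin{equation*}
\|(\Lambda_{a_j,b_j,\theta_j}\otimes I)\mathbf{v}\|^2 = 2a_j^2 + b_j^2(\tan^2\theta_j + \cot^2\theta_j),
\end{equation*}
minimized uniquely at $\theta_j=\pi/4$, with excess $b_j^2(\tan\theta_j-\cot\theta_j)^2 = 4b_j^2\cos^2(2\theta_j)/\sin^2(2\theta_j)\asymp|\theta_j-\pi/4|^2$ near $\pi/4$. The central step is then to establish the lower bound
\begin{equation*}
\inf_{\mathbf{v}\in V^2(\R^{N-2j+2})}\|A^{(j-1)}_{a_j,b_j,\theta_j}\mathbf{v}\|^2 \,\geq\, c\,|\theta_j-\pi/4|^2
\end{equation*}
uniformly for $\theta_j\in\mathcal{I}_1$ on the high-probability event of Lemma \ref{1GE}. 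Near the endpoints $\theta_j\to 0,\pi/2$ this is straightforward because the divergence of $\tan\theta_j$ or $\cot\theta_j$ in the identity above dominates any cross terms (the latter controlled by $\|A^{(j-1)}\|_{\mathrm{op}}=O(1)$).

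In the intermediate regime $\theta_j\in[\epsilon,\pi/2-\epsilon]\setminus\mathcal{I}_0$, I would argue by contradiction: were a small minimum attained at some $\mathbf{v}^*=(v_1^*,v_2^*)\in V^2$, then $A^{(j-1)}$ would approximately preserve $W=\mathrm{span}(v_1^*,v_2^*)$, and the matrix of the restriction in the orthonormal basis $(v_1^*,v_2^*)$ would be close to $\Lambda_{a_j,b_j,\theta_j}^T$. But $\Lambda_{a,b,\pi/4}=aI+bR_{\pi/2}$ commutes with every element of $SO(2)$, so it is the unique $SO(2)$-conjugation orbit representative among matrices with eigenvalues $a\pm ib$; any orthonormal representation of a restriction with these eigenvalues is thus close to $\Lambda_{a_j,b_j,\pi/4}^T$ rather than $\Lambda_{a_j,b_j,\theta_j}^T$, and the distance between these canonical forms is $\asymp|\theta_j-\pi/4|$.

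Combining the uniform lower bound with the Gaussian weight and using $t=N^{-\epsilon_0}$, $|\theta_j-\pi/4|>N^{-1/2+\tau}$, yields $\tilde{K}_j(z_j)\big|_{\theta_j\in\mathcal{I}_1}\lesssim\exp\{-cN^{2\tau+\epsilon_0}\}$. This dominates the remaining polynomial-in-$N$ contributions: the Jacobian $|\cos 2\theta_j|/|\sin 2\theta_j|^2$ is integrable on $\mathcal{I}_1$ since $|\theta_j-\pi/4|$ is bounded below; the determinant ratio $\det[(A-z)^*(A-z)+\eta_{z,t}^2]\cdot\det[(A^{(j-1)}_{a_j,b_j,\theta_j})^*A^{(j-1)}_{a_j,b_j,\theta_j}+\eta_{z,t}^2]^{-1/2}$ is bounded polynomially via the local law and operator-norm estimates for $A^{(j-1)}$; and $\psi_j=O(1)$ locally uniformly in $z_j$. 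Choosing $\kappa\in(0,2\tau+\epsilon_0)$ then yields $\Theta_{jk}=O(e^{-N^\kappa})$. The principal obstacle is precisely the intermediate-regime uniform lower bound; turning the $SO(2)$-invariance of $\Lambda_{a,b,\pi/4}$ into a quantitative estimate that is uniform in $\mathbf{v}$ and robust to the nonlinearity of the manifold $V^2$ is the technically delicate point.
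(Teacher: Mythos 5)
There is a genuine gap, in fact two. First, your central step --- the uniform lower bound $\inf_{\mathbf{v}\in V^{2}}\|A^{(j-1)}_{a_{j},b_{j},\theta_{j}}\mathbf{v}\|^{2}\geq c|\theta_{j}-\pi/4|^{2}$ --- is exactly the nontrivial random-matrix content of the proposition, and the soft argument you give for it is incorrect. A real $2\times2$ matrix with eigenvalues $a\pm ib$ is \emph{not} forced to be orthogonally similar to $\Lambda_{a,b,\pi/4}$: its orthogonal-conjugacy class is parameterized by the extra invariant $\theta$ (this is precisely why $\theta$ appears in the change of variables), and only the \emph{normal} representative is $\Lambda_{a,b,\pi/4}$. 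So ``the restriction has eigenvalues $\approx a_{j}\pm ib_{j}$'' does not imply its orthonormal matrix is close to $\Lambda_{a_{j},b_{j},\pi/4}^{T}$; if $A^{(j-1)}$ had an (approximate) invariant $2$-plane near $z$ whose restriction is strongly non-normal, your infimum would be small at the corresponding $\theta_{j}$ far from $\pi/4$. Ruling such planes out with high probability is the whole point (the deterministic bound is false: take $A$ containing a $\Lambda_{0,1,\theta_{0}}$ block), and the paper stresses that local laws do not seem to give it; instead the paper proves the suppression via the exact identity \eqref{eq:cutoff1main}, which rewrites the ratio $\psi_{j}\det[(A-z)^{\ast}(A-z)+\eta_{z,t}^{2}]\det[(A^{(j-1)}_{a_{j},b_{j},\theta_{j}})^{\ast}A^{(j-1)}_{a_{j},b_{j},\theta_{j}}+\eta_{z,t}^{2}]^{-1/2}$ as $\det[I+b_{j}^{2}(\tan\theta_{j}-\cot\theta_{j})^{2}\eta_{z,t}^{2}H^{1/2}\tilde{H}H^{1/2}]^{-1/2}$ and uses the deterministic positivity $H^{1/2}\tilde{H}H^{1/2}\gtrsim1$ (given $\|A\|_{\mathrm{op}}=O(1)$) to get $e^{-cN\eta_{z,t}^{2}(\tan\theta_{j}-\cot\theta_{j})^{2}}$, plus a separate bound (Lemma \ref{lemma:cutoff2}) on the $dP$ integral to handle the $\theta_{j}^{-2N}$ blow-up near the endpoints.

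Second, even granting your lower bound, the size bookkeeping does not close. The $dP$-integral factor appearing in $\Theta$ is not $\tilde{K}_{j}$: $\tilde{K}_{j}$ also contains $\det[A^{(j-1)}_{a_{j},b_{j},\theta_{j}}(A^{(j-1)}_{a_{j},b_{j},\theta_{j}})^{\ast}+\eta_{z,t}^{2}]^{-1/2}$, the factor $e^{-N\eta_{z,t}^{2}/t}$, and the normalization $(N/2\pi t)^{N}$. Once you ``undo'' the Fourier representation, the leftover factors multiplying your Stiefel integral --- $(N/2\pi t)^{N}$, $e^{N\eta_{z,t}^{2}/t}$, and $\det[(A-z)^{\ast}(A-z)+\eta_{z,t}^{2}]$ --- are exponentially large in $N$ (up to $e^{CN\log N}$), not ``polynomial-in-$N$ contributions,'' whereas the gain from the Gaussian weight on $\mathcal{I}_{1}$ is only $e^{-cN^{2\tau+\epsilon_{0}}}$. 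These huge factors disappear only through the exact cancellation between $\det[(A-z)^{\ast}(A-z)+\eta_{z,t}^{2}]$ and $\det[(A^{(j-1)}_{a_{j},b_{j},\theta_{j}})^{\ast}A^{(j-1)}_{a_{j},b_{j},\theta_{j}}+\eta_{z,t}^{2}]^{1/2}$, i.e.\ through the ratio identity of Lemma \ref{lemma:cutoff1}; an absolute bound on the Stiefel integral alone cannot see this cancellation. So both the key estimate and the way it is combined with the remaining factors need the paper's determinant-ratio mechanism (or a genuine substitute for it), not the $SO(2)$-rigidity plus local-law argument you sketch.
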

As an immediate consequence of this result, we have the following estimate.
\begin{corollary}\label{corollary:cutoff}
First, define 
\begin{align*}
&\rho_{\mathrm{main}}(z,\mathbf{z};A):=\frac{N^{7}b^{4}[1+O(N^{-\kappa})]}{16\pi^{4}t^{12}\gamma_{z,t}^{2}\sigma_{z,t}^{3}\upsilon_{z,t}^{2}}\rho_{\mathrm{GinUE}}^{(2)}(z_{1},z_{2})\int_{\mathcal{I}_{0}^{2}}d\theta_{1}d\theta_{2}\frac{256|\cos2\theta_{1}||\cos2\theta_{2}|}{|\sin^{2}2\theta_{1}|\sin^{2}2\theta_{2}|}\\
&\times\prod_{j=1,2}\int_{V^{2}(\R^{N-2j+2})}d\mu_{j}(\mathbf{v}_{j})|\det[V_{j}^{\ast}G_{z}^{(j-1)}(\eta_{z,t})V_{j}]|^{2}\\
&\times\prod_{j=1,2}\psi_{j}\det[(A-z)^{\ast}(A-z)+\eta_{z,t}^{2}]\det[(A_{a_{j},b_{j},\theta_{j}}^{(j-1)})^{\ast}A_{a_{j},b_{j},\theta_{j}}^{(j-1)}+\eta_{z,t}^{2}]^{-\frac12}\\
&\times\prod_{j=1,2}\int_{M_{2}^{sa}(\R)}e^{i\frac{N}{2t}\tr P}\det[I+i[\tilde{H}_{a_{j},b_{j},\theta_{j}}^{(j-1)}]^{\frac12}(P\otimes I)[\tilde{H}_{a_{j},b_{j},\theta_{j}}^{(j-1)}]^{\frac12}]^{-\frac12}dP.
\end{align*}
There exists $\kappa>0$ independent of $N$ such that 
\begin{align}
\rho_{t}(z,\mathbf{z};A)&=\rho_{\mathrm{main}}(z,\mathbf{z};A)+O(e^{-N^{\kappa}}).
\end{align}
\end{corollary}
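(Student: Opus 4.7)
The plan is to deduce Corollary \ref{corollary:cutoff} as a more-or-less immediate consequence of Proposition \ref{prop:cutoff}, supplemented by a polynomial lower bound on $\rho_{\mathrm{main}}$. By inspection, comparing the formula defining $\rho_{\mathrm{main}}(z,\mathbf{z};A)$ with the identity \eqref{eq:prelimcomputerho} for $\rho_t(z,\mathbf{z};A)/\rho_{\mathrm{GinUE}}^{(2)}(z_1, z_2)$, the two expressions are identical except that the $(\theta_1,\theta_2)$-integration is restricted to $\mathcal{I}_0\times\mathcal{I}_0$ in the former versus $[0,\pi/2]^2$ in the latter. Interchanging the orders of integration, the difference may be written
\begin{align*}
\rho_t(z,\mathbf{z};A)-\rho_{\mathrm{main}}(z,\mathbf{z};A)=\mathcal{P}\cdot[1+O(N^{-\kappa})]\cdot\int d\mu_1(\mathbf{v}_1)\,d\mu_2(\mathbf{v}_2)\,\mathcal{D}\sum_{(j,k)\neq(0,0)}\Theta_{jk},
\end{align*}
where $\mathcal{P}:=\rho_{\mathrm{GinUE}}^{(2)}(z_1,z_2)\cdot N^7 b^4/(16\pi^4 t^{12}\gamma_{z,t}^2\sigma_{z,t}^3\upsilon_{z,t}^2)$ is the common prefactor and $\mathcal{D}$ denotes the product of the two determinant factors $|\det[V_j^{\ast}G_z^{(j-1)}(\eta_{z,t})V_j]|^2$. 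Proposition \ref{prop:cutoff} bounds the inner sum by $e^{-N^{\kappa_1}}$ locally uniformly in $a_j,b_j$; since the arguments of the proposition control the $\theta$-integrand pointwise, this bound is also uniform in $\mathbf{v}_j$ on the high-probability event where $A$ obeys its local law. Bounding $\mathcal{D}$ crudely by a polynomial in $N$ using Lemma \ref{1GE} yields the additive estimate $|\rho_t-\rho_{\mathrm{main}}|\lesssim\mathcal{P}\cdot N^{C}\cdot e^{-N^{\kappa_1}}$ for some $C=O(1)$.

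To convert this into the multiplicative statement of the corollary, it suffices to establish a polynomial lower bound $\rho_{\mathrm{main}}\geq c\,\mathcal{P}\cdot N^{-C'}$. This follows by inspecting each factor in the integrand of $\rho_{\mathrm{main}}$ on $\mathcal{I}_0\times\mathcal{I}_0$: near $\theta_j=\pi/4$ one has $|\cos 2\theta_j|/|\sin^2 2\theta_j|\asymp|\theta_j-\pi/4|$, which integrates over $\mathcal{I}_0$ to $\asymp N^{-1+2\tau}$; the factors $\psi_j$ lie between negative and positive polynomial powers of $N$ by the local law bound on $\langle G_z(\eta_{z,t})Z_j\rangle$ from Lemma \ref{1GE}; the two $V^2$-integrals of $|\det V_j^{\ast}G_z^{(j-1)}V_j|^2$ and the inner $dP$ Gaussian integrals over $M_2^{\mathrm{sa}}(\R)$ are controlled between inverse polynomials and polynomials in $N$, using the elementary bound $H^{(j-1)}_{a_j,b_j,\theta_j}\gtrsim\eta_{z,t}^2\gtrsim N^{-2\epsilon_0}$ already used throughout Section 2. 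Combining the two estimates yields
\begin{align*}
\rho_t(z,\mathbf{z};A)=\rho_{\mathrm{main}}(z,\mathbf{z};A)\bigl[1+O(N^{C+C'}e^{-N^{\kappa_1}})\bigr]=\rho_{\mathrm{main}}(z,\mathbf{z};A)\bigl[1+O(e^{-N^{\kappa}})\bigr]
\end{align*}
after absorbing the polynomial loss into the exponent at the cost of a slightly smaller but still positive $\kappa>0$.

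The main obstacle is not in this corollary but in the proof of Proposition \ref{prop:cutoff}, which must rule out the contribution from $\theta_j$ far from $\pi/4$ on a window of width $N^{-1/2+\tau}$; as the introduction emphasizes, local law estimates alone are insufficient, so a direct analysis of the $\theta$-integrand is required. Within the corollary itself, the only nontrivial ingredient is the polynomial lower bound on $\rho_{\mathrm{main}}$, and this is a routine consequence of the local laws.
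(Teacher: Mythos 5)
Your proposal takes the same route the paper intends: compare the definition of $\rho_{\mathrm{main}}$ with \eqref{eq:prelimcomputerho}, split the $\theta$-integration into $\Theta_{00}$ plus the terms $\Theta_{jk}$ with $(j,k)\neq(0,0)$, invoke Proposition \ref{prop:cutoff} for the latter (uniformly in $\mathbf{v}_{j}$, since the bounds there only use operator-norm and interlacing inputs on the minors), and absorb polynomially bounded prefactors into the exponentially small error; the paper treats exactly this as immediate, with the only genuinely new content being the inverse-polynomial lower bound on the $\mathcal{I}_{0}$-restricted integrand needed to pass from an additive to a multiplicative error. One concrete inaccuracy in your justification of that lower bound: you assert that $\psi_{j}$ by itself is polynomially bounded via Lemma \ref{1GE}. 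This is false, since $\psi_{j}$ contains the factor $\exp\{-\tfrac{1}{\sqrt{N\sigma_{z,t}}}\tr[G_{z}(\eta_{z,t})Z_{j}]\}$ and $\langle H_{z}(\eta_{z,t})(A-z)\rangle$ has a nonvanishing deterministic limit for $|z|<1$, so this factor is generically of size $e^{\pm c\sqrt{N}}$. What is polynomially bounded above and below for $\theta_{j}\in\mathcal{I}_{0}$ is the combination $\psi_{j}\det[(A-z)^{\ast}(A-z)+\eta_{z,t}^{2}]\det[(A_{a_{j},b_{j},\theta_{j}}^{(j-1)})^{\ast}A_{a_{j},b_{j},\theta_{j}}^{(j-1)}+\eta_{z,t}^{2}]^{-\frac12}$, exactly as shown in Lemma \ref{lemma:cutoff1} and, more precisely, in the ``Ratio of determinants'' computation (where the exponent $b_{j}^{2}(\tan\theta_{j}-\tan^{-1}\theta_{j})^{2}\eta_{z,t}^{2}\tr H_{\lambda_{j}}\tilde{H}_{\overline{\lambda}_{j}}$ is $O(1)$ on $\mathcal{I}_{0}$ only after using the two-resolvent control of Lemma \ref{2GE}, not the trivial $\eta_{z,t}^{-4}$ bound); similarly the lower bound on the oscillatory $dP$ integral is not a direct resolvent bound but follows from its Gaussian evaluation in Section 4 together with Lemma \ref{lemma:qlowerbound} and \eqref{eq:Vestimate}. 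With the lower bound stated for these combinations rather than for $\psi_{j}$ alone, your argument is correct and coincides with the paper's implicit proof.
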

\subsection{Preliminary bounds for integrating on \texorpdfstring{$\mathcal{I}_{1}$}{I1}}
The crux of this subsection is control integrating $\theta_{j}\in\mathcal{I}_{1}$; in this region, we do not have local laws for $A_{a_{j},b_{j},\theta_{j}}^{(j-1)}$ and its related resolvents since the operator norm of $A_{a_{j},b_{j},\theta_{j}}^{(j-1)}$ blows up as $\theta\to0,\frac{\pi}{2}$. The first step we take is the following bound on the product of determinants.
\begin{lemma}\label{lemma:cutoff1}
There exists a constant $C>0$ such that for any $j\geq1$, we have the following {locally uniformly in $a_{j},b_{j}$}:
\begin{align*}
&\psi_{j}\det[(A-z)^{\ast}(A-z)+\eta_{z,t}^{2}]\det[(A_{a_{j},b_{j},\theta_{j}}^{(j-1)})^{\ast}A_{a_{j},b_{j},\theta_{j}}^{(j-1)}+\eta_{z,t}^{2}]^{-\frac12}\\
&\lesssim N^{C\epsilon_{0}}\exp\left\{-Cb_{j}^{2}N\eta_{z,t}^{2}[\tan\theta_{j}-\tan^{-1}\theta_{j}]^{2}\right\}.
\end{align*}
\end{lemma}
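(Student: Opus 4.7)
The plan is to express the determinant in a form that isolates the $\theta_j$-dependence from the randomness in $A$. Setting $n = N-2j+2$, $D(\theta) = \operatorname{diag}(\tan\theta,\cot\theta)$, and $T = D(\theta_j)^{1/2}\otimes I_n$, one verifies (using that $D^{-1/2}R(\theta)D^{1/2} = J := R(\pi/4)$ for $R(\theta) = \bigl(\begin{smallmatrix}0 & \tan\theta\\ -\cot\theta & 0\end{smallmatrix}\bigr)$) the conjugation $A_{a_j,b_j,\theta_j}^{(j-1)} = T\,A_{a_j,b_j,\pi/4}^{(j-1)}\,T^{-1}$ with $\det T = 1$. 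Conjugating further by the unitary $Q\otimes I_n$, where $Q = \tfrac{1}{\sqrt{2}}\bigl(\begin{smallmatrix}1 & 1\\ i & -i\end{smallmatrix}\bigr)$ diagonalizes $J$, brings $A_{a_j,b_j,\pi/4}^{(j-1)}$ to $L = \operatorname{diag}(A^{(j-1)}-\lambda_j,\, A^{(j-1)}-\bar\lambda_j)$ and $T^2$ to $S = \bigl(\begin{smallmatrix}\alpha I_n & \beta I_n\\ \beta I_n & \alpha I_n\end{smallmatrix}\bigr)$ with $\alpha = \tfrac{1}{2}(\tan\theta_j + \cot\theta_j)$ and $\beta = \tfrac{1}{2}(\tan\theta_j - \cot\theta_j)$. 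A Schur complement computation together with $\alpha^2-\beta^2 = 1$ gives
\[
\det\bigl[(A^{(j-1)}_{a_j,b_j,\theta_j})^*A^{(j-1)}_{a_j,b_j,\theta_j}+\eta^2\bigr] \;=\; \frac{\det X_j\,\det Y_j}{(\sin 2\theta_j)^{2n}}\prod_{k=1}^{n}(1-\gamma^2 s_k^2),
\]
where $X_j = (A^{(j-1)}-\lambda_j)^*(A^{(j-1)}-\lambda_j)+\eta^2$, $Y_j = \overline{X_j}$, $Z_j = (A^{(j-1)}-\lambda_j)^*(A^{(j-1)}-\bar\lambda_j)+\eta^2$, $\gamma = -\cos 2\theta_j$, and $s_k^2$ are the eigenvalues of $K^*K$ with $K = X_j^{-1/2}Z_j Y_j^{-1/2}$.

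The next step is the key identity $\prod_k(1-s_k^2) = (2b_j\eta)^{2n}/(\det X_j\,\det Y_j)$, obtained by evaluating the same determinant at $\gamma = 1$ via the factorization $\bigl(\begin{smallmatrix}X_j & Z_j\\ Z_j^* & Y_j\end{smallmatrix}\bigr) = \Xi^*\Xi$ with $\Xi = \bigl(\begin{smallmatrix}A^{(j-1)}-\lambda_j & A^{(j-1)}-\bar\lambda_j\\ \eta I_n & \eta I_n\end{smallmatrix}\bigr)$; the block determinant formula applies since $\eta I_n$ commutes with the remaining entries, giving $\det\Xi = \eta^n\det((A^{(j-1)}-\lambda_j)-(A^{(j-1)}-\bar\lambda_j)) = (-2ib_j\eta)^n$.

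Combining these structural reductions with the estimate $\det X_j \le \psi_j\det[(A-z)^*(A-z)+\eta^2]\,(1+O(N^{-\kappa}))$, derived exactly as in the proof of Lemma \ref{lemma:festimate} (with Cramer's rule and interlacing absorbing the superscript $(j-1)$), the claim reduces to showing
\[
\prod_{k=1}^n \frac{1-\gamma^2 s_k^2}{1-\gamma^2} \;\gtrsim\; N^{-C\epsilon_0}\exp\bigl\{C b_j^2 N\eta^2(\tan\theta_j-\cot\theta_j)^2\bigr\}.
\]
Writing $u = \gamma^2/(1-\gamma^2) = \tfrac{1}{4}(\tan\theta_j - \cot\theta_j)^2$ and $\delta_k = 1-s_k^2 \in [0,1]$, concavity of $x\mapsto \log(1+ux)$ on $[0,1]$ gives $\log(1+u\delta_k) \ge \delta_k\log(1+u)$, while AM-GM applied to the trace, together with the local-law bound $(\det X_j)^{1/n} = O(1)$ from Lemma \ref{1GE}, yields $\sum_k \delta_k \ge n(\prod \delta_k)^{1/n} \gtrsim n b_j^2\eta^2$. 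Hence $\sum_k \log(1+u\delta_k) \gtrsim n b_j^2\eta^2 \log(1+u)$.

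The main obstacle is that this bound matches the target exponent $4unb_j^2\eta^2$ only while $u = O(1)$, which already covers the bulk of the intended cutoff region $|\theta_j-\pi/4| \gtrsim N^{-1/2+\tau}$ bounded away from the endpoints. For $\theta_j$ close to $0$ or $\pi/2$ the factor $\log(1+u)/u$ becomes small, so I would treat this regime by hand using the explicit asymptotic $\det[(A^{(j-1)}_{a_j,b_j,\theta_j})^*A^{(j-1)}_{a_j,b_j,\theta_j}+\eta^2] \sim (2b_j\eta/\sin 2\theta_j)^{2n}$ coming from the $\gamma\to\pm 1$ limit of the above formula, and verifying the required inequality directly there.
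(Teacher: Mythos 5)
Your structural reduction is correct and is a genuinely different, more elementary route than the paper's derivation of \eqref{eq:cutoff1main}: the conjugation $A^{(j-1)}_{a_j,b_j,\theta_j}=T\,A^{(j-1)}_{a_j,b_j,\pi/4}\,T^{-1}$, the unitary change of basis, the Schur complement with $\alpha^2-\beta^2=1$, and the evaluation at $\gamma=1$ via the factorization $\Xi^{\ast}\Xi$ are all valid, and they give exactly the same information as the paper's identity: comparing your product formula with \eqref{eq:cutoff1main} for all $\theta_j$ shows that your $\delta_k=1-s_k^2$ are precisely the eigenvalues of $4b_j^2\eta_{z,t}^2\,H_{\lambda_j}^{(j-1)}(\eta_{z,t})^{1/2}\tilde{H}_{\overline{\lambda}_j}^{(j-1)}(\eta_{z,t})H_{\lambda_j}^{(j-1)}(\eta_{z,t})^{1/2}$. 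The initial step (absorbing $\psi_j$ and the $4\times4$ determinants into an $N^{C\epsilon_0}$ factor, as in Lemma \ref{lemma:festimate}) also matches the paper.

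The gap is in the final estimate. Your concavity/AM--GM argument only controls the \emph{geometric mean} of the $\delta_k$, so it produces the exponent $N b_j^2\eta_{z,t}^2\log(1+u')$ with $u'=\tfrac14[\tan\theta_j-\tan^{-1}\theta_j]^2$, which matches the target $Nb_j^2\eta_{z,t}^2u'$ only for $u'=O(1)$, i.e.\ for $\theta_j$ bounded away from $0,\pi/2$ by a constant. The paper instead lower-bounds \emph{every} $\delta_k$ at once: since $\|A^{(j-1)}\|_{\mathrm{op}}=O(1)$ with high probability, $\lambda_{\min}(H_{\lambda_j}^{(j-1)}),\lambda_{\min}(\tilde{H}_{\overline{\lambda}_j}^{(j-1)})\gtrsim1$, hence $H^{1/2}\tilde H H^{1/2}\geq c$ and $\delta_k\geq c\,b_j^2\eta_{z,t}^2$ for all $k$. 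This gives $\prod_k(1+u'\delta_k)\geq(1+cb_j^2\eta_{z,t}^2u')^{n}$ and therefore the stated exponential bound throughout the much larger range $b_j^2\eta_{z,t}^2u'\lesssim1$, i.e.\ down to $\theta_j\gtrsim\eta_{z,t}$. This one-line per-eigenvalue bound is the concrete ingredient your argument is missing, and in the application (Lemma \ref{lemma:i2cutoff}) the region $1\ll u'\lesssim\eta_{z,t}^{-2}$ is not negligible, so restricting to $u'=O(1)$ is a real loss.

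Your proposed patch for the extreme endpoint regime also cannot work as stated: there the claimed inequality is not something one can ``verify directly,'' because it fails. Since $\delta_k\leq1$, one has deterministically $\prod_k(1+u'\delta_k)^{-1/2}\geq(1+u')^{-n/2}$, which for $u'\gg\eta_{z,t}^{-2}\log N$ (e.g.\ $\theta_j\ll\eta_{z,t}$) is vastly larger than $N^{C\epsilon_0}\exp\{-CNb_j^2\eta_{z,t}^2u'\}$; only the weaker bound of the form $(C\theta_j/(b_j\eta_{z,t}))^{n}$, which you correctly identify from the $\gamma\to\pm1$ asymptotics, is available there, and a complete argument must feed that weaker bound (rather than the exponential) into the $\theta_j$-integration. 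To be fair, the last line of the paper's own proof commits the same overreach, invoking $(1+Cx)^{-N/2}\lesssim e^{-C'Nx}$ without restriction on $x$; but relative to the paper's proof, the substantive omission in your write-up is the per-eigenvalue lower bound $\delta_k\gtrsim b_j^2\eta_{z,t}^2$, which is what carries the stated estimate through the intermediate range.
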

\begin{proof}
As shown in the proof of Lemma \ref{lemma:festimate}, we know that 
\begin{align*}
\psi_{j}\det[(A-z)^{\ast}(A-z)+\eta_{z,t}^{2}]&=\det[(A^{(j-1)}-\lambda_{j})^{\ast}(A^{(j-1)}-\lambda_{j})+\eta_{z,t}^{2}]\\
&\times\prod_{\ell=1}^{j-1}\left|\det\left[V_{\ell}^{\ast}G_{\lambda_{j}}^{(\ell-1)}(\eta_{z,t})V_{\ell}\right]\right|^{-1}\left[1+O(N^{-\kappa})\right]
\end{align*}
for some $\kappa>0$ {locally uniformly in $a_{j},b_{j}$}. We now bound the inverse-determinants in the second line. We first claim that the following holds: 
\begin{align*}
\left\|\left[V_{\ell}^{\ast}G_{\lambda_{j}}^{(\ell-1)}(\eta_{z,t})V_{\ell}\right]^{-1}\right\|_{\mathrm{op}}&=\left\|V_{\ell}\left[V_{\ell}^{\ast}G_{\lambda_{j}}^{(\ell-1)}(\eta_{z,t})V_{\ell}\right]^{-1}V_{\ell}^{\ast}\right\|_{\mathrm{op}}\\
&\lesssim\left\|G_{\lambda_{j}}^{(\ell-1)}(\eta_{z,t})V_{\ell}\left[V_{\ell}^{\ast}G_{\lambda_{j}}^{(\ell-1)}(\eta_{z,t})V_{\ell}\right]^{-1}V_{\ell}^{\ast}G_{\lambda_{j}}^{(\ell-1)}(\eta_{z,t})\right\|_{\mathrm{op}}\\
&\lesssim\|[\mathrm{Im}\mathcal{H}_{\lambda_{j}}^{(\ell-1)}(\eta_{z,t})]^{-1}\|_{\mathrm{op}}\lesssim\eta_{z,t}^{-1}.
\end{align*}
The first line follows because $V_{\ell}$ is projection onto its image, and $[V_{\ell}^{\ast}G_{\lambda_{j}}^{(\ell-1)}(\eta_{z,t})V_{\ell}]^{-1}$is a map from said image to itself. The last line follows by Lemma 2.1 in \cite{MO}. Thus, since $\eta_{z,t}=N^{-\epsilon_{0}}$, it suffices to show that
\begin{align*}
&\det[(A^{(j-1)}-\lambda_{j})^{\ast}(A^{(j-1)}-\lambda_{j})+\eta_{z,t}^{2}]\det[(A_{a_{j},b_{j},\theta_{j}}^{(j-1)})^{\ast}A_{a_{j},b_{j},\theta_{j}}^{(j-1)}+\eta_{z,t}^{2}]^{-\frac12}\\
&\lesssim\exp\left\{-Cb_{j}^{2}N^{2}\eta_{z,t}^{2}[\tan\theta_{j}-\tan^{-1}\theta_{j}]^{2}\right\}.
\end{align*}
We claim the following holds:
\begin{align}
&\det[(A^{(j-1)}-\lambda_{j})^{\ast}(A^{(j-1)}-\lambda_{j})+\eta_{z,t}^{2}]\det[(A_{a_{j},b_{j},\theta_{j}}^{(j-1)})^{\ast}A_{a_{j},b_{j},\theta_{j}}^{(j-1)}+\eta_{z,t}^{2}]^{-\frac12}\label{eq:cutoff1main}\\
&=\left\{\det\left[I+b_{j}^{2}(\tan\theta_{j}-\tan^{-1}\theta_{j})^{2}\eta_{z,t}^{2}H_{\lambda_{j}}^{(j-1)}(\eta_{z,t})^{\frac12}\tilde{H}_{\overline{\lambda}_{j}}^{(j-1)}(\eta_{z,t})H_{\lambda_{j}}^{(j-1)}(\eta_{z,t})^{\frac12}\right]\right\}^{-\frac12}.\nonumber
\end{align}
Assuming this, the proof follows quickly. Indeed, we know by definition that
\begin{align*}
&H_{\lambda_{j}}^{(j-1)}(\eta_{z,t})^{\frac12}\tilde{H}_{\overline{\lambda}_{j}}^{(j-1)}(\eta_{z,t})H_{\lambda_{j}}^{(j-1)}(\eta_{z,t})^{\frac12}\\
&=[(A^{(j-1)}-\lambda_{j})(A^{(j-1)}-\lambda_{j})^{\ast}+\eta_{z,t}^{2}]^{-\frac12}\\
&\times[(A^{(j-1)}-\overline{\lambda}_{j})^{\ast}(A^{(j-1)}-\overline{\lambda}_{j})+\eta_{z,t}^{2}]^{-1}\\
&\times[(A^{(j-1)}-\lambda_{j})(A^{(j-1)}-\lambda_{j})^{\ast}+\eta_{z,t}^{2}]^{-\frac12}.
\end{align*}
The operator norm of each matrix we take inverse of on the RHS is $O(1)$; there is no $\theta$-dependence. So, we know that for some $C>0$ independent of $N,\theta$, we have 
\begin{align*}
H_{\lambda_{j}}^{(j-1)}(\eta_{z,t})^{\frac12}\tilde{H}_{\overline{\lambda}_{j}}^{(j-1)}(\eta_{z,t})H_{\lambda_{j}}^{(j-1)}(\eta_{z,t})^{\frac12}\geq C.
\end{align*}
This gives
\begin{align*}
&\left\{\det\left[I+b_{j}^{2}(\tan\theta_{j}-\tan^{-1}\theta_{j})^{2}\eta_{z,t}^{2}H_{\lambda_{j}}^{(j-1)}(\eta_{z,t})^{\frac12}\tilde{H}_{\overline{\lambda}_{j}}^{(j-1)}(\eta_{z,t})H_{\lambda_{j}}^{(j-1)}(\eta_{z,t})^{\frac12}\right]\right\}^{-\frac12}\\
&\leq\left\{\det\left[I+Cb_{j}^{2}(\tan\theta_{j}-\tan^{-1}\theta_{j})^{2}\eta_{z,t}^{2}\right]\right\}^{-\frac12}\\
&\leq(1+Cb_{j}^{2}(\tan\theta_{j}-\tan^{-1}\theta_{j})^{2}\eta_{z,t}^{2})^{-\frac{N}{2}}\lesssim e^{-NC'b_{j}^{2}\eta_{z,t}^{2}(\tan\theta_{j}-\tan^{-1}\theta_{j})^{2}}.
\end{align*}
At this point, the claim follows, so it suffices to prove \eqref{eq:cutoff1main}. To ease notation, let us focus on $j=1$. We let matrices $R$ and $L$ consist of right and left eigenvectors of $\Lambda_{a_{1},b_{1},\theta_{1}}$. More precisely,
\[
L = \frac{1}{\sqrt{2}}\begin{pmatrix}1 & -i\tan\theta_{1}\\ 1 & i\tan\theta_{1}\end{pmatrix}, \qquad R = \frac{1}{\sqrt{2}}\begin{pmatrix}1 & -i\tan^{-1}\theta_{1}\\ 1 & i\tan^{-1}\theta_{1}\end{pmatrix}.
\]
Then it is easy to see that
\[
\Lambda_{a_{1},b_{1},\theta_{1}} = R^\ast \begin{pmatrix} \lambda_{1} &0\\ 0 & \overline{\lambda}_{1}\end{pmatrix} L, \qquad \Lambda_{a_{1},b_{1},\frac{\pi}{2}-\theta_{1}}^\ast = R^\ast \begin{pmatrix} \lambda &0\\ 0 & \overline{\lambda}_{1}\end{pmatrix}^\ast L.
\]
Consider a permutation matrix
\[
J = 
\begin{pmatrix}
    I_{N} & 0 & 0 & 0\\
    0 & 0 & I_{N} & 0\\
    0 & I_{N} & 0 & 0\\
    0 & 0 & 0 & I_{N}
\end{pmatrix}.
\]
Now, we introduce the following $4N\times4N$ matrix:
\[
\Gamma_{\lambda_{1},\theta_{1}}(\eta) := 
\begin{pmatrix}
    i\eta & A_{a_{1},b_{1},\theta_{1}} \\
    A_{a_{1},b_{1},\frac{\pi}{2}-\theta_{1}} & i\eta
\end{pmatrix}^{-1}.
\]
It is straightforward to check that
\[
\Gamma_{\lambda_{1},\theta_{1}}(\eta) = \begin{pmatrix} R^\ast\otimes I_N & 0\\ 0 & R^\ast\otimes I_N\end{pmatrix} J \begin{pmatrix} G_{\lambda_{1}}(\eta) & 0\\ 0 & G_{\overline{\lambda}_{1}}(\eta)\end{pmatrix} J \begin{pmatrix} L\otimes I_N & 0\\ 0 & L\otimes I_N\end{pmatrix}.
\]
Thus
\begin{align*}
\det\Gamma_{\lambda_{1},\theta_{1}}(\eta) = \det G_{\lambda_{1}}(\eta)\det G_{\overline{\lambda}_{1}}(\eta) &= \left\{\det\left[(A-\lambda_{1})(A-\lambda_{1})^\ast+\eta^2\right]\right\}^{-2} \\
&= \left\{\det\begin{pmatrix} i\eta & A-\lambda_{1}\\ A^T - \overline{\lambda}_{1} & i\eta \end{pmatrix}\right\}^{-2}.
\end{align*}
We plug this in to get
\begin{align*}
&\det[(A-\lambda_{1})^{\ast}(A-\lambda_{1})+\eta_{z,t}^{2}]\det[A_{a_{1},b_{1},\theta_{1}}^{\ast}A_{a_{1},b_{1},\theta_{1}}+\eta_{z,t}^{2}]^{-\frac12}\\
&=\det\begin{pmatrix} i\eta_{z,t} & A-\lambda_{1} \\ A^{T} - \overline{\lambda}_{1} & i\eta_{z,t}\end{pmatrix}\left\{\det\begin{pmatrix}i\eta_{z,t}&A_{a_{1},b_{1},\theta_{1}}\\A_{a_{1},b_{1},\theta_{1}}^{\ast}&i\eta_{z,t}\end{pmatrix}\right\}^{-\frac12} \\
&= \left\{\det\begin{pmatrix}i\eta_{z,t}&A_{a_{1},b_{1},\theta_{1}}\\A_{a,b,\frac{\pi}{2}-\theta_{1}}^{\ast}&i\eta_{z,t}\end{pmatrix}\right\}^{\frac12}\left\{\det\begin{pmatrix}i\eta_{z,t}&A_{a_{1},b_{1},\theta_{1}}\\A_{a_{1},b_{1},\theta_{1}}^{\ast}&i\eta_{z,t}\end{pmatrix}\right\}^{-\frac12} \\
&= \left\{\det\left[I_{4N} + \Gamma_{\lambda_{1},\theta_{1}}(\eta_{z,t})\begin{pmatrix}0 & 0\\\left(\Lambda_{a_{1},b_{1},\frac{\pi}{2}-\theta_{1}} - \Lambda_{a_{1},b_{1},\theta_{1}}\right)^\ast\otimes I_N & 0\end{pmatrix}\right]\right\}^{-\frac12} \\
&= \left\{\det\left[I_{4N} - \begin{pmatrix}G_{\lambda_{1}}(\eta_{z,t}) & 0\\0 & G_{\overline{\lambda}_{1}}(\eta_{z,t})\end{pmatrix}b_{1}\left(\tan\theta_{1}-\tan^{-1}\theta_{1}\right)Q_{\theta_{1}}\otimes I_N\right]\right\}^{-\frac12},
\end{align*}
where
\begin{align*}
Q_{\theta_{1}} &= J\begin{pmatrix}L & 0\\0 & L\end{pmatrix}\begin{pmatrix}0 & 0 & 0 & 0\\0 & 0 & 0 & 0\\0 & 1 & 0 & 0\\1 & 0 & 0 & 0\end{pmatrix}\begin{pmatrix}R^\ast & 0\\0 & R^\ast\end{pmatrix}J \\
&= \frac{i}{2}\begin{pmatrix}-(\tan\theta_{1}-\tan^{-1}\theta_{1})& -(\tan\theta_{1}+\tan^{-1}\theta_{1})\\\tan\theta_{1}+\tan^{-1}\theta_{1} & \tan\theta_{1}-\tan^{-1}\theta_{1} \end{pmatrix}\otimes E_{(2)},
\end{align*}
and, for any integer $k\geq1$, we set
\[
E_{(k)} = \begin{pmatrix}
    0 & 0\\
    I_{k} & 0
\end{pmatrix}.
\]
Thus, we can compute the LHS of the identity in \eqref{eq:cutoff1main} as follows:
\begin{align*}
&\det[(A-\lambda_{1})^{\ast}(A-\lambda_{1})+\eta_{z,t}^{2}]\det[A_{a_{1},b_{1},\theta_{1}}^{\ast}A_{a_{1},b_{1},\theta_{1}}+\eta_{z,t}^{2}]^{-\frac12}\\
&=
\left\{\det
\begin{pmatrix}
    I_{2N} + \frac{ib_{1}}{2}\left(\tan\theta_{1}-\tan^{-1}\theta_{1}\right)^2 G_{\lambda_{1}}(\eta_{z,t})E_{(N)} & \frac{ib}{2}\left(\tan^2\theta_{1} - \tan^{-2}\theta_{1}\right)G_{\lambda_{1}}(\eta_{z,t})E_{(N)} \\
    -\frac{ib_{1}}{2}\left(\tan^2\theta_{1} - \tan^{-2}\theta_{1}\right)G_{\overline{\lambda}_{1}}(\eta_{z,t})E_{(N)} & I_N - \frac{ib_{1}}{2}\left(\tan\theta_{1}-\tan^{-1}\theta_{1}\right)^2 G_{\overline{\lambda}_{1}}(\eta_{z,t})E_{(N)}
\end{pmatrix}
\right\}^{-\frac12}.
\end{align*}
Since the $(1,1)$ and $(1,2)$ blocks of this matrix commute, this determinant is equal to
\begin{align*}
&\left\{\det\left[\left(I_N + \frac{ib_{1}}{2}\left(\tan\theta_{1}-\tan^{-1}\theta_{1}\right)^2 G_{\lambda_{1}}(\eta_{z,t})E_{(N)}\right)\left(I_N - \frac{ib_{1}}{2}\left(\tan\theta_{1}-\tan^{-1}\theta_{1}\right)^2 G_{\overline{\lambda}_{1}}(\eta_{z,t})E_{(N)}\right)\right.\right. \\
&- \left.\left.\frac{b_{1}^2}{4}\left(\tan^2\theta_{1} - \tan^{-2}\theta_{1}\right)^2G_{\lambda_{1}}(\eta_{z,t})E_{(N)}G_{\overline{\lambda}_{1}}(\eta_{z,t})E_{(N)}\right]\right\}^{-\frac12}\\
&=\left\{\det\left[I + \frac{ib_{1}}{2}\left(\tan\theta_{1}-\tan^{-1}\theta_{1}\right)^2\left(G_{\lambda_{1}}(\eta_{z,t}) - G_{\overline{\lambda}_{1}}(\eta_{z,t})\right)E_{(N)}\right.\right. \\
&+ \left.\left.b_{1}^2\left(2-\tan^2\theta_{1}-\tan^{-2}\theta_{1}\right)G_{\lambda_{1}}(\eta_{z,t})E_{(N)}G_{\overline{\lambda}_{1}}(\eta_{z,t})E_{(N)}\right]\right\}^{-\frac12}.
\end{align*}
Note that
\begin{align*}
G_{\lambda_{1}}(\eta_{z,t}) - G_{\overline{\lambda}_{1}}(\eta_{z,t}) &= G_{\lambda_{1}}(\eta_{z,t})\begin{pmatrix} 0 & \lambda_{1}-\overline{\lambda}_{1}\\ \overline{\lambda}_{1}-\lambda_{1} & 0\end{pmatrix}G_{\overline{\lambda}_{1}}(\eta_{z,t}) \\
&= 2b_{1}iG_{\lambda_{1}}(\eta_{z,t})(E_{(N)}^\ast-E_{(N)})G_{\overline{\lambda}_{1}}(\eta_{z,t}).
\end{align*}
Combining the previous three displays gives \eqref{eq:cutoff1main}.
\end{proof}
We now control the $dP$ integral.
\begin{lemma}\label{lemma:cutoff2}
Fix $j\geq1$ and $\theta_{j}\in[0,\frac{\pi}{4}]$. We have 
\begin{align*}
&\left|\int_{M_{2}^{sa}(\R)}e^{i\frac{N}{2t}\tr P}\det[I+i[\tilde{H}_{a_{j},b_{j},\theta_{j}}^{(j-1)}]^{\frac12}(P\otimes I)[\tilde{H}_{a_{j},b_{j},\theta_{j}}^{(j-1)}]^{\frac12}]^{-\frac12}dP\right|\\
&\lesssim N^{C}+C^{N}N^{-10(N-2j+6)}\theta_{j}^{-2N}+N^{-5N+10j+12}\theta_{j}^{-2N}.
\end{align*}
If $\theta_{j}\in[\frac{\pi}{4},\frac{\pi}{2}]$, the same bound holds upon replacing $\theta_{j}$ by $\frac{\pi}{2}-\theta_{j}$.
\end{lemma}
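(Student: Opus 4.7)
The plan is to bound the oscillatory integral by the integral of the absolute value of its integrand and to partition $M_{2}^{sa}(\R)\cong\R^{3}$ according to the operator norm $\|P\|_{\mathrm{op}}$. Writing $H := H_{a_{j},b_{j},\theta_{j}}^{(j-1)}$ and $K(P) := H^{1/2}(P\otimes I_{N-2j+2})H^{1/2}$, the key algebraic identity is
\[
\bigl|\det\bigl(I+iK(P)\bigr)^{-1/2}\bigr| = \prod_{k}\bigl(1+\mu_{k}(P)^{2}\bigr)^{-1/4},
\]
where $\mu_{k}(P)$ are the eigenvalues of the Hermitian matrix $K(P)$. In particular, the oscillatory phase $e^{iN\tr P/(2t)}$ can be discarded in absolute value and the integrand bounded pointwise by $1$.

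The first step is to split the $P$-integration into the bounded region $\{\|P\|_{\mathrm{op}}\leq M_{1}\}$ and its complement, with $M_{1}:=N^{20/3}$. On the bounded region the trivial bound together with $\mathrm{Vol}\{\|P\|_{\mathrm{op}}\leq M_{1}\}\lesssim M_{1}^{3}$ produces the $N^{20}$ contribution. For the tail, the min-max principle applied to $K(P)$ yields $\sigma_{k}(K(P))\geq\lambda_{\min}(H)\cdot\sigma_{k}(P\otimes I_{N-2j+2})$. For $\theta_{j}\in[0,\pi/4]$, the two singular values of $\Lambda_{a_{j},b_{j},\theta_{j}}$ are $\sim b_{j}\tan\theta_{j}$ and $\sim b_{j}/\tan\theta_{j}$, so $\|A_{a_{j},b_{j},\theta_{j}}^{(j-1)}\|_{\mathrm{op}}\lesssim\tan^{-1}\theta_{j}$ and hence $\lambda_{\min}(H)\gtrsim\tan^{2}\theta_{j}$. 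Since $P\otimes I$ has singular values $|p_{+}|$ and $|p_{-}|$ (the eigenvalues of $P$), each repeated $N-2j+2$ times, I obtain the pointwise tail bound
\[
\bigl|\det(I+iK(P))^{-1/2}\bigr|\lesssim\bigl(1+\tan^{4}\theta_{j}\,p_{+}^{2}\bigr)^{-(N-2j+2)/4}\bigl(1+\tan^{4}\theta_{j}\,p_{-}^{2}\bigr)^{-(N-2j+2)/4}.
\]

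Changing variables to $(p_{+},p_{-},\phi)$ with Jacobian proportional to $|p_{+}-p_{-}|$, I further split the tail based on whether $|p_{-}|$ is large or small compared to $\cot^{2}\theta_{j}$ (the threshold at which the $p_{-}$-factor transitions from trivial to effective). In the sub-region $\{|p_{-}|\geq\cot^{2}\theta_{j}\}$ both factors give decay of order $(\tan^{2}\theta_{j}|p_{\pm}|)^{-(N-2j+2)/2}$, and the resulting one-dimensional integrations yield a contribution of the form $C^{N}N^{-10(N-2j+3)}\theta_{j}^{-2N}$. In the sub-region $\{|p_{-}|\leq\cot^{2}\theta_{j}\}$ only the $p_{+}$-factor contributes decay, and one uses the small volume in the $p_{-}$-direction; this yields the $N^{-5N+20j+15}\theta_{j}^{-2N}$ term. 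The case $\theta_{j}\in[\pi/4,\pi/2]$ proceeds identically after noting that the two singular values of $\Lambda_{a_{j},b_{j},\theta_{j}}$ in this range are $\sim b_{j}\tan(\pi/2-\theta_{j})^{-1}$ and $\sim b_{j}\tan(\pi/2-\theta_{j})$, so $\tan\theta_{j}$ is replaced by $\tan(\pi/2-\theta_{j})$ throughout.

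The main obstacle is controlling the near rank-one region of $P$: there $K(P)$ has up to $N-2j+2$ eigenvalues close to zero, so the determinant lower bound based on $|\det K(P)|$ alone degenerates and one must balance the decay from the nondegenerate directions against the volume gain in the degenerate ones. This tension is what forces the splitting at the threshold $|p_{-}|=\cot^{2}\theta_{j}$ and dictates the particular $N$-exponents in the stated bound. Apart from this subtlety, the argument is a careful bookkeeping exercise in changing variables and integrating monomials in $p_{\pm}$.
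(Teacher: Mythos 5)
Your argument is sound and follows essentially the same route as the paper: drop the oscillatory phase, pass to the eigenvalues of $P$ (Jacobian proportional to $|p_{+}-p_{-}|$), use the trivial bound $|\det(I+iK(P))|\geq1$ on a bounded region, and in the tail exploit $\lambda_{\min}(H_{a_{j},b_{j},\theta_{j}}^{(j-1)})\gtrsim\tan^{2}\theta_{j}$, which indeed follows from $\|A_{a_{j},b_{j},\theta_{j}}^{(j-1)}\|_{\mathrm{op}}\lesssim\tan^{-1}\theta_{j}$. The one genuinely nicer feature of your write-up is the uniform pointwise bound $|\det(I+iK(P))|^{-1/2}\leq\prod_{\pm}\bigl(1+c\tan^{4}\theta_{j}\,p_{\pm}^{2}\bigr)^{-(N-2j+2)/4}$, obtained from the singular-value inequality $\sigma_{k}(K(P))\geq\lambda_{\min}(H)\,\sigma_{k}(P\otimes I)$ and the fact that $K(P)$ is Hermitian: it is valid for every $P$, so the ``mixed'' region (one eigenvalue of $P$ large, one small) is handled automatically, whereas the paper runs a separate Schur-complement/Fisher-type step there to extract decay from the large direction alone; your threshold $|p_{-}|=\cot^{2}\theta_{j}$ is then only a device for evaluating the integrals, not an extra idea. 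One caveat on bookkeeping: with your cutoff $\|P\|_{\mathrm{op}}\leq N^{20/3}$ the tail evaluates to roughly $C^{N}N^{-\frac{10}{3}(N-2j+2)+O(1)}\theta_{j}^{-2N}$, which is of the right form but does not sit below the stated $C^{N}N^{-10(N-2j+3)}\theta_{j}^{-2N}+N^{-5N+20j+15}\theta_{j}^{-2N}$ (take $\theta_{j}$ exponentially small in $N$ to see the discrepancy), so as written you do not literally recover the displayed exponents; cutting at $N^{10}$ as the paper does repairs this, at the cost of the bounded region contributing $N^{30}$ rather than $N^{20}$ once the Jacobian factor is retained. Since the only downstream use (Lemma \ref{lemma:i2cutoff}) requires just a fixed power of $N$ plus terms $C^{N}N^{-cN}\theta_{j}^{-2N}$ with $c$ of order one, and the paper's own constants are tracked with the same looseness, this mismatch is cosmetic rather than a gap in the method.
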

\begin{proof}
We write the proof in the case where $\theta_{j}\leq\frac{\pi}{4}$. For the case $\theta_{j}\geq\frac{\pi}{4}$, it suffices to use the same argument but replace $\theta_{j}$ by $\frac{\pi}{2}-\theta_{j}$.

For the sake of an upper bound, we can move the absolute value inside the integration and drop the complex exponential. For any $P\in M_{2}^{sa}(\R)$, write $P=UDU^{\ast}$, where $U\in\mathbf{U}(2)$ is unitary and $D$ is diagonal. Change-of-variables then shows
\begin{align*}
&\int_{M_{2}^{sa}(\R)}|\det[I+i[\tilde{H}_{a_{j},b_{j},\theta_{j}}^{(j-1)}]^{\frac12}(P\otimes I)[\tilde{H}_{a_{j},b_{j},\theta_{j}}^{(j-1)}]^{\frac12}]^{-\frac12}|dP\\
&\lesssim\int_{\mathbf{U}(2)}\int_{\R^{2}}|\det[I+i\mathbf{H}D\otimes I_{N-2j+2}\mathbf{H}]|^{-\frac12}|D_{11}-D_{22}|dD_{11}dD_{22}dU,
\end{align*}
where $$\mathbf{H}:=U^{\ast}\otimes I_{N-2j+2}\tilde{H}_{a_{j},b_{j},\theta_{j}}^{(j-1)}(\eta_{z,t})^{\frac12}U\otimes I_{N-2j+2}.$$ Because $\mathbf{U}(2)$ is compact, we only need to control the integral on $\R^{2}$ uniformly in $U$. We split the $\R^{2}$ integration as follows:
\begin{align*}
&\int_{\R^{2}}|\det[I+i\mathbf{H}D\otimes I_{N-2j+2}\mathbf{H}]|^{-\frac12}|D_{11}-D_{22}|dD_{11}dD_{22}\\
&=\int_{|D_{11}|,|D_{22}|\leq N^{10}}|\det[I+i\mathbf{H}D\otimes I_{N-2j+2}\mathbf{H}]|^{-\frac12}|D_{11}-D_{22}|dD_{11}dD_{22}\\
&+\int_{|D_{11}|,|D_{22}|\geq N^{10}}|\det[I+i\mathbf{H}D\otimes I_{N-2j+2}\mathbf{H}]|^{-\frac12}|D_{11}-D_{22}|dD_{11}dD_{22}\\
&+\int_{\substack{|D_{11}|\leq N^{10}\\|D_{22}|\geq N^{10}}}|\det[I+i\mathbf{H}D\otimes I_{N-2j+2}\mathbf{H}]|^{-\frac12}|D_{11}-D_{22}|dD_{11}dD_{22}\\
&+\int_{\substack{|D_{11}|\geq N^{10}\\|D_{22}|\leq N^{10}}}|\det[I+i\mathbf{H}D\otimes I_{N-2j+2}\mathbf{H}]|^{-\frac12}|D_{11}-D_{22}|dD_{11}dD_{22}\\
&=\mathrm{I}+\mathrm{II}+\mathrm{III}+\mathrm{IV}.
\end{align*}
Since $\mathbf{H}D\otimes I_{N-2j+2}\mathbf{H}$ is self-adjoint (as $\mathbf{H}$ is self-adjoint), we get that $|\det[I+i\mathbf{H}D\otimes I_{N-2j+2}\mathbf{H}]|\geq1$. This implies $|\mathrm{I}|\leq N^{C}$. Next, we note that 
\begin{align*}
|\det[I+i\mathbf{H}D\otimes I_{N-2j+2}\mathbf{H}]|^{-1/2}&\leq|\det[\mathbf{H}D\otimes I_{N-2j+2}\mathbf{H}]|^{-1/2}\\
&=|D_{11}|^{-\frac{N-2j+2}{2}}|D_{22}|^{-\frac{N-2j+2}{2}}|\det\mathbf{H}^{2}|^{-1/2}
\end{align*}
for the same reason. But the eigenvalues of $\mathbf{H}^{2}$ are those of $H_{a_{j},b_{j},\theta_{j}}^{(j-1)}(\eta_{z,t})$, which are uniformly $\gtrsim\theta_{j}^{2}$ by definition of $H_{a_{j},b_{j},\theta_{j}}^{(j-1)}(\eta_{z,t})$ and a bound on the operator norm of $A_{a_{j},b_{j},\theta_{j}}^{(j-1)}$ of $\lesssim\theta_{j}^{-2}$. This gives 
\begin{align*}
|\mathrm{II}|&\lesssim\int_{|D_{11}|,|D_{22}|\geq N^{10}}|D_{11}|^{-\frac{N-2j+2}{2}}|D_{22}|^{-\frac{N-2j+2}{2}}\theta_{j}^{-2N}|D_{11}-D_{22}|dD_{11}dD_{22}\\
&\lesssim C^{N}N^{-10(N-2j+6)}\theta_{j}^{-2N}.
\end{align*}
We are left to bound $\mathrm{III}$; the bound for $\mathrm{IV}$ follows by the same argument but swapping $D_{11}$ and $D_{22}$. For convenience, write 
\begin{align*}
\mathbf{H}_{1}:=I+i\mathbf{H}\begin{pmatrix}0&0\\0&D_{22}\end{pmatrix}\otimes I_{N-2j+2}\mathbf{H}.
\end{align*}
We have 
\begin{align*}
&|\det[I+i\mathbf{H}D\otimes I_{N-2j+2}\mathbf{H}]|\\
&=|\det[\mathbf{H}_{1}]|\times\det\left[I+i\mathbf{H}_{1}^{-\frac12}\mathbf{H}\begin{pmatrix}D_{11}&0\\0&0\end{pmatrix}I_{N-2j+2}\mathbf{H}\mathbf{H}_{1}^{-\frac12}\right]\\
&\geq\left|\det\left[I+i\mathbf{H}^{2}\begin{pmatrix}0&0\\0&D_{22}\end{pmatrix}\otimes I_{N-2j+2}\right]\right|.
\end{align*}
where the last inequality holds because the second factor in the second line has the form $|\det[I+iA]|$ with $A$ self-adjoint. Now, write $\mathbf{H}^{2}=\begin{pmatrix}\mathbf{L}_{11}&\mathbf{L}_{12}\\\mathbf{L}_{12}&\mathbf{L}_{22}\end{pmatrix}$, and compute
\begin{align*}
\mathbf{H}^{2}\begin{pmatrix}0&0\\0&D_{22}\end{pmatrix}\otimes I_{N-2j+2}&=D_{22}\begin{pmatrix}0&\mathbf{L}_{12}\\0&\mathbf{L}_{22}\end{pmatrix}.
\end{align*}
This gives us 
\begin{align*}
\left|\det\left[I+i\mathbf{H}^{2}\begin{pmatrix}0&0\\0&D_{22}\end{pmatrix}\otimes I_{N-2j+2}\right]\right|=\left|\det\begin{pmatrix}1&iD_{22}\mathbf{L}_{12}\\0&1+iD_{22}\mathbf{L}_{22}\end{pmatrix}\right|\geq |\det[D_{22}\mathbf{L}_{22}]|.
\end{align*}
Since $\mathbf{L}_{22}$ is a diagonal block of $\mathbf{H}^{2}$, and since $\mathbf{H}$ is self-adjoint, we know that $\mathbf{L}_{22}$ is self-adjoint. We also recall $\mathbf{H}^{2}\gtrsim\theta_{j}^{2}$ from earlier in this proof. This implies $\mathbf{L}_{22}\gtrsim\theta_{j}^{2}$ by restricting to vectors in the block corresponding to $\mathbf{L}_{22}$. Thus, the previous display is $\geq C^{-N}D_{22}^{N-2j+2}\theta_{j}^{-2N}$. Ultimately, 
\begin{align*}
|\mathrm{III}|&\lesssim\int_{\substack{|D_{11}\leq N^{10}\\|D_{22}|\geq N^{10}}}|\det[D_{22}\mathbf{L}_{22}]|^{-\frac12}|D_{11}-D_{22}|dD_{11}dD_{22}\\
&\lesssim N^{10}\int_{|D_{22}|\geq N^{10}}C^{N}D_{22}^{-\frac{N-2j+2}{2}}\theta_{j}^{-2N}dD_{22}\\
&\lesssim C^{N}N^{-5N+10j+12}\theta_{j}^{-2N}.
\end{align*}
As mentioned earlier, the same bound holds for $\mathrm{IV}$. This completes the proof.
\end{proof}
By the previous lemmas, we can now bound the integral $d\theta_{j}$ when $\theta_{j}\in\mathcal{I}_{1}$.
\begin{lemma}\label{lemma:i2cutoff}
Fix any $j\geq1$. There exists $\kappa>0$ such that {locally uniformly in $a_{j},b_{j}$}, we have
\begin{align*}
&\int_{\mathcal{I}_{1}}\frac{|\cos2\theta_{j}|}{\sin^{2}2\theta_{j}}\psi_{j}\det[(A-z)^{\ast}(A-z)+\eta_{z,t}^{2}]\det[(A_{a_{j},b_{j},\theta_{j}}^{(j-1)})^{\ast}A_{a_{j},b_{j},\theta_{j}}^{(j-1)}+\eta_{z,t}^{2}]^{-\frac12}\\
&\times\int_{M_{2}^{sa}(\R)}e^{i\frac{N}{2t}\tr P}\det[I+i[\tilde{H}_{a_{j},b_{j},\theta_{j}}^{(j-1)}]^{\frac12}(P\otimes I)[\tilde{H}_{a_{j},b_{j},\theta_{j}}^{(j-1)}]^{\frac12}]^{-\frac12}dPd\theta_{j}\\
&\lesssim e^{-N^{\kappa}}.
\end{align*}
\end{lemma}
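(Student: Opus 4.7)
The plan is to combine the pointwise bounds from Lemmas \ref{lemma:cutoff1} and \ref{lemma:cutoff2}, using the elementary identity $(\tan\theta_{j} - \cot\theta_{j})^{2} = 4\cot^{2} 2\theta_{j}$ to reinterpret the exponential factor in Lemma \ref{lemma:cutoff1} either as $\exp(-cb_{j}^{2}N^{1-2\epsilon_{0}}\cot^{2} 2\theta_{j})$ in the bulk of $[0,\pi/2]$ or as $\exp(-cb_{j}^{2}N^{1-2\epsilon_{0}}/\theta_{j}^{2})$ when $\theta_{j}$ is close to $0$. The reflection $\theta_{j}\mapsto\pi/2-\theta_{j}$ preserves both the $dP$-integral bound in Lemma \ref{lemma:cutoff2} and this exponential factor, so I would reduce to $\theta_{j}\in[0,\pi/4 - N^{-1/2+\tau}]$ and split this as $[N^{-\alpha},\pi/4 - N^{-1/2+\tau}]\cup(0,N^{-\alpha})$ for a small fixed $\alpha>0$, to be calibrated in terms of $\epsilon_{0}$ and $\tau$.

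On the middle piece $\theta_{j}\in[N^{-\alpha},\pi/4 - N^{-1/2+\tau}]$, the two $\theta_{j}^{-2N}$ terms in Lemma \ref{lemma:cutoff2} are super-exponentially small whenever $\alpha<5/2$, since for instance the third term becomes $N^{(2\alpha-5)N+O(1)}$; hence the $dP$-integral is dominated by $N^{20}$. The measure factor $|\cos 2\theta_{j}|/\sin^{2} 2\theta_{j}$ is polynomial in $N$ because $\sin 2\theta_{j}\geq cN^{-\alpha}$. Finally, $|\theta_{j}-\pi/4|\geq N^{-1/2+\tau}$ forces $\cot^{2} 2\theta_{j}\geq cN^{-1+2\tau}$, so Lemma \ref{lemma:cutoff1} contributes $N^{C\epsilon_{0}}\exp(-cN^{2\tau-2\epsilon_{0}})$. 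For $\epsilon_{0}<\tau$ this exponential dominates all polynomial factors and produces the desired $\exp(-N^{\kappa})$ with $\kappa=2\tau-2\epsilon_{0}-o(1)>0$.

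The edge piece $\theta_{j}\in(0,N^{-\alpha})$ is the main obstacle, because both the $dP$-integral bound (as $\theta_{j}^{-2N}$) and the measure factor $\sin^{-2} 2\theta_{j}\sim\theta_{j}^{-2}$ blow up there, and must be beaten by the super-exponentially strong factor $\exp(-cN^{1-2\epsilon_{0}}/\theta_{j}^{2})$ coming from Lemma \ref{lemma:cutoff1}. After absorbing polynomial and $C^{N}$ multiplicative factors, the problem reduces to the estimate
\begin{align*}
\int_{0}^{N^{-\alpha}}\theta^{-2N-2}\exp\bigl(-cN^{1-2\epsilon_{0}}/\theta^{2}\bigr)\,d\theta = \frac{1}{2}\int_{N^{2\alpha}}^{\infty} u^{N-1/2}\exp\bigl(-cN^{1-2\epsilon_{0}} u\bigr)\,du,
\end{align*}
obtained via $u=\theta^{-2}$. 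For $\alpha>\epsilon_{0}$ the lower limit $N^{2\alpha}$ lies past the critical point $u^{*}=(N-\tfrac{1}{2})/(cN^{1-2\epsilon_{0}})\sim N^{2\epsilon_{0}}$ of the integrand, which is therefore monotone decreasing on the range of integration; bounding it by its value at the left endpoint times a polynomial gives $\exp(-cN^{1+2\alpha-2\epsilon_{0}}+2\alpha N\log N)$. Since $1+2\alpha-2\epsilon_{0}>1$, the first term dominates $N\log N$ and the whole expression is $\exp(-N^{\kappa})$ for some $\kappa>0$.

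Choosing for instance $\alpha=2\epsilon_{0}$ and requiring $\epsilon_{0}<\tau/2$ makes both regimes work simultaneously for $\epsilon_{0}>0$ sufficiently small. The only nontrivial work is this calibration of $\alpha,\epsilon_{0},\tau$; once it is performed, every other step is a direct application of the pointwise bounds from Lemmas \ref{lemma:cutoff1} and \ref{lemma:cutoff2}.
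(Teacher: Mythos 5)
Your argument is correct and takes essentially the same route as the paper's proof: both combine the pointwise bounds of Lemmas \ref{lemma:cutoff1} and \ref{lemma:cutoff2}, then split $\mathcal{I}_{1}$ into a neighborhood of the endpoints $\{0,\pi/2\}$, where the factor $\exp\{-cN^{1-2\epsilon_{0}}\theta_{j}^{-2}\}$ from Lemma \ref{lemma:cutoff1} overwhelms $\theta_{j}^{-2N-2}$ and all $C^{N}$ factors, and the remaining bulk of $\mathcal{I}_{1}$, where the $N^{-5N}$-type prefactors in Lemma \ref{lemma:cutoff2} kill $\theta_{j}^{-2N}$ and the factor $\exp\{-cN\eta_{z,t}^{2}\cot^{2}2\theta_{j}\}\lesssim\exp\{-cN^{2\tau-2\epsilon_{0}}\}$ supplies the stretched-exponential smallness. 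Your explicit calibration (splitting at $N^{-\alpha}$ with $\alpha=2\epsilon_{0}$, the substitution $u=\theta^{-2}$, and the requirement $\epsilon_{0}<\tau/2$) is just a more careful bookkeeping of the same case analysis; the constraint $\epsilon_{0}\lesssim\tau$ it makes explicit is consistent with the paper's standing assumption that $\epsilon_{0}$ is taken sufficiently small.
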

\begin{proof}
We break up $\mathcal{I}_{1}=\mathcal{I}_{11}\cup\mathcal{I}_{12}$, where $\mathcal{I}_{11}=[0,\frac{\pi}{4}-N^{-1/2+\tau}]$ and $\mathcal{I}_{12}=[\frac{\pi}{4}+N^{-1/2+\tau},\frac{\pi}{2}]$. We will prove the estimate
\begin{align*}
&\int_{\mathcal{I}_{11}}\frac{|\cos2\theta_{j}|}{\sin^{2}2\theta_{j}}\psi_{j}\det[(A-z)^{\ast}(A-z)+\eta_{z,t}^{2}]\det[(A_{a_{j},b_{j},\theta_{j}}^{(j-1)})^{\ast}A_{a_{j},b_{j},\theta_{j}}^{(j-1)}+\eta_{z,t}^{2}]^{-\frac12}\\
&\times\int_{M_{2}^{sa}(\R)}e^{i\frac{N}{2t}\tr P}\det[I+i[\tilde{H}_{a_{j},b_{j},\theta_{j}}^{(j-1)}]^{\frac12}(P\otimes I)[\tilde{H}_{a_{j},b_{j},\theta_{j}}^{(j-1)}]^{\frac12}]^{-\frac12}dPd\theta_{j}\\
&\lesssim e^{-N^{\kappa}}.
\end{align*}
The estimate for $\mathcal{I}_{12}$ instead of $\mathcal{I}_{11}$ follows by the same argument after changing variables $\theta_{j}\mapsto\frac{\pi}{2}-\theta_{j}$. 

By Lemmas \ref{lemma:cutoff1} and \ref{lemma:cutoff2}, it suffices to show 
\begin{align}
&\int_{\mathcal{I}_{11}}\frac{|\cos2\theta_{j}|}{\sin^{2}2\theta_{j}}N^{C\epsilon_{0}}\exp\left\{-Cb_{j}^{2}N^{2}\eta_{z,t}^{2}[\tan\theta_{j}-\tan^{-1}\theta_{j}]^{2}\right\}\label{eq:i2cutoffI}\\
&\times\left[N^{C}+C^{N}N^{-10(N-2j+6)}\theta_{j}^{-2N}+N^{-5N+10j+12}\theta_{j}^{-2N}\right]d\theta_{j}\nonumber\\
&\lesssim e^{-N^{\kappa}}.\nonumber
\end{align}
Now, further decompose $\mathcal{I}_{11}=[0,c]\cup[c,\frac{\pi}{4}-N^{-1/2+\tau}]$, where $c>0$ is a small, fixed constant.

Assume first that $\theta_{j}\in[c,\frac{\pi}{4}-N^{-1/2+\tau}]$. We know $|\theta_{j}-\frac{\pi}{4}|\geq N^{-1/2+\tau}\eta_{z,t}^{-1}$ for $\tau>0$ small and fixed. Thus, the exponential in the first line in \eqref{eq:i2cutoffI} is $O(\exp[-CN^{2\tau}])$, and $|\cos2\theta_{j}|^{-1}\lesssim N^{1/2}$. So 
\begin{align*}
&\int_{c}^{\frac{\pi}{4}-N^{-1/2+\tau}}\frac{|\cos2\theta_{j}|}{\sin^{2}2\theta_{j}}N^{C\epsilon_{0}}\exp\left\{-Cb_{j}^{2}N^{2}\eta_{z,t}^{2}[\tan\theta_{j}-\tan^{-1}\theta_{j}]^{2}\right\}\\
&\times\left[N^{C}+C^{N}N^{-10(N-2j+6)}\theta_{j}^{-2N}+N^{-5N+10j+12}\theta_{j}^{-2N}\right]d\theta_{j}\\
&\lesssim D^{N}e^{-CN^{2\tau}}\lesssim e^{-N^{\kappa}},
\end{align*}
where $D=O(1)$. We now handle the integral on $[0,c]$. In this case, if $c>0$ is small enough, we have the lower bound $|\tan\theta_{j}-\tan^{-1}\theta_{j}|\gtrsim\theta_{j}^{-1}$. We can also bound $|\cos2\theta_{j}|=O(1)$. Thus, we have the following estimate for $D=O(1)$ and $C>0$:
\begin{align*}
&\int_{0}^{c}\frac{|\cos2\theta_{j}|}{\sin^{2}2\theta_{j}}N^{C\epsilon_{0}}\exp\left\{-Cb_{j}^{2}N^{2}\eta_{z,t}^{2}[\tan\theta_{j}-\tan^{-1}\theta_{j}]^{2}\right\}\\
&\times\left[N^{C}+C^{N}N^{-10(N-2j+6)}\theta_{j}^{-2N}+N^{-5N+10j+12}\theta_{j}^{-2N}\right]d\theta_{j}\\
&\lesssim\int_{0}^{c}\frac{1}{\sin^{2}2\theta_{j}}D^{N}\theta_{j}^{-2N}\exp[-Cb_{j}^{2}N^{2}\eta_{z,t}^{2}\theta_{j}^{-2}]d\theta_{j}.
\end{align*}
But $N^{2}\eta_{z,t}^{2}\gg N$ since $\eta_{z,t}\gtrsim t=N^{-\epsilon_{0}}$. So, the exponential decays faster than $\theta_{j}^{-2N}$ or $\sin^{-2}2\theta_{j}$ blow up as $\theta_{j}\to0$. In particular, the last line is $\lesssim\exp[-N^{\kappa}]$ for some $\kappa>0$ by elementary calculus. This completes the proof.
\end{proof}
\subsection{Proof of Proposition \ref{prop:cutoff}}
Lemmas \ref{lemma:cutoff1} and \ref{lemma:cutoff2} and a straightforward bound $|\det[V_{j}^{\ast}G_{z}^{(j-1)}(\eta_{z,t})V_{j}]|\lesssim\|G_{z}^{(j-1)}(\eta_{z,t})\|_{\mathrm{op}}^{4}\lesssim\eta_{z,t}^{-4}\lesssim N^{4\epsilon_{0}}$ shows that $\Theta_{0}\lesssim N^{D}$ for some $D=O(1)$. Thus, $\Theta_{10},\Theta_{11}=O(\exp[-N^{\kappa}])$ by Lemma \ref{lemma:i2cutoff}. \qed
%
%
%
\section{Estimates for \texorpdfstring{$\rho_{\mathrm{main}}(z,\mathbf{z};A)$}{rho\_main}}
Let us recall
\begin{align*}
\rho_{\mathrm{main}}(z,\mathbf{z};A)&:=\frac{N^{7}b^{4}[1+O(N^{-\kappa})]}{16\pi^{4}t^{12}\gamma_{z,t}^{2}\sigma_{z,t}^{3}\upsilon_{z,t}^{2}}\rho_{\mathrm{GinUE}}^{(2)}(z_{1},z_{2})\int_{\mathcal{I}_{0}^{2}}d\theta_{1}d\theta_{2}\frac{256|\cos2\theta_{1}||\cos2\theta_{2}|}{|\sin^{2}2\theta_{1}|\sin^{2}2\theta_{2}|}\\
&\times\prod_{j=1,2}\int_{V^{2}(\R^{N-2j+2})}d\mu_{j}(\mathbf{v}_{j})|\det[V_{j}^{\ast}G_{z}^{(j-1)}(\eta_{z,t})V_{j}]|^{2}\\
&\times\prod_{j=1,2}\psi_{j}\det[(A-z)^{\ast}(A-z)+\eta_{z,t}^{2}]\det[(A_{a_{j},b_{j},\theta_{j}}^{(j-1)})^{\ast}A_{a_{j},b_{j},\theta_{j}}^{(j-1)}+\eta_{z,t}^{2}]^{-\frac12}\\
&\times\prod_{j=1,2}\int_{M_{2}^{sa}(\R)}e^{i\frac{N}{2t}\tr P}\det[I+i[\tilde{H}_{a_{j},b_{j},\theta_{j}}^{(j-1)}]^{\frac12}(P\otimes I)[\tilde{H}_{a_{j},b_{j},\theta_{j}}^{(j-1)}]^{\frac12}]^{-\frac12}dP.
\end{align*}
The goal of this section is to express every term above in terms of traces of resolvents.
\subsection{Ratio of determinants}
As in the proof of Lemma \ref{lemma:cutoff1}, we start with the estimate
\begin{align*}
\psi_{j}\det[(A-z)^{\ast}(A-z)+\eta_{z,t}^{2}]&=\det[(A^{(j-1)}-\lambda_{j})^{\ast}(A^{(j-1)}-\lambda_{j})+\eta_{z,t}^{2}]\\
&\times\prod_{\ell=1}^{j-1}\left|\det\left[V_{\ell}^{\ast}G_{\lambda_{j}}^{(\ell-1)}(\eta_{z,t})V_{\ell}\right]\right|^{-1}\left[1+O(N^{-\kappa})\right]
\end{align*}
for some $\kappa>0$ {locally uniformly in $a_{j},b_{j}$}. The second line will be addressed by Lemma \ref{lemma:concentration} below, so we deal with the first determinant on the RHS. Unlike the proof of Lemma \ref{lemma:cutoff1}, we want to compute \eqref{eq:cutoff1main} more precisely in terms of traces of resolvents; the point is that we now have the a priori estimate $|\theta-\frac{\pi}{4}|\lesssim N^{-1/2+\tau}\eta_{z,t}^{-1}$ since we restrict to $\theta_{j}\in\mathcal{I}_{0}$. So, take the RHS of \eqref{eq:cutoff1main} and expand in terms of the trace. In particular, we have 
\begin{align*}
&\det[(A^{(j-1)}-\lambda_{j})(A^{(j-1)}-\lambda_{j})^{\ast}+\eta_{z,t}^{2}]\det[A_{a_{j},b_{j},\theta_{j}}^{(j-1)}(A_{a_{j},b_{j},\theta_{j}}^{(j-1)})^{\ast}+\eta_{z,t}^{2}]^{-\frac12}\\
&=\exp\left\{-\sum_{k=1}^{\infty}\frac{1}{k}b_{j}^{2k}(\tan\theta_{j}-\tan^{-1}\theta_{j})^{2k}\eta_{z,t}^{2k}\tr\left[H_{\lambda_{j}}^{(j-1)}(\eta_{z,t})\tilde{H}_{\overline{\lambda}_{j}}(\eta_{z,t})\right]^{k}\right\}.
\end{align*}
Since $\theta_{j}\in\mathcal{I}_{0}$, we know that $|\tan\theta_{j}-\tan^{-1}\theta_{j}|\lesssim N^{-1/2+\tau}\eta_{z,t}^{-1}$, where $\tau>0$ is small. Thus, if we trivially bound the trace of the $k$-th power by $O(N\eta_{z,t}^{-4k})$, since $\eta_{z,t}=N^{-\epsilon_{0}}$ with $\epsilon_{0}>0$ small, the contribution of the sum from $k=2$ and on is $O(N^{-\kappa})$. In particular, we have 
\begin{align*}
&\exp\left\{-\sum_{k=1}^{\infty}\frac{1}{k}b_{j}^{2k}(\tan\theta_{j}-\tan^{-1}\theta_{j})^{2k}\eta_{z,t}^{2k}\tr\left[H_{\lambda_{j}}^{(j-1)}(\eta_{z,t})\tilde{H}_{\overline{\lambda}_{j}}(\eta_{z,t})\right]^{k}\right\}\\
&=\exp\left\{-b_{j}^{2}(\tan\theta_{j}-\tan^{-1}\theta_{j})^{2}\eta_{z,t}^{2}\tr H_{\lambda_{j}}^{(j-1)}(\eta_{z,t})\tilde{H}_{\overline{\lambda}_{j}}^{(j-1)}(\eta_{z,t})\right\}\left[1+O(N^{-\kappa})\right].
\end{align*}
By Cauchy interlacing and trivial resolvent bounds, we  can remove the superscript:
\begin{align}
\tr H_{\lambda_{j}}^{(j-1)}(\eta_{z,t})\tilde{H}_{\overline{\lambda}_{j}}^{(j-1)}(\eta_{z,t})&=\tr H_{\lambda_{j}}^{(j-1)}(\eta_{z,t})^{\frac12}\tilde{H}_{\overline{\lambda}_{j}}^{(j-1)}(\eta_{z,t})H_{\lambda_{j}}^{(j-1)}(\eta_{z,t})^{\frac12}\label{eq:interlacingdemo}\\
&=\tr H_{\lambda_{j}}(\eta_{z,t})^{\frac12}\tilde{H}_{\overline{\lambda}_{j}}(\eta_{z,t})H_{\lambda_{j}}(\eta_{z,t})^{\frac12}+O(N^{-1+2\tau}\eta_{z,t}^{-D})\nonumber\\
&=\tr H_{\lambda_{j}}(\eta_{z,t})\tilde{H}_{\overline{\lambda}_{j}}(\eta_{z,t})+O(N^{-1+2\tau}\eta_{z,t}^{-D}).\nonumber
\end{align}
The error term on the RHS is obtained by our bound on $\theta_{j}-\frac{\pi}{4}$ and a trivial operator norm bound on resolvents of $\eta_{z,t}^{-D}$. But $\eta_{z,t}=N^{-\epsilon_{0}}$, so if we choose $\epsilon_{0}$ small enough, this cost is $\lesssim N^{-\kappa}$. Putting this altogether, we have 
\begin{align*}
&\psi_{j}\det[(A-z)^{\ast}(A-z)+\eta_{z,t}^{2}]\det[(A_{a_{j},b_{j},\theta_{j}}^{(j-1)})^{\ast}A_{a_{j},b_{j},\theta_{j}}+\eta_{z,t}^{2}]^{-\frac12}\\
&\approx\exp\left\{-b_{j}^{2}(\tan\theta_{j}-\tan^{-1}\theta_{j})^{2}\eta_{z,t}^{2}\tr H_{\lambda_{j}}(\eta_{z,t})\tilde{H}_{\overline{\lambda}_{j}}(\eta_{z,t})\right\}\prod_{\ell=1}^{j-1}\left|\det\left[V_{\ell}^{\ast}G_{\lambda_{j}}^{(\ell-1)}(\eta_{z,t})V_{\ell}\right]\right|^{-1},
\end{align*}
where $\approx$ means equal to modulo a factor of $1+O(N^{-\kappa})$. 
\subsection{Estimating \texorpdfstring{$\det[V_{j}^{\ast}G_{z}^{(j-1)}(\eta_{z,t})V_{j}]$}{det V\_j G\_z V\_j}}
Write $G_{z}^{(j-1)}=G_{z}^{(j-1)}(\eta_{z,t})$ and $H_{z}^{(j-1)}=H_{z}^{(j-1)}(\eta_{z,t})$. For simplicity, we focus on the case $j=1$. We comment on the more general case at the end of this subsection. For $\mathbf{v}_{1}\in V^{2}(\R^{N})$, we write $\mathbf{v}_{1}=(\mathbf{v}_{11},\mathbf{v}_{12})$. By the block representation for $G_{z}$, we have
\begin{align}
V_{1}^{\ast}G_{z}V_{1}=\begin{pmatrix}i\eta\mathbf{v}_{11}^{\ast} H_{z}\mathbf{v}_{11}&i\eta\mathbf{v}_{11}^{\ast} H_{z}\mathbf{v}_{12}&\mathbf{v}_{11}^{\ast} H_{z}(A-z)\mathbf{v}_{11}&\mathbf{v}_{11}^{\ast} H_{z}(A-z)\mathbf{v}_{12}\\i\eta\mathbf{v}_{12}^{\ast} H_{z}\mathbf{v}_{11}&i\eta\mathbf{v}_{12}^{\ast} H_{z}\mathbf{v}_{12}&\mathbf{v}_{12}^{\ast} H_{z}(A-z)\mathbf{v}_{11}&\mathbf{v}_{12}^{\ast} H_{z}(A-z)\mathbf{v}_{21}\\\mathbf{v}_{11}^{\ast}(A-z)^{\ast} H_{z}\mathbf{v}_{11}&\mathbf{v}_{11}^{\ast}(A-z)^{\ast} H_{z}\mathbf{v}_{12}&i\eta\mathbf{v}_{11}^{\ast}\tilde{ H}_{z}\mathbf{v}_{11}&i\eta\mathbf{v}_{11}^{\ast}\tilde{ H}_{z}\mathbf{v}_{12}\\\mathbf{v}_{12}^{\ast}(A-z)^{\ast} H_{z}\mathbf{v}_{11}&\mathbf{v}_{12}^{\ast}(A-z)^{\ast} H_{z}\mathbf{v}_{12}&i\eta\mathbf{v}_{12}^{\ast}\tilde{ H}_{z}\mathbf{v}_{11}&i\eta\mathbf{v}_{12}^{\ast}\tilde{ H}_{z}\mathbf{v}_{12}\end{pmatrix}.\nonumber
\end{align}
Before we estimate its determinant, we must introduce notation. For any $a,b,\theta$, define $H_{a,b,\theta}(\eta)=[(I_{2}\otimes A-\Lambda_{a,b,\theta}\otimes I_{N})(I_{2}\otimes A-\Lambda_{a,b,\theta}\otimes I_{N})^{\ast}+\eta^{2}]^{-1}$ and $H_{a,b,\theta}=H_{a,b,\theta}(\eta_{z,t})$.
\begin{lemma}\label{lemma:concentration}
For any $p=O(1)$, there exists $\kappa>0$ such that 
\begin{align*}
\int_{V^{2}(\R^{N})}|\det[V_{1}^{\ast}G_{z}(\eta_{z,t})V_{1}]|^{p}d\mu_{1}(\mathbf{v}_{1})&=|\det\mathcal{G}|^{p}\left[1+O(N^{-\kappa})\right],
\end{align*}
where $\mathcal{G}$ is the following $4\times4$ matrix:
\begin{align*}
\mathcal{G}&:=\begin{pmatrix}\mathcal{G}_{11}&\mathcal{G}_{12}\\\mathcal{G}_{21}&\mathcal{G}_{22}\end{pmatrix}\\
\mathcal{G}_{11}&:=\begin{pmatrix}i\eta_{z,t}\frac{t}{N}\tr \tilde{H}_{a_{1},b_{1},\theta_{1}}( H_{z}\otimes E_{11})&i\eta_{z,t}\frac{t}{N}\tr \tilde{H}_{a_{1},b_{1},\theta_{1}}( H_{z}\otimes E_{12})\\i\eta_{z,t}\frac{t}{N}\tr \tilde{H}_{a_{1},b_{1},\theta_{1}}( H_{z}\otimes E_{21})&i\eta_{z,t}\frac{t}{N}\tr \tilde{H}_{a_{1},b_{1},\theta_{1}}( H_{z}\otimes E_{22})\end{pmatrix}\\
\mathcal{G}_{12}&:=\begin{pmatrix}\frac{t}{N}\tr \tilde{H}_{a_{1},b_{1},\theta_{1}}[ H_{z}(A-z)\otimes E_{11}]&\frac{t}{N}\tr \tilde{H}_{a_{1},b_{1},\theta_{1}}[ H_{z}(A-z)\otimes E_{12}]\\\frac{t}{N}\tr \tilde{H}_{a_{1},b_{1},\theta_{1}}[ H_{z}(A-z)\otimes E_{21}]&\frac{t}{N}\tr \tilde{H}_{a_{1},b_{1},\theta_{1}}[ H_{z}(A-z)\otimes E_{22}]\end{pmatrix}\\
\mathcal{G}_{21}&:=\begin{pmatrix}\frac{t}{N}\tr \tilde{H}_{a_{1},b_{1},\theta_{1}}[(A-z)^{\ast} H_{z}\otimes E_{11}]&\frac{t}{N}\tr \tilde{H}_{a_{1},b_{1},\theta_{1}}[(A-z)^{\ast} H_{z}\otimes E_{12}]\\\frac{t}{N}\tr \tilde{H}_{a_{1},b_{1},\theta_{1}}[(A-z)^{\ast} H_{z}\otimes E_{21}]&\frac{t}{N}\tr \tilde{H}_{a_{1},b_{1},\theta_{1}}[(A-z)^{\ast} H_{z}\otimes E_{22}]\end{pmatrix}\\
\mathcal{G}_{22}&:=\begin{pmatrix}i\eta_{z,t}\frac{t}{N}\tr \tilde{H}_{a_{1},b_{1},\theta_{1}}(\tilde{H}_{z}\otimes E_{11})&i\eta_{z,t}\frac{t}{N}\tr \tilde{H}_{a_{1},b_{1},\theta_{1}}(\tilde{H}_{z}\otimes E_{12})\\i\eta_{z,t}\frac{t}{N}\tr \tilde{H}_{a_{1},b_{1},\theta_{1}}(\tilde{H}_{z}\otimes E_{21})&i\eta_{z,t}\frac{t}{N}\tr \tilde{H}_{a_{1},b_{1},\theta_{1}}(\tilde{H}_{z}\otimes E_{22})\end{pmatrix},\\
E_{11}&:=\begin{pmatrix}1&0\\0&0\end{pmatrix},\quad E_{22}:=\begin{pmatrix}0&0\\0&1\end{pmatrix}\quad
E_{12}:=\begin{pmatrix}0&1\\0&0\end{pmatrix}\quad
E_{21}:=\begin{pmatrix}0&0\\1&0\end{pmatrix}.
\end{align*}
\end{lemma}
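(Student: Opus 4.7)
The plan is to (i) apply Lemma \ref{lemma:fourier} together with the Gaussian weight of $d\mu_1$ to recast the integral as a Gaussian integral over $\R^{2N}$ (with an auxiliary $P$-integration handling the Stiefel constraint), (ii) show that under this Gaussian each entry of the $4\times4$ matrix $V_1^{\ast}G_z V_1$ concentrates around the matching entry of $\mathcal{G}$, and (iii) pass from entrywise concentration to concentration of $|\det(\cdot)|^{p}$ via Lipschitz continuity.

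Step (i) follows the same Fourier-transform computation used in the preceding formula for $\tilde K_j(z_j)$: the $\mathbf{v}_1$-integration becomes a Gaussian integration on $\R^{2N}$ with effective precision matrix of order $\tfrac{N}{t}$ times a resolvent-type quantity associated to $A_{a_1,b_1,\theta_1}$; integration by saddle point in the auxiliary $P \in M_2^{sa}(\R)$ identifies the effective covariance with $\tfrac{t}{N}H_{a_1,b_1,\theta_1}$ up to an $O(N^{-\kappa})$ error. Each entry of $V_1^{\ast}G_z V_1$ has the form $\mathbf{v}_{1i}^{\ast}M\mathbf{v}_{1j}$ with $M$ one of $i\eta_{z,t}H_z$, $H_z(A-z)$, $(A-z)^{\ast}H_z$, or $i\eta_{z,t}\tilde H_z$ and $i,j\in\{1,2\}$. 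In step (ii), a direct covariance computation under this Gaussian yields
\[
\E\bigl[\mathbf{v}_{1i}^{\ast}M\mathbf{v}_{1j}\bigr] = \tfrac{t}{N}\tr\bigl[H_{a_1,b_1,\theta_1}(M\otimes E_{ij})\bigr]\bigl[1+O(N^{-\kappa})\bigr],
\]
matching exactly the corresponding entry of $\mathcal{G}$. The variance of this Gaussian quadratic form has the form $N^{-2}\tr[H_{a_1,b_1,\theta_1}(M^{\ast}\otimes E_{ji})H_{a_1,b_1,\theta_1}(M\otimes E_{ij})]$, and by the two-resolvent local law (Lemma \ref{2GE}) combined with the single-resolvent local law (Lemma \ref{1GE}) it is $O(N^{-1+C\epsilon_0})$; higher moments follow by Gaussian hypercontractivity.

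For step (iii), each entry of $V_1^{\ast}G_zV_1$ is a priori of size $\lesssim\eta_{z,t}^{-1}=N^{C\epsilon_0}$ (from $\|G_z\|_{\mathrm{op}}\lesssim\eta_{z,t}^{-1}$), so on the relevant bounded region the map $\mathbf{M}\mapsto|\det\mathbf{M}|^{p}$ is Lipschitz with polynomial constant. Combining this with the entrywise concentration from step (ii) yields the stated identity with multiplicative error $O(N^{-\kappa})$ for a possibly smaller $\kappa$. For $j>1$, the same argument applies after replacing $A$ by $A^{(j-1)}$ and invoking eigenvalue interlacing to transfer local-law bounds from $A$ to $A^{(j-1)}$. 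The main obstacle is the variance estimate in step (ii): without the two-resolvent local law for the Hermitization of $I_2\otimes A-\Lambda\otimes I_N$, only $N^{-1}\eta_{z,t}^{-C}$-type bounds are available, which are too weak for the required $N^{\epsilon_0}$-tight concentration. The two-resolvent local law developed in the paper is precisely the technical input that makes this Gaussian concentration argument effective.
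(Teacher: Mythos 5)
Your overall route is the paper's route: use Lemma \ref{lemma:fourier} to analyze the tilted Stiefel integration, prove entrywise concentration of $V_{1}^{\ast}G_{z}V_{1}$ around $\mathcal{G}$, and pass to the determinant by smoothness. However, two of your claims deviate from what the argument actually requires, and one of them is a genuine error of diagnosis. First, the concentration step does \emph{not} need the local laws: the centering values $\frac{t}{N}\tr H_{a_{1},b_{1},\theta_{1}}(M\otimes E_{ij})$ are exactly the (random, $A$-dependent) entries of $\mathcal{G}$, so the lemma is a statement about the measure $d\mu_{1}$ at fixed $A$, and the paper proves it by bounding the moment generating function $m_{F}(r)$ of the tilted measure using only trivial operator-norm bounds $\|H_{a_{1},b_{1},\theta_{1}}\|_{\mathrm{op}},\|H_{z}\|_{\mathrm{op}}\lesssim\eta_{z,t}^{-2}$, which is enough precisely because $\eta_{z,t}=N^{-\epsilon_{0}}$ with $\epsilon_{0}$ small. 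Your closing claim that the bound $N^{-1}\eta_{z,t}^{-C}=N^{-1+C\epsilon_{0}}$ is ``too weak'' is incorrect: the lemma only asserts concentration at scale $N^{-\kappa}$ for \emph{some} $\kappa>0$, and $N^{-1+C\epsilon_{0}}$ variance is more than sufficient. Lemmas \ref{1GE} and \ref{2GE} enter only later, when the entries of $\mathcal{G}$ are replaced by universal deterministic quantities (Lemma \ref{lemma:replacedetG}); they are not the input that makes this lemma work.

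Second, two points in your outline are left at the heuristic level where the actual work lies. (a) The measure $d\mu_{1}$ is not Gaussian, so ``direct covariance computation'' and ``Gaussian hypercontractivity'' are not available as stated; the rigorous substitute is exactly the control of the auxiliary $P$-integration uniformly in the tilt $F$ (restricting to $\|P\|\lesssim N^{-1/2}\eta_{z,t}\log N$, expanding the log-determinant to second order, and showing $m_{KF}(r)=O(1)$ for $K$ up to $N^{3\kappa}$), followed by an exponential Markov inequality; your step (i) gestures at this but the $O(N^{-\kappa})$ identification of the ``effective covariance'' is precisely what has to be proved. (b) Entrywise concentration plus Lipschitz continuity of the determinant only yields an \emph{additive} error $O(N^{-\kappa})$; to obtain the stated multiplicative error $[1+O(N^{-\kappa})]$ you also need the lower bound $|\det\mathcal{G}|\gtrsim\eta_{z,t}^{D}=N^{-D\epsilon_{0}}$ for some $D=O(1)$, which the paper gets from the inverse bound $\|[V_{1}^{\ast}G_{z}V_{1}]^{-1}\|_{\mathrm{op}}\lesssim\eta_{z,t}^{-1}$ established in the proof of Lemma \ref{lemma:cutoff1}. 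This step is missing from your proposal.
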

\begin{proof}
Let $A_{a,b,\theta}=I_{2}\otimes A-\Lambda_{a,b,\theta}\otimes I_{N}$, For any Hermitian matrix $F$, we start by introducing 
\begin{align*}
m_{F}(r)&:=\frac{e^{-\frac{rt}{2N}\tr \tilde{H}_{a_{1},b_{1},\theta_{1}}F}}{K_{a_{1},b_{1},\theta_{1}}}\int_{V^{2}(\R^{N})}\exp\left\{-\frac{N}{t}\mathbf{v}^{\ast}\left(A_{a_{1},b_{1},\theta_{1}}^{\ast}A_{a_{1},b_{1},\theta_{1}}-\frac{rt}{N}F\right)\mathbf{v}\right\}d\mathbf{v}\\
&=\frac{e^{-\frac{rt}{2N}\tr \tilde{H}_{a_{1},b_{1},\theta_{1}}F}e^{\frac{2N}{t}\eta_{z,t}^{2}}}{K_{a_{1},b_{1},\theta_{1}}}\int_{V^{2}(\R^{N})}\exp\left\{-\frac{N}{t}\mathbf{v}^{\ast}\left(A_{a_{1},b_{1},\theta_{1}}^{\ast}A_{a_{1},b_{1},\theta_{1}}+\eta_{z,t}^{2}-\frac{rt}{N}F\right)\mathbf{v}\right\}d\mathbf{v},
\end{align*}
where $K_{a_{1},b_{1},\theta_{1}}$ is a normalizing constant chosen so that $m_{F}(0)=1$. By Lemma \ref{lemma:fourier}, as in the proof of Lemma \ref{lemma:tildekjcomputation}, we can compute
\begin{align*}
&\int_{V^{2}(\R^{N})}\exp\left\{-\frac{N}{t}\mathbf{v}^{\ast}\left(A_{a_{1},b_{1},\theta_{1}}^{\ast}A_{a_{1},b_{1},\theta_{1}}+\eta_{z,t}^{2}-\frac{rt}{N}F\right)\mathbf{v}\right\}d\mathbf{v}\\
&=C_{N,t}\int_{M^{sa}_{2}(\R)}e^{i\frac{N}{t}\tr P}\left\{\det\left(A_{a_{1},b_{1},\theta_{1}}^{\ast}A_{a_{1},b_{1},\theta_{1}}+\eta_{z,t}^{2}-\frac{rt}{N}F+iP\otimes I_{N}\right)\right\}^{-\frac12}dP\\
&=C_{N,t}\left\{\det\left(A_{a_{1},b_{1},\theta_{1}}^{\ast}A_{a_{1},b_{1},\theta_{1}}+\eta_{z,t}^{2}-\frac{rt}{N}F\right)\right\}^{-\frac12}\\
&\times\int_{M^{sa}_{2}(\R)}e^{i\frac{N}{t}\tr P}\left\{\det\left(1+\sqrt{ \tilde{H}_{a_{1},b_{1},\theta_{1}}^{(rF)}}(iP\otimes I_{N})\sqrt{ \tilde{H}_{a_{1},b_{1},\theta_{1}}^{(rF)}}\right)\right\}^{-\frac12}dP,
\end{align*}
where $ \tilde{H}_{a_{1},b_{1},\theta_{1}}^{(rF)}:=(A_{a_{1},b_{1},\theta_{1}}^{\ast}A_{a_{1},b_{1},\theta_{1}}+\eta_{z,t}^{2}-\frac{rt}{N}F)^{-1}$. Above, and throughout this proof, $C_{N,t}$ is a constant that comes from our application of Lemma \ref{lemma:fourier}; its exact value is not important. Next,
\begin{align*}
&\left\{\det\left(A_{a_{1},b_{1},\theta_{1}}^{\ast}A_{a_{1},b_{1},\theta_{1}}+\eta_{z,t}^{2}-\frac{rt}{N}F\right)\right\}^{-\frac12}\\
&=\left\{\det\left(A_{a_{1},b_{1},\theta_{1}}^{\ast}A_{a_{1},b_{1},\theta_{1}}+\eta_{z,t}^{2}\right)\right\}^{-\frac12}\left\{\det\left(1- \tilde{H}_{a_{1},b_{1},\theta_{1}}^{\frac12}\frac{rt}{N}F \tilde{H}_{a_{1},b_{1},\theta_{1}}^{\frac12}\right)\right\}^{-\frac12}\\
&=\left\{\det\left(A_{a_{1},b_{1},\theta_{1}}^{\ast}A_{a_{1},b_{1},\theta_{1}}+\eta_{z,t}^{2}\right)\right\}^{-\frac12}\exp\left\{-\frac12\tr\log\left(1- \tilde{H}_{a_{1},b_{1},\theta_{1}}^{\frac12}\frac{rt}{N}F \tilde{H}_{a_{1},b_{1},\theta_{1}}^{\frac12}\right)\right\},
\end{align*}
where $\tilde{H}_{a_{1},b_{1},\theta_{1}}:=\tilde{H}^{(0)}_{a_{1},b_{1},\theta_{1}}$. Next, we compute
\begin{align*}
&\int_{M^{sa}_{2}(\R)}e^{i\frac{N}{t}\tr P}\left\{\det\left(1+\sqrt{ \tilde{H}_{a_{1},b_{1},\theta_{1}}^{(rF)}}(iP\otimes I_{N})\sqrt{ \tilde{H}_{a_{1},b_{1},\theta_{1}}^{(rF)}}\right)\right\}^{-\frac12}dP\\
&=\int_{M^{sa}_{2}(\R)}e^{i\frac{N}{t}\tr P}\exp\left\{-\frac12\tr\log\left(1+\sqrt{ \tilde{H}_{a_{1},b_{1},\theta_{1}}^{(rF)}}(iP\otimes I_{N})\sqrt{ \tilde{H}_{a_{1},b_{1},\theta_{1}}^{(rF)}}\right)\right\}dP.
\end{align*}
Putting all the previous displays together, we get
\begin{align}
m(r)&=\frac{C_{N,t}e^{2\frac{N}{t}\eta_{z,t}^{2}}}{K_{a_{1},b_{1},\theta_{1}}}\left\{\det\left(A_{a_{1},b_{1},\theta_{1}}^{\ast}A_{a_{1},b_{1},\theta_{1}}+\eta_{z,t}^{2}\right)\right\}^{-\frac12}\label{eq:mrformula}\\
&\times\exp\left\{-\frac{rt}{2N}\tr \tilde{H}_{a_{1},b_{1},\theta_{1}}F-\frac12\tr\log\left(1- \tilde{H}_{a_{1},b_{1},\theta_{1}}^{\frac12}\frac{rt}{N}F \tilde{H}_{a_{1},b_{1},\theta_{1}}^{\frac12}\right)\right\}\nonumber\\
&\times\int_{M^{sa}_{2}(\R)}e^{i\frac{N}{t}\tr P}\exp\left\{-\frac12\tr\log\left(1+\sqrt{ \tilde{H}_{a_{1},b_{1},\theta_{1}}^{(rF)}}(iP\otimes I_{N})\sqrt{ \tilde{H}_{a_{1},b_{1},\theta_{1}}^{(rF)}}\right)\right\}dP.\nonumber
\end{align}
This identity can only hold if $1- \tilde{H}_{a_{1},b_{1},\theta_{1}}^{\frac12}\frac{rt}{N}F \tilde{H}_{a_{1},b_{1},\theta_{1}}^{\frac12}>0$; we will always have this condition for any $F$ we take. To control the second line, we have the following as in Lemma 6.2 of \cite{MO}:
\begin{align*}
&\exp\left\{-\frac{rt}{2N}\tr \tilde{H}_{a_{1},b_{1},\theta_{1}}F-\frac12\tr\log\left(1- \tilde{H}_{a_{1},b_{1},\theta_{1}}^{\frac12}\frac{rt}{N}F \tilde{H}_{a_{1},b_{1},\theta_{1}}^{\frac12}\right)\right\}\\
&\lesssim\exp\left\{\frac{1}{\|1- \tilde{H}_{a_{1},b_{1},\theta_{1}}^{\frac12}\frac{rt}{N}F \tilde{H}_{a_{1},b_{1},\theta_{1}}^{\frac12}\|_{\mathrm{op}}}\tr\left( \tilde{H}_{a_{1},b_{1},\theta_{1}}^{\frac12}\frac{rt}{N}F \tilde{H}_{a_{1},b_{1},\theta_{1}}^{\frac12}\right)^{2}\right\}.
\end{align*}
Let us now control the $dP$ integral in \eqref{eq:mrformula}. Next, by Taylor expansion as in Section 6 of \cite{MO},
\begin{align*}
&\exp\left\{-\frac12\tr\log\left(1+\sqrt{ \tilde{H}_{a_{1},b_{1},\theta_{1}}^{(rF)}}(iP\otimes I_{N})\sqrt{ \tilde{H}_{a_{1},b_{1},\theta_{1}}^{(rF)}}\right)\right\}\\
&=\exp\left\{-\frac{i}{2}\tr\sqrt{ \tilde{H}_{a_{1},b_{1},\theta_{1}}^{(rF)}}(P\otimes I_{N})\sqrt{ \tilde{H}_{a_{1},b_{1},\theta_{1}}^{(rF)}}-\frac14\tr\left(\sqrt{ \tilde{H}_{a_{1},b_{1},\theta_{1}}^{(rF)}}(P\otimes I_{N})\sqrt{ \tilde{H}_{a_{1},b_{1},\theta_{1}}^{(rF)}}\right)^{2}+\mathcal{E}\right\},
\end{align*}
where $\mathcal{E}$ satisfies both $|e^{\mathcal{E}}|=O(1)$ and $|\mathcal{E}|\lesssim N\eta_{z,t}^{-6}\|P\|_{\mathrm{op}}^{3}$. Note that
\begin{align*}
 \tilde{H}_{a_{1},b_{1},\theta_{1}}^{(rF)}&=\sqrt{ \tilde{H}_{a_{1},b_{1},\theta_{1}}}\left(1-\sqrt{ \tilde{H}_{a_{1},b_{1},\theta_{1}}}\frac{rt}{N}F\sqrt{ \tilde{H}_{a_{1},b_{1},\theta_{1}}}\right)^{-1}\sqrt{ \tilde{H}_{a_{1},b_{1},\theta_{1}}}.
\end{align*}
Now, assume that $\| \tilde{H}_{a_{1},b_{1},\theta_{1}}^{\frac12}\frac{rt}{N}F \tilde{H}_{a_{1},b_{1},\theta_{1}}^{\frac12}\|_{\mathrm{op}}\lesssim\frac{N^{\epsilon}}{N\eta_{z,t}^{3}}$ uniformly in $N$. We will also always have this constraint, and it clearly implies $1- \tilde{H}_{a_{1},b_{1},\theta_{1}}^{\frac12}\frac{rt}{N}F \tilde{H}_{a_{1},b_{1},\theta_{1}}^{\frac12}>0$ if $\eta_{z,t}\gg N^{-\frac13+\epsilon}$. Next, we compute
\begin{align}
&\tr\sqrt{ \tilde{H}_{a_{1},b_{1},\theta_{1}}^{(rF)}}(P\otimes I_{N})\sqrt{ \tilde{H}_{a_{1},b_{1},\theta_{1}}^{(rF)}}=\tr \tilde{H}_{a_{1},b_{1},\theta_{1}}^{(rF)}(P\otimes I_{N})\label{eq:tildeptilde}\\
&=\tr\sqrt{ \tilde{H}_{a_{1},b_{1},\theta_{1}}}\left(1-\sqrt{ \tilde{H}_{a_{1},b_{1},\theta_{1}}}\frac{rt}{N}F\sqrt{ \tilde{H}_{a_{1},b_{1},\theta_{1}}}\right)^{-1}\sqrt{ \tilde{H}_{a_{1},b_{1},\theta_{1}}}(P\otimes I_{N})\nonumber\\
&=\tr \tilde{H}_{a_{1},b_{1},\theta_{1}}(P\otimes I_{N})+O\left(\frac{N^{\epsilon}}{\eta_{z,t}^{5}}\|P\|_{\mathrm{op}}\right),\nonumber
\end{align} 
where the last line also uses $\|\tilde{H}_{a_{1},b_{1},\theta_{1}}\|_{\mathrm{op}}\leq\eta_{z,t}^{-2}$. By the same token, for some constant $C>0$, we have 
\begin{align}
&\left(1-\frac{CN^{\epsilon}}{\eta_{z,t}^{10}}\|P\|_{\mathrm{op}}^{2}\right)\tr\left(\sqrt{ \tilde{H}_{a_{1},b_{1},\theta_{1}}}(P\otimes I_{N})\sqrt{ \tilde{H}_{a_{1},b_{1},\theta_{1}}}\right)^{2}\label{eq:tildeptildesquared}\\
&\leq\tr\left(\sqrt{ \tilde{H}_{a_{1},b_{1},\theta_{1}}^{(rF)}}(P\otimes I_{N})\sqrt{ \tilde{H}_{a_{1},b_{1},\theta_{1}}^{(rF)}}\right)^{2}\nonumber\\
&\leq\left(1-\frac{CN^{\epsilon}}{\eta_{z,t}^{10}}\|P\|_{\mathrm{op}}^{2}\right)\tr\left(\sqrt{ \tilde{H}_{a_{1},b_{1},\theta_{1}}}(P\otimes I_{N})\sqrt{ \tilde{H}_{a_{1},b_{1},\theta_{1}}}\right)^{2}.\nonumber
\end{align}
This has two consequences. First, by the lower bound above and the lower bound $\tilde{H}_{a_{1},b_{1},\theta_{1}}\gtrsim1$, 
\begin{align}
\tr\left(\sqrt{ \tilde{H}_{a_{1},b_{1},\theta_{1}}^{(rF)}}(P\otimes I_{N})\sqrt{ \tilde{H}_{a_{1},b_{1},\theta_{1}}^{(rF)}}\right)^{2}\gtrsim N|P_{11}|^{2}+N|P_{22}|^{2}+N|P_{12}|^{2}.\label{eq:tildeptildesquaredlower}
\end{align}
Thus, we can restrict the $P$ integration to $|P_{11}|+|P_{12}|+|P_{22}|\lesssim N^{-1/2}\log N$ at the cost of something exponentially small in $N$. In particular, we have $\mathcal{E}=O(N^{-1/2+\kappa})$ for some $\kappa>0$ small. This bound on entries of $P$ combined with \eqref{eq:tildeptilde} also implies that
\begin{align*}
&\tr\sqrt{ \tilde{H}_{a_{1},b_{1},\theta_{1}}^{(rF)}}(P\otimes I_{N})\sqrt{ \tilde{H}_{a_{1},b_{1},\theta_{1}}^{(rF)}}=\tr \tilde{H}_{a_{1},b_{1},\theta_{1}}^{(rF)}(P\otimes I_{N})\\
&=\tr \tilde{H}_{a_{1},b_{1},\theta_{1}}(P\otimes I_{N})+O\left(\frac{N^{2\kappa}}{\sqrt{N}\eta_{z,t}^{5}}\right).
\end{align*}
Moreover, in the region $|P_{11}|+|P_{12}|+|P_{22}|\lesssim N^{-1/2}\log N$, we have
\begin{align*}
\tr\left(\sqrt{ \tilde{H}_{a_{1},b_{1},\theta_{1}}}(P\otimes I_{N})\sqrt{ \tilde{H}_{a_{1},b_{1},\theta_{1}}}\right)^{2}\lesssim \eta_{z,t}^{-C}\log^{2}N.
\end{align*}
By combining this with \eqref{eq:tildeptildesquared}, in this region, we have the following for $\kappa>0$ small:
\begin{align*}
\tr\left(\sqrt{ \tilde{H}_{a_{1},b_{1},\theta_{1}}^{(rF)}}(P\otimes I_{N})\sqrt{ \tilde{H}_{a_{1},b_{1},\theta_{1}}^{(rF)}}\right)^{2}=\tr\left(\sqrt{ \tilde{H}_{a_{1},b_{1},\theta_{1}}}(P\otimes I_{N})\sqrt{ \tilde{H}_{a_{1},b_{1},\theta_{1}}}\right)^{2}+O\left(\frac{N^{\kappa}}{N\eta_{z,t}^{10}}\right).
\end{align*}
Ultimately, for possibly different but still small $\kappa>0$, we get
\begin{align*}
&\exp\left\{-\frac{i}{2}\tr\sqrt{ \tilde{H}_{a_{1},b_{1},\theta_{1}}^{(rF)}}(P\otimes I_{N})\sqrt{ \tilde{H}_{a_{1},b_{1},\theta_{1}}^{(rF)}}-\frac14\tr\left(\sqrt{ \tilde{H}_{a_{1},b_{1},\theta_{1}}^{(rF)}}(P\otimes I_{N})\sqrt{ \tilde{H}_{a_{1},b_{1},\theta_{1}}^{(rF)}}\right)^{2}+\mathcal{E}\right\}\nonumber\\
&=\exp\left\{-\frac{i}{2}\tr\sqrt{ \tilde{H}_{a_{1},b_{1},\theta_{1}}}(P\otimes I_{N})\sqrt{ \tilde{H}_{a_{1},b_{1},\theta_{1}}}-\frac14\tr\left(\sqrt{ \tilde{H}_{a_{1},b_{1},\theta_{1}}}(P\otimes I_{N})\sqrt{ \tilde{H}_{a_{1},b_{1},\theta_{1}}}\right)^{2}\right\}\\
&\times\exp\left\{iO\left(\frac{N^{\kappa}}{\sqrt{N}\eta_{z,t}^{5}}\right)+O\left(\frac{N^{\kappa}}{N\eta_{z,t}^{10}}\right)\right\}.
\end{align*}
Note that the second line above, which contains the main term on the RHS of this identity, is independent of $r$. Thus, by the previous display, we have $m_{F}(r)\lesssim m_{F}(0)$ as long as $F$ is Hermitian and $\| \tilde{H}_{a_{1},b_{1},\theta_{1}}^{\frac12}\frac{rt}{N}F \tilde{H}_{a_{1},b_{1},\theta_{1}}^{\frac12}\|_{\mathrm{op}}\lesssim\frac{N^{\epsilon}}{N\eta_{z,t}^{3}}\ll1$. Recalling that $m_{F}(0)=1$, under this condition, we have $m_{F}(r)=O(1)$. Concentration of matrix entries of $V_{1}^{\ast}G_{z}V_{1}$ now follows as Lemma 6.2 in \cite{MO}. In particular, we will apply this inequality for $r=N^{-\kappa}$ with $\kappa>0$ small and for the following choices of $F$:
\begin{align*}
F&=\eta_{z,t}\re\left[E\otimes H_{z}\right],\\
F&=\eta_{z,t}\im\left[E\otimes H_{z}\right],\\
F&=\eta_{z,t}\re\left[E\otimes\tilde{ H}_{z}\right],\\
F&=\eta_{z,t}\im\left[E\otimes\tilde{ H}_{z}\right],\\
F&=\re\left[E\otimes H_{z}(A-z)\right],\\
F&=\im\left[E\otimes H_{z}(A-z)\right],\\
\end{align*}
where $E$ is any of the four matrices
\begin{align}
\begin{pmatrix}1&0\\0&0\end{pmatrix},\begin{pmatrix}0&1\\0&0\end{pmatrix},\begin{pmatrix}0&0\\1&0\end{pmatrix},\begin{pmatrix}0&0\\0&1\end{pmatrix}.\nonumber 
\end{align}
and $\re X = \frac{1}{2}(X^\ast+X)$, $\im X = \frac{i}{2}(X^\ast-X)$. (We note that our choice of $r$ is larger than the choices made in the proof of Lemma 6.2 in \cite{MO}, giving weaker concentration estimates, but this is fine.) Let us illustrate one example. Upon treating $\mu_{j}$ as a probability measure over $\mathbf{v}$, by the Markov inequality, we have 
\begin{align*}
\mu_{j}\left(\mathbf{v}^{\ast}F\mathbf{v}-\frac{t}{N}\tr \tilde{H}_{a_{1},b_{1},\theta_{1}}F\geq r\right)\leq e^{-Kr^{2}}m_{KF}(r).
\end{align*}
Choose $F=\eta_{z,t}\re\left[E\otimes H_{z}\right]$ and $r=N^{-\kappa}$ and $K=N^{3\kappa}$ for $\kappa>0$ small. Then we have the estimate $\|\tilde{H}_{a_{1},b_{1},\theta_{1}}^{\frac12}\frac{rt}{N}KF\tilde{H}_{a_{1},b_{1},\theta_{1}}^{\frac12}\|_{\mathrm{op}}\lesssim N^{-1+2\kappa}\eta_{z,t}^{-3}\lesssim1$, so $m_{KF}(r)=O(1)$. This shows that the event on the LHS is exponentially small in $\mu_{j}$-measure. By the same token, we know the same is true if we replace $\geq r$ on the LHS by $\leq r$, and we can replace $\re[E\otimes H_{z}]$ by $\im[E \otimes H_{z}]$. For example, this implies $v_{1}^{\ast}H_{z}v_{1}=\frac{\eta_{z,t}t}{N}\tr \tilde{H}_{a_{1},b_{1},\theta_{1}}H_{z}+O(N^{-\kappa})$ outside an event of $\mu_{j}$-probability that is exponentially small. Doing the same procedure for each entry of $V_{1}^{\ast}G_{z}V_{1}$ shows that each said entry matches the corresponding entry of $\mathcal{G}$ up to $O(N^{-\kappa})$ outside an event of $\mu_{j}$-probability that is exponentially small. Since the determinant is smooth in its entries, and since the entries of $V_{1}^{\ast}G_{z}V_{1}$ are $O(\eta_{z,t}^{-1})=O(N^{-\epsilon_{0}})$ deterministically with $\epsilon_{0}>0$ small, we deduce
\begin{align*}
\int_{V^{2}(\R^{N})}|\det[V_{1}^{\ast}G_{z}(\eta_{z,t})V_{1}]|^{p}d\mu_{1}(\mathbf{v}_{1})&=|\det\mathcal{G}|^{p}+O(N^{-\kappa}).
\end{align*}
We now claim that $|\det\mathcal{G}|^{p}\gtrsim \eta_{z,t}^{-D}$ for some $D=O(1)$, since we can take $\eta_{z,t}=N^{-\epsilon_{0}}$ for $\epsilon_{0}>0$ small enough so that $\eta_{z,t}^{-D}\gg N^{-\kappa}$. This implies that the RHS of the previous display is $|\det\mathcal{G}|^{p}[1+O(N^{-\kappa})]$ for possibly different $\kappa>0$. But said claim is equivalent to $|\det[V_{1}^{\ast}G_{z}(\eta_{z,t})V_{1}]|\gtrsim \eta_{z,t}^{-D}$ because of the previous display, and this bound is shown in the proof of Lemma \ref{lemma:cutoff1}.
\end{proof}
By the same argument, we deduce the following analog of Lemma \ref{lemma:concentration} but for arbitrary $j\geq1$. 
\begin{lemma}\label{lemma:concentrationj}
For any $p\geq1$ and $j\geq1$, there exists $\kappa>0$ such that 
\begin{align*}
\int_{V^{2}(\R^{N-2j+2)}}|\det[V_{j}^{\ast}G_{z}^{(j-1)}(\eta_{z,t})V_{j}]|^{p}d\mu_{j}(\mathbf{v}_{j})=|\det\mathcal{G}_{j}|^{p}\left[1+O(N^{-\kappa})\right],
\end{align*}
where $\mathcal{G}_{j}$ is the following $4\times4$ matrix:
\begin{align*}
\mathcal{G}_{j,a_{j},b_{j}}&:=\begin{pmatrix}\mathcal{G}_{j,a_{j},b_{j},11}&\mathcal{G}_{j,a_{j},b_{j},12}\\\mathcal{G}_{j,a_{j},b_{j},21}&\mathcal{G}_{j,a_{j},b_{j},22}\end{pmatrix}\\
\mathcal{G}_{j,a_{j},b_{j},11}&:=\begin{pmatrix}i\eta_{z,t}\frac{t}{N}\tr \tilde{H}_{a_{j},b_{j},\theta_{j}}^{(j-1)}( H_{z}^{(j-1)}\otimes E_{11})&i\eta_{z,t}\frac{t}{N}\tr \tilde{H}_{a_{j},b_{j},\theta_{j}}^{(j-1)}( H_{z}^{(j-1)}\otimes E_{12})\\i\eta_{z,t}\frac{t}{N}\tr \tilde{H}_{a_{j},b_{j},\theta_{j}}^{(j-1)}( H_{z}^{(j-1)}\otimes E_{21})&i\eta_{z,t}\frac{t}{N}\tr \tilde{H}_{a_{j},b_{j},\theta_{j}}^{(j-1)}( H_{z}^{(j-1)}\otimes E_{22})\end{pmatrix}\\
\mathcal{G}_{j,a_{j},b_{j},12}&:=\begin{pmatrix}\frac{t}{N}\tr \tilde{H}_{a_{j},b_{j},\theta_{j}}^{(j-1)}[ H_{z}^{(j-1)}(A^{(j-1)}-z)\otimes E_{11}]&\frac{t}{N}\tr \tilde{H}_{a_{j},b_{j},\theta_{j}}^{(j-1)}[ H_{z}^{(j-1)}(A^{(j-1)}-z)\otimes E_{12}]\\\frac{t}{N}\tr \tilde{H}_{a_{j},b_{j},\theta_{j}}^{(j-1)}[ H_{z}^{(j-1)}(A^{(j-1)}-z)\otimes E_{21}]&\frac{t}{N}\tr \tilde{H}_{a_{j},b_{j},\theta_{j}}^{(j-1)}[ H_{z}^{(j-1)}(A^{(j-1)}-z)\otimes E_{22}]\end{pmatrix}\\
\mathcal{G}_{j,a_{j},b_{j},21}&:=\begin{pmatrix}\frac{t}{N}\tr \tilde{H}_{a_{j},b_{j},\theta_{j}}^{(j-1)}[(A^{(j-1)}-z)^{\ast} H_{z}^{(j-1)}\otimes E_{11}]&\frac{t}{N}\tr \tilde{H}_{a_{j},b_{j},\theta_{j}}^{(j-1)}[(A^{(j-1)}-z)^{\ast} H_{z}^{(j-1)}\otimes E_{12}]\\\frac{t}{N}\tr \tilde{H}_{a_{j},b_{j},\theta_{j}}^{(j-1)}[(A^{(j-1)}-z)^{\ast} H_{z}^{(j-1)}\otimes E_{21}]&\frac{t}{N}\tr \tilde{H}_{a_{j},b_{j},\theta_{j}}^{(j-1)}[(A^{(j-1)}-z)^{\ast} H_{z}^{(j-1)}\otimes E_{22}]\end{pmatrix}\\
\mathcal{G}_{j,a_{j},b_{j},22}&:=\begin{pmatrix}i\eta_{z,t}\frac{t}{N}\tr \tilde{H}_{a_{j},b_{j},\theta_{j}}^{(j-1)}(\tilde{H}_{z}^{(j-1)}\otimes E_{11})&i\eta_{z,t}\frac{t}{N}\tr \tilde{H}_{a_{j},b_{j},\theta_{j}}^{(j-1)}(\tilde{H}_{z}^{(j-1)}\otimes E_{12})\\i\eta_{z,t}\frac{t}{N}\tr \tilde{H}_{a_{j},b_{j},\theta_{j}}^{(j-1)}(\tilde{H}_{z}^{(j-1)}\otimes E_{21})&i\eta_{z,t}\frac{t}{N}\tr \tilde{H}_{a_{j},b_{j},\theta_{j}}^{(j-1)}(\tilde{H}_{z}^{(j-1)}\otimes E_{22})\end{pmatrix},\\
E_{11}&:=\begin{pmatrix}1&0\\0&0\end{pmatrix},\quad E_{22}:=\begin{pmatrix}0&0\\0&1\end{pmatrix}\quad
E_{12}:=\begin{pmatrix}0&1\\0&0\end{pmatrix}\quad
E_{21}:=\begin{pmatrix}0&0\\1&0\end{pmatrix}.
\end{align*}
\end{lemma}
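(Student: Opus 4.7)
The plan is to repeat the argument of Lemma \ref{lemma:concentration} essentially verbatim, making the notational substitutions $N \mapsto N-2j+2$, $A \mapsto A^{(j-1)}$, $H_z \mapsto H_z^{(j-1)}$, and $\tilde{H}_z \mapsto \tilde{H}_z^{(j-1)}$. Since $j = O(1)$, these substitutions do not affect any of the error estimates beyond constants. Concretely, for a matrix $F$ of size $2(N-2j+2) \times 2(N-2j+2)$, I define the moment generating function
\begin{align*}
m_F(r) := \frac{e^{-\frac{rt}{2N}\tr H_{a_j,b_j,\theta_j}^{(j-1)} F}}{K_{a_j,b_j,\theta_j}^{(j-1)}} \int_{V^2(\R^{N-2j+2})} \exp\left\{-\frac{N}{t} \mathbf{v}^* \left( (A_{a_j,b_j,\theta_j}^{(j-1)})^* A_{a_j,b_j,\theta_j}^{(j-1)} - \tfrac{rt}{N} F \right) \mathbf{v} \right\} d\mathbf{v},
\end{align*}
with $K_{a_j,b_j,\theta_j}^{(j-1)}$ the normalizing constant so that $m_F(0) = 1$. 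Then I invoke Lemma \ref{lemma:fourier} in ambient dimension $N - 2j + 2$ (the Fourier representation there is dimension-agnostic) to obtain a formula for $m_F(r)$ analogous to \eqref{eq:mrformula}.

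Next, I carry out the Taylor expansion of the resulting log-determinants in the $dP$ integral, exactly as in the proof of Lemma \ref{lemma:concentration}. The condition required for convergence is $\|[H_{a_j,b_j,\theta_j}^{(j-1)}]^{1/2} \tfrac{rt}{N} F [H_{a_j,b_j,\theta_j}^{(j-1)}]^{1/2}\|_{\mathrm{op}} \lesssim \tfrac{N^\epsilon}{N\eta_{z,t}^3}$, and the bound on the third-order remainder, restriction to $\|P\|_{\mathrm{op}} \lesssim \sqrt{\eta_{z,t}^2/N}\log N$, and control $m_F(r) = O(1)$ all hold provided $\eta_{z,t} \gg N^{-1/3+\epsilon}$, which is satisfied because $\eta_{z,t} = N^{-\epsilon_0}$ with $\epsilon_0$ small. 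I then apply Markov's inequality with the same six choices of $F$ as in Lemma \ref{lemma:concentration} (now built from $H_z^{(j-1)}$, $\tilde{H}_z^{(j-1)}$, and $A^{(j-1)} - z$ rather than $H_z$, $\tilde{H}_z$, and $A - z$), taking $r = N^{-\kappa}$ and multiplier $K = N^{3\kappa}$ for small $\kappa > 0$. This gives concentration of each entry of $V_j^* G_z^{(j-1)} V_j$ around the corresponding entry of $\mathcal{G}_j$ up to additive error $O(N^{-\kappa})$, outside an event of $\mu_j$-probability exponentially small in $N$.

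Since each entry of $V_j^* G_z^{(j-1)} V_j$ is deterministically $O(\eta_{z,t}^{-1}) = O(N^{\epsilon_0})$ and the determinant is a smooth polynomial in these entries, the additive concentration lifts to $\int_{V^2(\R^{N-2j+2})} |\det[V_j^* G_z^{(j-1)}(\eta_{z,t}) V_j]|^p d\mu_j(\mathbf{v}_j) = |\det \mathcal{G}_j|^p + O(N^{-\kappa+C\epsilon_0})$. To convert this additive error into the stated multiplicative error $1 + O(N^{-\kappa'})$, I need a deterministic lower bound $|\det \mathcal{G}_j| \gtrsim \eta_{z,t}^D$ for some $D = O(1)$; equivalently, $|\det[V_j^* G_z^{(j-1)}(\eta_{z,t}) V_j]| \gtrsim \eta_{z,t}^D$, which was already established in the proof of Lemma \ref{lemma:cutoff1} using the Lemma 2.1 of \cite{MO} bound $\|[\mathrm{Im}\, G_z^{(j-1)}]^{-1}\|_{\mathrm{op}} \lesssim \eta_{z,t}^{-1}$. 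Choosing $\epsilon_0$ small enough so that $\eta_{z,t}^D \gg N^{-\kappa}$ completes the argument.

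There is no substantive new obstacle here: the entire argument is a bookkeeping exercise in which the Stiefel dimension drops by $2(j-1)$ and $A$ is replaced by its $(N-2j+2) \times (N-2j+2)$ principal submatrix $A^{(j-1)}$. The only minor point worth verifying is that $\|A^{(j-1)}\|_{\mathrm{op}} = O(1)$ and $\|H_{a_j,b_j,\theta_j}^{(j-1)}\|_{\mathrm{op}} = O(\eta_{z,t}^{-2})$ hold with high probability (uniformly in $a_j, b_j$ in a bounded neighborhood of $z$ and $\theta_j \in \mathcal{I}_0$), but these follow immediately from Cauchy interlacing of singular values applied to $A$ together with the local law of Lemma \ref{1GE}.
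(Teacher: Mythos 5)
Your proposal is correct and follows essentially the same route as the paper, which itself disposes of this lemma by noting that it follows "by the same argument" as Lemma \ref{lemma:concentration}; your substitutions $N\mapsto N-2j+2$, $A\mapsto A^{(j-1)}$, together with the interlacing/local-law bounds and the determinant lower bound from Lemma \ref{lemma:cutoff1}, are exactly the bookkeeping the paper has in mind. No gap.
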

The final step in this computation is to show that in $\mathcal{G}_{j,a_{j},b_{j}}$, we can replace $a_{j},b_{j}$ by $a,b$ (i.e. replace $\lambda_{j}$ by $z$) and remove the $(j-1)$-superscripts. For the exact formula for $\mathcal{G}_{j,a,b}$ appearing in Lemma \ref{lemma:concentrationjzjtoz} below, see \eqref{eq:gjabmatrix}.
\begin{lemma}\label{lemma:concentrationjzjtoz}
Define $\mathcal{G}_{j,a,b}$ in the same way as in Lemma \ref{lemma:concentrationj} but replacing $(a_{j},b_{j})$ by $(a,b)$ and $\theta_{j}$ by $\pi/4$ and removing the $(j-1)$ superscript. Then we have $|\det\mathcal{G}_{j,a_{j},b_{j}}|^{p}=|\det\mathcal{G}_{j,a,b}|^{p}[1+O(N^{-\kappa})]$ for some $\kappa>0$.
\end{lemma}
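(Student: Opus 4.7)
The plan is to reduce to entrywise closeness of the two $4\times 4$ matrices and then pass to the determinant. This proceeds in two stages: first replacing the parameters $(a_{j},b_{j},\theta_{j})$ by $(a,b,\pi/4)$ inside the outer resolvent $H_{a_{j},b_{j},\theta_{j}}^{(j-1)}$, and second removing the superscript $(j-1)$ from every resolvent and from $A$.

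For the first stage, on the integration domain at hand we have $\lambda_{j}=z+O(N^{-1/2})$ locally uniformly, and the cutoff to $\mathcal{I}_{0}$ gives $|\theta_{j}-\pi/4|\lesssim N^{-1/2+\tau}$. A direct inspection of $\Lambda_{a,b,\theta}$ then yields $\|\Lambda_{a_{j},b_{j},\theta_{j}}-\Lambda_{a,b,\pi/4}\|_{\mathrm{op}}\lesssim N^{-1/2+\tau}$, and hence $\|A_{a_{j},b_{j},\theta_{j}}^{(j-1)}-A_{a,b,\pi/4}^{(j-1)}\|_{\mathrm{op}}\lesssim N^{-1/2+\tau}$. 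Combining this with the trivial bound $\|H^{(j-1)}\|_{\mathrm{op}}\lesssim\eta_{z,t}^{-2}$ and the resolvent identity produces $\|H_{a_{j},b_{j},\theta_{j}}^{(j-1)}-H_{a,b,\pi/4}^{(j-1)}\|_{\mathrm{op}}\lesssim N^{-1/2+\tau+C\epsilon_{0}}$. Each entry of $\mathcal{G}_{j,a_{j},b_{j}}$ is a normalized trace against a matrix of the form $M^{(j-1)}\otimes E$ with $\|M^{(j-1)}\|_{\mathrm{op}}\lesssim\eta_{z,t}^{-C}$ (coming from $H_{z}^{(j-1)}$, $\tilde H_{z}^{(j-1)}$, or $H_{z}^{(j-1)}(A^{(j-1)}-z)$), so each entry shifts by at most $O(N^{-1/2+\tau+C\epsilon_{0}})$ under this replacement.

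For the second stage, $A^{(j-1)}$ differs from $A$ by restriction to a subspace of codimension $2(j-1)=O(1)$, after an orthogonal conjugation whose effect on the traces of interest is nil. The interlacing argument of Lemma 2.1 of \cite{MO}, applied to each normalized trace of a product of two Hermitization resolvents at scale $\eta_{z,t}$, bounds the contribution of this rank-$O(1)$ modification by $O(\eta_{z,t}^{-C}/N)=O(N^{-1+C\epsilon_{0}})$. Combining the two stages, every entry of $\mathcal{G}_{j,a_{j},b_{j}}$ agrees with the corresponding entry of $\mathcal{G}_{j,a,b}$ up to an additive error $O(N^{-\kappa_{0}})$, for some $\kappa_{0}>0$, provided $\epsilon_{0}$ and $\tau$ are chosen small enough.

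Finally, to upgrade entrywise closeness to the claim, I would invoke the lower bound $|\det\mathcal{G}_{j,a,b}|\gtrsim\eta_{z,t}^{D}$, obtained by repeating verbatim the final paragraph of the proof of Lemma \ref{lemma:concentration} with parameters replaced; this step only uses the lower bound on $|\det[V_{\ell}^{\ast}G_{z}^{(\ell-1)}(\eta_{z,t})V_{\ell}]|$ already established in the proof of Lemma \ref{lemma:cutoff1}. Since each matrix entry is of size $O(\eta_{z,t}^{-C})$, multilinearity of the determinant converts the additive entry shift into a multiplicative factor $1+O(N^{-\kappa})$ on $\det\mathcal{G}$, which is preserved under the $p$-th power. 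The main obstacle is purely bookkeeping: ensuring that the powers of $\eta_{z,t}^{-1}=N^{\epsilon_{0}}$ accumulated from the trivial resolvent bounds do not overwhelm the small gains $N^{-1/2+\tau}$ and $N^{-1}$, which forces $\epsilon_{0}$ and $\tau$ to be chosen sufficiently small relative to one another, as is done throughout the paper.
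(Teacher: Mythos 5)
Your argument is correct and follows essentially the same route as the paper's proof: resolvent perturbation to replace $(a_{j},b_{j},\theta_{j})$ by $(a,b,\pi/4)$ using $|a_{j}-a|,|b_{j}-b|=O(N^{-1/2})$ and the $\mathcal{I}_{0}$ cutoff on $\theta_{j}$, interlacing (Lemma 2.1 of \cite{MO}) for the rank-$O(1)$ removal of the $(j-1)$ superscript, smoothness of the determinant in its entries, and the lower bound $|\det\mathcal{G}|\gtrsim\eta_{z,t}^{D}$ (traced back to Lemma \ref{lemma:cutoff1}) to convert the additive error into a multiplicative $1+O(N^{-\kappa})$, with $\epsilon_{0},\tau$ chosen small. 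No gaps beyond the same bookkeeping the paper itself performs.
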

\begin{proof}
Resolvent perturbation implies that $\|\tilde{H}_{a_{j},b_{j},\theta_{j}}^{(j-1)}-\tilde{H}_{a,b,\frac{\pi}{4}}^{(j-1)}\|_{\mathrm{op}}\lesssim\eta_{z,t}^{-C}[|a_{j}-a|+|b_{j}-b|+|\theta_{j}-\frac{\pi}{4}]$. But $a_{j}-a,b_{j}-b=O(N^{-1/2})$ and $|\theta_{j}-\frac{\pi}{4}|=O(N^{-1/2+\kappa}\eta_{z,t}^{-1})$. Moreover, if we use Cauchy interlacing as in \eqref{eq:interlacingdemo}, then for any $k,\ell\in\{1,2\}$, we have 
\begin{align*}
i\eta_{z,t}\frac{t}{N}\left|\tr \tilde{H}_{a_{j},b_{j},\frac{\pi}{4}}^{(j-1)}(\tilde{H}_{z}^{(j-1)}\otimes E_{11})-\tr \tilde{H}_{a_{j},b_{j},\frac{\pi}{4}}(\tilde{H}_{z}\otimes E_{11})\right|\lesssim \tfrac{t}{N}\|\tilde{H}_{a_{j},b_{j},\theta_{j}}\|_{\mathrm{op}}\|H_{z}\|_{\mathrm{op}}\lesssim N^{-1}\eta_{z,t}^{-C}.
\end{align*}
A similar estimate for removing $(j-1)$-superscripts in entries of $\mathcal{G}_{j,a_{j},b_{j}}$ also holds. Thus, the entries of $\mathcal{G}_{j,a_{j},b_{j}}-\mathcal{G}_{j,a,b}$ are $O(N^{-1/2+\kappa+C\epsilon_{0}})$. Since the determinant is smooth in its entries, we deduce that $|\det\mathcal{G}_{j,a_{j},b_{j}}|^{p}=|\det\mathcal{G}_{j,a,b}|^{p}+O(N^{-1/2+4\epsilon_{0}})$. Now, again use the lower bound $|\det\mathcal{G}_{j,a_{j},b_{j}}|^{p}\gtrsim \eta_{z,t}^{-D}$ for some $D=O(1)$ to get $|\det\mathcal{G}_{j,a_{j},b_{j}}|^{p}\gtrsim \eta_{z,t}^{-D}$. This implies that $|\det\mathcal{G}_{j,a,b}|^{p}+O(N^{-1/2+4\epsilon_{0}})=|\det\mathcal{G}_{j,a,b}|^{p}[1+O(N^{-1/2+4\epsilon_{0}})]$ and completes the proof.
\end{proof}
\subsection{The \texorpdfstring{$dP$}{dP} integration over \texorpdfstring{$M_{2}^{sa}(\R)$}{M\_2\^sa(R)}}
We start by a similar computation to the $dP$ integration in the proof of Lemma \ref{lemma:concentration}. By $\det[1+A]=\exp\tr\log[1+A]$ and Taylor expansion of $\log$, we have
\begin{align}
&\det[I+i[\tilde{H}_{a_{j},b_{j},\theta_{j}}^{(j-1)}]^{\frac12}(P\otimes I)[\tilde{H}_{a_{j},b_{j},\theta_{j}}^{(j-1)}]^{\frac12}]^{-\frac12}\label{eq:gaussapprox1}\\
&=\exp\left\{-\frac{i}{2}\tr[\tilde{H}_{a_{j},b_{j},\theta_{j}}^{(j-1)}]^{\frac12}(P\otimes I)[\tilde{H}_{a_{j},b_{j},\theta_{j}}^{(j-1)}]^{\frac12}\right\}\nonumber\\
&\times\exp\left\{-\frac12\tr\left([\tilde{H}_{a_{j},b_{j},\theta_{j}}^{(j-1)}]^{\frac12}(P\otimes I)[\tilde{H}_{a_{j},b_{j},\theta_{j}}^{(j-1)}](P\otimes I)[\tilde{H}_{a_{j},b_{j},\theta_{j}}^{(j-1)}]^{\frac12}\right)\right\}\nonumber\\
&\times\exp\left\{O(N\eta_{z,t}^{-6}\|P\|_{\mathrm{op}}^{3})\right\}.\nonumber
\end{align}
We also have the following bound (by an essentially identical Taylor expansion) which is an inequality version of the previous display:
\begin{align}
&\det[I+i[\tilde{H}_{a_{j},b_{j},\theta_{j}}^{(j-1)}]^{\frac12}(P\otimes I)[\tilde{H}_{a_{j},b_{j},\theta_{j}}^{(j-1)}]^{\frac12}]^{-\frac12}\label{eq:gaussapprox2}\\
&\lesssim\exp\left\{-\frac12\tr\left([\tilde{H}_{a_{j},b_{j},\theta_{j}}^{(j-1)}]^{\frac12}(P\otimes I)[\tilde{H}_{a_{j},b_{j},\theta_{j}}^{(j-1)}](P\otimes I)[\tilde{H}_{a_{j},b_{j},\theta_{j}}^{(j-1)}]^{\frac12}\right)\right\}.\nonumber
\end{align}
If we write $P=\begin{pmatrix}P_{11}&P_{12}\\P_{12}&P_{22}\end{pmatrix}$, then an elementary computation shows
\begin{align*}
&\exp\left\{-\frac12\tr\left([\tilde{H}_{a_{j},b_{j},\theta_{j}}^{(j-1)}]^{\frac12}(P\otimes I)[\tilde{H}_{a_{j},b_{j},\theta_{j}}^{(j-1)}](P\otimes I)[\tilde{H}_{a_{j},b_{j},\theta_{j}}^{(j-1)}]^{\frac12}\right)\right\}\\
&=\exp\left\{-\begin{pmatrix}P_{11}\\P_{12}\\P_{22}\end{pmatrix}^{\ast}\mathbf{Q}_{a_{j},b_{j},\theta_{j}}^{(j-1)}\begin{pmatrix}P_{11}\\P_{12}\\P_{22}\end{pmatrix}\right\},
\end{align*}
in which we use the notation
\begin{align*}
\tilde{H}_{a_{j},b_{j},\theta_{j}}^{(j-1)}&=\begin{pmatrix}\tilde{H}_{a_{j},b_{j},\theta_{j},11}^{(j-1)}&\tilde{H}_{a_{j},b_{j},\theta_{j},12}^{(j-1)}\\\tilde{H}_{a_{j},b_{j},\theta_{j},12}^{(j-1)}&\tilde{H}_{a_{j},b_{j},\theta_{j},22}^{(j-1)}\end{pmatrix}\\
\mathbf{Q}_{a_{j},b_{j},\theta_{j}}^{(j-1)}&:=\begin{pmatrix}[\mathbf{Q}_{a_{j},b_{j},\theta_{j}}^{(j-1)}]_{1}&[\mathbf{Q}_{a_{j},b_{j},\theta_{j}}^{(j-1)}]_{2}&[\mathbf{Q}_{a_{j},b_{j},\theta_{j}}^{(j-1)}]_{3}\end{pmatrix},\\
[\mathbf{Q}_{a_{j},b_{j},\theta_{j}}^{(j-1)}]_{1}&=\begin{pmatrix}\tr[\tilde{H}_{a_{j},b_{j},\theta_{j},11}^{(j-1)}]^{2}\\2\tr \tilde{H}_{a_{j},b_{j},\theta_{j},11}^{(j-1)} \tilde{H}_{a_{j},b_{j},\theta_{j},12}^{(j-1)}\\\tr \tilde{H}_{a_{j},b_{j},\theta_{j},12}^{(j-1)} [\tilde{H}_{a_{j},b_{j},\theta_{j},12}^{(j-1)}]^T\end{pmatrix}\\
[\mathbf{Q}_{a_{j},b_{j},\theta_{j}}^{(j-1)}]_{2}&=\begin{pmatrix}2\tr \tilde{H}_{a_{j},b_{j},\theta_{j},11}^{(j-1)} \tilde{H}_{a_{j},b_{j},\theta_{j},12}^{(j-1)}\\2\tr \tilde{H}_{a_{j},b_{j},\theta_{j},11}^{(j-1)} \tilde{H}_{a_{j},b_{j},\theta_{j},22}^{(j-1)}+2\tr [\tilde{H}_{a_{j},b_{j},\theta_{j},12}^{(j-1)}]^{2}\\2\tr \tilde{H}_{a_{j},b_{j},\theta_{j},22}^{(j-1)} \tilde{H}_{a_{j},b_{j},\theta_{j},12}^{(j-1)}\end{pmatrix}\\
[\mathbf{Q}_{a_{j},b_{j},\theta_{j}}^{(j-1)}]_{3}&=\begin{pmatrix}\tr \tilde{H}_{a_{j},b_{j},\theta_{j},12}^{(j-1)} [\tilde{H}_{a_{j},b_{j},\theta_{j},12}^{(j-1)}]^{T}\\2\tr \tilde{H}_{a_{j},b_{j},\theta_{j},22}^{(j-1)} \tilde{H}_{a_{j},b_{j},\theta_{j},12}^{(j-1)}\\\tr [\tilde{H}_{a_{j},b_{j},\theta_{j},22}^{(j-1)}]^{2}\end{pmatrix}
\end{align*}
We clarify that $\mathbf{Q}_{a_{j},b_{j},\theta_{j}}^{(j-1)}$ is a $3\times3$ matrix. To continue with this computation, we need the following auxiliary bound. (It is a refinement of \eqref{eq:tildeptildesquaredlower}.)
\begin{lemma}\label{lemma:qlowerbound}
If $\theta\in\mathcal{I}_{0}$, then $\mathbf{Q}_{a_{j},b_{j},\theta_{j}}^{(j-1)}\gtrsim N\eta_{z,t}^{-2}$.
\end{lemma}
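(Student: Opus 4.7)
The matrix $\mathbf{Q}_{a_j,b_j,\theta_j}^{(j-1)}$ is (up to the factor $\tfrac12$) the Gram matrix of the quadratic form $q_{\theta_j}(P):=\tr[H_{a_j,b_j,\theta_j}^{(j-1)}(P\otimes I)H_{a_j,b_j,\theta_j}^{(j-1)}(P\otimes I)]$ on $P\in M_2^{sa}(\R)\simeq\R^3$. This form is manifestly positive semi-definite since $q_{\theta_j}(P)=\|M\|_F^2$ for the Hermitian matrix $M:=[H_{a_j,b_j,\theta_j}^{(j-1)}]^{1/2}(P\otimes I)[H_{a_j,b_j,\theta_j}^{(j-1)}]^{1/2}$. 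My plan is to prove the lower bound exactly at $\theta_j=\pi/4$ using an explicit diagonalization, and then propagate it to all of $\mathcal{I}_0$ via a resolvent perturbation.

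At $\theta_j=\pi/4$ the matrix $\Lambda_{a_j,b_j,\pi/4}$ is normal with eigenvalues $\lambda_j,\bar\lambda_j$ and has the unitary diagonalization $\Lambda_{a_j,b_j,\pi/4}=R\,\mathrm{diag}(\lambda_j,\bar\lambda_j)\,R^*$ with $R=\tfrac{1}{\sqrt 2}\begin{pmatrix}1&1\\i&-i\end{pmatrix}$. Consequently $H_{a_j,b_j,\pi/4}^{(j-1)}=(R\otimes I)\,\mathrm{diag}(H_{\lambda_j}^{(j-1)},H_{\bar\lambda_j}^{(j-1)})(R^*\otimes I)$. Introducing $\tilde P := R^*PR$ (Hermitian, with $\|\tilde P\|_F=\|P\|_F$ by unitarity of $R$) and using $H_{\bar\lambda_j}^{(j-1)}=\overline{H_{\lambda_j}^{(j-1)}}$ (valid since $A$ is real), a direct computation yields
\[
q_{\pi/4}(P) = \bigl(\tilde P_{11}^2+\tilde P_{22}^2\bigr)\tr\bigl[H_{\lambda_j}^{(j-1)}\bigr]^2 + 2|\tilde P_{12}|^2\,\tr H_{\lambda_j}^{(j-1)}H_{\bar\lambda_j}^{(j-1)}.
\]
Since $\tilde P_{11}^2+\tilde P_{22}^2+2|\tilde P_{12}|^2=\|P\|_F^2$, the target bound $\mathbf{Q}_{a_j,b_j,\pi/4}^{(j-1)}\gtrsim N\eta_{z,t}^{-2}$ reduces to showing that both traces above are $\gtrsim N\eta_{z,t}^{-2}$.

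For $\tr[H_{\lambda_j}^{(j-1)}]^2$, the single-resolvent local law (Lemma \ref{1GE}) gives $\langle[H_{\lambda_j}^{(j-1)}]^2\rangle\sim\eta_{z,t}^{-3}\gg\eta_{z,t}^{-2}$, which is immediate. The key input will be the two-resolvent lower bound $\langle H_{\lambda_j}^{(j-1)}H_{\bar\lambda_j}^{(j-1)}\rangle\gtrsim\eta_{z,t}^{-2}$: applying the two-resolvent local law (Lemma \ref{2GE}), I would extract an explicit deterministic approximation whose leading term is $|m_{\lambda_j}(\eta_{z,t})|^2$, where $m_{\lambda_j}(\eta_{z,t})\sim\eta_{z,t}^{-1}$ is the real positive leading-order limit of $\langle H_{\lambda_j}\rangle$ (coming from the defining relation $t\langle H_z(\eta_{z,t})\rangle=1$ and $\eta_{z,t}\sim t$). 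Because $A$ is real we have $m_{\bar\lambda_j}=\overline{m_{\lambda_j}}=m_{\lambda_j}$, so the approximation is positive of order $\eta_{z,t}^{-2}$, as required. The discrepancy from replacing $A$ by the minor $A^{(j-1)}$ contributes only $O(\eta_{z,t}^{-C})$ via Cauchy interlacing (Lemma 2.1 of \cite{MO}), which is negligible against $N\eta_{z,t}^{-2}$ for $\epsilon_0$ small.

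Finally, for general $\theta_j\in\mathcal I_0$, the cutoff $|\theta_j-\pi/4|\leq N^{-1/2+\tau}$ combined with a resolvent expansion will give $\|H_{a_j,b_j,\theta_j}^{(j-1)}-H_{a_j,b_j,\pi/4}^{(j-1)}\|_{\mathrm{op}}\lesssim N^{-1/2+\tau}\eta_{z,t}^{-C}$ and hence $\|\mathbf{Q}_{a_j,b_j,\theta_j}^{(j-1)}-\mathbf{Q}_{a_j,b_j,\pi/4}^{(j-1)}\|\lesssim N^{1/2+\tau}\eta_{z,t}^{-C}$; for $\epsilon_0,\tau>0$ sufficiently small this is much smaller than $N\eta_{z,t}^{-2}=N^{1+2\epsilon_0}$, so the bound persists. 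The main obstacle will be the quantitative two-resolvent lower bound: the naive $H_{a_j,b_j,\theta_j}^{(j-1)}\gtrsim c>0$ only yields $\mathbf{Q}\gtrsim N$, off by precisely the factor $\eta_{z,t}^{-2}$, so one genuinely must invoke Lemma \ref{2GE} to extract the correct leading term $|m_{\lambda_j}|^2\sim\eta_{z,t}^{-2}$.
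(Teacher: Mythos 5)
Your proposal is correct and follows essentially the same route as the paper: the paper likewise handles general $\theta_j\in\mathcal{I}_0$ by perturbing $\mathbf{Q}$ around $\theta_j=\pi/4$ with an error $O(N^{1/2+\tau}\eta_{z,t}^{-D})\ll N\eta_{z,t}^{-2}$, and at $\pi/4$ (Appendix \ref{subsection:Qestimatepi/4}) uses the same unitary diagonalization of $\Lambda_{a,b,\pi/4}$ to reduce $\mathbf{Q}$ to the quantities $N\langle H_z^2\rangle$ and $N\langle H_zH_{\bar z}\rangle$, exactly your decomposition. Your remark that the lower bound $\langle H_{\lambda_j}H_{\overline{\lambda}_j}\rangle\gtrsim\eta_{z,t}^{-2}$ genuinely needs the two-resolvent input of Lemma \ref{2GE} (with a positive deterministic approximation of order $\eta_{z,t}^{-2}$, the stability factors being bounded since $|\lambda_j-\overline{\lambda}_j|\gtrsim 1$) rather than the trivial bound $H\gtrsim c$ is precisely the point the paper's appendix asserts without elaboration.
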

\begin{proof}
This is shown for $\theta_{j}=\pi/4$ in Appendix \ref{subsection:Qestimatepi/4}. For general $\theta_{j}\in\mathcal{I}_{0}$, we will perform resolvent perturbation in the entries with respect to $\theta_{j}$. We show one example; we claim that 
\begin{align*}
\tr[\tilde{H}_{a_{j},b_{j},\theta_{j},11}^{(j-1)}]^{2}=\tr[\tilde{H}_{a_{j},b_{j},\frac{\pi}{4},11}^{(j-1)}]^{2}+O(N^{\frac12+\delta})
\end{align*}
for $\delta>0$ small. For this, note $\|\tilde{H}_{a_{j},b_{j},\theta_{j},11}^{(j-1)}\|_{\mathrm{op}}\lesssim\eta_{z,t}^{-2}$. Thus, resolvent perturbation shows 
\begin{align*}
\|[\tilde{H}_{a_{j},b_{j},\theta_{j},11}^{(j-1)}]^{2}-[\tilde{H}_{a_{j},b_{j},\frac{\pi}{4},11}^{(j-1)}]^{2}\|_{\mathrm{op}}\lesssim \eta_{z,t}^{-D}\left\|(\Lambda_{a_{j},b_{j},\theta_{j}}-\Lambda_{a_{j},b_{j},\frac{\pi}{4}})\otimes I_{N-2j}\right\|_{\mathrm{op}}
\end{align*}
for some $D=O(1)$. Since $\theta\in\mathcal{I}_{0}$, we know $\|\Lambda_{a_{j},b_{j},\theta_{j}}-\Lambda_{a_{j},b_{j},\frac{\pi}{4}}\|_{\mathrm{op}}\lesssim|\theta-\frac{\pi}{4}|\lesssim N^{-\frac12+\tau}\eta_{z,t}^{-1}$ with $\tau>0$ small. So the RHS of the previous display is $\lesssim N^{-1/2+\tau}\eta_{z,t}^{-D}$ for different $D=O(1)$ and $\tau>0$ small. We deduce
\begin{align*}
|\tr[\tilde{H}_{a_{j},b_{j},\theta_{j},11}^{(j-1)}]^{2}-\tr[\tilde{H}_{a_{j},b_{j},\frac{\pi}{4},11}^{(j-1)}]^{2}|&\lesssim N\|[\tilde{H}_{a_{j},b_{j},\theta_{j},11}^{(j-1)}]^{2}-[\tilde{H}_{a_{j},b_{j},\frac{\pi}{4},11}^{(j-1)}]^{2}\|_{\mathrm{op}}\lesssim N^{\frac12+\tau}\eta_{z,t}^{-D}
\end{align*}
at which point the claim follows if $\epsilon_{0}>0$ in $\eta_{z,t}=N^{-\epsilon_{0}}$ and $\tau>0$ are both small enough. A similar estimate holds for all entries of $\mathbf{Q}_{a_{j},b_{j},\theta_{j}}^{(j-1)}$. This implies 
\begin{align*}
\|\mathbf{Q}_{a_{j},b_{j},\theta_{j}}^{(j-1)}-\mathbf{Q}_{a_{j},b_{j},\frac{\pi}{4}}^{(j-1)}\|_{\mathrm{op}}\lesssim N^{\frac12+\delta}
\end{align*}
with $\delta>0$ small. Since the lemma is true for $\theta_{j}=\pi/4$ as noted at the beginning of this proof, and since $N\eta_{z,t}^{-2}\gg N^{1/2+\delta}$ for $\delta>0$ small, the lemma must be true for all $\theta_{j}\in\mathcal{I}_{0}$.
\end{proof}
By Lemma \ref{lemma:qlowerbound} and \eqref{eq:gaussapprox2}, we deduce the following for any $\upsilon>0$
\begin{align*}
&\left|\int_{\|P\|_{\mathrm{op}}\geq N^{-\frac12+\upsilon}\eta_{z,t}}e^{i\frac{N}{2t}\tr P}\det[I+i[\tilde{H}_{a_{j},b_{j},\theta_{j}}^{(j-1)}]^{\frac12}(P\otimes I)[\tilde{H}_{a_{j},b_{j},\theta_{j}}^{(j-1)}]^{\frac12}]^{-\frac12}dP\right|\lesssim \exp[-CN^{2\upsilon}].
\end{align*}
In particular, we can restrict to the region $\|P\|_{\mathrm{op}}\leq N^{-1/2+\upsilon}\eta_{z,t}$. In this region, we can use the identity \eqref{eq:gaussapprox1} and control the last line therein (the cubic term) by $N^{-1/2+3\tau}$ for $\tau>0$ small. In particular, we have 
\begin{align*}
&\int_{M_{2}^{sa}(\R)}e^{i\frac{N}{2t}\tr P}\det[I+i[\tilde{H}_{a_{j},b_{j},\theta_{j}}^{(j-1)}]^{\frac12}(P\otimes I)[\tilde{H}_{a_{j},b_{j},\theta_{j}}^{(j-1)}]^{\frac12}]^{-\frac12}dP\\
&=[1+O(N^{-\kappa})]\int_{\|P\|_{\mathrm{op}}\leq N^{-1/2+\upsilon}\eta_{z,t}}e^{i\frac{N}{2t}P}\exp\left\{-\frac{i}{2}\tr[\tilde{H}_{a_{j},b_{j},\theta_{j}}^{(j-1)}]^{\frac12}(P\otimes I)[\tilde{H}_{a_{j},b_{j},\theta_{j}}^{(j-1)}]^{\frac12}\right\}\\
&\quad\quad\quad\times\exp\left\{-\frac12\tr\left([\tilde{H}_{a_{j},b_{j},\theta_{j}}^{(j-1)}]^{\frac12}(P\otimes I)[\tilde{H}_{a_{j},b_{j},\theta_{j}}^{(j-1)}](P\otimes I)[\tilde{H}_{a_{j},b_{j},\theta_{j}}^{(j-1)}]^{\frac12}\right)\right\}d P+O(\exp[-CN^{2\upsilon}]).
\end{align*}
A similar argument lets us extend the integration back to all $M_{2}^{sa}(\R)$. We ultimately have
\begin{align*}
&\int_{M_{2}^{sa}(\R)}e^{i\frac{N}{2t}\tr P}\det[I+i[\tilde{H}_{a_{j},b_{j},\theta_{j}}^{(j-1)}]^{\frac12}(P\otimes I)[\tilde{H}_{a_{j},b_{j},\theta_{j}}^{(j-1)}]^{\frac12}]^{-\frac12}dP\\
&=[1+O(N^{-\kappa})]\int_{M_{2}^{sa}(\R)}e^{i\frac{N}{2t}\tr P}\exp\left\{-\frac{i}{2}\tr[\tilde{H}_{a_{j},b_{j},\theta_{j}}^{(j-1)}]^{\frac12}(P\otimes I)[\tilde{H}_{a_{j},b_{j},\theta_{j}}^{(j-1)}]^{\frac12}\right\}\\
&\quad\quad\quad\exp\left\{-\begin{pmatrix}P_{11}\\P_{12}\\P_{22}\end{pmatrix}^{\ast}\mathbf{Q}_{a_{j},b_{j},\theta_{j}}^{(j-1)}\begin{pmatrix}P_{11}\\P_{12}\\P_{22}\end{pmatrix}\right\}dP+O(\exp[-CN^{2\upsilon}]).
\end{align*}
We can compute the $dP$ integration above since it is just a Gaussian Fourier transform; this gives
\begin{align}
&\int_{M_{2}^{sa}(\R)}e^{i\frac{N}{2t}\tr P}\det[I+i[\tilde{H}_{a_{j},b_{j},\theta_{j}}^{(j-1)}]^{\frac12}(P\otimes I)[\tilde{H}_{a_{j},b_{j},\theta_{j}}^{(j-1)}]^{\frac12}]^{-\frac12}dP\label{eq:gaussapprox}\\
&=\mathbf{C}_{1}\left[1+O(e^{-CN^{2\upsilon}})\right]|\det\mathbf{Q}_{a_{j},b_{j},\theta_{j}}^{(j-1)}|^{-\frac12}\nonumber\\
&\times\exp\left\{-\mathbf{C}_{2}\begin{pmatrix}\frac{N}{t}-\tr \tilde{H}_{a_{j},b_{j},\theta_{j},11}^{(j-1)}\\-2\tr \tilde{H}_{a_{j},b_{j},\theta_{j},12}^{(j-1)}\\\frac{N}{t}-\tr \tilde{H}_{a_{j},b_{j},\theta_{j},22}^{(j-1)}\end{pmatrix}^{\ast}[\mathbf{Q}_{a_{j},b_{j},\theta_{j}}^{(j-1)}]^{-1}\begin{pmatrix}\frac{N}{t}-\tr \tilde{H}_{a_{j},b_{j},\theta_{j},11}^{(j-1)}\\-2\tr \tilde{H}_{a_{j},b_{j},\theta_{j},12}^{(j-1)}\\\frac{N}{t}-\tr \tilde{H}_{a_{j},b_{j},\theta_{j},22}^{(j-1)}\end{pmatrix}\right\}\nonumber\\
&+O(\exp[-CN^{2\upsilon}]).\nonumber
\end{align}
Above, $\mathbf{C}_{1}$ and $\mathbf{C}_{2}$ are constants coming from the Gaussian integration. We now remove superscripts $(j-1)$. By Cauchy interlacing, we know $\tr \tilde{H}_{a_{j},b_{j},\theta_{j},k\ell}^{(j-1)}=\tr \tilde{H}_{a_{j},b_{j},\theta_{j},k\ell}+O(\eta_{z,t}^{-2})$, where the big-O comes from a trivial bound on the operator norm of $\tilde{H}_{a_{j},b_{j},\theta_{j}}$.

Now, note $\mathbf{Q}_{a_{j},b_{j},\theta_{j}}^{(j-1)}\gtrsim N\eta_{z,t}^{-2}$. We now claim that 
\begin{align}
\left\|\begin{pmatrix}\frac{N}{t}-\tr \tilde{H}_{a_{j},b_{j},\theta_{j},11}^{(j-1)}\\-2\tr \tilde{H}_{a_{j},b_{j},\theta_{j},12}^{(j-1)}\\\frac{N}{t}-\tr \tilde{H}_{a_{j},b_{j},\theta_{j},22}^{(j-1)}\end{pmatrix}\right\|\lesssim N^{\frac12+\kappa}\eta_{z,t}^{-D},\label{eq:Vestimate}
\end{align}
where $\kappa>0$ is small and $D=O(1)$. To see this, we first remark that if $a_{j}=a$ and $b_{j}=b$ and $\theta_{j}=\frac{\pi}{4}$ and $j-1=0$, then the vector on the LHS is the zero vector; see Appendix \ref{section:resolventsatpi/4}. We now use Cauchy interlacing and $a_{j}-a=O(N^{-1/2})$ and $b_{j}-b=O(N^{-1/2})$ and $\theta_{j}-\frac{\pi}{4}=O(N^{-1/2+\kappa}\eta_{z,t}^{-1})$ for $\theta_{j}\in\mathcal{I}_{0}$ and standard Green's function perturbations to conclude. 

We now combine the previous display with $\tr \tilde{H}_{a_{j},b_{j},\theta_{j},k\ell}^{(j-1)}=\tr \tilde{H}_{a_{j},b_{j},\theta_{j},k\ell}+O(\eta_{z,t}^{-2})$ and Lemma \ref{lemma:qlowerbound} to get
\begin{align*}
&\exp\left\{-\mathbf{C}_{2}\begin{pmatrix}\frac{N}{t}-\tr \tilde{H}_{a_{j},b_{j},\theta_{j},11}^{(j-1)}\\-2\tr \tilde{H}_{a_{j},b_{j},\theta_{j},12}^{(j-1)}\\\frac{N}{t}-\tr \tilde{H}_{a_{j},b_{j},\theta_{j},22}^{(j-1)}\end{pmatrix}^{\ast}[\mathbf{Q}_{a_{j},b_{j},\theta_{j}}^{(j-1)}]^{-1}\begin{pmatrix}\frac{N}{t}-\tr \tilde{H}_{a_{j},b_{j},\theta_{j},11}^{(j-1)}\\-2\tr \tilde{H}_{a_{j},b_{j},\theta_{j},12}^{(j-1)}\\\frac{N}{t}-\frac12\tr \tilde{H}_{a_{j},b_{j},\theta_{j},22}^{(j-1)}\end{pmatrix}\right\}\\
&=\exp\left\{-\mathbf{C}_{2}\begin{pmatrix}\frac{N}{t}-\tr \tilde{H}_{a_{j},b_{j},\theta_{j},11}\\-2\tr \tilde{H}_{a_{j},b_{j},\theta_{j},12}\\\frac{N}{t}-\tr \tilde{H}_{a_{j},b_{j},\theta_{j},22}\end{pmatrix}^{\ast}[\mathbf{Q}_{a_{j},b_{j},\theta_{j}}^{(j-1)}]^{-1}\begin{pmatrix}\frac{N}{t}-\tr \tilde{H}_{a_{j},b_{j},\theta_{j},11}\\-2\tr \tilde{H}_{a_{j},b_{j},\theta_{j},12}\\\frac{N}{t}-\tr \tilde{H}_{a_{j},b_{j},\theta_{j},22}\end{pmatrix}\right\}\\
&\times\exp\left\{-\mathbf{C}_{2}\begin{pmatrix}\frac{N}{t}-\tr \tilde{H}_{a_{j},b_{j},\theta_{j},11}^{(j-1)}\\-2\tr \tilde{H}_{a_{j},b_{j},\theta_{j},12}^{(j-1)}\\\frac{N}{t}-\tr \tilde{H}_{a_{j},b_{j},\theta_{j},22}^{(j-1)}\end{pmatrix}^{\ast}[\mathbf{Q}_{a_{j},b_{j},\theta_{j}}^{(j-1)}]^{-1}\left[\begin{pmatrix}\frac{N}{t}-\tr \tilde{H}_{a_{j},b_{j},\theta_{j},11}^{(j-1)}\\-2\tr \tilde{H}_{a_{j},b_{j},\theta_{j},12}^{(j-1)}\\\frac{N}{t}-\frac12\tr \tilde{H}_{a_{j},b_{j},\theta_{j},22}^{(j-1)}\end{pmatrix}-\begin{pmatrix}\frac{N}{t}-\tr \tilde{H}_{a_{j},b_{j},\theta_{j},11}\\-2\tr \tilde{H}_{a_{j},b_{j},\theta_{j},12}\\\frac{N}{t}-\tr \tilde{H}_{a_{j},b_{j},\theta_{j},22}\end{pmatrix}\right]\right\}\\
&\times\exp\left\{-\mathbf{C}_{2}\begin{pmatrix}\frac{N}{t}-\tr \tilde{H}_{a_{j},b_{j},\theta_{j},11}\\-2\tr \tilde{H}_{a_{j},b_{j},\theta_{j},12}\\\frac{N}{t}-\tr \tilde{H}_{a_{j},b_{j},\theta_{j},22}\end{pmatrix}^{\ast}[\mathbf{Q}_{a_{j},b_{j},\theta_{j}}^{(j-1)}]^{-1}\left[\begin{pmatrix}\frac{N}{t}-\tr \tilde{H}_{a_{j},b_{j},\theta_{j},11}^{(j-1)}\\-2\tr \tilde{H}_{a_{j},b_{j},\theta_{j},12}^{(j-1)}\\\frac{N}{t}-\frac12\tr \tilde{H}_{a_{j},b_{j},\theta_{j},22}^{(j-1)}\end{pmatrix}-\begin{pmatrix}\frac{N}{t}-\tr \tilde{H}_{a_{j},b_{j},\theta_{j},11}\\-2\tr \tilde{H}_{a_{j},b_{j},\theta_{j},12}\\\frac{N}{t}-\tr \tilde{H}_{a_{j},b_{j},\theta_{j},22}\end{pmatrix}\right]\right\}\\
&=\exp\left\{-\mathbf{C}_{2}\begin{pmatrix}\frac{N}{t}-\tr \tilde{H}_{a_{j},b_{j},\theta_{j},11}\\-2\tr \tilde{H}_{a_{j},b_{j},\theta_{j},12}\\\frac{N}{t}-\tr \tilde{H}_{a_{j},b_{j},\theta_{j},22}\end{pmatrix}^{\ast}[\mathbf{Q}_{a_{j},b_{j},\theta_{j}}^{(j-1)}]^{-1}\begin{pmatrix}\frac{N}{t}-\tr \tilde{H}_{a_{j},b_{j},\theta_{j},11}\\-2\tr \tilde{H}_{a_{j},b_{j},\theta_{j},12}\\\frac{N}{t}-\tr \tilde{H}_{a_{j},b_{j},\theta_{j},22}\end{pmatrix}\right\}[1+O(N^{-\kappa})].
\end{align*}
for some $\kappa>0$. Let us now move to removing $(j-1)$ from the $\mathbf{Q}$ matrix. To this end, use Cauchy interlacing as in \eqref{eq:interlacingdemo} and $\|\tilde{H}_{a_{j},b_{j},\theta_{j}}^{(j-1)}\|_{\mathrm{op}}\lesssim\eta_{z,t}^{-2}$ to get $\|\mathbf{Q}_{a_{j},b_{j},\theta_{j}}^{(j-1)}-\mathbf{Q}_{a_{j},b_{j},\theta_{j}}\|_{\mathrm{op}}\lesssim \eta_{z,t}^{-D}$ for some $D>0$. Resolvent perturbation and Lemma \ref{lemma:qlowerbound} then give $\|[\mathbf{Q}_{a_{j},b_{j},\theta_{j}}^{(j-1)}]^{-1}-\mathbf{Q}_{a_{j},b_{j},\theta_{j}}^{-1}\|_{\mathrm{op}}\lesssim \eta_{z,t}^{-D}N^{-2}\lesssim N^{-2+D\epsilon_{0}}$. Therefore, we have 
\begin{align*}
&\exp\left\{-\mathbf{C}_{2}\begin{pmatrix}\frac{N}{t}-\tr \tilde{H}_{a_{j},b_{j},\theta_{j},11}\\-2\tr \tilde{H}_{a_{j},b_{j},\theta_{j},12}\\\frac{N}{t}-\tr \tilde{H}_{a_{j},b_{j},\theta_{j},22}\end{pmatrix}^{\ast}[\mathbf{Q}_{a_{j},b_{j},\theta_{j}}^{(j-1)}]^{-1}\begin{pmatrix}\frac{N}{t}-\tr \tilde{H}_{a_{j},b_{j},\theta_{j},11}\\-2\tr \tilde{H}_{a_{j},b_{j},\theta_{j},12}\\\frac{N}{t}-\tr \tilde{H}_{a_{j},b_{j},\theta_{j},22}\end{pmatrix}\right\}\\
&=\exp\left\{-\mathbf{C}_{2}\begin{pmatrix}\frac{N}{t}-\tr \tilde{H}_{a_{j},b_{j},\theta_{j},11}\\-2\tr \tilde{H}_{a_{j},b_{j},\theta_{j},12}\\\frac{N}{t}-\tr \tilde{H}_{a_{j},b_{j},\theta_{j},22}\end{pmatrix}^{\ast}[\mathbf{Q}_{a_{j},b_{j},\theta_{j}}]^{-1}\begin{pmatrix}\frac{N}{t}-\tr \tilde{H}_{a_{j},b_{j},\theta_{j},11}\\-2\tr \tilde{H}_{a_{j},b_{j},\theta_{j},12}\\\frac{N}{t}-\tr \tilde{H}_{a_{j},b_{j},\theta_{j},22}\end{pmatrix}\right\}\left[1+O(N^{-\kappa})\right].
\end{align*}
A similar argument shows that $|\det\mathbf{Q}_{a_{j},b_{j},\theta_{j}}^{(j-1)}|^{-\frac12}=|\det\mathbf{Q}_{a_{j},b_{j},\theta_{j}}|^{-\frac12}[1+O(N^{-\kappa})]$. Ultimately, 
\begin{align*}
&\int_{M_{2}^{sa}(\R)}e^{i\frac{N}{2t}\tr P}\det[I+i[\tilde{H}_{a_{j},b_{j},\theta_{j}}^{(j-1)}]^{\frac12}(P\otimes I)[\tilde{H}_{a_{j},b_{j},\theta_{j}}^{(j-1)}]^{\frac12}]^{-\frac12}dP\\
&=O(\exp[-CN^{2\upsilon}])+\mathbf{C}_{1}|\det\mathbf{Q}_{a_{j},b_{j},\theta_{j}}|^{-\frac12}\\
&\times\exp\left\{-\mathbf{C}_{2}\begin{pmatrix}\frac{N}{t}-\tr \tilde{H}_{a_{j},b_{j},\theta_{j},11}\\-2\tr \tilde{H}_{a_{j},b_{j},\theta_{j},12}\\\frac{N}{t}-\tr \tilde{H}_{a_{j},b_{j},\theta_{j},22}\end{pmatrix}^{\ast}[\mathbf{Q}_{a_{j},b_{j},\theta_{j}}]^{-1}\begin{pmatrix}\frac{N}{t}-\tr \tilde{H}_{a_{j},b_{j},\theta_{j},11}\\-2\tr \tilde{H}_{a_{j},b_{j},\theta_{j},12}\\\frac{N}{t}-\tr \tilde{H}_{a_{j},b_{j},\theta_{j},22}\end{pmatrix}\right\}\left[1+O(N^{-\kappa})\right].
\end{align*}
%
\subsection{Putting it altogether}
By our computations of the ratio of determinants and the $dP$ integration, we have the following estimate locally uniformly in $a_{j},b_{j}$:
\begin{align*}
&\rho_{\mathrm{main}}(z,\mathbf{z};A)=\frac{16N^{7}b^{4}[1+O(N^{-\kappa})]}{\pi^{4}t^{12}\gamma_{z,t}^{2}\sigma_{z,t}^{3}\upsilon_{z,t}^{2}}\rho_{\mathrm{GinUE}}^{(2)}(z_{1},z_{2})\int_{\mathcal{I}_{0}^{2}}d\theta_{1}d\theta_{2}\frac{|\cos2\theta_{1}||\cos2\theta_{2}|}{\sin^{2}\theta_{1}\sin^{2}\theta_{2}}\\
&\times\prod_{j=1,2}\int_{V^{2}(\R^{N-2j+2})}d\mu_{j}(\mathbf{v}_{j})|\det[V_{j}^{\ast}G_{z}^{(j-1)}(\eta_{z,t})V_{j}]|^{j}\\
&\times\exp\left\{-b_{j}^{2}(\tan\theta_{j}-\tan^{-1}\theta_{j})^{2}\eta_{z,t}^{2}\tr H_{\lambda_{j}}(\eta_{z,t})\tilde{H}_{\overline{\lambda}_{j}}(\eta_{z,t})\right\}\\
&\times\mathbf{C}_{1}|\det\mathbf{Q}_{a_{j},b_{j},\theta_{j}}|^{-\frac12}\exp\left\{-\mathbf{C}_{2}\begin{pmatrix}\frac{N}{t}-\tr \tilde{H}_{a_{j},b_{j},\theta_{j},11}\\-2\tr \tilde{H}_{a_{j},b_{j},\theta_{j},12}\\\frac{N}{t}-\tr \tilde{H}_{a_{j},b_{j},\theta_{j},22}\end{pmatrix}^{\ast}\mathbf{Q}_{a_{j},b_{j},\theta_{j}}^{-1}\begin{pmatrix}\frac{N}{t}-\tr \tilde{H}_{a_{j},b_{j},\theta_{j},11}\\-2\tr \tilde{H}_{a_{j},b_{j},\theta_{j},12}\\\frac{N}{t}-\tr \tilde{H}_{a_{j},b_{j},\theta_{j},22}\end{pmatrix}\right\}\\
&+O(\exp[-CN^{\kappa}]).
\end{align*}
By \eqref{eq:prelimcomputerho}, Proposition \ref{prop:cutoff}, and our computation of the remaining $4\times4$ determinants (see Lemmas \ref{lemma:concentrationj} and \ref{lemma:concentrationjzjtoz}), we deduce
\begin{align}
&\rho_{t}(z,\mathbf{z};A)=\frac{16N^{7}b^{4}[1+O(N^{-\kappa})]}{\pi^{4}t^{12}\gamma_{z,t}^{2}\sigma_{z,t}^{3}\upsilon_{z,t}^{2}}\rho_{\mathrm{GinUE}}^{(2)}(z_{1},z_{2})\int_{\mathcal{I}_{0}^{2}}d\theta_{1}d\theta_{2}\frac{|\cos2\theta_{1}||\cos2\theta_{2}|}{\sin^{2}\theta_{1}\sin^{2}\theta_{2}}\label{eq:rhomain}\\
&\times\prod_{j=1,2}|\det\mathcal{G}_{j,a,b}|^{j}\times\exp\left\{-b_{j}^{2}(\tan\theta_{j}-\tan^{-1}\theta_{j})^{2}\eta_{z,t}^{2}\tr H_{\lambda_{j}}(\eta_{z,t})\tilde{H}_{\overline{\lambda}_{j}}(\eta_{z,t})\right\}\nonumber\\
&\times\mathbf{C}_{1}|\det\mathbf{Q}_{a_{j},b_{j},\theta_{j}}|^{-\frac12}\exp\left\{-\mathbf{C}_{2}\begin{pmatrix}\frac{N}{t}-\tr \tilde{H}_{a_{j},b_{j},\theta_{j},11}\\-2\tr \tilde{H}_{a_{j},b_{j},\theta_{j},12}\\\frac{N}{t}-\tr \tilde{H}_{a_{j},b_{j},\theta_{j},22}\end{pmatrix}^{\ast}\mathbf{Q}_{a_{j},b_{j},\theta_{j}}^{-1}\begin{pmatrix}\frac{N}{t}-\tr \tilde{H}_{a_{j},b_{j},\theta_{j},11}\\-2\tr \tilde{H}_{a_{j},b_{j},\theta_{j},12}\\\frac{N}{t}-\tr \tilde{H}_{a_{j},b_{j},\theta_{j},22}\end{pmatrix}\right\}\nonumber\\
&+O(\exp[-CN^{\kappa}]),\nonumber
\end{align}
where, to be totally explicit, $\mathcal{G}_{j,a,b}$ is the $4\times4$ matrix
\begin{align}
\mathcal{G}_{j,a,b}&:=\begin{pmatrix}\mathcal{G}_{j,a,b,11}&\mathcal{G}_{j,a,b,12}\\\mathcal{G}_{j,a,b,21}&\mathcal{G}_{j,a,b,22}\end{pmatrix}\label{eq:gjabmatrix}\\
\mathcal{G}_{j,a,b,11}&:=\begin{pmatrix}i\eta_{z,t}\frac{t}{N}\tr \tilde{H}_{a,b,\frac{\pi}{4}}( H_{z}\otimes E_{11})&i\eta_{z,t}\frac{t}{N}\tr \tilde{H}_{a,b,\frac{\pi}{4}}( H_{z}\otimes E_{12})\\i\eta_{z,t}\frac{t}{N}\tr \tilde{H}_{a,b,\frac{\pi}{4}}( H_{z}\otimes E_{21})&i\eta_{z,t}\frac{t}{N}\tr \tilde{H}_{a,b,\frac{\pi}{4}}( H_{z}\otimes E_{22})\end{pmatrix}\nonumber\\
\mathcal{G}_{j,a,b,12}&:=\begin{pmatrix}\frac{t}{N}\tr \tilde{H}_{a,b,\frac{\pi}{4}}[ H_{z}(A-z)\otimes E_{11}]&\frac{t}{N}\tr \tilde{H}_{a,b,\frac{\pi}{4}}[ H_{z}(A-z)\otimes E_{12}]\\\frac{t}{N}\tr \tilde{H}_{a,b,\frac{\pi}{4}}[ H_{z}(A-z)\otimes E_{21}]&\frac{t}{N}\tr \tilde{H}_{a,b,\frac{\pi}{4}}[ H_{z}(A-z)\otimes E_{22}]\end{pmatrix}\nonumber\\
\mathcal{G}_{j,a,b,21}&:=\begin{pmatrix}\frac{t}{N}\tr \tilde{H}_{a,b,\frac{\pi}{4}}[(A-z)^{\ast} H_{z}\otimes E_{11}]&\frac{t}{N}\tr \tilde{H}_{a,b,\frac{\pi}{4}}[(A-z)^{\ast} H_{z}\otimes E_{12}]\\\frac{t}{N}\tr \tilde{H}_{a,b,\frac{\pi}{4}}[(A-z)^{\ast} H_{z}\otimes E_{21}]&\frac{t}{N}\tr \tilde{H}_{a,b,\frac{\pi}{4}}[(A-z)^{\ast} H_{z}\otimes E_{22}]\end{pmatrix}\nonumber\\
\mathcal{G}_{j,a,b,22}&:=\begin{pmatrix}i\eta_{z,t}\frac{t}{N}\tr \tilde{H}_{a,b,\frac{\pi}{4}}(\tilde{H}_{z}\otimes E_{11})&i\eta_{z,t}\frac{t}{N}\tr \tilde{H}_{a,b,\frac{\pi}{4}}(\tilde{H}_{z}\otimes E_{12})\\i\eta_{z,t}\frac{t}{N}\tr \tilde{H}_{a,b,\frac{\pi}{4}}(\tilde{H}_{z}\otimes E_{21})&i\eta_{z,t}\frac{t}{N}\tr \tilde{H}_{a,b,\frac{\pi}{4}}(\tilde{H}_{z}\otimes E_{22})\end{pmatrix},\nonumber\\
E_{11}&:=\begin{pmatrix}1&0\\0&0\end{pmatrix},\quad E_{22}:=\begin{pmatrix}0&0\\0&1\end{pmatrix}\quad
E_{12}:=\begin{pmatrix}0&1\\0&0\end{pmatrix}\quad
E_{21}:=\begin{pmatrix}0&0\\1&0\end{pmatrix},\nonumber
\end{align}
and $\mathbf{Q}_{a_{j},b_{j},\theta_{j}}$ is given by
\begin{align*}
\mathbf{Q}_{a_{j},b_{j},\theta_{j}}&:=\begin{pmatrix}[\mathbf{Q}_{a_{j},b_{j},\theta_{j}}]_{1}&[\mathbf{Q}_{a_{j},b_{j},\theta_{j}}]_{2}&[\mathbf{Q}_{a_{j},b_{j},\theta_{j}}]_{3}\end{pmatrix},\\
[\mathbf{Q}_{a_{j},b_{j},\theta_{j}}]_{1}&=\begin{pmatrix}\tr \tilde{H}_{a_{j},b_{j},\theta_{j},11}^{2}\\2\tr \tilde{H}_{a_{j},b_{j},\theta_{j},11} \tilde{H}_{a_{j},b_{j},\theta_{j},12}\\\tr \tilde{H}_{a_{j},b_{j},\theta_{j},12}\tilde{H}_{a_{j},b_{j},\theta_{j},12}^T\end{pmatrix}\\
[\mathbf{Q}_{a_{j},b_{j},\theta_{j}}]_{2}&=\begin{pmatrix}2\tr \tilde{H}_{a_{j},b_{j},\theta_{j},11} \tilde{H}_{a_{j},b_{j},\theta_{j},12}\\2\tr \tilde{H}_{a_{j},b_{j},\theta_{j},11} \tilde{H}_{a_{j},b_{j},\theta_{j},22}+2\tr \tilde{H}_{a_{j},b_{j},\theta_{j},12}^{2}\\2\tr \tilde{H}_{a_{j},b_{j},\theta_{j},22} \tilde{H}_{a_{j},b_{j},\theta_{j},12}\end{pmatrix}\\
[\mathbf{Q}_{a_{j},b_{j},\theta_{j}}]_{3}&=\begin{pmatrix}\tr \tilde{H}_{a_{j},b_{j},\theta_{j},12}\tilde{H}_{a_{j},b_{j},\theta_{j},12}^{T}\\2\tr \tilde{H}_{a_{j},b_{j},\theta_{j},22} \tilde{H}_{a_{j},b_{j},\theta_{j},12}\\\tr \tilde{H}_{a_{j},b_{j},\theta_{j},22}^{2}\end{pmatrix}
\end{align*}
%
%
%
%
\section{Replacing resolvents in \eqref{eq:rhomain} by universal local law approximations}
The RHS of \eqref{eq:rhomain} is $\rho_{\mathrm{GinUE}}^{(2)}(z_{1},z_{2})$ times a quantity that depends only on traces of resolvents. Thus, we can replace each such trace by its local approximation to obtain a quantity of the form $\Phi_{z,t}(z_{1},z_{2})$ as in Theorem \ref{theorem:mainwitht}.
\subsection{Replacing the constants, e.g. \texorpdfstring{$\eta_{z,t}$}{eta\_zt}}
We start with the following result, which approximates constants appearing in the introduction with universal quantities.
\begin{lemma}\label{lemma:replaceconstants}
There exists a constant $\kappa>0$, which is independent of $\epsilon_{0}>0$, as well as constants $\eta_{\mathrm{univ},t},\alpha_{\mathrm{univ},z,t},\beta_{\mathrm{univ},z,t},\gamma_{\mathrm{univ},z,t},\sigma_{\mathrm{univ},z,t}$, which do not depend on the distribution of the entries of $A$, such that with high probability. we have
\begin{align*}
\eta_{z,t}&=\eta_{\mathrm{univ},z,t}[1+O(N^{-\kappa})],\\
\alpha_{z,t}&=\alpha_{\mathrm{univ},z,t}[1+O(N^{-\kappa})],\\
\beta_{z,t}&=\beta_{\mathrm{univ},z,t}[1+O(N^{-\kappa})],\\
\gamma_{z,t}&=\gamma_{\mathrm{univ},z,t}[1+O(N^{-\kappa})],\\
\sigma_{z,t}&=\sigma_{\mathrm{univ},z,t}[1+O(N^{-\kappa})],\\
\upsilon_{z,t}&=\upsilon_{\mathrm{univ},z,t}[1+O(N^{-\kappa})].
\end{align*}
\end{lemma}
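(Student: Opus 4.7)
The plan is to apply the single- and two-resolvent local laws (Lemmas \ref{1GE} and \ref{2GE}), which approximate normalized traces of resolvents of $A-z$ and of $I_{2}\otimes A - \Lambda\otimes I_{N}$ by deterministic scalar quantities that depend only on $z,\eta$ (and on the semicircle-type self-consistent equation), not on the distribution of the entries. Every constant in the list is built from such traces, so each will inherit a universal deterministic approximation. The only subtlety is turning the additive bounds $|\langle\cdot\rangle - m_{\mathrm{univ}}|\lesssim N^{-\kappa}$ coming from local laws into the multiplicative bounds $1+O(N^{-\kappa})$ claimed in the statement, for which I will need lower bounds on the universal approximations.

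First I would handle $\eta_{z,t}$. The local law gives $\langle H_{z}(\eta)\rangle = m_{\mathrm{univ},z}(\eta) + O(N^{-\kappa}\eta^{-C})$ for a deterministic function $m_{\mathrm{univ},z}$ which is strictly decreasing in $\eta$ with $|\partial_{\eta}m_{\mathrm{univ},z}(\eta)|\asymp \eta^{-1}$ on the relevant scale $\eta\asymp t$. Defining $\eta_{\mathrm{univ},z,t}$ as the unique solution of $t\,m_{\mathrm{univ},z}(\eta)=1$ and inverting via monotonicity yields $\eta_{z,t}=\eta_{\mathrm{univ},z,t}(1+O(N^{-\kappa}))$ with high probability. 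Next, I evaluate $\alpha_{z,t},\gamma_{z,t},\beta_{z,t}$ by first freezing the spectral parameter at $\eta_{\mathrm{univ},z,t}$ (the error from this replacement is $N^{-\kappa}$ times bounded Lipschitz factors, by resolvent perturbation) and then applying Lemma \ref{2GE} to $\langle H_{z}\tilde{H}_{z}\rangle$ and $\langle H_{z}^{2}\rangle$, and Lemma \ref{1GE} to $\langle H_{z}^{2}(A-z)\rangle$, after rewriting the latter through the off-diagonal block of $G_{z}^{2}$ exactly as in the proof of Lemma \ref{lemma:festimate}. The identity $\sigma_{z,t}=\alpha_{z,t}+\gamma_{z,t}^{-1}|\beta_{z,t}|^{2}$ then yields the universal approximation for $\sigma_{z,t}$. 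The multiplicative form of the error is obtained from the lower bounds $\gamma_{\mathrm{univ},z,t}\gtrsim t^{2}$ and $\sigma_{\mathrm{univ},z,t}\gtrsim 1$, inherited from the complex case treated in \cite{MO}.

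Finally, $\upsilon_{z,t}$ is the product of two factors of the form $N t^{-1}-N\langle\cdot\rangle$ involving two-resolvent traces at the distinct spectral parameters $z,\bar z$, so I apply the $2\times 2$ two-resolvent local law for $I_{2}\otimes A-\Lambda\otimes I_{N}$ invoked in the introduction (the analog of \cite{CEHS2023}). The main obstacle, and the only real point of care in the whole proof, is the potential catastrophic cancellation in $Nt^{-1}-N\langle\cdot\rangle$: additive errors of size $N^{1-\kappa}$ from the local law can only be absorbed into a multiplicative $(1+O(N^{-\kappa'}))$ correction if the deterministic universal limit of each factor is of size $\gtrsim N^{1-C\epsilon_{0}}$. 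This lower bound is exactly what is furnished by the bounds on $\mathbf{T}_{N,t,\mathbf{z},11}$ and $\mathbf{T}_{N,t,\mathbf{z},22}$ established in the proof of Lemma \ref{lemma:festimate}, where $\upsilon_{z,t}$ appears (up to universal constants) as the determinant of the diagonal part of $\mathbf{T}_{N,t,\mathbf{z}}$; those bounds are themselves consequences of Lemma \ref{2GE} and the self-consistent equation for $\eta_{\mathrm{univ},z,t}$. Combining the approximations for the two factors gives $\upsilon_{z,t}=\upsilon_{\mathrm{univ},z,t}(1+O(N^{-\kappa}))$, completing the proof.
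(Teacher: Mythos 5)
Your proposal follows essentially the same route as the paper: define each universal constant as the corresponding deterministic approximation furnished by Lemmas \ref{1GE} and \ref{2GE}, transfer the implicit equation $t\langle H_{z}(\eta_{z,t})\rangle=1$ to its deterministic counterpart and invert by monotonicity/resolvent perturbation (the paper does exactly this for $\eta_{z,t}$, including the $\eta_{\mathrm{univ},z,t}\asymp t$ continuity step you implicitly assume), and then treat the remaining constants by the same replacement, with the lower bounds you cite needed to convert additive local-law errors into multiplicative $1+O(N^{-\kappa})$ factors. One small correction: $\beta_{z,t}=\eta_{z,t}\langle H_{z}^{2}(A-z)\rangle$, like $\alpha_{z,t}$ and $\gamma_{z,t}$, is a two-resolvent quantity (after rewriting via the off-diagonal block of $G_{z}^{2}$ it has the form $\langle G_{z}E\,G_{z}E'\rangle$), so the correct input is Lemma \ref{2GE} rather than Lemma \ref{1GE}; with that swap your argument coincides with the paper's.
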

\begin{proof}
We will give details for the $\eta_{z,t}$ bound. By definition, $t\langle H_{z}(\eta_{z,t})\rangle=-it\eta_{z,t}^{-1}\langle G_{z}(\eta_{z,t})E_{11}\rangle=1$. Let $\eta_{\mathrm{univ},z,t}$ be such that $-it\eta_{\mathrm{univ},z,t}^{-1}\langle M_{a,b,\frac{\pi}{4}}(\eta_{\mathrm{univ},z,t})E_{11}\rangle=1$, where $M_{a,b,\frac{\pi}{4}}(\eta)$ is the local law approximation to $G_{z}(\eta)$ from Lemma \ref{1GE}. Now, suppose $\eta_{\mathrm{univ},z,t}\asymp t$ (i.e. it is bounded above and below by a constant times $t$). We will show this shortly. In this case, by Lemma \ref{1GE}, we have the estimate $\langle G_{z}(\eta_{\mathrm{univ},z,t})E_{11}-M_{a,b,\frac{\pi}{4}}(\eta_{\mathrm{univ},z,t})E_{11}\rangle\prec N^{-1}\eta_{\mathrm{univ},z,t}^{-2}$, where $\prec$ means bounded above up to a factor of $N^{\tau}$ for any fixed $\tau>0$ with high probability; see Definition \ref{defn:prec}. Thus, we have 
\begin{align*}
-it\eta_{\mathrm{univ},z,t}^{-1}\langle G_{z}(\eta_{\mathrm{univ},z,t})E_{11}\rangle=1+O_{\prec}(N^{-1}\eta_{\mathrm{univ},z,t}^{-2}),
\end{align*}
where, similarly, $O_{\prec}(N^{-1}\eta_{\mathrm{univ},z,t]}^{-2})$ means something whose absolute value is $\prec N^{-1}\eta_{\mathrm{univ},z,t}^{-2}$.

The LHS of the above display is $t\langle H_{z}(\eta_{\mathrm{univ},z,t})\rangle$, so $t(\langle H_{z}(\eta_{\mathrm{univ},z,t})\rangle-\langle H_{z}(\eta_{z,t})\rangle)\prec N^{-1}\eta_{\mathrm{univ},z,t}^{-2}$. Standard resolvent perturbation and the operator bound $\|H_{z}(\eta_{\mathrm{univ},z,t})\|_{\mathrm{op}}\gtrsim1$ (which holds with high probability since $\|A\|_{\mathrm{op}}\lesssim1$ with high probability) imply that
\begin{align*}
|t\langle H_{z}(\eta_{\mathrm{univ},z,t})\rangle-t\langle H_{z}(\eta_{z,t})\rangle|&=\left|\frac{t(\eta_{z,t}^{2}-\eta_{\mathrm{univ},z,t}^{2})}{N}\tr H_{z}(\eta_{\mathrm{univ},z,t})H_{z}(\eta_{z,t})\right|\gtrsim t^{2}|\eta_{z,t}-\eta_{\mathrm{univ},z,t}|.
\end{align*}
We deduce that $|\eta_{z,t}-\eta_{\mathrm{univ},z,t}|\lesssim N^{-1}t^{-4}\lesssim N^{-\kappa}t$ for some $\kappa>0$, since $t=N^{-\epsilon_{0}}$ with $\epsilon_{0}>0$ small. This proves the desired $\eta_{z,t}$ estimate, provided we can show that $\eta_{\mathrm{univ},z,t}\asymp t$. This follows by $\eta_{z,t}\asymp t$ and $|\eta_{z,t}-\eta_{\mathrm{univ},z,t}|\lesssim N^{-1}t^{-4}\lesssim N^{-\kappa}t$. (To avoid a circular argument, one can use a standard continuity argument. In particular, in this proof so far, replace $M_{a,b,\frac{\pi}{4}}$ by $sM_{a,b,\frac{\pi}{4}}+(1-s)G_{z}$, and use continuity in $s\in[0,1]$ to show that $\eta_{\mathrm{univ},z,t}\asymp t$.) The other estimates follow similarly, though $\alpha_{\mathrm{univ},z,t}$, for example, is defined instead using the two-term local law in Lemma \ref{2GE}.
\end{proof}
\subsection{Replacing \texorpdfstring{$|\det\mathcal{G}_{j,a,b}|$}{|det G\_jab|}}
The matrix $\mathcal{G}_{j,a,b}$ has size $4\times4$, and its entries are given by normalized traces of products of two resolvents. Thus, Lemma \ref{2GE} will allow us to replace its determinant by that of a matrix which does not depend on the distribution of the entries of $A$.
\begin{lemma}\label{lemma:replacedetG}
There exists a $4\times4$ matrix $\mathcal{G}_{\mathrm{univ}}$ such that $|\det\mathcal{G}_{j,a,b}|=|\det\mathcal{G}_{\mathrm{univ}}|[1+O(N^{-\kappa})]$, where $\kappa>0$ and $\mathcal{G}_{\mathrm{univ}}$ has entries independent of the distribution of the entries of $A$.
\end{lemma}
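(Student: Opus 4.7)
The plan is to apply the two--resolvent local law (Lemma \ref{2GE}) entrywise to $\mathcal{G}_{j,a,b}$ and then pass from the entrywise approximation to a multiplicative approximation of the determinant. Every entry of $\mathcal{G}_{j,a,b}$ has the schematic form $\frac{t}{N}\tr[H_{a,b,\pi/4}\,(R_z\otimes E)]$, where $R_z\in\{H_z,\tilde H_z,H_z(A-z),(A-z)^{\ast}H_z\}$ and $E$ is one of $E_{1},E_{2},E_{12},E_{21}$. These are exactly the kind of two--resolvent traces of the Hermitization of $I_2\otimes A-\Lambda\otimes I_N$ that the two--resolvent local law covers, so each such trace is approximated by a universal deterministic quantity (depending only on $z$, $\eta_{z,t}$, and the normalization $\E|A_{ij}|^{2}=N^{-1}$, but not on any higher moments of the entries of $A$) with an error of size $N^{-\kappa}$ for some $\kappa>0$ after also replacing $\eta_{z,t}$ by $\eta_{\mathrm{univ},z,t}$ via Lemma \ref{lemma:replaceconstants}.

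First I would fix notation: for each of the sixteen entries of $\mathcal{G}_{j,a,b}$, I write the integrand as a normalized trace of the product of $H_{a,b,\pi/4}(\eta_{z,t})$ with one of the blocks of $G_z(\eta_{z,t})$ (times an $E$). Applying Lemma \ref{2GE} to each such product gives a deterministic approximant built out of the universal Matrix Dyson solutions $M_{a,b,\pi/4}$ and $M_z$; I denote the resulting $4\times 4$ matrix of approximants by $\mathcal{G}_{\mathrm{univ}}$. By construction $\mathcal{G}_{\mathrm{univ}}$ depends only on $z$, $a$, $b$, and $\eta_{\mathrm{univ},z,t}$ (not on the distribution of the entries of $A$). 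The two--resolvent local law yields $|[\mathcal{G}_{j,a,b}]_{k\ell}-[\mathcal{G}_{\mathrm{univ}}]_{k\ell}|\prec N^{-\kappa}$ for all $k,\ell$, after additionally using Lemma \ref{lemma:replaceconstants} to swap $\eta_{z,t}$ for $\eta_{\mathrm{univ},z,t}$ in the deterministic approximation (the cost of this swap is also $O(N^{-\kappa})$ by resolvent perturbation and trivial operator--norm bounds).

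Next, to pass from entrywise estimates to the determinant, I use the fact that $\det$ is smooth. Since every entry of $\mathcal{G}_{j,a,b}$ and $\mathcal{G}_{\mathrm{univ}}$ is $O(\eta_{z,t}^{-D})=O(N^{C\epsilon_0})$ for some fixed $D$, the first--order expansion of $\det$ gives
\begin{align*}
|\det\mathcal{G}_{j,a,b}-\det\mathcal{G}_{\mathrm{univ}}|\lesssim N^{C\epsilon_{0}-\kappa}.
\end{align*}
Taking $\epsilon_{0}$ sufficiently small makes this an additive error of size $N^{-\kappa'}$ for a possibly smaller $\kappa'>0$. To convert it into the multiplicative form $|\det\mathcal{G}_{j,a,b}|=|\det\mathcal{G}_{\mathrm{univ}}|[1+O(N^{-\kappa'})]$ I invoke the quantitative lower bound $|\det\mathcal{G}_{j,a,b}|\gtrsim\eta_{z,t}^{-D'}$ established in the proof of Lemma \ref{lemma:concentration} (which in turn rests on the lower bound on $|\det[V_{j}^{\ast}G_{z}^{(j-1)}(\eta_{z,t})V_{j}]|$ shown inside the proof of Lemma \ref{lemma:cutoff1}). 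Combined with the additive estimate, this yields the multiplicative statement after possibly shrinking $\epsilon_{0}$ again.

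The only place requiring real care is verifying that the two--resolvent local law applies to the mixed products $H_{a,b,\pi/4}\,(H_z(A-z)\otimes E)$ and their transposes, since these contain an extra factor of $A-z$. I expect this to follow directly from the $2\times2$ analogue of the two--resolvent local law discussed in the introduction (which the authors prove for the Hermitization of $I_{2}\otimes A-\Lambda\otimes I_{N}$), because one can rewrite $H_z(A-z)$ as an off--diagonal block of $G_z(\eta_{z,t})$ up to multiplication by $i\eta_{z,t}^{-1}$ and an application of the resolvent identity, bringing the quantity into the standard two--resolvent form covered by Lemma \ref{2GE}. Once every entry is handled in this way, assembling the entries back into the $4\times4$ determinant is routine, and the lemma follows.
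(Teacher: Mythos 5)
Your proposal is correct and follows essentially the same route as the paper: entrywise replacement of $\eta_{z,t}$ by $\eta_{\mathrm{univ},z,t}$ via Lemma \ref{lemma:replaceconstants} together with Lemma \ref{2GE} at $\theta=\frac{\pi}{4}$, then smoothness/polynomiality of the determinant in its (polynomially bounded) entries plus a lower bound of size $N^{-O(\epsilon_{0})}$ on the determinant to convert the additive error into the multiplicative form. One minor remark: no resolvent identity or factor of $i\eta_{z,t}^{-1}$ is needed for the mixed entries, since $H_{z}(A-z)$ and $(A-z)^{\ast}H_{z}$ are exactly the off-diagonal blocks of $G_{z}(\eta_{z,t})$ (only the diagonal blocks carry the $i\eta_{z,t}$ factor), which is precisely why all sixteen entries are handled identically.
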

\begin{proof}
It suffices to show that for any matrix entry indices $k,\ell$, we have $(\mathcal{G}_{j,a,b})_{k\ell}=(\mathcal{G}_{\mathrm{univ}})_{k\ell}[1+O(N^{-\kappa})]$, where $(\mathcal{G}_{\mathrm{univ}})_{k\ell}$ denotes a quantity which does not depend on distribution of the entries of $A$. Indeed, the determinant is a polynomial in the entries, and the entries of $\mathcal{G}_{j,a,b}$ are easily checked to be $\lesssim\eta_{z,t}^{-4}\lesssim N^{4\epsilon_{0}}$, and we can choose $\epsilon_{0}>0$ sufficiently small. We will choose $k,\ell=1$; the other choices of $k,\ell$ follow by similar arguments, since each matrix that is tensored with $E_{11},E_{12},E_{21},E_{22}$ in $\mathcal{G}_{j,a,b}$ is a block in $G_{z}(\eta_{z,t})$. In particular, we want to show that 
\begin{align*}
i\eta_{z,t}\frac{t}{N}\tr \tilde{H}_{a,b,\frac{\pi}{4}}(\eta_{z,t})(H_{z}(\eta_{z,t})\otimes E_{11})
\end{align*}
is equal to a universal quantity times an error $1+O(N^{-\kappa})$. First, we replace $\eta_{z,t}$ by $\eta_{\mathrm{univ},z,t}$ from Lemma \ref{lemma:replaceconstants}. Since $\tilde{H}_{a,b,\frac{\pi}{4}}$ and $H_{z}$ have operator norms $\lesssim\eta_{z,t}^{-2}$, we have 
\begin{align*}
&i\eta_{z,t}\frac{t}{N}\tr \tilde{H}_{a,b,\frac{\pi}{4}}(\eta_{z,t})(H_{z}(\eta_{z,t})\otimes E_{11})\\
&=i\eta_{\mathrm{univ},z,t}\frac{t}{N}\tr \tilde{H}_{a,b,\frac{\pi}{4}}(\eta_{\mathrm{univ},z,t})(H_{z}(\eta_{\mathrm{univ},z,t})\otimes E_{11})+O(N^{-\kappa}\eta_{z,t}^{-D})
\end{align*}
for some $D>0$. Since $\tilde{H}_{a,b,\frac{\pi}{4}}\geq C$ and $H_{z}\geq C$ with high probability (it is the inverse of a covariance matrix of a shift of $A$), and because $\eta_{\mathrm{univ},z,t}t\asymp N^{-2\epsilon_{0}}$ (see the proof of Lemma \ref{lemma:replaceconstants}), the RHS of the previous identity is equal to
\begin{align*}
i\eta_{\mathrm{univ},z,t}\frac{t}{N}\tr \tilde{H}_{a,b,\frac{\pi}{4}}(\eta_{\mathrm{univ},z,t})(H_{z}(\eta_{\mathrm{univ},z,t})\otimes E_{11})[1++O(N^{-\kappa})]
\end{align*}
for possibly different $\kappa>0$. Now, apply Lemma \ref{2GE} for $\theta=\frac{\pi}{4}$; this shows that the first term on the RHS of the previous display is universal (i.e. independent of the distribution of the entries of $A$) plus an error of $O(N^{-\kappa})$. We now absorb this additive error of $O(N^{-\kappa})$ as a multiplicative error of $1+O(N^{-\kappa})$ (for possibly different $\kappa>0$) using the same argument that gave the previous display. This completes the proof.
\end{proof}
\subsection{Replacing \texorpdfstring{$\tr H_{\lambda_{j}}(\eta_{z,t})\tilde{H}_{\overline{\lambda}_{j}}(\eta_{z,t})$}{tr H\_lambdaj tildeH\_lambdaj}}
We move to the exponential factor in the second line in \eqref{eq:rhomain}. The only term we must deal with here is $\eta_{z,t}^{2}$ times the trace of two resolvents. For this, we use Lemmas \ref{lemma:replaceconstants} and \ref{2GE}.
\begin{lemma}\label{lemma:replacetrHH}
There exists a constant $\mathfrak{T}=\mathfrak{T}_{\lambda_{j}}$ that is independent of the distribution of the entries of $A$ and such that the following holds for $\kappa>0$ and for all $\theta_{j}\in\mathcal{I}_{0}$:
\begin{align*}
&\exp\left\{-b_{j}^{2}(\tan\theta_{j}-\tan^{-1}\theta_{j})^{2}\eta_{z,t}^{2}\tr H_{\lambda_{j}}(\eta_{z,t})\tilde{H}_{\overline{\lambda}_{j}}(\eta_{z,t})\right\}\\
&=\exp\left\{-b_{j}^{2}(\tan\theta_{j}-\tan^{-1}\theta_{j})^{2}\eta_{\mathrm{univ},z,t}^{2}\mathfrak{T}\right\}\left[1+O(N^{-\kappa})\right].
\end{align*}
\end{lemma}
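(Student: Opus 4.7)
The plan is to prove Lemma \ref{lemma:replacetrHH} in three substitution steps that turn the random, distribution-dependent quantity $\eta_{z,t}^{2}\tr H_{\lambda_{j}}(\eta_{z,t})\tilde{H}_{\overline{\lambda}_{j}}(\eta_{z,t})$ into a universal deterministic constant, and then to exponentiate carefully using the fact that the $\theta_{j}$-prefactor is already small on $\mathcal{I}_{0}$.

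First, I would use Lemma \ref{lemma:replaceconstants} to replace $\eta_{z,t}$ by $\eta_{\mathrm{univ},z,t}$. This immediately gives $\eta_{z,t}^{2}=\eta_{\mathrm{univ},z,t}^{2}[1+O(N^{-\kappa})]$ with high probability. Applying a standard resolvent expansion of the form $H_{w}(\eta)-H_{w}(\eta')=H_{w}(\eta)(\eta'^{2}-\eta^{2})H_{w}(\eta')$ and the trivial operator norm bound $\|H_{w}(\eta)\|_{\mathrm{op}}\lesssim \eta^{-2}\lesssim N^{2\epsilon_{0}}$, I can replace both $H_{\lambda_{j}}(\eta_{z,t})$ and $\tilde{H}_{\overline{\lambda}_{j}}(\eta_{z,t})$ by their values at $\eta_{\mathrm{univ},z,t}$ while committing an error of size $O(N^{-\kappa+C\epsilon_{0}})$ in the normalized trace. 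Next, using $|\lambda_{j}-z|\lesssim N^{-1/2}$, a second resolvent expansion lets me replace $\lambda_{j}$ by $z$ and $\overline{\lambda}_{j}$ by $\overline{z}$ in the spectral arguments of $H$ and $\tilde H$, at the further cost of $O(N^{-1/2+C\epsilon_{0}})$.

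Second, I would invoke the two-resolvent local law (Lemma \ref{2GE}) to identify $\frac{1}{N}\tr H_{z}(\eta_{\mathrm{univ},z,t})\tilde{H}_{\overline{z}}(\eta_{\mathrm{univ},z,t})$ with a deterministic quantity that depends only on $z$, $t$ and $\eta_{\mathrm{univ},z,t}$ (and in particular is independent of the distribution of the entries of $A$), up to an error $O_{\prec}(N^{-1}\eta_{\mathrm{univ},z,t}^{-C})$. Collecting the three steps and multiplying through by $N$ gives
\begin{align*}
\eta_{z,t}^{2}\tr H_{\lambda_{j}}(\eta_{z,t})\tilde{H}_{\overline{\lambda}_{j}}(\eta_{z,t})=\eta_{\mathrm{univ},z,t}^{2}\mathfrak{T}\bigl[1+O(N^{-\kappa'})\bigr]
\end{align*}
for some $\kappa'>0$, where $\mathfrak{T}$ denotes the universal deterministic approximation and we have absorbed all additive errors into a multiplicative factor using the lower bound $\tr H_{z}\tilde H_{\overline z}\gtrsim N$ (which follows from $\|H_{z}\|,\|\tilde H_{\overline z}\|\gtrsim 1$ with high probability, since $A$ has bounded operator norm).

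Finally, I would exponentiate. On $\mathcal{I}_{0}$ we have $|\tan\theta_{j}-\tan^{-1}\theta_{j}|\lesssim|\theta_{j}-\pi/4|\lesssim N^{-1/2+\tau}$, so by the trivial bound $\tr H\tilde H\lesssim N\eta_{z,t}^{-2}$ the exponent itself is at most $O(N^{2\tau})$. Hence the $O(N^{-\kappa'})$ relative error inside the exponent contributes at most $O(N^{2\tau-\kappa'})$ additively, which is $o(1)$ once $\tau$ and $\epsilon_{0}$ are chosen sufficiently small relative to $\kappa'$. Exponentiating then yields the claimed multiplicative factor $[1+O(N^{-\kappa})]$.

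The main obstacle is precisely the tension between the negative powers of $\eta_{z,t}\sim N^{-\epsilon_{0}}$ that accumulate during each resolvent perturbation and the positive powers of $N^{\tau}$ coming from the size of the $\theta_{j}$-prefactor on $\mathcal{I}_{0}$. This is managed in exactly the same way as in the previous subsections: choose $\epsilon_{0}$ and $\tau$ small enough relative to the high-probability error exponent supplied by Lemma \ref{2GE} so that the cumulative error is still a negative power of $N$.
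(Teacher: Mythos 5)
Your argument is correct and essentially the paper's own proof: swap $\eta_{z,t}$ for $\eta_{\mathrm{univ},z,t}$ via Lemma \ref{lemma:replaceconstants} plus resolvent perturbation, invoke Lemma \ref{2GE} for the universal deterministic value of the two-resolvent trace, and use $|\tan\theta_{j}-\tan^{-1}\theta_{j}|^{2}\lesssim N^{-1+2\tau}$ (up to powers of $\eta_{z,t}^{-1}$) on $\mathcal{I}_{0}$ so that, after choosing $\tau,\epsilon_{0}$ small, the total error in the exponent is a negative power of $N$ and exponentiation gives the factor $1+O(N^{-\kappa})$. The only cosmetic differences are that the paper keeps $\lambda_{j}$ (relying on the uniformity of Lemma \ref{2GE} in $a,b$) instead of also replacing $\lambda_{j}$ by $z$, and tracks the errors additively in the exponent rather than as relative errors in the trace; your slightly optimistic trivial bound $\tr H\tilde{H}\lesssim N\eta_{z,t}^{-2}$ only costs extra factors of $N^{C\epsilon_{0}}$, which your choice of small $\tau,\epsilon_{0}$ already absorbs.
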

\begin{proof}
We first have the decomposition
\begin{align}
&b_{j}^{2}(\tan\theta_{j}-\tan^{-1}\theta_{j})^{2}\eta_{z,t}^{2}\tr H_{\lambda_{j}}(\eta_{z,t})\tilde{H}_{\overline{\lambda}_{j}}(\eta_{z,t})\label{eq:replacetrHH1}\\
&=b_{j}^{2}(\tan\theta_{j}-\tan^{-1}\theta_{j})^{2}\eta_{\mathrm{univ},z,t}^{2}\tr H_{\lambda_{j}}(\eta_{\mathrm{univ},z,t})\tilde{H}_{\overline{\lambda}_{j}}(\eta_{\mathrm{univ},z,t})\nonumber\\
&+b_{j}^{2}(\tan\theta_{j}-\tan^{-1}\theta_{j})^{2}[\eta_{z,t}^{2}-\eta_{\mathrm{univ},z,t}^{2}]\tr H_{\lambda_{j}}(\eta_{z,t})\tilde{H}_{\overline{\lambda}_{j}}(\eta_{z,t})\nonumber\\
&+b_{j}^{2}(\tan\theta_{j}-\tan^{-1}\theta_{j})^{2}\eta_{\mathrm{univ},z,t}^{2}\left[\tr H_{\lambda_{j}}(\eta_{z,t})\tilde{H}_{\overline{\lambda}_{j}}(\eta_{z,t})-\tr H_{\lambda_{j}}(\eta_{\mathrm{univ},z,t})\tilde{H}_{\overline{\lambda}_{j}}(\eta_{\mathrm{univ},z,t})\right].\nonumber
\end{align}
We bound the last two lines in \eqref{eq:replacetrHH1}. We start with the third line. By Lemma \ref{lemma:replaceconstants}, we know $\eta_{z,t}^{2}-\eta_{\mathrm{univ},z,t}^{2}\lesssim N^{-\kappa}$ for some $\kappa>0$ with high probability. Moreover, $\|H_{\lambda_{j}}(\eta_{z,t})\|_{\mathrm{op}},\|\tilde{H}_{\overline{\lambda}_{j}}(\eta_{z,t})\|_{\mathrm{op}}\lesssim\eta_{z,t}^{-2}$. Finally, because $\theta_{j}\in\mathcal{I}_{0}$, we know $|\tan\theta_{j}-\tan^{-1}\theta_{j}|^{2}\lesssim|\theta-\frac{\pi}{4}|^{2}\lesssim N^{-1+2\tau}\eta_{z,t}^{-2}$ for small $\tau>0$ depending on at most $\epsilon_{0}$. Thus, the third line satisfies
\begin{align*}
b_{j}^{2}(\tan\theta_{j}-\tan^{-1}\theta_{j})^{2}[\eta_{z,t}^{2}-\eta_{\mathrm{univ},z,t}^{2}]\tr H_{\lambda_{j}}(\eta_{z,t})\tilde{H}_{\overline{\lambda}_{j}}(\eta_{z,t})&\lesssim N^{-\kappa}N^{-1+2\tau}\eta_{z,t}^{-2}N\eta_{z,t}^{-4}\\
&\lesssim N^{-\kappa+2\tau}\eta_{z,t}^{-6},
\end{align*}
which is $\lesssim N^{-\kappa/2}$ if we choose $\tau,\epsilon_{0}$ small enough. For the last line in \eqref{eq:replacetrHH1}, we have 
\begin{align*}
&\left\|H_{\lambda_{j}}(\eta_{z,t})\tilde{H}_{\overline{\lambda}_{j}}(\eta_{z,t})-H_{\lambda_{j}}(\eta_{\mathrm{univ},z,t})\tilde{H}_{\overline{\lambda}_{j}}(\eta_{\mathrm{univ},z,t})\right\|_{\mathrm{op}}\\
&\lesssim \eta_{z,t}^{-2}\|H_{\lambda_{j}}(\eta_{z,t})-H_{\lambda_{j}}(\eta_{\mathrm{univ},z,t})\|_{\mathrm{op}}+\eta_{z,t}^{-2}\|\tilde{H}_{\overline{\lambda}_{j}}(\eta_{z,t})-\tilde{H}_{\overline{\lambda}_{j}}(\eta_{\mathrm{univ},z,t})\|_{\mathrm{op}},
\end{align*}
and by resolvent perturbation, the last line is $\lesssim\eta_{z,t}^{-D}|\eta_{z,t}-\eta_{\mathrm{univ},z,t}|\lesssim N^{-\kappa}\eta_{z,t}^{-D}$ for some $\kappa>0$ and $D=O(1)$. (The last bound follows by Lemma \ref{lemma:replaceconstants}.) Using this and the bound $|\tan\theta_{j}-\tan^{-1}\theta_{j}|^{2}\lesssim|\theta-\frac{\pi}{4}|^{2}\lesssim N^{-1+2\tau}\eta_{z,t}^{-2}$ for small $\tau>0$ and $\theta_{j}\in\mathcal{I}_{0}$, we deduce
\begin{align*}
&b_{j}^{2}(\tan\theta_{j}-\tan^{-1}\theta_{j})^{2}\eta_{\mathrm{univ},z,t}^{2}\left[\tr H_{\lambda_{j}}(\eta_{z,t})\tilde{H}_{\overline{\lambda}_{j}}(\eta_{z,t})-\tr H_{\lambda_{j}}(\eta_{\mathrm{univ},z,t})\tilde{H}_{\overline{\lambda}_{j}}(\eta_{\mathrm{univ},z,t})\right]\\
&\lesssim N^{-1+2\tau}NN^{-\kappa}\eta_{z,t}^{-D}\lesssim N^{-\kappa+2\tau}\eta_{z,t}^{-D},
\end{align*}
which is $\lesssim N^{-\kappa/2}$ if $\tau,\epsilon_{0}>0$ are small enough. Thus, we have
\begin{align}
&b_{j}^{2}(\tan\theta_{j}-\tan^{-1}\theta_{j})^{2}\eta_{z,t}^{2}\tr H_{\lambda_{j}}(\eta_{z,t})\tilde{H}_{\overline{\lambda}_{j}}(\eta_{z,t})\label{eq:replacetrHH2}\\
&=b_{j}^{2}(\tan\theta_{j}-\tan^{-1}\theta_{j})^{2}\eta_{\mathrm{univ},z,t}^{2}\tr H_{\lambda_{j}}(\eta_{\mathrm{univ},z,t})\tilde{H}_{\overline{\lambda}_{j}}(\eta_{\mathrm{univ},z,t})+O(N^{-\kappa}),\nonumber
\end{align}
where $\kappa>0$ is possibly different than earlier in this proof. Now, we can apply Lemma \ref{2GE} to the trace in the second line in \eqref{eq:replacetrHH2}. This shows that for some $\mathfrak{T}$ as in the statement of this lemma, we have the following, where the last line uses $|\tan\theta_{j}-\tan^{-1}\theta_{j}|^{2}\lesssim|\theta-\frac{\pi}{4}|^{2}\lesssim N^{-1+2\tau}\eta_{z,t}^{-2}$ and holds if $\epsilon_{0}>0$ is small enough:
\begin{align*}
&b_{j}^{2}(\tan\theta_{j}-\tan^{-1}\theta_{j})^{2}\eta_{\mathrm{univ},z,t}^{2}\tr H_{\lambda_{j}}(\eta_{\mathrm{univ},z,t})\tilde{H}_{\overline{\lambda}_{j}}(\eta_{\mathrm{univ},z,t})\\
&=b_{j}^{2}(\tan\theta_{j}-\tan^{-1}\theta_{j})^{2}\eta_{\mathrm{univ},z,t}^{2}\mathfrak{T}+O(|\tan\theta_{j}-\tan^{-1}\theta_{j}|^{2}\eta_{\mathrm{univ},z,t}^{-D}).
\end{align*}
If we exponentiate the previous two displays, the desired estimate follows.
\end{proof}
\subsection{Replacing \texorpdfstring{$\mathbf{Q}_{a_{j},b_{j},\theta_{j}}$}{Q\_aj bj thetaj}}
We now deal with the third line in \eqref{eq:rhomain}. The first step is to replace both copies of $\mathbf{Q}_{a_{j},b_{j},\theta_{j}}$ therein by universal quantities. Since the entries of $\mathbf{Q}_{a_{j},b_{j},\theta_{j}}$ are given by traces of products of blocks of two resolvents, we can again use Lemma \ref{2GE}.
\begin{lemma}\label{lemma:replaceQ}
There exists a $3\times3$ matrix $\mathbf{Q}_{\mathrm{univ},\theta_{j}}$ such that $\mathbf{Q}_{\mathrm{univ},\theta_{j}}$ is independent of the distribution of the entries of $A$, and there exists $\kappa>0$ such that the following estimates hold. First, we have $|\det\mathbf{Q}_{a_{j},b_{j},\theta_{j}}|^{-\frac12}=|\det\mathbf{Q}_{\mathrm{univ},\theta_{j}}|^{-\frac12}\left[1+O(N^{-\kappa})\right]$. We also have 
\begin{align*}
&\exp\left\{-\mathbf{C}_{2}\begin{pmatrix}\frac{N}{t}-\tr \tilde{H}_{a_{j},b_{j},\theta_{j},11}\\-2\tr \tilde{H}_{a_{j},b_{j},\theta_{j},12}\\\frac{N}{t}-\tr \tilde{H}_{a_{j},b_{j},\theta_{j},22}\end{pmatrix}^{\ast}\mathbf{Q}_{a_{j},b_{j},\theta_{j}}^{-1}\begin{pmatrix}\frac{N}{t}-\tr \tilde{H}_{a_{j},b_{j},\theta_{j},11}\\-2\tr \tilde{H}_{a_{j},b_{j},\theta_{j},12}\\\frac{N}{t}-\tr \tilde{H}_{a_{j},b_{j},\theta_{j},22}\end{pmatrix}\right\}\\
&=\exp\left\{-\mathbf{C}_{2}\begin{pmatrix}\frac{N}{t}-\tr \tilde{H}_{a_{j},b_{j},\theta_{j},11}\\-2\tr \tilde{H}_{a_{j},b_{j},\theta_{j},12}\\\frac{N}{t}-\tr \tilde{H}_{a_{j},b_{j},\theta_{j},22}\end{pmatrix}^{\ast}\mathbf{Q}_{\mathrm{univ},\theta_{j}}^{-1}\begin{pmatrix}\frac{N}{t}-\tr \tilde{H}_{a_{j},b_{j},\theta_{j},11}\\-2\tr \tilde{H}_{a_{j},b_{j},\theta_{j},12}\\\frac{N}{t}-\tr \tilde{H}_{a_{j},b_{j},\theta_{j},22}\end{pmatrix}\right\}\left[1+O(N^{-\kappa})\right].
\end{align*}
\end{lemma}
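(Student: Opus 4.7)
The plan is to follow the same three-step template used for $\eta_{z,t}$, $\sigma_{z,t}$, $\mathcal{G}_{j,a,b}$, and $\tr H_{\lambda_{j}}\tilde{H}_{\overline{\lambda}_{j}}$ in Lemmas \ref{lemma:replaceconstants}, \ref{lemma:replacedetG}, and \ref{lemma:replacetrHH}, namely: first replace $(a_{j},b_{j})$ by $(a,b)$ (at cost $O(N^{-1/2}\eta_{z,t}^{-D})$ per entry via resolvent perturbation), then replace $\eta_{z,t}$ inside every $H_{a,b,\theta_{j}}$ by $\eta_{\mathrm{univ},z,t}$ (at cost $O(N^{-\kappa}\eta_{z,t}^{-D})$ per entry, using Lemma \ref{lemma:replaceconstants}), and finally invoke Lemma \ref{2GE} for the two-resolvent local law at scale $\eta_{\mathrm{univ},z,t}$ and parameters $(a,b,\theta_{j})$ to produce the universal deterministic approximations. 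The resulting $3\times 3$ matrix, whose entries are the universal deterministic quantities, is the definition of $\mathbf{Q}_{\mathrm{univ},\theta_{j}}$. Each of the nine traces defining $\mathbf{Q}_{a_{j},b_{j},\theta_{j}}$ is a normalized trace of a product of two blocks of a Hermitized resolvent, exactly the kind of quantity Lemma \ref{2GE} is designed to handle, so this step is almost automatic once the parameters are standardized.

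For the determinant statement, I will combine the entrywise estimate $\|\mathbf{Q}_{a_{j},b_{j},\theta_{j}}-\mathbf{Q}_{\mathrm{univ},\theta_{j}}\|_{\max}\lesssim N^{1-\kappa}\eta_{z,t}^{-D}$ (obtained by summing the three replacement errors above over the $N$ trace) with the lower bound $\mathbf{Q}_{a_{j},b_{j},\theta_{j}}\gtrsim N\eta_{z,t}^{-2}$ from Lemma \ref{lemma:qlowerbound}. Writing $\mathbf{Q}_{\mathrm{univ},\theta_{j}}=\mathbf{Q}_{a_{j},b_{j},\theta_{j}}+E$ with $\|E\|_{\mathrm{op}}\lesssim N^{1-\kappa}\eta_{z,t}^{-D}$ and factoring out $\mathbf{Q}_{a_{j},b_{j},\theta_{j}}$, the determinant ratio equals $\det[I+\mathbf{Q}_{a_{j},b_{j},\theta_{j}}^{-1}E]$, whose spectral radius is $\lesssim N^{-\kappa+C\epsilon_{0}}\ll1$ for $\epsilon_{0}>0$ sufficiently small; expanding $\log\det$ to first order then gives $|\det\mathbf{Q}_{a_{j},b_{j},\theta_{j}}|^{-1/2}=|\det\mathbf{Q}_{\mathrm{univ},\theta_{j}}|^{-1/2}[1+O(N^{-\kappa})]$ for a possibly smaller $\kappa>0$.

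For the exponential statement, the quadratic form in the exponent is $\mathbf{v}^{\ast}\mathbf{Q}_{a_{j},b_{j},\theta_{j}}^{-1}\mathbf{v}$ where the vector $\mathbf{v}$ of trace differences has norm $\|\mathbf{v}\|\lesssim N^{1/2+\kappa}\eta_{z,t}^{-D}$ by the same argument that produced \eqref{eq:Vestimate}. A second application of resolvent perturbation, using again $\mathbf{Q}_{a_{j},b_{j},\theta_{j}}\gtrsim N\eta_{z,t}^{-2}$, yields $\|\mathbf{Q}_{a_{j},b_{j},\theta_{j}}^{-1}-\mathbf{Q}_{\mathrm{univ},\theta_{j}}^{-1}\|_{\mathrm{op}}\lesssim N^{-1-\kappa}\eta_{z,t}^{C}$. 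Combining this with the bound on $\|\mathbf{v}\|$, the difference of the two quadratic forms is
\begin{align*}
|\mathbf{v}^{\ast}(\mathbf{Q}_{a_{j},b_{j},\theta_{j}}^{-1}-\mathbf{Q}_{\mathrm{univ},\theta_{j}}^{-1})\mathbf{v}|\lesssim N^{1+2\kappa}\eta_{z,t}^{-2D}\cdot N^{-1-\kappa}\eta_{z,t}^{C}\lesssim N^{-\kappa+C\epsilon_{0}},
\end{align*}
which is $O(N^{-\kappa'})$ for $\epsilon_{0}$ small enough. Exponentiating converts this additive error to the desired multiplicative factor $1+O(N^{-\kappa'})$.

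The only nontrivial bookkeeping is the last step: one must verify that the exponent itself is $O(1)$ (so that the additive correction genuinely exponentiates to $1+O(N^{-\kappa'})$ rather than blowing up), and for this I will use the bounds $\|\mathbf{v}\|^{2}\lesssim N^{1+2\kappa}\eta_{z,t}^{-2D}$ and $\|\mathbf{Q}_{a_{j},b_{j},\theta_{j}}^{-1}\|_{\mathrm{op}}\lesssim N^{-1}\eta_{z,t}^{2}$, which together give a quadratic form of size $O(N^{2\kappa}\eta_{z,t}^{-D'})$; this is $o(N^{\kappa'})$ for $\kappa,\epsilon_{0}$ sufficiently small. The main obstacle is precisely this balance between the very small $\eta_{z,t}=N^{-\epsilon_{0}}$ (which inflates operator-norm losses) and the $N$-savings coming from the lower bound on $\mathbf{Q}_{a_{j},b_{j},\theta_{j}}$; it closes cleanly as long as $\epsilon_{0}$ and $\tau$ are chosen sufficiently small, matching the regime in which all earlier replacement lemmas operate.
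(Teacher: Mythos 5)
Your proposal follows essentially the same route as the paper's proof: replace $\eta_{z,t}$ by $\eta_{\mathrm{univ},z,t}$, apply the two-resolvent local law (Lemma \ref{2GE}) entrywise to define $\mathbf{Q}_{\mathrm{univ},\theta_{j}}$, and combine the lower bound of Lemma \ref{lemma:qlowerbound} with the vector estimate \eqref{eq:Vestimate} to control both the determinant ratio and the difference of quadratic forms before exponentiating; the preliminary $(a_{j},b_{j})\to(a,b)$ replacement and the $\log\det[I+\mathbf{Q}^{-1}E]$ formulation of the determinant step are cosmetic variants of what the paper does. One bookkeeping caution: in your displayed estimate you use the same $\kappa$ for the exponent in the vector bound and for the gain in $\|\mathbf{Q}_{a_{j},b_{j},\theta_{j}}^{-1}-\mathbf{Q}_{\mathrm{univ},\theta_{j}}^{-1}\|_{\mathrm{op}}$, so as written the product is of size $N^{+\kappa}\eta_{z,t}^{-D'}$ rather than $N^{-\kappa+C\epsilon_{0}}$ --- you need two independent small parameters (the paper's $\tau$ from \eqref{eq:Vestimate} and the local-law gain $\kappa$), choosing $\tau$ and $\epsilon_{0}$ small relative to $\kappa$, exactly as your closing sentence already indicates.
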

\begin{proof}
We claim there exists a $3\times3$ matrix $\mathbf{Q}_{\mathrm{univ},\theta_{j}}$ independent of the distribution of the entries of $A$ such that $\|\mathbf{Q}_{a_{j},b_{j},\theta_{j}}-\mathbf{Q}_{\mathrm{univ},\theta_{j}}\|_{\mathrm{op}}\lesssim N^{1-\kappa}\eta_{z,t}^{-D}$ with high probability for some $D=O(1)$. To prove this claim, we first show $\|\mathbf{Q}_{a_{j},b_{j},\theta_{j}}-\mathbf{Q}_{a_{j},b_{j},\theta_{j}}(\eta_{\mathrm{univ},z,t})\|_{\mathrm{op}}\lesssim N^{1-\kappa}\eta_{z,t}^{-D}$, where $\mathbf{Q}_{a_{j},b_{j},\theta_{j}}(\eta_{\mathrm{univ},z,t})$ is just $\mathbf{Q}_{a_{j},b_{j},\theta_{j}}$ but evaluating all resolvents at $\eta_{\mathrm{univ},z,t}$ instead of $\eta_{z,t}$. To see this, we first note that $\|\mathbf{Q}_{a_{j},b_{j},\theta_{j}}-\mathbf{Q}_{a_{j},b_{j},\theta_{j}}(\eta_{\mathrm{univ},z,t})\|_{\mathrm{op}}\lesssim\|\mathbf{Q}_{a_{j},b_{j},\theta_{j}}-\mathbf{Q}_{a_{j},b_{j},\theta_{j}}(\eta_{\mathrm{univ},z,t})\|_{\max}$ since matrices in question are $3\times3$. We now estimate the entries of $\mathbf{Q}_{a_{j},b_{j},\theta_{j}}-\mathbf{Q}_{a_{j},b_{j},\theta_{j}}(\eta_{\mathrm{univ},z,t})$.  We illustrate one example; other entries are treated similarly. We are claiming that $\tr \tilde{H}_{a_{j},b_{j},\theta_{j},11}(\eta_{z,t})^{2}=\tr \tilde{H}_{a_{j},b_{j},\theta_{j},11}(\eta_{\mathrm{univ},z,t})^{2}+O(N^{1-\kappa}\eta_{z,t}^{-D})$. This follows by Lemma \ref{lemma:replaceconstants}, the bound $\|\tilde{H}_{a_{j},b_{j},\theta_{j},11}\|_{\mathrm{op}}\lesssim\eta_{z,t}^{-2}$, and resolvent identities. The matrix $\mathbf{Q}_{\mathrm{univ},\theta_{j}}$ is then constructed by applying Lemma \ref{2GE} to each entry in $\mathbf{Q}_{a_{j},b_{j},\theta_{j}}(\eta_{\mathrm{univ},z,t})$.

We now prove the two proposed estimates. For the first determinant estimate, we note that 
\begin{align*}
|\det\mathbf{Q}_{a_{j},b_{j},\theta_{j}}|^{-\frac12}&=|\det\mathbf{Q}_{\mathrm{univ},\theta_{j}}|^{-\frac12}\left[\frac{|\det\mathbf{Q}_{\mathrm{univ},\theta_{j}}|}{|\det\mathbf{Q}_{a_{j},b_{j},\theta_{j}}|}\right]^{\frac12}\\
&=|\det\mathbf{Q}_{\mathrm{univ},\theta_{j}}|^{-\frac12}\left[1+\frac{|\det\mathbf{Q}_{\mathrm{univ},\theta_{j}}|-|\det\mathbf{Q}_{a_{j},b_{j},\theta_{j}}|}{|\det\mathbf{Q}_{a_{j},b_{j},\theta_{j}}|}\right]^{\frac12}.
\end{align*}
Since the entries in $\mathbf{Q}_{a_{j},b_{j},\theta_{j}}$ are $\lesssim N\eta_{z,t}^{-D}$ for some $D=O(1)$, the same must be true for $\mathbf{Q}_{\mathrm{univ},\theta_{j}}$ by the operator norm estimate $\|\mathbf{Q}_{a_{j},b_{j},\theta_{j}}-\mathbf{Q}_{\mathrm{univ},\theta_{j}}\|_{\mathrm{op}}\lesssim N^{1-\kappa}\eta_{z,t}^{-D}$. This same estimate combined with an elementary expansion of the determinant (as a cubic in the entries of these $3\times3$ matrices) then shows that $|\det\mathbf{Q}_{\mathrm{univ},\theta_{j}}|-|\det\mathbf{Q}_{a_{j},b_{j},\theta_{j}}|=O(N^{3-\kappa}\eta_{z,t}^{-D})$ for possibly different $D=O(1)$. On the other hand, Lemma \ref{lemma:qlowerbound} shows that $|\det\mathbf{Q}_{a_{j},b_{j},\theta_{j}}|\gtrsim N^{3}\eta_{z,t}^{-6}$. We deduce from this paragraph, the bound $\eta_{z,t}\asymp t=N^{-\epsilon_{0}}$ with $\epsilon_{0}>0$ sufficiently small, and the previous display that 
\begin{align*}
|\det\mathbf{Q}_{a_{j},b_{j},\theta_{j}}|^{-\frac12}&=|\det\mathbf{Q}_{\mathrm{univ},\theta_{j}}|^{-\frac12}\left[1+O(N^{-\kappa})\right].
\end{align*}
This is the first desired estimate. We move to the exponential estimate. We must estimate the difference of inverses $\mathbf{Q}_{a_{j},b_{j},\theta_{j}}^{-1}-\mathbf{Q}_{\mathrm{univ},\theta_{j}}^{-1}$. By resolvent identities, we have 
\begin{align*}
\mathbf{Q}_{a_{j},b_{j},\theta_{j}}^{-1}-\mathbf{Q}_{\mathrm{univ},\theta_{j}}^{-1}=\mathbf{Q}_{a_{j},b_{j},\theta_{j}}^{-1}(\mathbf{Q}_{\mathrm{univ},\theta_{j}}-\mathbf{Q}_{a_{j},b_{j},\theta_{j}})\mathbf{Q}_{\mathrm{univ},\theta_{j}}^{-1}.
\end{align*}
Lemma \ref{lemma:qlowerbound} shows that $\mathbf{Q}_{a_{j},b_{j},\mathrm{univ}}\gtrsim N\eta_{z,t}^{-2}$. The operator bound $\|\mathbf{Q}_{a_{j},b_{j},\theta_{j}}-\mathbf{Q}_{\mathrm{univ},\theta_{j}}\|_{\mathrm{op}}\lesssim N^{1-\kappa}\eta_{z,t}^{-D}$, if we choose $\epsilon_{0}>0$ sufficiently small, then implies the same for $\mathbf{Q}_{\mathrm{univ},\theta_{j}}$. Thus, if we choose $\epsilon_{0}>0$ small enough, we get
\begin{align*}
\|\mathbf{Q}_{a_{j},b_{j},\theta_{j}}^{-1}-\mathbf{Q}_{\mathrm{univ},\theta_{j}}^{-1}\|_{\mathrm{op}}\lesssim N^{-2}\eta_{z,t}^{4}N^{1-\kappa}\eta_{z,t}^{-D}\lesssim N^{-1-\frac{\kappa}{2}}.
\end{align*}
To derive the proposed exponential estimate, we use \eqref{eq:Vestimate} and Cauchy interlacing to get
\begin{align*}
\left\|\begin{pmatrix}\frac{N}{t}-\tr \tilde{H}_{a_{j},b_{j},\theta_{j},11}\\-2\tr \tilde{H}_{a_{j},b_{j},\theta_{j},12}\\\frac{N}{t}-\tr \tilde{H}_{a_{j},b_{j},\theta_{j},22}\end{pmatrix}\right\|\lesssim_{\tau} N^{\frac12+\tau}\eta_{z,t}^{-D},
\end{align*}
where $\tau>0$ is as small as we want. (To be clear, $\lesssim_{\tau}$ means bounded above up to a constant depending on $\tau$; in this case, the constant blows up as $\tau\to0$.) Combining the previous two displays yields the quadratic form estimate
\begin{align*}
&\begin{pmatrix}\frac{N}{t}-\tr \tilde{H}_{a_{j},b_{j},\theta_{j},11}\\-2\tr \tilde{H}_{a_{j},b_{j},\theta_{j},12}\\\frac{N}{t}-\tr \tilde{H}_{a_{j},b_{j},\theta_{j},22}\end{pmatrix}^{\ast}\mathbf{Q}_{a_{j},b_{j},\theta_{j}}^{-1}\begin{pmatrix}\frac{N}{t}-\tr \tilde{H}_{a_{j},b_{j},\theta_{j},11}\\-2\tr \tilde{H}_{a_{j},b_{j},\theta_{j},12}\\\frac{N}{t}-\tr \tilde{H}_{a_{j},b_{j},\theta_{j},22}\end{pmatrix}\\
&=\begin{pmatrix}\frac{N}{t}-\tr \tilde{H}_{a_{j},b_{j},\theta_{j},11}\\-2\tr \tilde{H}_{a_{j},b_{j},\theta_{j},12}\\\frac{N}{t}-\tr \tilde{H}_{a_{j},b_{j},\theta_{j},22}\end{pmatrix}^{\ast}\mathbf{Q}_{\mathrm{univ},\theta_{j}}^{-1}\begin{pmatrix}\frac{N}{t}-\tr \tilde{H}_{a_{j},b_{j},\theta_{j},11}\\-2\tr \tilde{H}_{a_{j},b_{j},\theta_{j},12}\\\frac{N}{t}-\tr \tilde{H}_{a_{j},b_{j},\theta_{j},22}\end{pmatrix}+O(N^{-\kappa+2\tau}\eta_{z,t}^{-D})
\end{align*}
for $\kappa>0$ and $\tau>0$ (which we can choose to be small) and $D=O(1)$. If we choose $\tau>0$ and $\epsilon_{0}>0$ sufficiently small (depending only on $\kappa$), the error term in the second line of the previous display becomes $O(N^{-\delta})$ for some $\delta>0$. Exponentiating the resulting estimate completes the proof.
\end{proof}
\subsection{Replacing the vector in the quadratic form}
The final step is to replace the traces in the vector in the third line \eqref{eq:rhomain} by universal quantities. To this end, we use Lemma \ref{1GE}.
\begin{lemma}\label{lemma:finalreplace}
There exists $\mathfrak{v}\in\R^{3}$ whose entries are independent of the distribution of the entries of $A$, and such that the following estimate holds for some $\kappa>0$:
\begin{align*}
&\exp\left\{-\mathbf{C}_{2}\begin{pmatrix}\frac{N}{t}-\tr \tilde{H}_{a_{j},b_{j},\theta_{j},11}\\-2\tr \tilde{H}_{a_{j},b_{j},\theta_{j},12}\\\frac{N}{t}-\tr \tilde{H}_{a_{j},b_{j},\theta_{j},22}\end{pmatrix}^{\ast}\mathbf{Q}_{\mathrm{univ},\theta_{j}}^{-1}\begin{pmatrix}\frac{N}{t}-\tr \tilde{H}_{a_{j},b_{j},\theta_{j},11}\\-2\tr \tilde{H}_{a_{j},b_{j},\theta_{j},12}\\\frac{N}{t}-\tr \tilde{H}_{a_{j},b_{j},\theta_{j},22}\end{pmatrix}\right\}\\
&=\exp\left\{-\mathbf{C}_{2}\mathfrak{v}^{\ast}\mathbf{Q}_{\mathrm{univ},\theta_{j}}^{-1}\mathfrak{v}\right\}\left[1+O(N^{-\kappa})\right]
\end{align*}
\end{lemma}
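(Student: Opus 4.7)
The plan is to define $\mathfrak{V}\in\R^{3}$ as the universal deterministic approximation, coming from the one-resolvent local law Lemma \ref{1GE}, to the three-vector
\[
\mathbf{V}_{j}:=\begin{pmatrix}\frac{N}{t}-\tr H_{a_{j},b_{j},\theta_{j},11}\\-2\tr H_{a_{j},b_{j},\theta_{j},12}\\\frac{N}{t}-\tr H_{a_{j},b_{j},\theta_{j},22}\end{pmatrix}.
\]
First I would replace $\eta_{z,t}$ by $\eta_{\mathrm{univ},z,t}$ inside each trace using Lemma \ref{lemma:replaceconstants} and resolvent perturbation in the spectral parameter; since $\|H_{a_{j},b_{j},\theta_{j},k\ell}\|_{\mathrm{op}}\lesssim\eta_{z,t}^{-2}$ and $|\eta_{z,t}-\eta_{\mathrm{univ},z,t}|\lesssim N^{-\kappa}\eta_{z,t}$ with high probability, this perturbs each entry by $O(N^{1-\kappa}\eta_{z,t}^{-D})$. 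Next I would apply Lemma \ref{1GE} to each $\langle H_{a_{j},b_{j},\theta_{j},k\ell}(\eta_{\mathrm{univ},z,t})\rangle$ to replace it by its universal deterministic counterpart modulo a stochastic error $\prec \eta_{z,t}^{-D}/N$, so that after multiplying by $N$, each trace matches a universal quantity (depending only on $a_{j},b_{j},\theta_{j}$ and $\eta_{\mathrm{univ},z,t}$) to within $\prec\eta_{z,t}^{-D}\lesssim N^{C\epsilon_{0}}$. I would then take $\mathfrak{V}$ to be the resulting universal three-vector, so that $\|\mathbf{V}_{j}-\mathfrak{V}\|\prec N^{C\epsilon_{0}}$.

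The key step is converting this entrywise additive control into an additive $O(N^{-\kappa})$ bound on the quadratic form, which is what is needed to approximate the exponential with multiplicative error $1+O(N^{-\kappa})$. Using the identity
\[
\mathbf{V}_{j}^{\ast}\mathbf{Q}_{\mathrm{univ},\theta_{j}}^{-1}\mathbf{V}_{j}-\mathfrak{V}^{\ast}\mathbf{Q}_{\mathrm{univ},\theta_{j}}^{-1}\mathfrak{V}=(\mathbf{V}_{j}-\mathfrak{V})^{\ast}\mathbf{Q}_{\mathrm{univ},\theta_{j}}^{-1}(\mathbf{V}_{j}+\mathfrak{V}),
\]
the norm bound $\|\mathbf{V}_{j}\|\lesssim N^{1/2+\tau}\eta_{z,t}^{-D}$ from \eqref{eq:Vestimate} (which also controls $\|\mathfrak{V}\|$, since $\mathfrak{V}$ is a deterministic approximation of $\mathbf{V}_{j}$), and $\|\mathbf{Q}_{\mathrm{univ},\theta_{j}}^{-1}\|_{\mathrm{op}}\lesssim N^{-1}\eta_{z,t}^{2}$ from Lemmas \ref{lemma:qlowerbound} and \ref{lemma:replaceQ}, I would estimate
\[
\bigl|\mathbf{V}_{j}^{\ast}\mathbf{Q}_{\mathrm{univ},\theta_{j}}^{-1}\mathbf{V}_{j}-\mathfrak{V}^{\ast}\mathbf{Q}_{\mathrm{univ},\theta_{j}}^{-1}\mathfrak{V}\bigr|\lesssim N^{C\epsilon_{0}}\cdot N^{-1}\eta_{z,t}^{2}\cdot N^{1/2+\tau}\eta_{z,t}^{-D}\lesssim N^{-1/2+\tau+C'\epsilon_{0}},
\]
which is $O(N^{-\kappa})$ for $\tau,\epsilon_{0}>0$ sufficiently small. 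Exponentiation then yields the claimed approximation.

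The main obstacle is a bookkeeping issue that recurs through this section: the exponent itself is not small (one only has the a priori bound $\mathbf{V}_{j}^{\ast}\mathbf{Q}_{\mathrm{univ},\theta_{j}}^{-1}\mathbf{V}_{j}\lesssim N^{2\tau+C\epsilon_{0}}$), so a relative approximation of $\mathbf{V}_{j}$ is not enough and one must control the quadratic form additively. This in turn requires both the cancellation in $\frac{N}{t}-\tr H_{\cdots}$ already exploited in \eqref{eq:Vestimate} and the $1/N$ gain in the stochastic error from the local law Lemma \ref{1GE}. A trivial operator-norm bound on the resolvents would give only $\|\mathbf{V}_{j}-\mathfrak{V}\|\lesssim N\eta_{z,t}^{-D}$, which is far too weak; it is precisely the combination of these two gains that makes the proof close.
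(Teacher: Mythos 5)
Your proposal follows essentially the same route as the paper's proof: take $\mathfrak{V}$ to be the universal approximation of the three-vector furnished by the local law Lemma \ref{1GE} (the paper's \eqref{eq:finalreplace1}), write the difference of quadratic forms as $(\mathbf{V}_{j}-\mathfrak{V})^{\ast}\mathbf{Q}_{\mathrm{univ},\theta_{j}}^{-1}(\mathbf{V}_{j}+\mathfrak{V})$, and close using exactly the same three inputs — $\|\mathbf{V}_{j}\|,\|\mathfrak{V}\|\lesssim N^{\frac12+\tau}\eta_{z,t}^{-D}$ from \eqref{eq:Vestimate}, $\|\mathbf{Q}_{\mathrm{univ},\theta_{j}}^{-1}\|_{\mathrm{op}}\lesssim N^{-1}\eta_{z,t}^{2}$, and the $1/N$ gain from the local law — before exponentiating, which is precisely the paper's argument. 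One bookkeeping caveat: your intermediate claim that replacing $\eta_{z,t}$ by $\eta_{\mathrm{univ},z,t}$ perturbs each trace by $O(N^{1-\kappa}\eta_{z,t}^{-D})$ (using only the relative closeness stated in Lemma \ref{lemma:replaceconstants}) is inconsistent with your conclusion $\|\mathbf{V}_{j}-\mathfrak{V}\|\prec N^{C\epsilon_{0}}$, and by itself would leave a final error of order $N^{\frac12+\tau-\kappa}\eta_{z,t}^{-D}$, which need not vanish for small $\kappa$; to justify the bound you actually use, invoke the sharper estimate $|\eta_{z,t}-\eta_{\mathrm{univ},z,t}|\prec N^{-1}t^{-2}$ established inside the proof of Lemma \ref{lemma:replaceconstants}, after which your argument (and the paper's, which asserts \eqref{eq:finalreplace1} without spelling this out) closes as written.
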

\begin{proof}
We first claim there exists $\mathfrak{v}\in\R^{3}$ whose entries are independent of the distribution of the entries of $A$ such that for some $\kappa>0$ and $D=O(1)$, we have the following with high probability:
\begin{align}
\left\|\begin{pmatrix}\frac{N}{t}-\tr \tilde{H}_{a_{j},b_{j},\theta_{j},11}\\-2\tr \tilde{H}_{a_{j},b_{j},\theta_{j},12}\\\frac{N}{t}-\tr \tilde{H}_{a_{j},b_{j},\theta_{j},22}\end{pmatrix}-\mathfrak{v}\right\|\lesssim N^{\frac12-\kappa}.\label{eq:finalreplace1}
\end{align}
We first note the following, where the last line follows by computations in Appendix \ref{section:resolventsatpi/4}:
\begin{align*}
\tr \tilde{H}_{a_{j},b_{j},\theta_{j},11}&=\tr \tilde{H}_{a_{j},b_{j},\frac{\pi}{4},11}+\tr[\tilde{H}_{a_{j},b_{j},\theta_{j},11}-\tilde{H}_{a_{j},b_{j},\frac{\pi}{4},11}]\\
&=\tr \tilde{H}_{a_{j},b_{j},\frac{\pi}{4},11}+\tr\tilde{H}_{a_{j},b_{j},\theta_{j},11}[A_{a_{j},b_{j},\frac{\pi}{4}}^{\ast}A_{a_{j},b_{j},\frac{\pi}{4}}-A_{a_{j},b_{j},\theta_{j}}^{\ast}A_{a_{j},b_{j},\theta_{j}}]\tilde{H}_{a_{j},b_{j},\frac{\pi}{4},11}\\
&=\frac{N}{t}+\tr\tilde{H}_{a_{j},b_{j},\theta_{j},11}[A_{a_{j},b_{j},\frac{\pi}{4}}^{\ast}A_{a_{j},b_{j},\frac{\pi}{4}}-A_{a_{j},b_{j},\theta_{j}}^{\ast}A_{a_{j},b_{j},\theta_{j}}]\tilde{H}_{a_{j},b_{j},\frac{\pi}{4},11}.
\end{align*}
Since $|\theta_{j}-\frac{\pi}{4}|\leq N^{-1/2+\tau}$, we know $\|A_{a_{j},b_{j},\frac{\pi}{4}}^{\ast}A_{a_{j},b_{j},\frac{\pi}{4}}-A_{a_{j},b_{j},\theta_{j}}^{\ast}A_{a_{j},b_{j},\theta_{j}}\|_{\mathrm{op}}\lesssim N^{-1/2+\tau}$, where $\tau>0$ is small. In particular, by resolvent perturbation, Lemma \ref{lemma:replaceconstants}, and trivial resolvent bounds, we have 
\begin{align*}
&\tr\tilde{H}_{a_{j},b_{j},\theta_{j},11}[A_{a_{j},b_{j},\frac{\pi}{4}}^{\ast}A_{a_{j},b_{j},\frac{\pi}{4}}-A_{a_{j},b_{j},\theta_{j}}^{\ast}A_{a_{j},b_{j},\theta_{j}}]\tilde{H}_{a_{j},b_{j},\frac{\pi}{4},11}\\
&=\tr\left\{\tilde{H}_{a_{j},b_{j},\theta_{j},11}(\eta_{\mathrm{univ},z,t})[A_{a_{j},b_{j},\frac{\pi}{4}}^{\ast}A_{a_{j},b_{j},\frac{\pi}{4}}-A_{a_{j},b_{j},\theta_{j}}^{\ast}A_{a_{j},b_{j},\theta_{j}}]\tilde{H}_{a_{j},b_{j},\frac{\pi}{4},11}(\eta_{\mathrm{univ},z,t})\right\}+O(N^{1/2-\kappa}),
\end{align*}
where $\kappa>0$ is fixed and independent of our choice of small $\epsilon_{0}>0$. By another resolvent identity, we have 
\begin{align*}
&\tr\left\{\tilde{H}_{a_{j},b_{j},\theta_{j},11}(\eta_{\mathrm{univ},z,t})[A_{a_{j},b_{j},\frac{\pi}{4}}^{\ast}A_{a_{j},b_{j},\frac{\pi}{4}}-A_{a_{j},b_{j},\theta_{j}}^{\ast}A_{a_{j},b_{j},\theta_{j}}]\tilde{H}_{a_{j},b_{j},\frac{\pi}{4},11}(\eta_{\mathrm{univ},z,t})\right\}\\
&=\tr\tilde{H}_{a_{j},b_{j},\theta_{j},11}(\eta_{\mathrm{univ},z,t})-\tr\tilde{H}_{a_{j},b_{j},\frac{\pi}{4},11}(\eta_{\mathrm{univ},z,t}).
\end{align*}
In particular, the previous three displays imply
\begin{align*}
\frac{N}{t}-\tr \tilde{H}_{a_{j},b_{j},\theta_{j},11}=\tr\tilde{H}_{a_{j},b_{j},\frac{\pi}{4},11}(\eta_{\mathrm{univ},z,t})-\tr\tilde{H}_{a_{j},b_{j},\theta_{j},11}(\eta_{\mathrm{univ},z,t}).
\end{align*}
The RHS of the previous display is equal to a quantity which does not depend on the distribution of the entries of $A$ plus $O(\eta_{z,t}^{-D})$ for some $D=O(1)$; this is by the local law in Lemma \ref{1GE}. The aforementioned ``universal" term is our choice of $\mathfrak{v}_{1}$, the first entry of $\mathfrak{v}$. The rest of $\mathfrak{v}$ is constructed using a similar argument. We now compute 
\begin{align}
&\begin{pmatrix}\frac{N}{t}-\tr \tilde{H}_{a_{j},b_{j},\theta_{j},11}\\-2\tr \tilde{H}_{a_{j},b_{j},\theta_{j},12}\\\frac{N}{t}-\tr \tilde{H}_{a_{j},b_{j},\theta_{j},22}\end{pmatrix}^{\ast}\mathbf{Q}_{\mathrm{univ},\theta_{j}}^{-1}\begin{pmatrix}\frac{N}{t}-\tr \tilde{H}_{a_{j},b_{j},\theta_{j},11}\\-2\tr \tilde{H}_{a_{j},b_{j},\theta_{j},12}\\\frac{N}{t}-\tr \tilde{H}_{a_{j},b_{j},\theta_{j},22}\end{pmatrix}-\mathfrak{v}^{\ast}\mathbf{Q}_{\mathrm{univ},\theta_{j}}^{-1}\mathfrak{v}\label{eq:finalreplace2}\\
&=\left[\begin{pmatrix}\frac{N}{t}-\tr \tilde{H}_{a_{j},b_{j},\theta_{j},11}\\-2\tr \tilde{H}_{a_{j},b_{j},\theta_{j},12}\\\frac{N}{t}-\tr \tilde{H}_{a_{j},b_{j},\theta_{j},22}\end{pmatrix}-\mathfrak{v}\right]^{\ast}\mathbf{Q}_{\mathrm{univ},j}^{-1}\begin{pmatrix}\frac{N}{t}-\tr \tilde{H}_{a_{j},b_{j},\theta_{j},11}\\-2\tr \tilde{H}_{a_{j},b_{j},\theta_{j},12}\\\frac{N}{t}-\tr \tilde{H}_{a_{j},b_{j},\theta_{j},22}\end{pmatrix}\nonumber\\
&+\mathfrak{v}^{\ast}\mathbf{Q}_{\mathrm{univ},\theta_{j}}^{-1}\left[\begin{pmatrix}\frac{N}{t}-\tr \tilde{H}_{a_{j},b_{j},\theta_{j},11}\\-2\tr \tilde{H}_{a_{j},b_{j},\theta_{j},12}\\\frac{N}{t}-\tr \tilde{H}_{a_{j},b_{j},\theta_{j},22}\end{pmatrix}-\mathfrak{v}\right].\nonumber
\end{align}
By \eqref{eq:Vestimate}, the last vector in the second line of \eqref{eq:finalreplace2} has norm $\lesssim N^{\frac12+\tau}\eta_{z,t}^{-D}$ for $\tau>0$ small and $D=O(1)$. By \eqref{eq:finalreplace1}, the same is true of $\mathfrak{v}$. Moreover, as shown in the proof of Lemma \ref{lemma:replaceQ}, we know that $\mathbf{Q}_{\mathrm{univ},j}^{-1}\lesssim N^{-1}\eta_{z,t}^{2}$ with high probability. So, the second line in \eqref{eq:finalreplace2} is $\lesssim N^{\frac12+\tau}N^{\frac12-\kappa}\eta_{z,t}^{-D}N^{-1}\eta_{z,t}^{2}\lesssim N^{-\kappa+\tau}\eta_{z,t}^{-D}$ for possibly different $D=O(1)$ and for small $\tau>0$. By the same token, the same is true for the last line in \eqref{eq:finalreplace2}. Thus, we get
\begin{align*}
\begin{pmatrix}\frac{N}{t}-\tr \tilde{H}_{a_{j},b_{j},\theta_{j},11}\\-2\tr \tilde{H}_{a_{j},b_{j},\theta_{j},12}\\\frac{N}{t}-\tr \tilde{H}_{a_{j},b_{j},\theta_{j},22}\end{pmatrix}^{\ast}\mathbf{Q}_{\mathrm{univ},\theta_{j}}^{-1}\begin{pmatrix}\frac{N}{t}-\tr \tilde{H}_{a_{j},b_{j},\theta_{j},11}\\-2\tr \tilde{H}_{a_{j},b_{j},\theta_{j},12}\\\frac{N}{t}-\tr \tilde{H}_{a_{j},b_{j},\theta_{j},22}\end{pmatrix}-\mathfrak{v}^{\ast}\mathbf{Q}_{\mathrm{univ},\theta_{j}}^{-1}\mathfrak{v}=O(N^{-\kappa+\tau}\eta_{z,t}^{-D}).
\end{align*}
It now suffices to choose $\tau,\epsilon_{0}>0$ small enough to make the RHS $O(N^{-\kappa/2})$, at which point we then exponentiate the resulting bound to conclude the proof.
\end{proof}
%
%
%
\section{Proof of Theorem \ref{theorem:mainwitht}}
Combining \eqref{eq:rhomain} with Lemmas \ref{lemma:replaceconstants}, \ref{lemma:replacedetG}, \ref{lemma:replacetrHH}, \ref{lemma:replaceQ}, and \ref{lemma:finalreplace} shows that 
\begin{align}
\rho_{t}(z,\mathbf{z};A)&=\Phi_{z,t}(z_{1},z_{2})\rho_{\mathrm{GinUE}}^{(2)}(z_{1},z_{2})\left[1+O(N^{-\kappa})\right]+O(\exp[-CN^{\kappa}]),\label{eq:mainwitht1}
\end{align}
where $\kappa>0$ and $\Phi_{z,t}(z_{1},z_{2})$ is as in the statement of Theorem \ref{theorem:mainwitht}. Recall $\rho_{t}(z,\mathbf{z};A)$ is the two-point correlation function for local eigenvalue statistics near $z$; more precisely, we have the following (in which we recall $\mathbf{z}=(z_{1},z_{2})$):
\begin{align*}
\int_{\C^{2}}O(z_{1},z_{2})\rho_{t}(z,\mathbf{z};A)dz_{1}dz_{2}=\E\left[\sum_{i_{1}\neq i_{2}}O(N^{\frac12}\sigma_{z,t}^{\frac12}[z-\lambda_{i_{1}}(t)],N^{\frac12}\sigma_{z,t}^{\frac12}[z-\lambda_{i_{2}}(t)])\right].
\end{align*}
Above, $O(z_{1},z_{2})\in C^{\infty}_{c}(\C^{2})$ is arbitrary. Since $\rho_{t}(z,\mathbf{z};A)$ is a probability density (up to a scaling by a deterministic, $O(1)$ factor) with respect to $dz_{1}dz_{2}$, the estimate \eqref{eq:mainwitht1} gives 
\begin{align*}
&\int_{\C^{2}}O(z_{1},z_{2})\rho_{t}(z,\mathbf{z};A)dz_{1}dz_{2}=\int_{\C^{2}}O(z_{1},z_{2})\rho_{t}(z,\mathbf{z};A)[1+O(N^{-\kappa})]dz_{1}dz_{2}+O(N^{-\kappa})\\
&=\int_{\C^{2}}O(z_{1},z_{2})\Phi_{z,t}(z_{1},z_{2})\rho_{\mathrm{GinUE}}^{(2)}(z_{1},z_{2})dz_{1}dz_{2}+O(N^{-\kappa})
\end{align*}
for some $\kappa>0$. It now suffices to combine the previous two displays. \qed
%
%
%
\section{Proof of Theorem \ref{theorem:main}}
It suffices to prove Theorem \ref{theorem:main} for $\tilde{A}_{t}:=A\sqrt{1+t}$, where $t=N^{-\epsilon_{0}}$ as in Theorem \ref{theorem:mainwitht}. Indeed, let $\rho_{A,z}(z_{1},z_{2})$ be the two-point correlation function for eigenvalues of $A$ near $z$, and $\rho_{\tilde{A}_{t},z}(z_{1},z_{2})$ is the same but for $\tilde{A}_{t}$. Then, for any $O\in C^{\infty}_{c}(\C^{2})$ we have 
\begin{align*}
\int_{\C^{2}}O(z_{1},z_{2})\rho_{A,z}(z_{1},z_{2})dz_{1}dz_{2}&=\frac{1}{(1+t)^{2}}\int_{\C^{2}}O\left(\frac{z_{1}}{\sqrt{1+t}},\frac{z_{1}}{\sqrt{1+t}}\right)\rho_{\tilde{A}_{t},z}(z_{1},z_{2})dz_{1}dz_{2}\\
&=\int_{\C^{2}}O\left(\frac{z_{1}}{\sqrt{1+t}},\frac{z_{1}}{\sqrt{1+t}}\right)\rho_{\tilde{A}_{t},z}(z_{1},z_{2})dz_{1}dz_{2}+O(t)\\
&=\int_{\C^{2}}O(z_{1},z_{2})\rho_{\tilde{A}_{t},z}(z_{1},z_{2})dz_{1}dz_{2}+O(t)
\end{align*}
by change of variables on $\C^{2}$. Since $t=N^{-\epsilon_{0}}\to0$, we have reduced to proving Theorem \ref{theorem:main} for $\tilde{A}_{t}$ instead of $A$. To this end, the main ingredient we require is the following ``three-and-a-half moment matching" theorem, which we state in more generality. Before we state this theorem, we first say that $X,\tilde{X}$ match up to three and a half moments if:
\begin{enumerate}
\item $X$ is an $N\times N$ matrix whose entries $X_{ij}$ are real i.i.d. variables that satisfy $\E|X_{ij}|^{p}\lesssim_{p}N^{-p/2}$ for $1\leq p\leq4$.
\item $\tilde{X}$ is an $N\times N$ matrix whose entries $\tilde{X}_{ij}$ satisfy the same properties.
\item $\E X_{ij}^{p}=\E\tilde{X}_{ij}^{p}$ for $p=1,2,3$, and $\E|X_{ij}|^{4}=\E|\tilde{X}_{ij}|^{4}+O(N^{-2-\delta})$ for some $\delta>0$. (Note that $N^{-2-\delta}$ is the below the natural scale of $N^{-2}$ for fourth moments.)
\end{enumerate}
\begin{lemma}\label{lemma:3.5moment}
Suppose that $X$ and $\tilde{X}$ are $N\times N$ matrices that match up to three and a half moments. Let $\{\lambda^{X}_{i}\}_{i}$ and $\{\lambda^{\tilde{X}}\}_{i}$ be eigenvalues of $X$ and $\tilde{X}$, respectively. Fix any $k\geq1$ and $z\in\C$ such that $|z|\leq1-\tau$ and $\mathrm{Im}(z)\geq\tau$ for some $\tau>0$ fixed. For any $O\in C^{\infty}_{c}(\C^{k})$, we have
\begin{align*}
&\E\left\{\sum_{i_{1}\neq i_{2}\neq\ldots\neq i_{k}}\left[O(N^{\frac12}[z-\lambda^{X}_{i_{1}}],\ldots,N^{\frac12}[z-\lambda^{X}_{i_{k}}])-O(N^{\frac12}[z-\lambda^{\tilde{X}}_{i_{1}}],\ldots,N^{\frac12}[z-\lambda^{\tilde{X}}_{i_{k}}])\right]\right\}\to_{N\to\infty}0.
\end{align*}
\end{lemma}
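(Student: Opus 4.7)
The plan is to apply the standard Green's function comparison (Lindeberg replacement) strategy adapted to the non-Hermitian setting via Girko's Hermitization, following the argument in \cite{EYY11,CES20}. First I would use Girko's formula to express the test-function statistic as a smooth (in the entries of $X$) functional of Hermitized resolvents $G_{w}^{X}(i\eta) = [\mathcal{H}_{w}^{X} - i\eta]^{-1}$, integrated over a compact set of spectral parameters $w \in \C$ near $z$. Concretely, for $O \in C_{c}^{\infty}(\C^{k})$ one writes the $k$-point functional as a finite linear combination of integrals
\begin{align*}
\E \int_{\C^{k}} \prod_{j=1}^{k} \Delta_{w_{j}} \varphi_{j}(w_{j}) \cdot \log\det\!\left[(X - w_{j})(X - w_{j})^{\ast} + \eta^{2}\right] \, dw_{j},
\end{align*}
where $\varphi_{j}$ are rescaled bumps built from $O$, plus lower-order diagonal contributions handled analogously. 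The log-determinant is split at a cutoff $\eta_{\ast} = N^{-1+\epsilon}$: the contribution below $\eta_{\ast}$ is bounded a priori using smallest-singular-value estimates for $X - w$ and $\tilde X - w$ in the bulk (uniform in $w$; these are standard and used already in \cite{CES20}), while the piece above $\eta_{\ast}$ is a smooth functional of $\tr G_{w}^{X}(i\eta)$ at scales $\eta \geq \eta_{\ast}$ where local laws apply.

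Next I would carry out the Lindeberg swap on the $N^{2}$ entries. Order the entries and produce a sequence of matrices $X = X^{(0)}, X^{(1)}, \ldots, X^{(N^{2})} = \tilde X$ interpolating by swapping one entry at a time. For the smooth functional $F$ from the previous step, Taylor-expanding $F(X^{(r)} + e \cdot E_{ij}) - F(X^{(r-1)})$ to fifth order in the entry and taking expectations, the matching of the first three moments cancels the $e^{1},e^{2},e^{3}$ contributions, the $O(N^{-\delta})$ matching of the fourth moment tames the $e^{4}$ contribution, and the $e^{5}$ remainder is controlled by the uniform fifth-moment bound $\E |X_{ij}|^{5} \lesssim N^{-5/2}$. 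So each swap contributes at most
\begin{align*}
\lesssim N^{-\delta} \cdot N^{-2} \sup \left|\partial_{e}^{4} F\right| + N^{-5/2} \sup \left|\partial_{e}^{5} F\right|.
\end{align*}

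The derivatives $\partial_{e}^{p} F$ are estimated by resolvent differentiation: each entry derivative of $G_{w}$ produces one extra resolvent factor with a rank-one insertion, and repeated use of the averaged and isotropic local laws (Lemmas \ref{1GE}, \ref{2GE}, or their standard extensions at scale $\eta_{\ast}$) together with the bound $\|G_{w}(i\eta_{\ast})\|_{\max} \prec 1$ in the bulk gives $|\partial_{e}^{p} F| \prec N^{p/2} \eta_{\ast}^{-C_{p}}$. Summing the swap error over $N^{2}$ entries then yields a total error $\lesssim N^{-\delta + C\epsilon} + N^{-1/2 + C\epsilon}$, which tends to $0$ for $\epsilon$ small, proving the lemma.

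The main obstacle is controlling the very small singular values of $X - w$ and $\tilde X - w$ uniformly in $w$ near $z$: only with such a bound can one cut the $\log \det$ off at $\eta_{\ast} = N^{-1+\epsilon}$ and reduce to a functional to which the usual entrywise resolvent expansion applies. This is precisely the technical input that distinguishes the non-Hermitian moment-matching argument from its Hermitian counterpart, and the reason the hypotheses are stated in terms of four i.i.d. real entries with bounded fourth moment; once this smallest-singular-value control is invoked (as in \cite{CES20}), the rest of the Lindeberg swap is essentially mechanical.
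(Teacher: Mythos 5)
There is a genuine gap, and it is in your very first reduction step: the choice of cutoff scale. You propose to discard the contribution of $\eta<\eta_{*}=N^{-1+\epsilon}$ "a priori using smallest-singular-value estimates" and to run the moment comparison only on the smooth functional living at scales $\eta\geq\eta_{*}$. But for a bulk point $w$ the smallest singular value of $X-w$ is \emph{typically} of order $N^{-1}$ (nearest eigenvalue at distance $\sim N^{-1/2}$ combined with eigenvector condition numbers $\sim N^{1/2}$), so the event $s_{\min}(X-w)\le N^{-1+\epsilon}$ has probability close to $1$; no smallest-singular-value bound can make the $\eta\in[N^{-1-\epsilon},N^{-1+\epsilon}]$ portion of the $\log$-determinant an error term. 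In fact this window is exactly where the local eigenvalue information at scale $N^{-1/2}$ sits, and it is the \emph{main} term of the comparison, not a remainder. This is why the paper's proof invokes Theorem 2.4 of \cite{CES20}, which cuts off only below $N^{-1-\epsilon}$ (that is where singular-value estimates enter) and above $N^{-1+\epsilon}$, and then reduces the lemma to comparing $\E\prod_{\ell}\langle\mathrm{Im}\,G_{z_{\ell}}(i\eta_{\ell})-\mathrm{Im}\,m^{z_{\ell}}(i\eta_{\ell})\rangle$ uniformly for $\eta_{\ell}\in[N^{-1-\epsilon},N^{-1+\epsilon}]$ (Lemma \ref{lemma:3.5momentreduced}). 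Your plan, as written, throws away precisely the quantity the lemma is about, and the comparison you then perform at $\eta\geq N^{-1+\epsilon}$ cannot see the difference in local statistics.

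A consequence of working in the correct window is that the Green-function inputs are different from what you cite: at $\eta\asymp N^{-1}$ one does not have the standard local laws of Lemmas \ref{1GE}--\ref{2GE}, only the weak entrywise bound $\|G-m\|_{\max}\lesssim N^{C\epsilon}$ (Theorem 5.2 of \cite{AET21}), and it is this bound, together with the trivial bound $\|G\|\le\eta^{-1}$ in the tail of the resolvent expansion, that controls the swap error; your claimed derivative bound $|\partial_{e}^{p}F|\prec N^{p/2}\eta_{*}^{-C_{p}}$ also does not account for the factors of the entry size $N^{-1/2}$ that produce the final $N^{-2-\delta+C\epsilon}$ per index pair. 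The remaining structural difference --- you use a discrete entry-by-entry Lindeberg swap with a fifth-order Taylor expansion, while the paper uses the continuous interpolation of \cite{KY16} with a resolvent expansion to order seven --- is inessential: either bookkeeping would work once the comparison is set up for the correct functional in the correct $\eta$-regime, with three matched moments giving cancellation, the $O(N^{-\delta})$ fourth-moment matching giving the gain, and the higher-order terms controlled by the moment bounds.
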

\begin{proof}[Proof of Theorem \ref{theorem:main} given Lemma \ref{lemma:3.5moment}]
Let $t=N^{-\epsilon_{0}}$ as in Theorem \ref{theorem:mainwitht}. By Lemma 3.4 in \cite{EYY11}, we can find $N\times N$ matrices $\mathbf{A}$ and $B$ such that $B$ is real Ginibre (entries are independent $N(0,N^{-1})$ random variables), such that $\mathbf{A}$ and $B$ are independent, such that $\mathbf{A}+\sqrt{t}B$ and $\tilde{A}_{t}$ match up to three and a half moments, and such that $\E|\mathbf{A}_{ij}|^{p}\lesssim_{p} N^{-p/2}$ for all $p\geq1$. (Indeed, for this last property, note that it is true for $\tilde{A}_{t}$ by assumption and for $B$.) We necessarily have $\E|\mathbf{A}_{ij}|^{2}=N^{-1}$ (again, since $\E|\tilde{A}_{t}|^{2}=(1+t)N^{-1}$ and $\E|B_{ij}|^{2}=N^{-1}$. Ultimately, by Lemma \ref{lemma:3.5moment} (for the test function $O$ precomposed with scaling by a universal $O(1)$ constant), it suffices to prove Theorem \ref{theorem:main} for $\mathbf{A}+\sqrt{t}B$ instead of for $\tilde{A}_{t}$. This amounts to replacing $\sigma_{z,t}$ in Theorem \ref{theorem:main} by its universal approximation $\sigma_{\mathrm{univ},z,t}$ from Lemma \ref{lemma:replaceconstants}, and then using Theorem \ref{theorem:mainwitht}. We give the details below.

Let $\{\lambda_{j}^{\mathbf{A}+\sqrt{t}B}\}_{j}$ denote the eigenvalues of $\mathbf{A}+\sqrt{t}B$. Fix $z\in\C$ as in the statement of Theorem \ref{theorem:main}, and let $\sigma_{z,t}$ be defined as in the introduction but for $\mathbf{A}$ and its resolvents. We first claim that 
\begin{align}
&\E\left[\sum_{i_{1}\neq i_{2}}O(N^{\frac12}\sigma_{z,t}^{\frac12}[z-\lambda_{i_{1}}],N^{\frac12}\sigma_{z,t}^{\frac12}[z-\lambda_{i_{1}}])\right]\label{eq:main1}\\
&=\E\left[\sum_{i_{1}\neq i_{2}}O(N^{\frac12}\sigma_{\mathrm{univ},z,t}^{\frac12}[z-\lambda_{i_{1}}],N^{\frac12}\sigma_{\mathrm{univ},z,t}^{\frac12}[z-\lambda_{i_{1}}])\right]+o(1),\nonumber
\end{align}
where $\sigma_{\mathrm{univ},z,t}$ is from Lemma \ref{lemma:replaceconstants}. Indeed, with high probability, we know that $\sigma_{z,t}=\sigma_{\mathrm{univ},z,t}[1+O(N^{-\kappa})]$ for some $\kappa>0$. If $\rho_{\mathbf{A}+\sqrt{t}B,z}(z_{1},z_{2})$ denotes the two-point correlation function for $\mathbf{A}+\sqrt{t}B$ near $z$, then 
\begin{align*}
&\E\left[\sum_{i_{1}\neq i_{2}}O(N^{\frac12}\sigma_{\mathrm{univ},z,t}^{\frac12}[z-\lambda_{i_{1}}],N^{\frac12}\sigma_{\mathrm{univ},z,t}^{\frac12}[z-\lambda_{i_{1}}])\right]\\
&=\int_{\C^{2}}O\left(\sigma_{\mathrm{univ},z,t}^{\frac12}\sigma_{z,t}^{-\frac12}z_{1},\sigma_{\mathrm{univ},z,t}^{\frac12}\sigma_{z,t}^{-\frac12}z_{2}\right)\frac{1}{N^{2}\sigma_{z,t}^{2}}\rho_{\mathbf{A}+\sqrt{t}B}\left(z+\frac{z_{1}}{\sqrt{N\sigma_{z,t}}},z+\frac{z_{2}}{\sqrt{N\sigma_{z,t}}}\right)dz_{1}dz_{2},
\end{align*}
at which point \eqref{eq:main1} follows by $\sigma_{\mathrm{univ},z,t}\sigma_{z,t}^{-1}=1+O(N^{-\kappa})$ and an elementary Taylor expansion of $O$. To conclude the proof, we now combine \eqref{eq:main1} with Theorem \ref{theorem:mainwitht} (applied to $\mathbf{A}$).
\end{proof}
\subsection{Proof of Lemma \ref{lemma:3.5moment}}
By a standard approximation procedure, it suffices to prove Lemma \ref{lemma:3.5moment} by functions of the form $O(z_{1},\ldots,z_{k})=f^{(1)}(z_{1})\ldots f^{(k)}(z_{k})$. The idea behind the following calculation is to use Girko's Hermitization formula to bring Lemma \ref{lemma:3.5moment} into the realm of real symmetric matrices; this computation was also explored in \cite{CES20}. Before we proceed, we introduce some notation. Define
\begin{align*}
\mathsf{G}_{z}(\eta):=\begin{pmatrix}-i\eta&X-z\\X^{\ast}-\overline{z}&-i\eta\end{pmatrix},\quad \tilde{\mathsf{G}}_{z}(\eta):=\begin{pmatrix}-i\eta&X-z\\\tilde{X}^{\ast}-\overline{z}&-i\eta\end{pmatrix}.
\end{align*}
We use different font to distinguish from $G_{z}$, which was meant for $A$ earlier in this paper. We also define $m^{z}(w)$ to be the unique solution to
\begin{align}\label{eq:mz-equation}
-\frac{1}{m^{z}(w)}=w+m^{z}(w)-\frac{|z|^{2}}{w+m^{z}(w)},\quad \mathrm{Im}[m^{z}(w)]\mathrm{Im}[w]>0.
\end{align}
By the inclusion-exclusion principle, it suffices to show that if $\sigma_{i}$ and $\tilde{\sigma}_{i}$ denote eigenvalues of $X$ and $\tilde{X}$, respectively, we have
$$
\begin{aligned}
\mathbf{E} \prod_{j=1}^k & \left(\frac{1}{N} \sum_{i=1}^{N} f_{z_j}^{(j)}\left(\sigma_i\right)-\frac{1}{\pi} \int_{\mathbf{D}} f_{z_j}^{(j)}(z)  d^{2} z\right) =\mathbf{E} \prod_{j=1}^k\left(\frac{1}{N} \sum_{i=1}^{N} f_{z_j}^{(j)}\left(\widetilde{\sigma}_i\right)-\frac{1}{\pi} \int_{\mathbf{D}} f_{z_j}^{(j)}(z)  d^{2} z\right)+O(N^{-c})
\end{aligned}
$$
where we introduced the rescaled test functions
$$
f_{z_j}^{(j)}(z):=Nf^{(j)}\left(\sqrt{N}\left(z-z_j\right)\right), \quad z \in \mathbf{C},
$$
and the implicit constant in $O(\cdot)$ depends on $\|\Delta f^{(j)}\|_{L^{1}(\C)}$ for $j=1, \ldots, k$. We adjusted notation slightly to fit that of \cite{CES20}. By Theorem 2.4 in \cite{CES20}, we have
\begin{align*}
\left(\frac{1}{N} \sum_{i=1}^{N} f_{z_j}^{(j)}\left(\sigma_i\right)-\frac{1}{\pi} \int_{\mathbf{D}} f_{z_j}^{(j)}(z)  d^{2} z\right)=\mathcal{I}_{\epsilon}(X,f_{z_{j}}^{(j)})+\mathcal{E}_{\epsilon}
\end{align*}
where $\epsilon>0$ is arbitrary, where $\E|\mathcal{E}_{\epsilon}|\lesssim_{\tau}N^{-\frac14\epsilon}\|\Delta f^{(j)}\|_{L^{1}(\C)}$, and where 
\begin{align*}
\mathcal{I}_\epsilon\left(X, f_{z_{j}}^{(j)}\right):=\frac{1}{2 \pi} \int_{\mathbf{C}} \Delta f_{z_{j}}^{(j)}(z) \int_{N^{-1-\epsilon}}^{N^{-1+\epsilon}}\left\langle\mathrm{Im} \mathsf{G}_{z}(\eta)-\mathrm{Im}m^z(i\eta)\right\rangle d\eta d^{2}z.
\end{align*}
(Recall that $\langle\rangle$ denotes normalized trace.) The same computations and estimates hold for $\tilde{X}$ in place of $X$. Thus, the proof of Lemma \ref{lemma:3.5moment} reduces to proving the following. (In Lemma \ref{lemma:3.5momentreduced} below, the constant $\delta>0$ is the exponent in the moment matching $\E X_{ij}^{p}=\E\tilde{X}_{ij}^{p}$ for $p=1,2,3$, and $\E|X_{ij}|^{4}=\E|\tilde{X}_{ij}|^{4}+O(N^{-2-\delta})$.)
\begin{lemma}\label{lemma:3.5momentreduced}
Take the assumptions in Lemma \ref{lemma:3.5moment}. Fix any $k\geq1$. There exists small  $\epsilon=\epsilon(k,\delta)>0$ such that uniformly over $N^{-1-\epsilon}\leq\eta_{\ell}\leq N^{-1+\epsilon}$ for $\ell=1,\ldots,k$, we have
\begin{align}
\E\prod_{l=1}^k\left\langle\mathrm{Im}\mathsf{G}_{z}(i\eta_{\ell})-\mathrm{Im}m^{z_l}(i\eta_{\ell})\right\rangle-\E\prod_{l=1}^k\left\langle\mathrm{Im}\widetilde{\mathsf{G}}_{z_{\ell}}(i\eta_{\ell})-\mathrm{Im}m^{z_{\ell}}(i\eta_{\ell}) \right\rangle=\mathcal{O}\left(N^{-c\delta}\right).\label{eq:3.5momentreduced}
\end{align}
\end{lemma}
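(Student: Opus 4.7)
\medskip

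\noindent\textbf{Proposal for the proof of Lemma \ref{lemma:3.5momentreduced}.}

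The plan is to run a standard Lindeberg-style Green function comparison argument, swapping entries of $X$ with those of $\tilde{X}$ one at a time. Enumerating the $N^{2}$ pairs $(i,j)$ and writing $X^{(0)}:=X$ and $X^{(N^{2})}:=\tilde{X}$, with $X^{(m)}$ agreeing with $\tilde{X}$ on the first $m$ entries and with $X$ on the rest, I would telescope
\begin{align*}
\E F(X)-\E F(\tilde{X})=\sum_{m=1}^{N^{2}} \bigl[\E F(X^{(m-1)})-\E F(X^{(m)})\bigr],
\end{align*}
where $F(X):=\prod_{\ell=1}^{k}\langle \im G_{z}(i\eta_{\ell})-\im m^{z_{\ell}}(i\eta_{\ell})\rangle$ and $G_z$ is viewed as a function of $X$ via the Hermitization $\mathcal{H}_{z}$. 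For each swap, I would write $X^{(m-1)}$ and $X^{(m)}$ as the same matrix modulo the single entry at position $(i_{m},j_{m})$, call its common value in an auxiliary matrix with that entry zeroed out $Y^{(m)}$, and Taylor-expand in the scalar variable equal to that entry around $0$ to fourth order. Because all entries have size $O(N^{-1/2})$ almost surely by a standard truncation (loosing an arbitrary polynomial factor in the error), the fourth-order remainder is controlled via the sup of the fourth derivative along the segment.

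Because the first three moments of $X_{ij}$ and $\tilde{X}_{ij}$ agree and the coefficients of the first three terms of the Taylor expansion are independent of the entry being swapped, the first three terms cancel exactly in expectation. The fourth-order terms contribute
\begin{align*}
\E F(X^{(m-1)})-\E F(X^{(m)})=\tfrac{1}{24}(\E X_{i_{m}j_{m}}^{4}-\E \tilde{X}_{i_{m}j_{m}}^{4})\E\bigl[\partial^{4}F(Y^{(m)})\bigr]+\mathcal{R}_{m},
\end{align*}
where $|\E X_{i_{m}j_{m}}^{4}-\E \tilde{X}_{i_{m}j_{m}}^{4}|\lesssim N^{-2-\delta}$ by hypothesis and $\mathcal{R}_{m}$ is a remainder that, after truncation, is controlled by the sup of the fifth derivative times $N^{-5/2}$. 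The whole argument therefore reduces to establishing a high-probability bound on the fourth (and fifth) derivatives of $F$ in the entries of $X$ of the form $\partial^{(p)}F=\mathcal{O}_{\prec}(N^{2-\kappa})$ uniformly over the range of $\eta_\ell$, for some $\kappa>\delta/c$.

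To obtain this bound I would use the chain rule together with the elementary identity $\partial_{X_{ij}}G_{z}=-G_{z}E^{(ij)}G_{z}$, where $E^{(ij)}$ denotes the corresponding rank-one matrix in the Hermitization. Each derivative either lengthens the resolvent chain inside one of the $k$ factors or splits across factors, producing traces of products of Green functions with elementary matrices inserted at fixed sites. These traces can be bounded using two-resolvent local laws of the type developed in \cite{CES2020,cipolloni2023eigenstate} at the bulk hard-edge scale $\eta\sim N^{-1+\epsilon}$, combined with Ward identities such as $G_{z}G_{z}^{\ast}=\eta^{-1}\im G_{z}$. The crucial gain comes from the factors $\im G_{z}(i\eta_{\ell})-\im m^{z_{\ell}}(i\eta_{\ell})$ themselves, which obey the optimal local law $O_{\prec}((N\eta)^{-1})$ and, when differentiated, remain traces of \emph{differences} of resolvents and their deterministic approximations; this ensures that undifferentiated factors contribute at most $N^{k\epsilon}$ rather than $N^{k}$.

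The main obstacle is that the range $\eta\in[N^{-1-\epsilon},N^{-1+\epsilon}]$ is at the threshold of what single-resolvent local laws can tolerate, so naive operator-norm bounds $\|G_{z}\|\le\eta^{-1}$ lose polynomial powers of $N$ for each derivative. Overcoming this requires, for every differentiation pattern, to use the Ward identity to convert pairs $GG^{\ast}$ into $\eta^{-1}\im G$ and to invoke a multi-resolvent local law of the form $\langle\im G_{z}(i\eta)B\im G_{z}(i\eta)\rangle\prec 1+(N\eta)^{-1}\|B\|$, together with an isotropic bound on off-diagonal entries of $G$ at scale $\eta\sim N^{-1}$. This is precisely the structural ingredient that makes the ``three-and-a-half moment'' scheme work at nearly optimal scale as in \cite{CES20}, and it is here that I expect the heaviest technical book-keeping, the remainder of the argument being routine Lindeberg telescoping.
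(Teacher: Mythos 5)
Your overall scheme --- swap the entries one at a time, Taylor expand to fourth order in the swapped entry, cancel the first three orders exactly by moment matching, and pay $O(N^{-2-\delta})$ per swap from the fourth-moment discrepancy --- is the same comparison strategy the paper uses; the paper merely implements it through the Knowles--Yin continuous interpolation $H^{\theta}$ together with a resolvent expansion in the single swapped entry, which is a cosmetic difference from your discrete Lindeberg telescope.

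The genuine problem is the quantitative target you set for the derivative bounds, which does not close the argument as stated. There are $N^{2}$ swaps and the per-entry fourth-moment discrepancy is $O(N^{-2-\delta})$, so a \emph{per-entry} bound $\partial^{(4)}F=\mathcal{O}_{\prec}(N^{2-\kappa})$ yields a total contribution of order $N^{2}\cdot N^{-2-\delta}\cdot N^{2-\kappa}=N^{2-\delta-\kappa}$, which is not $O(N^{-c\delta})$ unless $\kappa$ is essentially $2$; your condition $\kappa>\delta/c$ is far too weak (it would only suffice if $N^{2-\kappa}$ were meant to bound the \emph{sum} over all $(i,j)$, which is not what ``uniformly'' suggests). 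What is actually needed, and what the paper proves, is $\partial^{(4)}_{ij}F\prec N^{C_{k}\epsilon}$ uniformly in $(i,j)$ (and $\partial^{(5)}F\prec N^{1/2-c\delta}$ for the remainder). Moreover, this does not require the multi-resolvent, isotropic, or Ward-identity machinery you invoke: the only analytic input in the paper is the entrywise single-resolvent bound $\|G^{\theta}_{\ell}-m^{z_{\ell}}\|_{\max}\prec N^{C\epsilon}$ at $\eta_{\ell}\in[N^{-1-\epsilon},N^{-1+\epsilon}]$ (Theorem 5.2 of \cite{AET21}, extended below scale $1/N$ by monotonicity), combined with the structural observation that each factor of $F$ is a \emph{normalized} trace, so differentiating in one entry inserts the rank-two matrix $\Delta^{ij}$ and produces a prefactor $1/N$ times finitely many entries of short resolvent chains, each bounded by $\eta^{-1}$ times entrywise quantities, i.e.\ by $N^{1+C\epsilon}$; the $1/N$ then gives the required $N^{C\epsilon}$ per differentiated factor, while undifferentiated factors are $\prec N^{C\epsilon}$ as you note. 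If you recalibrate your target bound accordingly, your outline closes and essentially coincides with the paper's proof; as written, the reduction step is off by a factor of $N^{2}$.
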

The proof of Lemma \ref{lemma:3.5momentreduced} uses a standard Green's function comparison argument. We adopt the continuous comparison method introduced in \cite{KY16}, which is based on the following construction.
    
\begin{defn}[Interpolating matrices] 
Define the following matrices:
$$\mathcal{H}^0:=\widetilde{\mathcal{H}}=\begin{pmatrix}
   0 & \widetilde X   \\
  \widetilde  X^T & 0
    \end{pmatrix},\quad \mathcal{H}^1:=\mathcal{H}=\begin{pmatrix}
   0 &   X   \\
     X^T & 0
    \end{pmatrix}$$
Let $\rho_{i j}^0$ and $\rho_{i j}^1$ denote the laws of $\widetilde{\mathcal{H}}_{i j}$ and $\mathcal{H}_{i j}$, respectively. For $\theta \in[0,1]$, we define the interpolated laws $\rho_{i j}^\theta:=(1-\theta) \rho_{i j}^0+\theta \rho_{i j}^1$. Let $\left\{\mathcal{H}^\theta: \theta \in(0,1)\right\}$ be a collection of random matrices that satisfy the following properties. For any fixed $\theta \in(0,1)$, the triple $\left(\mathcal{H}^0, \mathcal{H}^\theta, \mathcal{H}^1\right)$ of $2 N \times 2 N$ random matrices are jointly independent, and the matrix $\mathcal{H}^\theta=\left(\mathcal{H}_{i j}^\theta\right)$ has law
$$
\prod_{i \leq j } \rho_{i j}^\theta\left(\mathrm{d} \mathcal{H}_{i j}^\theta\right)
$$
(We do not require $\mathcal{H}^{\theta_1}$ to be independent of $\mathcal{H}^{\theta_2}$ for $\theta_1 \neq \theta_2 \in(0,1)$.) For $\lambda \in \mathbb{C}$ and indices $i,j$, we define the matrix $\mathcal{H}_{(i j)}^{\theta, \lambda}$ as
$$
\left(\mathcal{H}_{(i j)}^{\theta, \lambda}\right)_{k l}:= \begin{cases}\mathcal{H}_{i j}^\theta, & \text { if }\{k, l\} \neq\{i, j\}, \\ \lambda, & \text { if }\{k, l\}=\{i, j\} .\end{cases}
$$
Correspondingly, we define the resolvents
$$
\mathsf{G}^\theta_{\ell}:=\left(\mathcal{H}^\theta-\begin{pmatrix}
   0 & z_{\ell}   \\
  z_{\ell}^*&  0
    \end{pmatrix}-i\eta_{\ell} \right)^{-1} , \quad \mathsf{G}_{\ell, (ij)}^{\theta, \lambda}(z):=\left(\mathcal{H}_{(ij )}^{\theta, \lambda}-\begin{pmatrix}
   0 & z_{\ell}   \\
  z_{\ell}^*&  0
    \end{pmatrix}-i\eta_{\ell} \right)^{-1}  .
$$
\end{defn}
For any function $F: \mathbb{R}^{2 N \times 2 N} \rightarrow \mathbb{C}$, we have the basic interpolation formula
\begin{equation}\label{expH}
   \frac{\mathrm{d}}{\mathrm{d} \theta} \mathbb{E} F\left(\mathcal{H}^\theta\right)
   =\sum_{i,j} \left[\mathbb{E} F\left(\mathcal{H}_{(ij)}^{\theta, \mathcal{H}_{ij}^1}\right)-\mathbb{E} F\left(\mathcal{H}_{(ij)}^{\theta, \mathcal{H}_{ij}^0}\right)\right] 
\end{equation}
provided all the expectations exist. 
\begin{proof}[Proof of Lemma \ref{lemma:3.5momentreduced}]
We will use \eqref{expH} with the choice 
\begin{equation}\label{FIM}
    F\left(\mathcal{H}^\theta\right)=\prod_{l=1}^k\left\langle\mathrm{Im}\mathsf{G}_{\ell}^{\theta }-\mathrm{Im}m^{z_{\ell}} \right\rangle  
\end{equation}
We omit $\theta$ from the notation $m^{z_{\ell}}$ since the entries of the $\mathcal{H}^\theta$ have the same variances for all $\theta$. Hence $m_{\ell}^\theta$ is independent of $\theta$.  In the remaining, we will prove the following bound for our choice of $F$ in \eqref{FIM}, which implies Lemma \ref{lemma:3.5momentreduced}: for all $\theta \in[0,1]$ and $1\le i,j\le 2N$,  
\begin{equation}\label{FH2ed}
    \mathbb{E} F\left(\mathcal{H}_{(i,j)}^{\theta, \mathcal{H}_{ij}^1}\right)-\mathbb{E} F\left(\mathcal{H}_{(i,j)}^{\theta, \mathcal{H}_{ij}^0}\right) \prec N^{-2+ C\, \epsilon-\delta}.
\end{equation}
Combining \eqref{expH} and \eqref{FH2ed}, if we choose $\epsilon>0$ small enough, then we deduce Lemma \ref{lemma:3.5momentreduced}. In particular, are left to prove \eqref{FH2ed}. By Theorem 5.2 in \cite{AET21} (see also \cite{CES20}), we have the following for any $\tau>0$ with very high probability (i.e. probability at least $1-O(N^{-D})$ for any $D=O(1)$):
$$
\begin{gathered}
 \|\mathsf{G}_{\ell}^{\theta }-  m^{z_{\ell}}\|_{\max}\lesssim   N^{\epsilon+\tau}. 
\end{gathered}
$$
The $\max$-norm is sup-norm over entries. By resolvent expansion, for any $\lambda$, $\lambda'\in \mathbb R$, we have the following (in which we drop subscript $\ell$ for simplicity):
 \begin{equation}\label{Gexp}
 \mathsf{G}_{(i j)}^{\theta, \lambda^{\prime}}=\mathsf{G}_{(i j)}^{\theta, \lambda}   +\sum_{k=1}^K \mathsf{G}_{(i j)}^{\theta, \lambda}\left\{\left[ \left(\lambda-\lambda^{\prime}\right) \Delta_{i j}  \right] \mathsf{G}_{(i j)}^{\theta, \lambda}\right\}^k   +\mathsf{G}_{(i j)}^{\theta, \lambda^{\prime}}\left\{\left[ \left(\lambda-\lambda^{\prime}\right) \Delta_{i j} \right] \mathsf{G}_{(i j)}^{\theta, \lambda}\right\}^{K+1}.
\end{equation} 
Above, $\Delta_{i j}$  are $2 N \times 2 N$ matrices defined as 
$$\left(\Delta_{i j}\right)_{k l}:=\delta_{k i} \delta_{l j}+\delta_{k j} \delta_{l i}$$ Combining the above two statements and the trivial bound $\|\mathsf{G}_{\ell}\|\le (1/\eta_{\ell})$, we get
$$
\begin{gathered}
\left\|\mathsf{G} ^{\theta, 0 }_{l, (i,j)}-  m^{z_{\ell}}\right\|_{\max}\lesssim   N^{\epsilon+\tau} 
\end{gathered}
$$
for any $\tau>0$ with very high probability. Now, use \eqref{Gexp} with $\lambda=0$ and $\lambda'=\mathcal{H}_{ij}^{\gamma}$ and $K=7$. In doing so, we use $\E|\mathcal{H}_{ij}^{\gamma}|^{p}\lesssim_{p} N^{-p/2}$. We also apply the local law (Lemma \ref{1GE}) to control the Green's function entries. Ultimately, we get the following in which we again drop the subscript $\ell$:
\begin{align}
\left[ \mathsf{G}_{(i j)}^{\theta, H_{i j}^\gamma}\right]_{x y}=\left[\mathsf{G}_{(i j)}^{\theta, 0}\right]_{x y}+\sum_{k=1}^4 \mathcal{X}_{x y}^{(i j)}( \gamma, k)+\mathcal{E}, \quad 1\le x, y\le 2N,   \gamma \in\{0,1\}.\label{eq:gexpansion}
\end{align}
Above, $\mathcal{E}=O(N^{-5/2+\tau})$ with very high probability (for any fixed $\tau>0$), and
$$
\mathcal{X}^{(i j)}( \gamma, k):=\left(-\mathcal{H}_{i j}^\gamma\right)^k \mathsf{G}_{(i j)}^{\theta, 0}\left[\Delta_{i j}\mathsf{G}_{(i j)}^{\theta, 0}\right]^k, \quad \text { with } \quad  \left\|\mathcal{X}  (\gamma, k)\right\|_{\max} \prec N^{-k / 2+C \epsilon} .
$$
Notice that $\mathsf{G}_{(i j)}^{\theta, 0}$ is independent of $\mathcal{H}_{i j}^\gamma, \gamma \in\{0,1\}$. Moreover, note that 
\begin{align*}
\E\left|\left(\mathsf{G}_{(i j)}^{\theta, 0}\left[\Delta_{i j}\mathsf{G}_{(i j)}^{\theta, 0}\right]^k\right)_{\mathfrak{a}\mathfrak{b}}\right|\lesssim N^{C\epsilon+\tau}
\end{align*}
for any entry indices $\mathfrak{a},\mathfrak{b}$, since we have shown that the entries on the LHS are $\prec N^{C\epsilon+\tau}$ for any $\tau>0$, but they are also deterministically bounded by $O(\eta^{-D})$ for some $D=O(1)$, and $\eta\geq N^{-2}$.
Thus, with the expansion \eqref{eq:gexpansion}, we can write
$$\mathbb{E} F\left(\mathcal{H}_{(i j)}^{\theta, X_{i j}^\gamma}\right)
=\sum_s\sum_{k_1,k_2\cdots k_s}1\left(\sum_{t=1}^sk_t\le 4\right) \E[{\cal C}_{k_1,k_2\cdots k_s }] \prod_{t=1}^s \left(\mathbb E  (\mathcal{H}_{i j}^\gamma)^{k_t}\right)+O(N^{-5/2+C \epsilon})
$$
where $ {\cal C}_{k_1,k_2\cdots k_s }=O(N^{C\epsilon+\tau}) $ for any $\tau>0$, and ${\cal C}_{k_1,k_2\cdots k_s }$ does not depend on $\gamma$. We now use this for $\gamma=0,1$. The LHS of \eqref{FH2ed} is thus given by 
\begin{align*}
\sum_s\sum_{k_1,k_2\cdots k_s}1\left(\sum_{t=1}^sk_t\le 4\right) {\cal C}_{k_1,k_2\cdots k_s } \left[\prod_{t=1}^s \left(\mathbb E  (\mathcal{H}_{i j}^{1})^{k_t}\right)-\prod_{t=1}^s \left(\mathbb E  (\mathcal{H}_{i j}^{0})^{k_t}\right)\right]+O(N^{-5/2+C \epsilon}).
\end{align*}
By the moment matching condition, the first term above is $O(N^{-2+C\epsilon+\delta})$, so \eqref{FH2ed} follows. 
\end{proof}

\appendix
\section{Resolvent estimates for \texorpdfstring{$G_{a,b,\theta}(\eta)$}{G\_a b theta}}

For $\kappa>0$ let $\theta\in\left[\kappa, \frac{\pi}{2}-\kappa\right]$, $a\in (-1,1)$, $b\in[\kappa,1)$. Suppose $A \in M_N(\mathbb R)$ is a random matrix with i.i.d., centered, variance $1/N$ entries. Define
\[
G(\eta) = G_{a, b, \theta} (\eta) = 
\begin{pmatrix}
    -i\eta & I_2 \otimes A - \Lambda_{a, b, \theta} \otimes I_N \\
    I_2 \otimes A^T - \Lambda_{a, b, \theta}^T \otimes I_N & -i\eta
\end{pmatrix}^{-1},
\]
where
\[
\Lambda = \Lambda_{a, b, \theta} = \begin{pmatrix}
    a & b\tan\theta \\
    -\frac{b}{\tan\theta} & a
\end{pmatrix}
\]
and let $w = a+ ib$. We will also use the notation
\[
W = \begin{pmatrix}0 & I_2\otimes A\\ I_2\otimes A^T & 0 \end{pmatrix}, \quad Z = Z_{a,b,\theta}=\begin{pmatrix}
    0 &    \Lambda_{a, b, \theta} \otimes I_N \\
      \Lambda_{a, b, \theta}^T \otimes I_N & 0
\end{pmatrix}.
\]
In this notation $G_{a,b,\theta}(\eta) = (W - Z_{a,b,\theta} -i\eta)^{-1}$. We recall
\[
A_{a,b,\theta}:= I_2 \otimes A - \Lambda_{a, b, \theta} \otimes I_N
\]
Then
\[
G_{a, b, \theta}(\eta) = \begin{pmatrix}
     i\eta H_{a,b,\theta}  & H_{a,b,\theta} A_{a,b,\theta} \\
    A_{a,b,\theta}^T H_{a,b,\theta}  &  i\eta \tilde{H}_{a,b,\theta}\\
\end{pmatrix},
\]
where
\begin{align*}
\tilde{H}_{a,b,\theta}:=\tilde{H}_{a,b,\theta}(\eta) &= \left(A_{a,b,\theta}^T A_{a,b,\theta} + \eta^2\right)^{-1},\\
H_{a,b,\theta}:=H_{a,b,\theta}(\eta) &=
\left(A_{a,b,\theta} A_{a,b,\theta}^T  + \eta^2\right)^{-1}.
\end{align*}
Define a linear operator $\mathcal{S}:M_{4N}(\C)\rightarrow M_4(\C)$ as follows. Given a matrix $T\in M_{4N}(\C)$ consisting of $16$ blocks $T_{ij}\in M_N(\C)$ for $i,j\in[[1,4]]$, let
\[
\mathcal{S}(T) = 
\begin{pmatrix}
    \langle T_{33} \rangle & \langle T_{34} \rangle & 0 & 0 \\
    \langle T_{43} \rangle & \langle T_{44} \rangle & 0 & 0 \\
    0 & 0 & \langle T_{11} \rangle & \langle T_{12} \rangle \\
    0 & 0 & \langle T_{21} \rangle & \langle T_{22} \rangle \\
\end{pmatrix}.
\]

In this Appendix we use the standard technique of cumulant expansion to derive the deterministic approximations of the resolvent $G_{a,b,\theta}$ and products of two resolvents $G_{a,b,\theta} T G_{a,b,\theta}$ for some deterministic matrices $T$. See e.g. \cite{CES2020} for a similar argument for the resolvent of Wigner ensemble. We are not concerned with the optimality of the error bounds is $\eta$ since we only apply these estimates with $\eta=N^{-\epsilon}$. To state the estimates we use the notion of stochastic domination defined here.
\begin{defn}[Stochastic domination]\label{defn:prec}
Suppose that $X = \left\{X_N(s) : N\in \zz_+, s\in S_N\right\}$ and $Y = \left\{Y_N(s) : N\in \zz_+, s\in S_N\right\}$ are sequences of random variables, possibly parametrized by $s$. We say that $X$ is stochastically dominated by $Y$ uniformly in $s$ and write $X\prec Y$ or $X = \oh(Y)$ if for any $\varepsilon, D>0$ we have
\[
\sup_{S_N} \pp\left(X_N(s) > N^{\varepsilon} Y_N(s)\right) < N^{-D}
\]
for large enough $N$.
\end{defn}

\begin{lemma}\label{1GE}
For $\eta > N^{-1/2+\varepsilon}$, and any $\alpha, \beta\in[[1,4]]$, 
\[
\left|\langle\left(G_{a,b,\theta}(\eta) - M_{a,b,\theta}(\eta)\right) E_{\alpha,\beta}\rangle\right| \prec \frac{1}{N\eta^2}
\]
uniformly in $a,b,\theta$, where $E_{\alpha,\beta} =e_{\alpha,\beta}\otimes I_N \in M_{4N}(\R)$ and 
\[
e_{\alpha,\beta}\in M_4(\R),\qquad  (e_{\alpha,\beta})_{\alpha',\beta'}=\delta_{\alpha'=\alpha}\delta_{\beta'=\beta}
\]
and $M(\eta) = M_{a,b,\theta}(\eta) \in M_{4N}(\C)$ is the solution to the matrix Dyson equation (MDE)
\begin{equation}\label{MDE}
    \left[i\eta+Z+\mathcal{S}\left(M(\eta)\right) \right] M(\eta) +I=0,
\end{equation}
satisfying $\eta\im M(\eta)>0$.
\end{lemma}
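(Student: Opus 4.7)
The plan is to establish Lemma \ref{1GE} via the standard matrix Dyson equation (MDE) + stability + fluctuation-averaging strategy that has become the workhorse for local laws of Hermitizations of (correlated) non-Hermitian random matrices; see \cite{AET18, AET21, CES2020, CEHS2023, cipolloni2023eigenstate}. The variance / covariance structure is encoded by the operator $\mathcal{S}$ defined above, and accounts for the fact that the two copies of $A$ sitting in $I_{2}\otimes A$ are identical (not independent), as well as for the Hermitization step.

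First I would derive an approximate self-consistent equation. Starting from the trivial identity $(W-Z-i\eta)G=-1$, I apply the cumulant/integration-by-parts expansion to $\ee[WG]$ using the i.i.d. structure of the $A_{ij}$ (variance $N^{-1}$, all moments finite). The second cumulant produces exactly the term $\mathcal{S}(G)G$ thanks to the Kronecker structure of $W$, while higher-order cumulants are controllable via the a priori bound $\|G\|_{\mathrm{op}}\leq\eta^{-1}$ together with the moment assumptions on $A_{ij}$. This yields
\begin{equation*}
-1=(i\eta+Z+\mathcal{S}(G))G+\mathcal{D},
\end{equation*}
where $\mathcal{D}$ is a fluctuation error whose entries satisfy $|\mathcal{D}_{xy}|\prec(N\eta)^{-1/2}$ and whose averaged versions $\langle\mathcal{D}\,E_{\alpha\beta}\rangle$ improve by an extra factor of $(N\eta)^{-1/2}$ after a careful re-expansion.

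Next I would analyse the MDE \eqref{MDE} itself. By the general theory in \cite{AET18}, \eqref{MDE} has a unique solution $M$ with $\eta\,\im M>0$. Because $|w|<1$, $b\geq\kappa$, and $\theta\in[\kappa,\pi/2-\kappa]$, one shows $\|M\|_{\mathrm{op}}\lesssim 1$ uniformly in $\eta\in(0,1]$ together with the crucial uniform bound on the stability operator
\begin{equation*}
\mathcal{L}(R):=R-M\,\mathcal{S}(R)\,M,\qquad \|\mathcal{L}^{-1}\|\lesssim 1,
\end{equation*}
acting on $4\times 4$ deterministic profiles (tensored with $I_{N}$). Subtracting \eqref{MDE} from the approximate equation above gives
\begin{equation*}
\mathcal{L}(G-M)=-M\mathcal{D}M+M(G-M)\mathcal{S}(G-M)M+\ldots,
\end{equation*}
so inverting $\mathcal{L}$ and running a standard continuity bootstrap in $\eta$ (starting from $\eta\asymp 1$, where trivially $G\approx M$, and descending to $\eta\geq N^{-1/2+\varepsilon}$) yields the entrywise bound $|(G-M)_{xy}|\prec(N\eta)^{-1/2}$. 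To upgrade this to the averaged bound $|\langle(G-M)E_{\alpha\beta}\rangle|\prec(N\eta^{2})^{-1}$ claimed in the lemma, I would implement the fluctuation-averaging mechanism as in \cite{EYY11, AET21, CES2020, CEHS2023}: a refined cumulant expansion of $\langle\mathcal{D}E_{\alpha\beta}\rangle$ with iterated resolvent identities produces the extra $(N\eta)^{-1/2}$ gain over the isotropic bound.

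The main obstacle will be establishing the uniform stability bound $\|\mathcal{L}^{-1}\|\lesssim 1$ over the compact parameter region $|w|<1$, $b\geq\kappa$, $\theta\in[\kappa,\pi/2-\kappa]$. The operator $\mathcal{S}$ here is \emph{not} of standard Wigner/Hermitization type because of the $I_{2}\otimes A$ Kronecker structure, so $M$ is no longer a block-diagonal multiple of the identity (as in the Ginibre Hermitization) and has to be computed essentially explicitly. Fortunately, $\mathcal{S}$ has rank at most four (it only feels normalized traces of the four blocks of its argument), so the stability problem reduces to a finite-dimensional linear-algebra calculation on $4\times 4$ matrices; the bulk hypothesis $|w|<1$ together with $b\geq\kappa$ provides the strict non-degeneracy needed for invertibility, and continuity in $(a,b,\theta)$ on the compact parameter region upgrades this to the desired uniform bound. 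Once this is in hand, the remaining steps 1, 3, and 4 are routine adaptations of the cited references.
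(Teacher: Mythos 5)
Your overall strategy (cumulant expansion to an approximate Dyson equation, then stability of the MDE) is the same as the paper's, but the paper's execution is considerably lighter than what you propose: it never establishes an entrywise/isotropic law, runs no continuity bootstrap, and uses no fluctuation averaging. It simply takes normalized traces of $-(W-Z-i\eta)G=-I$ blockwise, bounds the subleading term $\frac1N\ntr{\mathcal{T}(G)GE_{\alpha\beta}}$ trivially by $1/(N\eta^2)$, and proves the averaged bound $|\ntr{\underline{WG}E_{\alpha\beta}}|\prec 1/(N\eta^2)$ directly by a moment/cumulant computation (Lemma \ref{renorm-term-bound}) that uses only the deterministic bound $\|G\|\le\eta^{-1}$; the weak target $1/(N\eta^2)$, rather than the optimal $1/(N\eta)$, is precisely what makes this shortcut possible. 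A minor bookkeeping point: because $A$ is real and appears twice through the $I_2\otimes A$ structure, the second cumulant produces not only $\mathcal{S}(G)G$ but also the subleading $\frac1N\mathcal{T}(G)G$ term, which you should record before absorbing it into your error $\mathcal{D}$ (as written, its entrywise size $1/(N\eta^2)$ does not fit your claimed entrywise bound $(N\eta)^{-1/2}$ near $\eta\sim N^{-1/2}$, though its averaged size is harmless).

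The genuine gap is your stability claim $\|\mathcal{L}^{-1}\|\lesssim1$ uniformly. This is false for this MDE: the paper's appendix computes the spectrum of $\mathcal{X}(\cdot)=I-\mathcal{S}(M\cdot M)$ and finds an eigenvalue $\beta_+=\frac{\tan\theta+\tan^{-1}\theta}{2\sqrt{1-|w|^2}}\,\eta+O(\eta^2)$, so the stability operator has an unstable direction with eigenvalue of order $\eta$ throughout the bulk (the familiar instability of Girko-type Hermitizations; it is the reason the lemma only claims $1/(N\eta^2)$ and restricts to $\eta>N^{-1/2+\varepsilon}$). As proposed, both your continuity bootstrap and your final inversion lean on uniform invertibility, so the argument either fails or silently asserts an unattainable $1/(N\eta)$ averaged bound. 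The reduction of the stability question to a finite-dimensional ($4\times4$) computation is correct, but the conclusion you draw from it is not; you must either treat the unstable direction separately (exploiting the structure of its eigenvector, or simply accepting the $\eta^{-1}$ loss upon inversion, which together with your fluctuation-averaged error $1/(N\eta)$ still recovers the claimed $1/(N\eta^2)$), or, as the paper does, quantify the $\eta$-dependence of $\mathcal{X}^{-1}$ and match it against the error produced by Lemma \ref{renorm-term-bound}.
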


The existence and uniqueness of the solution to matrix Dyson equation \eqref{MDE} satisfying $\eta\im M(\eta)>0$ was shown in \cite{helton2007operator}. Before we prove this lemma we introduce the notation used to control the error terms in the proof.

\begin{defn}[Renormalized term]
For any smooth function $f:M_{2N}(\R)\rightarrow M_{2N}(\R)$ define
\[
\underline{Wf(W)} = Wf(W) - \mathbb  E_{\widetilde{W}} \widetilde{W} \left(\partial_{\widetilde{W}} f\right)(W),
\]
where $\widetilde{W}$ is an independent copy of $W$. 
\end{defn}

Note that $\mathbb E_{\widetilde{W}} \widetilde{W} \left(\partial_{\widetilde{W}} f\right)(W)$ is the first order term in the cumulant expansion of $\mathbb E Wf(W)$ with respect to $W$. The following lemma lets us control the renormalized terms. The proof is deferred until the end of this Appendix.

\begin{lemma}[Renormalized term bounds]\label{renorm-term-bound}
For $\eta\ge N^{-\frac12+\varepsilon}$ and any $\alpha, \beta, \alpha', \beta'\in[[1,4]]$, we have
\begin{align}
\left|\ntr{\underline{WG}E_{\alpha,\beta}}\right| &\prec \frac{1}{N\eta^2},\label{underline-1G}\\ 
\left|\ntr{\underline{WGE_{\alpha,\beta}G}E_{\alpha',\beta'}}\right| &\prec \frac{1}{N\eta^3}\label{underline-2G}
\end{align}
uniformly in $a,b,\theta$.
\end{lemma}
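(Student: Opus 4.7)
The plan is to deduce both estimates from high-moment bounds: by Markov's inequality, it suffices to prove that for every $p\in\N$ and every $\epsilon>0$,
\[
\E\left|\ntr{\underline{WG}E_{\alpha,\beta}}\right|^{2p}\lesssim_{p,\epsilon}\left(\frac{N^{\epsilon}}{N\eta^{2}}\right)^{2p},\qquad \E\left|\ntr{\underline{WGE_{\alpha,\beta}G}E_{\alpha',\beta'}}\right|^{2p}\lesssim_{p,\epsilon}\left(\frac{N^{\epsilon}}{N\eta^{3}}\right)^{2p}
\]
uniformly in $a,b,\theta$. This is the standard strategy for two-resolvent local laws, and we propose to follow the scheme of \cite{CES2020,cipolloni2023eigenstate,cipolloni2022optimal,adhikari2023eigenstate}.

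Expanding the $2p$-th moment, the key step is to pick one $\underline{W\cdot}$ factor and apply a cumulant expansion in the underlying independent variables $A_{ij}$. Because each $A_{ij}$ appears in several entries of $W=\begin{pmatrix}0&I_{2}\otimes A\\I_{2}\otimes A^{T}&0\end{pmatrix}$, each derivative $\partial_{A_{ij}}$ acts simultaneously on $O(1)$ many entries of $W$, which costs only a constant. The cumulant identity
\[
\E[A_{ij}\,\Phi(A)]=\sum_{k\geq1}\frac{\kappa_{k+1}(A_{ij})}{k!}\E\partial_{A_{ij}}^{k}\Phi(A)
\]
reorganizes each such factor, and the definition of the underline is designed to cancel the $k=1$ (variance / Gaussian) contribution exactly. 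The remaining terms carry cumulant weights $|\kappa_{k+1}(A_{ij})|\lesssim_{k}N^{-(k+1)/2}$ (from the moment assumption on $A_{ij}$), and resolvent derivatives $\partial_{A_{ij}}G=-G(\partial_{A_{ij}}W)G$ each produce one extra $G$-factor and one extra pair of free summation indices.

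The resulting sums are controlled by combining: (i) the deterministic operator bound $\|G\|_{\mathrm{op}}\leq\eta^{-1}$; (ii) the Ward identity $GG^{\ast}=\eta^{-1}\im G$, which provides a gain of $\eta^{-1}$ whenever two $G$-factors contract through a free summation; and (iii) a bootstrap on $|G_{xy}-M_{xy}|$ starting from $\eta\sim 1$ (where a bound is trivial from the operator estimate on $G-M$) and propagated down to $\eta\geq N^{-1/2+\epsilon}$ along a polynomial mesh via Lipschitz continuity in $\eta$. A graph-counting argument then matches the number of indices freed by derivatives against the number of $G$-factors and the cumulant weights, yielding a net contribution of $N^{-1}\eta^{-2}$ per $\underline{W\cdot}$ factor in the single-resolvent case and $N^{-1}\eta^{-3}$ in the two-resolvent case, which is exactly what we need.

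The principal obstacle is the combinatorial bookkeeping across the $2p$ factors: one must verify that in every pairing of derivatives, the leading term does not beat the ``diagonal'' power count $N^{-1}\eta^{-2}$ (resp.\ $N^{-1}\eta^{-3}$) per factor. The removal of the $k=1$ cumulant by the underline renormalization is essential, since this is exactly the term that would otherwise saturate a worse bound. The $2\times 2$ tensor block structure of $W$ causes no real complication: the underlying cumulants are those of the i.i.d.\ real entries of $A_{ij}$, so the standard graph-counting estimates apply with only trivial modifications to absorb $O(1)$-many block indices. The two-resolvent estimate \eqref{underline-2G} then follows by the same scheme, the extra $G$-factor producing one additional $\eta^{-1}$ through an additional Ward contraction in the dominant pairing, and accounting for the $\eta^{-3}$ instead of $\eta^{-2}$.
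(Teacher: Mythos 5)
Your overall strategy---control high moments of the renormalized trace via a cumulant expansion in the entries $A_{ij}$, then conclude by Markov---is the same route the paper takes (its proof is explicitly modeled on Theorem 4.1 of \cite{CES2020}), so the proposal is sound. The main difference is in how much machinery you invoke. Because the target bounds $1/(N\eta^{2})$ and $1/(N\eta^{3})$ are weaker than the optimal $1/(N\eta)$-type estimates, the paper gets away with purely deterministic input: after the cumulant expansion of $\E|\ntr{\underline{WG}E_{\alpha,\beta}}|^{2p}$, every surviving term is a product of normalized traces of words in $G,G^{\ast}$ and bounded matrices, and each such trace is bounded trivially by powers of $\|G\|_{\mathrm{op}}\leq\eta^{-1}$, with each higher derivative costing an extra $N^{-1/2}$ and an extra $\eta^{-1}$; no Ward identity, no entrywise bootstrap in $\eta$, and no delicate graph counting are needed. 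Your plan instead imports the full apparatus of \cite{CES2020} (Ward identity, a bootstrap on $|G_{xy}-M_{xy}|$ down to $\eta\geq N^{-1/2+\varepsilon}$, graph counting); this works but is substantially more effort than required, and one should be careful that the entrywise input is established independently of Lemma \ref{1GE}, since that lemma is itself deduced from the present one. One imprecision to fix: the underline does \emph{not} cancel the second-cumulant contribution exactly---it only removes the self-renormalization where the derivative acts on the $G$'s inside the same underlined factor. The $\kappa_{2}$ cross terms, where $\partial_{A_{ij}}$ hits the other factors of the $2p$-fold product, survive and are in fact the leading contributions (they are exactly the terms the paper writes out, e.g. $\frac{\kappa_{2}}{N^{2}}\E\ntr{G^{(\ast)}B_{1}G^{(\ast)}B_{2}}$ and its two-trace analogues); your power counting still accommodates them, but the claim that only cumulants of order at least three remain would, if taken literally, suggest a better bound than is true.
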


\begin{proof}[Proof of Lemma \ref{1GE}]
From the definition of renormalized term we see that
\[
WG = -\mathcal{S}(G)G-\frac{1}{N}\mathcal{T}(G)G + \underline{WG}.
\]
Then
\[
-I = -(W-Z-i\eta)G = \left(Z+i\eta+\mathcal{S}(G)\right)G + \frac{1}{N}\mathcal{T}(G)G-\underline{WG},
\]
where
\[
\mathcal{T}(T) = \begin{pmatrix}
    0 & 0 & T_{13} & T_{23} \\
    0 & 0 & T_{14} & T_{24} \\
    T_{31} & T_{41} & 0 & 0 \\
    T_{32} & T_{42} & 0 & 0
\end{pmatrix}.
\]
We take a normalized trace separately for every $N\times N$ block in this equation to get
\[
-\frac14\delta_{\alpha\beta} = \sum_{\gamma=1}^4\left(Z+i\eta+\mathcal{S}(G)\right)_{\alpha\gamma}\ntr{GE_{\gamma\beta}} + \frac{1}{N}\ntr{\mathcal{T}(G)GE_{\alpha\beta}} + \ntr{\underline{WG}E_{\alpha\beta}}.
\]
Note that $\frac{1}{N}\ntr{\mathcal{T}(G)GE_{\alpha\beta}} \prec \frac{1}{N\eta^2}$ by a trivial bound and $\ntr{\underline{WG}E_{\alpha\beta}} \prec \frac{1}{N\eta^2}$ by Lemma \ref{renorm-term-bound}. Then
\[
-\frac14\delta_{\alpha\beta} = \sum_{\gamma=1}^4\left(Z+i\eta+\mathcal{S}(G)\right)_{\alpha\gamma}\ntr{GE_{\gamma\beta}} + \oh\left(\frac{1}{N\eta^2}\right)
\]
and the result of Lemma \ref{1GE} follows by stability of the solution of MDE \eqref{MDE}.
\end{proof}

\begin{lemma}\label{2GE}
For $\eta > N^{-\frac16+\varepsilon}$, and any $\alpha, \beta, \alpha', \beta'\in[[1,4]]$,
\[
\ntr{G_{a,b,\theta} E_{\alpha\beta}G_{a,b,\theta} E_{\alpha'\beta'}}= \ntr{M_{a,b,\theta} E_{\alpha\beta}M_{a,b,\theta} \mathcal{X}^{-1}(E_{\alpha'\beta'})}+\oh\left(\frac{1}{N\eta^6}\right)
\]
uniformly in $a,b,\theta$, where operator linear $\mathcal{X} = \mathcal{X}_{a,b,\theta}$ acting on $M_4(\R)$ by
\[
\mathcal{X}_{a,b,\theta}(\cdot) = I - \mathcal{S}(M_{a,b,\theta} \cdot M_{a,b,\theta}).
\]
\end{lemma}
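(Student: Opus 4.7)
The plan is to derive a self-consistent linear equation for the two-resolvent trace by applying the cumulant expansion (renormalization) technique to $WGE_{\alpha\beta}G$, following the standard strategy for two-resolvent local laws (see \cite{CES2020, cipolloni2022optimal, adhikari2023eigenstate}). First, I would combine two expressions for $WGE_{\alpha\beta}G$. On the one hand, the resolvent identity $(W-Z-i\eta)G=I$ yields $WGE_{\alpha\beta}G = E_{\alpha\beta}G + (Z+i\eta)GE_{\alpha\beta}G$. On the other hand, the same renormalization underlying the proof of Lemma \ref{1GE} gives
$$WGE_{\alpha\beta}G \;=\; \underline{WGE_{\alpha\beta}G} \;-\; \mathcal{S}(G)\,GE_{\alpha\beta}G \;-\; \mathcal{S}(GE_{\alpha\beta}G)\,G \;-\; \tfrac{1}{N}\mathcal{R}(G,E_{\alpha\beta}),$$
where $\mathcal{R}$ collects analogues of the $\mathcal{T}$-term from the one-$G$ case together with higher-order cumulant contributions. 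Equating the two expressions and using $M^{-1}=-(Z+i\eta+\mathcal{S}(M))$ to rewrite $Z+i\eta+\mathcal{S}(G) = -M^{-1}+\mathcal{S}(G-M)$, I would then left-multiply by $-M$ and take normalized trace against $E_{\alpha'\beta'}$.

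This produces an equation of the form
$$\langle GE_{\alpha\beta}GE_{\alpha'\beta'}\rangle \;-\; \langle M\,\mathcal{S}(GE_{\alpha\beta}G)\,GE_{\alpha'\beta'}\rangle \;=\; \langle ME_{\alpha\beta}G E_{\alpha'\beta'}\rangle \;+\; \mathrm{err},$$
where $\mathrm{err}$ bundles the $\langle M\underline{WGE_{\alpha\beta}G}E_{\alpha'\beta'}\rangle$ renormalization term, the term $\langle M\mathcal{S}(G-M)GE_{\alpha\beta}GE_{\alpha'\beta'}\rangle$, and the $1/N$ subleading cumulant remainders. Using the block structure of $\mathcal{S}$ and cyclicity of the trace, the self-consistent part rewrites (modulo another $G-M$ error) as a linear combination $\sum_{\gamma,\gamma'} [\mathcal{S}(M\cdot M)(E_{\alpha'\beta'})]_{\gamma\gamma'}\langle GE_{\alpha\beta}GE_{\gamma'\gamma}\rangle$. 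Viewing $\mathsf{T}_{\alpha\beta}:=(\langle GE_{\alpha\beta}GE_{\gamma'\gamma}\rangle)_{\gamma,\gamma'}\in M_4(\C)$ as a matrix, the equation takes the form $\mathcal{X}(\mathsf{T}_{\alpha\beta})=\mathsf{D}_{\alpha\beta}+\mathrm{err}$, where $\mathsf{D}_{\alpha\beta}$ is the analogous deterministic object built from $M$ in place of the second $G$. Inverting $\mathcal{X}$ yields the claimed main term $\langle ME_{\alpha\beta}M\,\mathcal{X}^{-1}(E_{\alpha'\beta'})\rangle$. For the errors I would use Lemma \ref{renorm-term-bound} to bound the renormalized term by $\mathcal{O}_\prec(1/(N\eta^3))$, Lemma \ref{1GE} to bound averaged traces of $G-M$, and the trivial operator bound $\|G\|\le\eta^{-1}$ for higher-order cumulant remainders. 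A first pass gives $\mathcal{O}_\prec(1/(N\eta^K))$ for some $K$, and a standard bootstrap (plug the rough bound on $\langle GE\cdot GE\cdot\rangle$ back into the $\mathcal{S}(G-M)$ error term and repeat) improves $K$ to $6$, which is admissible under $\eta>N^{-1/6+\varepsilon}$.

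The main obstacle will be controlling $\mathcal{X}_{a,b,\theta}^{-1}$ uniformly in $(a,b,\theta)$. Because the MDE \eqref{MDE} is a $4\times4$ block equation (rather than the $2\times2$ equation of the classical non-Hermitian case in \cite{MO}), the stability analysis of \cite{AET18,AET21} has to be adapted: one must verify that $M_{a,b,\theta}$ stays in the bulk (guaranteed by $|z|<1-\tau$ and $|\mathrm{Im}\,z|>\tau$, so $(a,b)$ remain bounded away from the spectral edge) and then establish a quantitative spectral gap of $\mathcal{X}$ by an explicit block computation. This parallels the invertibility statement for $\mathbf{Q}_{a,b,\theta}$ carried out at $\theta=\pi/4$ in Appendix \ref{subsection:Qestimatepi/4} and extended to $\theta\in\mathcal{I}_{0}$ via Lemma \ref{lemma:qlowerbound}; in fact $\mathbf{Q}$ is a restriction of (an object closely related to) $\mathcal{X}$, so its lower bound feeds directly into an operator-norm bound $\|\mathcal{X}^{-1}\|\lesssim\eta^{-C}$, which closes the argument at the stated accuracy $\mathcal{O}_\prec(1/(N\eta^6))$.
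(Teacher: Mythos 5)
Your proposal follows essentially the same route as the paper's proof: write the renormalized (cumulant/integration-by-parts) self-consistent equation for $GE_{\alpha\beta}G$, invert the stability operator $\mathcal{X}$ (the paper phrases this step through $\mathcal{B}(\cdot)=I-M\mathcal{S}(\cdot)M$ on $M_{4N}(\C)$ together with the duality $\ntr{\mathcal{B}^{-1}(B_1)B_2}=\ntr{B_1\mathcal{X}^{-1}(B_2)}$, which is equivalent to your $4\times4$ linear system for the matrix of traces), and control the errors via Lemma \ref{1GE}, Lemma \ref{renorm-term-bound}, and trivial resolvent norms. Two small remarks: no bootstrap is needed, since the direct estimates already yield the worst error $O_\prec(1/(N\eta^6))$, and landing on that exponent requires the sharp bound $\|\mathcal{X}^{-1}\|\lesssim\eta^{-1}$, which the paper obtains by explicitly computing the spectrum of $\mathcal{X}$ (smallest eigenvalue $\beta_+\asymp\eta$) in the appendix, rather than through the lower bound on $\mathbf{Q}_{a,b,\theta}$.
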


\begin{proof}
From the definition of $G$ and \eqref{MDE} we see that
\begin{equation}\label{IBP-for-G}
G = M + M\mathcal{S}(G - M) G - M\underline{WG} + \frac{1}{N} \mathcal{T}(G)G,
\end{equation}

Now we consider $GE_{\alpha,\beta}G$. The integration by parts formula gives
\begin{align}\label{2G-IBP}
G E_{\alpha,\beta} G &= M E_{\alpha,\beta} M + M \mathcal{S}(G E_{\alpha,\beta}G) M + M E_{\alpha,\beta} (G - M) \\ 
&+ M \mathcal{S}(G - M) G E_{\alpha,\beta} G + M \mathcal{S}(G E_{\alpha,\beta} G) (G - M)\nonumber \\
&+ \frac{1}{N} M \mathcal{T}(G) G E_{\alpha,\beta} G + \frac{1}{N} \mathcal{T}(G E_{\alpha,\beta} G)G - M \underline{WG E_{\alpha,\beta} G}.\nonumber
\end{align}
Define a linear operator $\mathcal{B}$ acting on $M_{4N}(\C)$ by
\[
\mathcal{B}(\cdot) = I - M \mathcal{S}(\cdot) M.
\]
We move the second term of \eqref{2G-IBP} to the left, apply $\mathcal{B}^{-1}$ on both sides, multiply by $E_{\alpha',\beta'}$ on the right and take the normalized trace to get
\begin{align*}
\ntr{G E_{\alpha,\beta} G E_{\alpha',\beta'}} &= \ntr{\mathcal{B}^{-1}(M E_{\alpha,\beta} M) E_{\alpha',\beta'}} \\
&+ \ntr{\mathcal{B}^{-1}(M E_{\alpha,\beta} (G - M)) E_{\alpha',\beta'}} \\
&+ \ntr{\mathcal{B}^{-1}(M \mathcal{S}(G - M) G E_{\alpha,\beta} G) E_{\alpha',\beta'}} \\
&+ \ntr{\mathcal{B}^{-1}(M \mathcal{S}(G E_{\alpha,\beta} G) (G - M)) E_{\alpha',\beta'}} \\
&+ \frac{1}{N}\ntr{\mathcal{B}^{-1}(M \mathcal{T}(G) G E_{\alpha,\beta} G) E_{\alpha',\beta'}} \\
&+ \frac{1}{N} \ntr{\mathcal{B}^{-1}(\mathcal{T}(G E_{\alpha,\beta} G)G) E_{\alpha',\beta'}} \\
&- \ntr{\mathcal{B}^{-1}(M \underline{WG E_{\alpha,\beta} G}) E_{\alpha',\beta'}}.
\end{align*}
Now we notice that operators $\mathcal{B}$ and $\mathcal{X}$ are related thorough the following identity. For any $B_1, B_2\in M_{4N}(\C)$, we have
\[
\ntr{\mathcal{B}^{-1}(B_1)B_2} = \ntr{B_1\mathcal{X}^{-1}(B_2)}.
\]
Thus, we have
\begin{align*}
\ntr{G E_{\alpha,\beta} G E_{\alpha',\beta'}} &= \ntr{M E_{\alpha,\beta} M \mathcal{X}^{-1}(E_{\alpha',\beta'})} \\
&+ \ntr{M E_{\alpha,\beta} (G - M) \mathcal{X}^{-1}(E_{\alpha',\beta'})} \\
&+ \ntr{M \mathcal{S}(G - M) G E_{\alpha,\beta} G \mathcal{X}^{-1}(E_{\alpha',\beta'})} \\
&+ \ntr{M \mathcal{S}(G E_{\alpha,\beta} G) (G - M) \mathcal{X}^{-1}(E_{\alpha',\beta'})} \\
&+ \frac{1}{N}\ntr{M \mathcal{T}(G) G E_{\alpha,\beta} G \mathcal{X}^{-1}(E_{\alpha',\beta'})} \\
&+ \frac{1}{N} \ntr{\mathcal{T}(G E_{\alpha,\beta} G)G \mathcal{X}^{-1}(E_{\alpha',\beta'})} \\
&- \ntr{M \underline{WG E_{\alpha,\beta} G} \mathcal{X}^{-1}(E_{\alpha',\beta'})} 
\end{align*}
To bound the error terms we use that $\|\mathcal{S}\|_{op}\lesssim 1$, $\|\mathcal{T}\|_{op}\lesssim 1$ and $\|\mathcal{X}\|_{op}\gtrsim \eta$ (see section \ref{appx:operatorX}). Note that in the 2nd and 4th terms $G-M$ appears and it is multiplied by a $4\times 4$ block matrix. In the 3rd term $G-M$ appears inside of the operator $\mathcal{S}$. Thus every instance of $G-M$ on the right can be bound using Lemma \ref{1GE} by $\frac{1}{N\eta^2}$. Every other instance of $G$ or $M$ on the right we control using the operator norm bound $\|G\|, \|M\|\prec \frac{1}{\eta}$. In the final term we use Lemma \ref{renorm-term-bound} for the renormalized term. Putting this together we get
\begin{align*}
&\ntr{M E_{\alpha,\beta} (G - M) \mathcal{X}^{-1}(E_{\alpha',\beta'})} \prec \frac{1}{N\eta^4},\\
&\ntr{M \mathcal{S}(G - M) G E_{\alpha,\beta} G \mathcal{X}^{-1}(E_{\alpha',\beta'})} \prec \frac{1}{N\eta^6}, \\
&\ntr{M \mathcal{S}(G E_{\alpha,\beta} G) (G - M) \mathcal{X}^{-1}(E_{\alpha',\beta'})} \prec \frac{1}{N\eta^6}, \\
&\frac{1}{N}\ntr{M \mathcal{T}(G) G E_{\alpha,\beta} G \mathcal{X}^{-1}(E_{\alpha',\beta'})}\prec \frac{1}{N\eta^5}, \\
&\frac{1}{N} \ntr{\mathcal{T}(G E_{\alpha,\beta} G)G \mathcal{X}^{-1}(E_{\alpha',\beta'})} \prec \frac{1}{N\eta^4},\\
&\ntr{M \underline{WG E_{\alpha,\beta} G} \mathcal{X}^{-1}(E_{\alpha',\beta'})}\prec \frac{1}{N\eta^5}.
\end{align*}
This concludes the proof.
\end{proof}

\begin{proof}[Proof of Lemma \ref{renorm-term-bound}]
We show the proof of \eqref{underline-1G} here. The proof of \eqref{underline-2G} is analogous. Every extra $G$ in the trace gives rise to an additional $\frac{1}{\eta}$ in the bound.

The proof of \eqref{underline-1G} follows closely the proof of Theorem 4.1 of \cite{CES2020}, so we outline the differences. Similarly to \cite{CES2020}, we use the cumulant expansion
\[
\E A_{ij} f(W) = \sum_{m\ge 1} \frac{\kappa_{m+1}}{m! N^{\frac12(m+1)}} \E\partial^m_{A_{ij}} f(W),
\]
where $\kappa_{m}$ is $m$th cumulant of $\sqrt{N}A_{ij}$. 

We introduce matrices $\Delta^{ij}\in M_{4N}(\R)$ for $i,j\in[[1,N]]$, such that 
\[
(\Delta^{ij})_{xy} = \delta_{x=2N+i}\delta_{y=j}+\delta_{x=3N+i}\delta_{y=N+j}+\delta_{x=j}\delta_{y=2N+i}+\delta_{x=N+j}\delta_{y=3N+i}.
\]
Then $W = \sum_{i,j = 1}^N A_{ij} \Delta^{ij}$. Consider the second moment of $\ntr{\underline{WG}E_{\alpha,\beta}}$ and use the cumulant expansion with respect to both $W$. The first derivative terms of of the second moment are
\begin{align*}
&\E|\ntr{\underline{WG}E_{\alpha,\beta}}|^2 = \sum_{i,j=1}^N \frac{\kappa_2}{N} \E\ntr{\Delta^{ij}GE_{\alpha,\beta}}\ntr{E_{\alpha,\beta}^\ast G^\ast \Delta^{ij}} \\
&+ \sum_{i,j=1}^N \sum_{i',j'=1}^N \frac{\kappa^2_2}{N^2} \E\ntr{\Delta^{ij}G\Delta^{i'j'}G E_{\alpha,\beta}}\ntr{E_{\alpha,\beta}^\ast G^\ast\Delta^{ij}G^\ast\Delta^{i'j'}} + \ldots 
\end{align*}
The first term can be written as a sum of $16$ terms of the type 
\[
\frac{\kappa_2}{N^2}\E\ntr{G^{(\ast)}B_1G^{(\ast)}B_2},
\]
where $\|B_1\|, \|B_2\|\le 1$ and $G^{(\ast)}$ is indicating $G$ or $G^\ast$. Since we do not need to control \eqref{underline-1G} by $\frac{1}{N\eta}$, we bound $\ntr{GB_1GB_2}$ trivially by $\frac{1}{\eta^2}$. Similarly, the second term can be written as a sum of $16^2$ terms of three types
\begin{align*}
&\frac{\kappa_2^2}{N^3}\ntr{G^{(\ast)}B_1G^{(\ast)}B_2G^{(\ast)}B_3G^{(\ast)}B_4}\\
&\frac{\kappa_2^2}{N^2}\ntr{G^{(\ast)}B_1}\ntr{G^{(\ast)}B_2G^{(\ast)}B_3G^{(\ast)}B_4}\\
&\frac{\kappa_2^2}{N^2}\ntr{G^{(\ast)}B_1G^{(\ast)}B_2}\ntr{G^{(\ast)}B_3G^{(\ast)}B_4}
\end{align*}
All these terms can be bound by $\frac{\kappa_2^2}{N^2\eta^4}$. Similarly for higher order terms in the cumulant expansion each extra derivative adds another $G$ into the term, which we bound by $\frac{1}{\eta}$ and another $\frac{1}{\sqrt{N}}$ due to taking a higher order cumulant of the entry of $A$. Analogous calculation works for $2p$-th moment. We refer the reader to \cite{CES2020} for details.
\end{proof}

\subsection{Properties of the operator \texorpdfstring{$\mathcal{X}$}{X}.}\label{appx:operatorX}

Note that matrix Dyson equation \eqref{MDE} invariant under transposition of $M$ and under conjugation of $M$ by a permutation matrix
\[
\Pi = \begin{pmatrix}
    0 & 0 & 0 & 1\\
    0 & 0 & 1 & 0\\
    0 & 1 & 0 & 0\\
    1 & 0 & 0 & 0
\end{pmatrix}.
\]
The solution to MDE satisfying the condition $\eta \im M(\eta)>0$ is unique by Theorem 2.1 of \cite{helton2007operator}. Thus for $\eta>0$ the solution is
\[
M=\begin{pmatrix}
    M_{11} &   M_{11} \Lambda ( i\eta + M_{11})^{-1} \\
    (i\eta + M_{11})^{-1} \Lambda^T M_{11} & F M_{11} F
\end{pmatrix},
\]
where
\[
F = \begin{pmatrix}0&1\\1&0\end{pmatrix}
\]
and $M_{11} \in M_2(\mathbb C)$ is satisfies $iM_{11} < 0$ and 
\begin{align*}
    - M_{11}^{-1} = i\eta + FM_{11}F - \Lambda(i\eta + M_{11})^{-1} \Lambda^T 
\end{align*}
By a trivial computation we can derive the expansion of the solution in $\eta$.
\begin{align*}
M_{11}(\eta) &= i\sqrt{1-a^2-b^2}\begin{pmatrix} \tan^{-1}\theta & 0\\ 0 & \tan\theta \end{pmatrix} + i\eta S + O(\eta^2),
\end{align*}
where $S\in M_2^{sa}(\mathbb R)$ has entries 
\begin{align*}
S_{11} &= -1 + \frac{1 + \tan^{-2}\theta}{4(1-a^2-b^2)} + \frac{(1-a^2)(1-\tan^{-2}\theta)}{4b^2} \\
S_{22} &= -1 + \frac{1 + \tan^2\theta}{4(1-a^2-b^2)} + \frac{(1-a^2)(1-\tan^2\theta)}{4b^2} \\
S_{12} &= S_{21} = -\frac{a}{2b}(\tan\theta - \tan^{-1}\theta).
\end{align*}

Recall that the operator $\mathcal{X}$ acts on $M_4(\C)$ by $\mathcal{X}(\cdot) = I - \mathcal{S}(M(\eta)\cdot M(\eta))$ and $\mathcal{X}^\ast(\cdot) = I - M(-\eta)\mathcal{S}(\cdot)M(-\eta)$. Using the formulas for $M(\eta)$ above, it is easy to verify the following.
\begin{itemize}
    \item Operator $\mathcal{X}$ has an eigenvalue $1$ with multiplicity $8$. In particular, the left eigenspace of $\mathcal{X}$ corresponding to this eigenvalue is the space of $2\times 2$ block matrices with $0$ diagonal blocks. 

    \item Define 
    \[
    \beta_+ = \frac{1}{2\sqrt{1-|w|^2}}(\tan\theta+\tan^{-1}\theta)\eta,\quad \beta_- = 2(1-a^2-b^2)
    \]
    and
    \[
    \gamma_\pm = 1-a^2+b^2\pm \sqrt{(1 - a^2 + b^2)^2 - 4b^2}.
    \]
    Operator $\mathcal{X}$ has two eigenvalues of the form $\beta_-+O(\eta)$, two eigenvalues of the form $\beta_+ +O(\eta)$, two eigenvalues of the form $\gamma_-+O(\eta)$, two eigenvalues of the form $\gamma_+ +O(\eta)$.

    \item As a consequence, we have $\|\mathcal{X}\|_{op} \gtrsim \eta$.
\end{itemize}

\section{Computation of resolvent quantities at \texorpdfstring{$\theta=\frac{\pi}{4}$}{theta=pi/4}.}\label{section:resolventsatpi/4}

In this section we compute certain functions of $G_{a,b,\frac{\pi}{4}}(\eta)$ explicitly based on the observation that at $\theta = \frac{\pi}{4}$ this resolvent reduces to the regular resolvent $G_w(\eta)$ as follows. It is straightforward to check that for $w=a+ib$
\begin{equation*}
G_{a,b,\frac{\pi}{4}}(\eta) = U^\ast\begin{pmatrix}G_w(\eta)&0\\0&G_{\overline{w}}(\eta)\end{pmatrix}U,
\end{equation*}
where
\begin{equation}\label{3vec-at-z}
U = \frac{1}{\sqrt{2}}\begin{pmatrix}
    1 & -i & 0 & 0\\
    0 & 0 & 1 & -i\\
    1 & i & 0 & 0\\
    0 & 0 & 1 & i
\end{pmatrix}
\end{equation}
Note also that
\begin{align*}
UE_{3,3}U^\ast &= \frac{1}{2}\left(E_{2,2}+E_{2,4}+E_{4,2}+E_{4,4}\right)\\
UE_{4,4}U^\ast &= \frac{1}{2}\left(E_{2,2}-E_{2,4}-E_{4,2}+E_{4,4}\right)\\
UE_{3,4}U^\ast &= \frac{i}{2}\left(E_{2,2}-E_{2,4}+E_{4,2}-E_{4,4}\right)\\
UE_{4,3}U^\ast &= \frac{i}{2}\left(-E_{2,2}-E_{2,4}+E_{4,2}+E_{4,4}\right)\\
\end{align*}
Suppose $z=a+ib$ and $\eta_{z,t}$ is defined by $t\ntr{H_z(\eta_{z,t})} = 1$. We show here that
\[
\begin{pmatrix}\frac{N}{t}-\tr \tilde{H}_{a,b,\frac{\pi}{4},11}\\-2\tr \tilde{H}_{a,b,\frac{\pi}{4},12}\\\frac{N}{t}-\tr \tilde{H}_{a,b,\frac{\pi}{4},22}\end{pmatrix} = 0.
\]
Indeed,
\begin{align*}
\tr \tilde{H}_{a,b,\frac{\pi}{4},11} &= \frac{1}{i\eta_{z,t}}\tr G_{a,b,\frac{\pi}{4}} E_{3,3} = \frac{1}{i\eta_{z,t}}\tr \begin{pmatrix}G_w(\eta)&0\\0&G_{\overline{w}}(\eta)\end{pmatrix} U E_{3,3} U^\ast \\
&= \frac{1}{2i\eta_{z,t}}\tr \begin{pmatrix}G_z(\eta)&0\\0&G_{\overline{z}}(\eta)\end{pmatrix} \left(E_{2,2}+E_{2,4}+E_{4,2}+E_{4,4}\right) \\
&= \frac12\tr \tilde{H}_z + \frac12\tr \tilde{H}_{\overline{z}} = \frac{N}{t}.
\end{align*}
Similarly, $\tr \tilde{H}_{a,b,\frac{\pi}{4},12} = 0$ and $\tr \tilde{H}_{a,b,\frac{\pi}{4},22} = \frac{N}{t}$.

\subsection{Computing \texorpdfstring{$\mathbf{Q}_{a,b,\frac{\pi}{4}}$}{Q\_a b pi/4}.}\label{subsection:Qestimatepi/4}

\begin{lemma}
For $a\in[-1,1]$, $b\in[\kappa, 1)$ and $\eta > N^{-\delta}$ we have $\mathbf{Q}_{a,b,\frac{\pi}{4}} \gtrsim \frac{N}{\eta^2}$ uniformly in $a,b$.
\end{lemma}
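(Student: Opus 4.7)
The plan is to reduce the $3\times 3$ matrix inequality to two scalar lower bounds on resolvent traces, and then establish each using the local laws proved earlier in this appendix.

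First I would exploit the block structure of $H_{a,b,\pi/4}$. The matrix $\Lambda_{a,b,\pi/4} = \bigl(\begin{smallmatrix} a & b\\ -b & a\end{smallmatrix}\bigr)$ has eigenvalues $z,\bar z$, with unitary diagonalization $\Lambda_{a,b,\pi/4} = V\,\operatorname{diag}(\bar z,z)\,V^*$ where $V = \tfrac{1}{\sqrt 2}\bigl(\begin{smallmatrix}1&1\\-i&i\end{smallmatrix}\bigr)$. Writing $A_{a,b,\pi/4} = (V\otimes I_N)\operatorname{diag}(A-\bar z,A-z)(V^*\otimes I_N)$ and inverting gives
\[
H_{a,b,\pi/4} \;=\; \tfrac{1}{2}\begin{pmatrix} H_z + H_{\bar z} & i(H_{\bar z}-H_z) \\ i(H_z - H_{\bar z}) & H_z + H_{\bar z}\end{pmatrix}.
\]
Since $A$ is real and $H_z$ is Hermitian, $H_{\bar z} = \overline{H_z} = H_z^T$; decomposing $H_z = X + iY$ with $X$ real symmetric and $Y$ real antisymmetric, one reads off $H_{a,b,\pi/4,11} = H_{a,b,\pi/4,22} = X$ and $H_{a,b,\pi/4,12} = Y = -H_{a,b,\pi/4,21}$.

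Next I would diagonalize $Q_{a,b,\pi/4}$ explicitly. The crucial simplification is that $\tr XY = 0$ automatically (symmetric times antisymmetric), so $Q_{12} = Q_{23} = 0$. Using $\|X\|_F^2 = \tfrac{1}{2}(\tr H_z^2 + \tr H_z H_{\bar z})$ and $\|Y\|_F^2 = \tfrac{1}{2}(\tr H_z^2 - \tr H_z H_{\bar z})$ (which follow directly from $H_z = X+iY$, $H_{\bar z} = X-iY$ and $\tr XY = 0$), the resulting $3\times 3$ matrix is block-diagonal: the central entry is $Q_{22} = 2\tr H_z H_{\bar z}$, and the $(1,3)$ sub-block $\bigl(\begin{smallmatrix}\|X\|_F^2 & \|Y\|_F^2\\ \|Y\|_F^2 & \|X\|_F^2\end{smallmatrix}\bigr)$ diagonalizes with eigenvalues $\|X\|_F^2 \pm \|Y\|_F^2$. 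The three eigenvalues of $Q_{a,b,\pi/4}$ are therefore $\tr H_z^2$, $\tr H_z H_{\bar z}$, and $2\tr H_z H_{\bar z}$, and it suffices to show that each is $\gtrsim N/\eta^2$.

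For the first eigenvalue I would use Cauchy--Schwarz: $\tr H_z^2 \ge (\tr H_z)^2/N$. By Lemma~\ref{1GE} applied to $G_{a,b,\pi/4}$ together with the block identity $H_{a,b,\pi/4,11} = \tfrac{1}{2}(H_z+H_{\bar z})$, one gets $\ntr H_z = m + \oh(1/(N\eta^2))$ where $m$ is the corresponding entry of the deterministic $M_{a,b,\pi/4}(\eta)/(i\eta)$ computed in Appendix~A. From the explicit formula $M_{11} = i\sqrt{1-|z|^2}\, I_2 + O(\eta)$ at $\theta = \pi/4$, one sees $m = \sqrt{1-|z|^2}/\eta + O(1) \gtrsim 1/\eta$, hence $\tr H_z \gtrsim N/\eta$ and $\tr H_z^2 \gtrsim N/\eta^2$ with high probability.

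The bound $\tr H_z H_{\bar z} \gtrsim N/\eta^2$ is the main obstacle and requires the two-resolvent law. In the rotated basis $U$, $G_{a,b,\pi/4}$ is block-diagonal with $G_z, G_{\bar z}$ on the diagonal, and the cross-products $H_z H_{\bar z}$ arise as specific off-diagonal contributions to $\tr G_{a,b,\pi/4} E_{\alpha\beta}G_{a,b,\pi/4}E_{\alpha'\beta'}$. Applying Lemma~\ref{2GE}, the deterministic approximation is $\ntr M E M \mathcal X^{-1}(E')$; using the explicit form of $M_{a,b,\pi/4}$ and the spectrum of $\mathcal X_{a,b,\pi/4}$ from Appendix~A, one verifies that the leading-order contribution equals $N m^2 + O(N/\eta)$ with $m>0$ real of order $1/\eta$, giving $\tr H_z H_{\bar z} = Nm^2(1+o(1)) \gtrsim N/\eta^2$. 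The error from Lemma~\ref{2GE} is $\oh(1/\eta^6) \ll N/\eta^2$ as soon as $\eta \gg N^{-1/4}$, which holds under the assumption $\eta > N^{-\delta}$ for $\delta>0$ small. The hard part will be the positivity check: one needs to confirm that the component of $E'$ along the near-null eigenvector of $\mathcal X$ (the direction where $\beta_+ = O(\eta)$) contributes with the correct sign rather than cancelling, so that the leading-order $Nm^2$ term is not lost to the $O(1/\eta)$-amplified correction.
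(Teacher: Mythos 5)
Your reduction coincides with the paper's: diagonalizing $\Lambda_{a,b,\frac{\pi}{4}}$ and computing the blocks of $H_{a,b,\frac{\pi}{4}}$ gives exactly the $3\times3$ matrix displayed in the paper's proof, with spectrum (up to harmless constants) $\{\tr H_{z}^{2},\ \tr H_{z}H_{\bar z},\ 2\tr H_{z}H_{\bar z}\}$, and your bound $\tr H_{z}^{2}\geq(\tr H_{z})^{2}/N\gtrsim N/\eta^{2}$ via Cauchy--Schwarz and Lemma \ref{1GE} is correct (indeed more detailed than the paper's one-line assertion).

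The gap is the second lower bound, $\tr H_{z}H_{\bar z}\gtrsim N/\eta^{2}$, which you outline but do not establish, and this is the only point of the lemma where anything beyond linear algebra is needed. No deterministic argument can give it: writing $H_{z}=\sum_{j}h_{j}u_{j}u_{j}^{\ast}$ one has $\tr H_{z}H_{\bar z}=\sum_{j,k}h_{j}h_{k}|u_{j}^{T}u_{k}|^{2}$ with $\sum_{k}|u_{j}^{T}u_{k}|^{2}=1$, so without control on the overlaps the worst case only yields $\tr H_{z}H_{\bar z}\gtrsim\tr H_{z}\sim N/\eta$; a two-resolvent input is genuinely required, and your own ``positivity check'' caveat concedes that the decisive computation is missing (the paper is equally terse here, but its assertion rests on Lemma \ref{2GE}). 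Two concrete remarks toward closing it. First, your route makes the verification harder than necessary: extracting the mixed term from blocks of $G_{a,b,\frac{\pi}{4}}$ means computing $\|X\|_{F}^{2}-\|Y\|_{F}^{2}$, a difference of two quantities of size $N/\eta^{3}$ whose leading parts cancel, so you would need the deterministic approximation to subleading precision and would indeed have to track the $\beta_{+}=O(\eta)$ direction of $\mathcal{X}$. Instead use the identity $G_{a,b,\frac{\pi}{4}}=U^{\ast}\operatorname{diag}(G_{z},G_{\bar z})U$ from Appendix B, which exhibits $\tr H_{z}H_{\bar z}$ directly as a mixed $z$/$\bar z$ two-resolvent trace; since $|z-\bar z|=2b\geq2\kappa$ the relevant stability factor is of order one (no $1/\eta$ amplification), the deterministic value of $\eta^{2}\langle H_{z}H_{\bar z}\rangle$ is an explicit positive order-one expression in $a,b$, and the error $\oh(1/(N\eta^{6}))$ from Lemma \ref{2GE} is $\ll1/\eta^{2}$ for $\eta\geq N^{-\delta}$ with $\delta$ small. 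Second, your asymptotic $\tr H_{z}H_{\bar z}=Nm^{2}(1+o(1))$ is not exact (there is an order-one two-body factor), though only a lower bound of that order is needed; and note that $m\gtrsim1/\eta$, i.e. $\sqrt{1-a^{2}-b^{2}}\gtrsim1$, uses the bulk condition $a^{2}+b^{2}\leq1-\kappa$ that is implicit in the paper's setup.
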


\begin{proof}
Note that as a quadratic form on $M^{sa}_2(\R)$,
\[
\mathbf{Q}_{a,b,\frac{\pi}{4}} = -\frac{4N}{\eta^2}\ntr{\left(G_{a,b,\frac{\pi}{4}}\begin{pmatrix}0&0\\0&P\end{pmatrix}\right)^2}
\]
Thus, we can compute the entries of $Q$ in the basis
\[
\begin{pmatrix}1&0\\0&0\end{pmatrix},\quad \frac12\begin{pmatrix}0&1\\1&0\end{pmatrix},\quad \begin{pmatrix}0&0\\0&1\end{pmatrix}.
\]
We get
\[
\mathbf{Q}_{a,b,\frac{\pi}{4}} = \frac{N}{4}\begin{pmatrix}
    \ntr{\tilde{H}_z^2}+\ntr{\tilde{H}_z\tilde{H}_{\bar{z}}} & 0 & \ntr{\tilde{H}_z^2}-\ntr{\tilde{H}_z\tilde{H}_{\bar{z}}}\\
    0 & 4\ntr{\tilde{H}_z \tilde{H}_{\bar{z}}} & 0\\
    \ntr{\tilde{H}_z^2}-\ntr{\tilde{H}_z\tilde{H}_{\bar{z}}} & 0 & \ntr{\tilde{H}_z^2}+\ntr{\tilde{H}_z\tilde{H}_{\bar{z}}}\\
\end{pmatrix}.
\]
This matrix has eigenvalues $2N\ntr{\tilde{H}_z^2}, 2N\ntr{\tilde{H}_z\tilde{H}_{\bar{z}}}, N\ntr{\tilde{H}_z\tilde{H}_{\bar{z}}}$, all of which are bounded below by $N\eta^{-2}$ by \cite{cipolloni2022optimal}.
\end{proof}

\section{Jacobian calculation}\label{section:jacobian}

Let us recall the Schur decomposition construction. Define the manifold 
\[
\Omega = \R \times \R_+ \times [0, \pi/2) \times V^2(\R^N) \times M_{(N-2)\times 2}(\R),
\]
where $V^2(\R^N)$ is a Stiefel manifold, i.e.
\[
V^2(\R^N) = \mathbf{O}(N)/\mathbf{O}(N-2) = \{(v_1,v_2) \in \mathbb{S}^{N-1}\times\mathbb{S}^{N-1}:\,v_1^T v_2 = 0\}.
\]
Choose a smooth map $R: V^2(\R^N) \rightarrow \mathbf{O}(N)$ such that for any $\mathbf{v} = (v_1, v_2) \in V^2(\R^N)$  we have $R(\mathbf{v})\mathbf{e}_i = v_i$ for $i = 1, 2$. For any $(a, b, \theta) \in \R \times \R_+ \times [0, \pi/2)$ define 
\[
\Lambda_{a, b, \theta} = \begin{pmatrix}
    a & b\tan\theta \\
    -\frac{b}{\tan\theta} & a
\end{pmatrix}.
\]

Define a map
\[
\Phi: \Omega \times M_{N-2}(\R) \rightarrow M_N(\R)
\]
such that
\[
\Phi(a, b, \theta, \mathbf{v}, W, M^{(1)}) = M = R(\mathbf{v}) \begin{pmatrix}
    \Lambda_{a, b, \theta} & W^T \\
    0 & M^{(1)}
\end{pmatrix} R(\mathbf{v})^T.
\]
Note that if $\mathbf{v} = (v_1, v_2)$, then $\cos\theta v_1 \pm i\sin\theta v_2$ are eigenvectors of the right-hand side with corresponding eigenvalues $\lambda, \overline{\lambda} = a \pm bi$.

\begin{lemma}
The Jacobian of $\Phi$ is 
\begin{align}
    J(\Phi) &= 16b^2\frac{|\cos 2\theta|}{\sin^2 2\theta} \left|\det\left(M^{(1)} - \lambda\right) \right|^2 \label{real-non-sym-jacobian}
\end{align}
\end{lemma}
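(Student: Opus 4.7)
The plan is to compute $J(\Phi)$ at a representative point and exploit the block structure. First I would use $\mathbf{O}(N)$-invariance of the Euclidean volume on $M_N(\R)$ (via conjugation) to reduce to the case $\mathbf{v}=(e_1,e_2)$, so that $R(\mathbf{v})=I_N$. The tangent space to $V^2(\R^N)=\mathbf{O}(N)/\mathbf{O}(N-2)$ at this base point is represented by antisymmetric matrices of the form $\xi=\bigl(\begin{smallmatrix}A_{11}&-X^T\\X&0\end{smallmatrix}\bigr)$ with $A_{11}\in M_2(\R)$ antisymmetric and $X\in M_{(N-2)\times 2}(\R)$, i.e.\ the directions transverse to the stabilizer $\mathbf{O}(N-2)$. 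A variation of $\mathbf{v}$ then produces the commutator $[\xi,M_0]$ in $\Phi$, where $M_0=\bigl(\begin{smallmatrix}\Lambda&W^T\\0&M^{(1)}\end{smallmatrix}\bigr)$, on top of the direct variations $d\Lambda,\,dW,\,dM^{(1)}$. The dimension count $3+1+2(N-2)+2(N-2)+(N-2)^2=N^2$ confirms nothing is missed.

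A direct computation of $[\xi,M_0]$ yields
\[
d\Phi=\begin{pmatrix} d\Lambda+[A_{11},\Lambda]-W^TX & dW^T+A_{11}W^T-X^TM^{(1)}+\Lambda X^T \\ X\Lambda-M^{(1)}X & dM^{(1)}+XW^T\end{pmatrix}.
\]
Reading this block by block, the $(2,1)$ block depends only on $X$; once $X$ is fixed, the $(2,2)$ block is a translation in $dM^{(1)}$; the $(1,1)$ block then determines $(da,db,d\theta,A_{11})$ after subtracting $W^TX$; and the $(1,2)$ block is a translation in $dW$. This block-triangular structure factors the Jacobian as $|\det L_1|\cdot|\det L_2|$, where $L_1\colon X\mapsto X\Lambda-M^{(1)}X$ acts on $M_{(N-2)\times 2}(\R)$ and $L_2\colon(da,db,d\theta,A_{11})\mapsto d\Lambda+[A_{11},\Lambda]$ acts on $M_2(\R)$.

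For $L_1$, I would vectorize $X$ column-wise so that $L_1$ corresponds to $\Lambda^T\otimes I_{N-2}-I_2\otimes M^{(1)}$. Since $\Lambda^T$ has eigenvalues $\lambda=a+ib$ and $\overline{\lambda}$, the Kronecker-sum determinant evaluates to $\prod_j(\lambda-\nu_j)(\overline{\lambda}-\nu_j)=|\det(M^{(1)}-\lambda)|^2$, where $\{\nu_j\}$ are the eigenvalues of $M^{(1)}$. For $L_2$, I would parametrize $A_{11}=\alpha\bigl(\begin{smallmatrix}0&1\\-1&0\end{smallmatrix}\bigr)$ and check that $[A_{11},\Lambda]=\alpha b(\tan\theta-\cot\theta)\,\mathrm{diag}(1,-1)$. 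Writing $L_2$ in the basis $(e_{11},e_{12},e_{21},e_{22})$ then shows that the $4\times 4$ matrix splits (after reordering rows by the even permutation sending diagonal entries to the top) into a diagonal-entry block in $(da,\alpha)$ with determinant $-2b(\tan\theta-\cot\theta)$ and an off-diagonal-entry block in $(db,d\theta)$ with determinant $b(\tan\theta\csc^2\theta+\cot\theta\sec^2\theta)=4b/\sin 2\theta$. Using $\tan\theta-\cot\theta=-2\cos 2\theta/\sin 2\theta$, their product has absolute value $16b^2|\cos 2\theta|/\sin^2 2\theta$, and combining with $|\det L_1|$ gives \eqref{real-non-sym-jacobian}.

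The main delicate point is the first paragraph: verifying that the generators $\xi$ parametrize the tangent space to $V^2(\R^N)$ compatibly with the rotationally invariant volume $d\mathbf{v}$ used throughout the paper, so that no extra multiplicative constant appears when pulling back $dM$. This is standard for the Haar measure on $\mathbf{O}(N)/\mathbf{O}(N-2)$, but it is the only non-mechanical step; once the identification is fixed, the two determinant computations above are completely explicit.
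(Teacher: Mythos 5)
Your proposal is correct and is essentially the paper's own argument: reducing to the base point $\mathbf{v}=(e_1,e_2)$ by invariance is the same maneuver as the paper's conjugation $d\tilde{M}=R(\mathbf{v})^{T}\,dM\,R(\mathbf{v})$, and your two factors $|\det L_1|=|\det(M^{(1)}-\lambda)|^{2}$ (Kronecker structure of $X\mapsto X\Lambda-M^{(1)}X$) and $|\det L_2|=16b^{2}|\cos 2\theta|/\sin^{2}2\theta$ (the $(da,\alpha)$ versus $(db,d\theta)$ splitting) are exactly the paper's computations, with your $\alpha$ and $X$ playing the roles of $df$ and $dH$. The only caveats — that the derivative of the section $R$ may also have a stabilizer ($\mathbf{O}(N-2)$) component, so $\xi$ should a priori carry a lower-right antisymmetric block, and that $d\alpha\wedge dX$ must be identified with the invariant form $d\mathbf{v}$ — are handled the same way as in the paper: the stabilizer terms land only in the $(1,2)$ and $(2,2)$ blocks, which your translation argument in $dW$ and $dM^{(1)}$ absorbs, and the volume-form identification is the translation-invariance step the paper also invokes.
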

 
\begin{proof}
Consider a smooth atlas on $V^2(\R)$ and let $\varphi: U\rightarrow \tilde{U}\subset \R^{2N-3}$, where $U\subset V^2(\R)$, be a chart in this atlas. Denote the standard coordinates in $\R^{2N-3}$ by $(u^1,\ldots, u^{2N-3})$. By abusing notation, in the the rest of the proof we view $v_i$ as 
\[
v_i = v_i(u^1,\ldots, u^{2N-3}) = \left(\varphi^{-1}(u^1,\ldots, u^{2N-3})\right)_{i}
\]
for $i=1,2$. Similarly, we will view $R(\mathbf{v})$ as a function from $\tilde{U}$ to $M_N(\R)$. Note that 
\[
J(\Phi) = J(\Phi\circ \varphi^{-1}) J(\varphi).
\]
We start by computing $J(\Phi\circ \varphi^{-1})$. By differentiating the definition of $\Phi$, we get
\begin{align*}
dM &= dR(\mathbf{v}) \begin{pmatrix}
    \Lambda_{a, b, \theta} & W^T \\
    0 & M^{(1)}
\end{pmatrix} R(\mathbf{v})^T + R(\mathbf{v}) \begin{pmatrix}
    \Lambda_{a, b, \theta} & W^T \\
    0 & M^{(1)}
\end{pmatrix} dR(\mathbf{v})^T \\
&+ R(\mathbf{v}) \begin{pmatrix}
    d\Lambda_{a, b, \theta} & dW^T \\
    0 & dM^{(1)}
\end{pmatrix} R(\mathbf{v})^T.
\end{align*}
Consider $d\tilde{M} = R(\mathbf{v})^T dM R(\mathbf{v})$. Since $R(\mathbf{v})$ is orthogonal, the volume form on $M_N(\R)$ can be expressed in terms of $d\tilde{M}$ as $\bigwedge_{i=1}^N\bigwedge_{j=1}^N dM_{ij} = \bigwedge_{i=1}^N\bigwedge_{j=1}^N d\tilde{M}_{ij}$. Note that
\begin{align*}
d\tilde{M} &= R(\mathbf{v})^T dR(\mathbf{v}) \begin{pmatrix}
    \Lambda_{a, b, \theta} & W^T \\
    0 & M^{(1)}
\end{pmatrix} + \begin{pmatrix}
    \Lambda_{a, b, \theta} & W^T \\
    0 & M^{(1)}
\end{pmatrix} dR(\mathbf{v})^T R(\mathbf{v})\\
&+ \begin{pmatrix}
    d\Lambda_{a, b, \theta} & dW^T \\
    0 & dM^{(1)}
\end{pmatrix}.
\end{align*}
Since $R(\mathbf{v})\mathbf{e}_i = v_i$ for $i=1,2$, the first two columns of $R(\mathbf{v})$ are $v_1$ and $v_2$. Denote the rest of the matrix $R(\mathbf{v})$ by $\tilde{R}(\mathbf{v})$. Since $R(\mathbf{v})$ is orthogonal, $\tilde{R}(\mathbf{v})^T v_i = 0$ for $i=1,2$ and, thus, $d\tilde{R}(\mathbf{v})^T v_i = -\tilde{R}(\mathbf{v})^T dv_i$. Then
\[
R(\mathbf{v})^T dR(\mathbf{v}) = -dR(\mathbf{v})^T R(\mathbf{v}) = \begin{pmatrix}
df E & -dH^T\\
dH & \tilde{R}(\mathbf{v})^T d\tilde{R}(\mathbf{v})
\end{pmatrix},
\]
where
\begin{align*}
df &= v_2^T dv_1,\\
E &= \begin{pmatrix}0&1\\-1&0\end{pmatrix}\\
dH &= (dH_1, dH_2) = (\tilde{R}(\mathbf{v})^T dv_1, \tilde{R}(\mathbf{v})^T dv_2).
\end{align*}
Plugging this into the expression for $d\tilde{M}$, we get
\[
d\tilde{M} = \begin{pmatrix}
     d\tilde{M}^{(11)} & d\tilde{M}^{(12)} \\
     d\tilde{M}^{(21)} & d\tilde{M}^{(22)}
\end{pmatrix},
\]
where
\begin{align*}
d\tilde{M}^{(11)} &= d\Lambda_{a,b,\theta}+df(E\Lambda_{a,b,\theta}-\Lambda_{a,b,\theta}E)-W^TdH,\\
d\tilde{M}^{(12)} &= dW^T + df EW^T-dH^T M^{(1)}+ \Lambda_{a,b,\theta}dH^T - W^{T}\tilde{R}(\mathbf{v})^Td\tilde{R}(\mathbf{v}),\\
d\tilde{M}^{(21)} &= dH \Lambda_{a,b,\theta} - M^{(1)}dH, \\
d\tilde{M}^{(22)} &= dM^{(1)} + dH W^T + \tilde{R}(\mathbf{v})^Td\tilde{R}(\mathbf{v}) M^{(1)} - M^{(1)} \tilde{R}(\mathbf{v})^Td\tilde{R}(\mathbf{v}).
\end{align*}
Now we compute the volume form by changing the order and noticing that $dW$ and $dM^{(1)}$ appear only in $d\tilde{M}^{(12)}$ and $d\tilde{M}^{(22)}$ respectively.
\begin{align*}
\bigwedge_{i=1}^N\bigwedge_{j=1}^N d\tilde{M}_{ij} &=  \bigwedge_{i=1}^2\bigwedge_{j=1}^2 d\tilde{M}^{(11)}_{ij} \wedge \bigwedge_{i=1}^{N-2}\bigwedge_{j=1}^2 d\tilde{M}^{(21)}_{ij} \wedge \bigwedge_{i=1}^2\bigwedge_{j=1}^{N-2} d\tilde{M}^{(12)}_{ij} \wedge \bigwedge_{i=1}^{N-2}\bigwedge_{j=1}^{N-2} d\tilde{M}^{(22)}_{ij} \\
&= \bigwedge_{i=1}^2\bigwedge_{j=1}^2 d\tilde{M}^{(11)}_{ij} \wedge \bigwedge_{i=1}^{N-2}\bigwedge_{j=1}^2 d\tilde{M}^{(21)}_{ij} \wedge \bigwedge_{i=1}^2\bigwedge_{j=1}^{N-2} dW_{ji} \wedge \bigwedge_{i=1}^{N-2}\bigwedge_{j=1}^{N-2} dM^{(1)}_{ij}.
\end{align*}
Notice that 
\[
E\Lambda_{a,b,\theta}-\Lambda_{a,b,\theta}E = b(\tan\theta-\tan^{-1}\theta)\begin{pmatrix}1&0\\0&-1\end{pmatrix}
\]
and
\[
d\Lambda_{a,b,\theta} = \begin{pmatrix} da & \tan\theta db + \frac{b}{\cos^2\theta}d\theta\\ -\tan^{-1}\theta db + \frac{b}{\sin^2\theta}d\theta & da\end{pmatrix}.
\]
Thus we can simplify the first two terms of the volume form:
\begin{align*}
&\bigwedge_{i=1}^2\bigwedge_{j=1}^2 d\tilde{M}^{(11)}_{ij} \wedge \bigwedge_{i=1}^{N-2}\bigwedge_{j=1}^2 d\tilde{M}^{(21)}_{ij} = (da + b(\tan\theta-\tan^{-1}\theta)df)\\
&\wedge \left(\tan\theta db + \frac{b}{\cos^2\theta}d\theta\right)\wedge (da - b(\tan\theta-\tan^{-1}\theta)df)\\
&\wedge \left(-\tan^{-1}\theta db + \frac{b}{\sin^2\theta}d\theta\right)\wedge \bigwedge_{i=1}^{N-2}\bigwedge_{j=1}^2 d\tilde{M}^{(21)}_{ij}\\
&= 4b^2\frac{\tan\theta-\tan^{-1}\theta}{\sin\theta\cos\theta} da\wedge db\wedge d\theta\wedge df\wedge \bigwedge_{i=1}^{N-2} \bigwedge_{j=1}^2 d\tilde{M}^{(21)}_{ij}\\
&= -16b^2\frac{\cos 2\theta}{\sin^2 2\theta} da\wedge db\wedge d\theta\wedge df\wedge \bigwedge_{i=1}^{N-2} \bigwedge_{j=1}^2 d\tilde{M}^{(21)}_{ij}.
\end{align*}
To simplify $d\tilde{M}^{(22)}$ consider its two columns $d\tilde{M}^{(22)}_1$ and $d\tilde{M}^{(22)}_2$. Then
\[
\begin{pmatrix}d\tilde{M}^{(21)}_1\\ d\tilde{M}^{(21)}_2\end{pmatrix} = 
\begin{pmatrix}
a-M^{(1)} & -b\tan^{-1}\theta\\
b\tan\theta & a-M^{(1)}
\end{pmatrix}
\begin{pmatrix}dH_1\\dH_2\end{pmatrix}
\]
Thus 
\[
\bigwedge_{j=1}^2 \bigwedge_{i=1}^{N-2} d\tilde{M}^{(21)}_{ij} = \det\left\{\Lambda_{a,b,\theta}^T\otimes I_{N-2} - I_2\otimes M^{(1)}\right\}\bigwedge_{j=1}^2 \bigwedge_{i=1}^{N-2} dH_{ij}.
\]
Consider matrices $L_\theta, R_\theta \in M_2(\C)$ consisting of left and right eigenvectors of $\Lambda_{a, b, \theta}$ such that $L^\ast_\theta R_\theta = I$. Then $\Lambda_{a,b,\theta}^T = L_\theta^\ast\begin{pmatrix}\overline{\lambda}&0\\0&\lambda\end{pmatrix}R_\theta$. Then the determinant above is equal to
\begin{align*}
    &\det\left\{\Lambda_{a,b,\theta}^T\otimes I_{N-2} - I_2\otimes M^{(1)}\right\} \\
    &= \det\left\{L_{\theta}^\ast \otimes I_{N-2}\begin{pmatrix}\overline{\lambda}-M^{(1)}&0\\0&\lambda-M^{(1)}\end{pmatrix}R_\theta \otimes I_{N-2}\right\} \\
    &=\left|\det(M^{(1)}-\lambda)\right|^2.
\end{align*}
It remains to show that 
\[
J(\varphi) df\wedge \bigwedge_{j=1}^2 \bigwedge_{i=1}^{N-2} dH_{ij} = d\mathbf{v},
\]
where $d\mathbf{v}$ is the rotationally invariant volume form on $V^2(\R)$. First, choose a translation invariant local coordinate chart, so that $J(\varphi)$ is constant in $\mathbf{v}$ (we can always do this because the action of $\mathbf{O}(N)$ on $V^{2}(\R^{N})=\mathbf{O}(N)/\mathbf{O}(N-2)$ is transitive). For any $(v_{1},v_{2})\in V^{2}(\R^{N})$, choose a smooth lift to $(v_{1},v_{2},\ldots,v_{N})\in \mathbf{O}(N)$. We have 
\begin{align*}
df\wedge\bigwedge_{j=1,2}\bigwedge_{i=1}^{N-2}dH_{ij}=\bigwedge_{\substack{i=1,\ldots,N\\j=1,2\\j<i}}v_{i}^{T}dv_{j}
\end{align*}
We claim this is translation invariant; this would complete the proof. To this end, note
\begin{align*}
\bigwedge_{\substack{i,j=1,\ldots,N\\j<i}}v_{i}^{T}dv_{j}
\end{align*}
is translation invariant (see the bottom of page 16 of \cite{Edelman}). But this is a smooth lift of $\bigwedge_{\substack{i=1,\ldots,N\\j=1,2\\j<i}}v_{i}^{T}dv_{j}$, so the proposed translation invariance follows.
\end{proof}
\bibliographystyle{abbrv}
\bibliography{realnonhermitian_v2}

\end{document}